\newtheorem{theorem}{Theorem}[section]
\newtheorem{lemm}[theorem]{Lemma}
\newtheorem{prop}[theorem]{Proposition}
\newtheorem{theo}[theorem]{Theorem}
\theoremstyle{definition}
\newtheorem{defi}[theorem]{Definition}
\newtheorem{coro}[theorem]{Corollary}
\theoremstyle{remark}
\newtheorem{remark}[theorem]{Remark}
\numberwithin{equation}{section}
\def\om{\omega}
\def\al{\alpha}
\newcommand{\la}{\lambda}
\newcommand{\vep}{\varepsilon}
\def\om{\omega}
\def\ot{\otimes}
\def\ra{\rangle}
\def\la{\langle}
\begin{document}

\title[Two-parameter twisted quantum affine algebras]
{Two-parameter twisted quantum affine algebras}

\author[Jing]{Naihuan Jing}
\address{
Department of Mathematics,
   North Carolina State University,
   Ra\-leigh, NC 27695, USA}
\email{jing@math.ncsu.edu}

\author[Zhang]{Honglian Zhang$^\star$}
\address{Department of Mathematics, Shanghai University,
Shanghai 200444, China} \email{hlzhangmath@shu.edu.cn}

\thanks{$^\star$ H.Zhang, Corresponding Author}

\subjclass[2010]{17B37, 17B67}

\keywords{Twisted affine algebra,  two-parameter quantum affine algebra, quantum Lie bracket,
Drinfeld realization, comultiplication. }
\begin{abstract}
We establish Drinfeld realization for the two-parameter twisted
quantum affine algebras using a new method. The Hopf algebra structure for
Drinfeld generators is given
for both untwisted and twisted two-parameter quantum affine algebras, which
include the quantum affine algebras as special cases.
\end{abstract}

\maketitle

\section{ Introduction}

Drinfeld realization \cite{Dr} is a loop algebra type realization of the quantum affine algebra.
It was introduced in studying finite dimensional representations of quantum affine algebras,
and has since played an important role in representation theory such as in vertex representations
\cite{FJ, J1} and finite dimensional representations of quantum affine algebras
for quiver varieties \cite{GKV, N}.

Drinfeld realization was first proved by Beck \cite{B} using Lusztig's braid groups \cite{L}. One of us \cite{J2} also
gave an elementary proof of the untwisted quantum affine algebras using
$q$-commutators. Subsequently both of these
methods are generalized to twisted quantum affine algebras in \cite{ZJ, JZ3, JZ4} using the Hopf algebraic structures and braid groups.

Two-parameter quantum enveloping algebras
were introduced as generalization of the one-parameter
quantum enveloping algebras \cite{T, BW, BGH1, BGH2}.
It was known that the theory has analogous properties with the
one-parameter counterpart such as a similar Schur-Weyl duality and Drinfeld double structure \cite{BW}.
Recent advances on geometric representations \cite{FL} have realized
the two-parameter quantum groups naturally, where the second parameter
turns out to be closely associated with the Tate twist.

Two-parameter quantum enveloping algebras have a
generalized root space structure where the action of generalized Lusztig's braid groups
is not closed, but a Weyl groupoid action sends $U_{r, s}(\mathfrak g)$ to $U_{r', s'}(\mathfrak g)$ \cite{H}.
Therefore the Drinfeld realizations of two-parameter quantum enveloping algebras
cannot be studied by the braid group action.
Hu, Rosso and Zhang \cite{HRZ} first studied the Drinfeld realization of the two-parameter quantum affine algebra in type $A$
by constructing its quantum
Lyndon basis and the vertex representation. Furthermore, vertex representations of two-parameter quantum affine algebras in other types
were also considered in \cite{HZ1, HZ2, GHZ}.

The goal of this paper is twofold. First, we extend the $q$-commutator approach \cite{J2} to derive Drinfeld realizations for all twisted two-parameter quantum affine algebras using a new method, and establish the isomorphism between the Drinfeld realization and the Drinfeld-Jimbo form of the two-parameter quantum
enveloping algebras. A simpler set of generators is used to replace
the original full set of the Drinfeld generators in the quantized algebra, which enables us to
simplify many computations involved with Drinfeld generators and
Drinfeld-Jimbo generators. Thus the current work in two parameter cases
contains a brand new proof of the Drinfeld realization for quantum affine algebras as a
special case.

Second, our new method gives explicit formulae for the Hopf algebra structure
of ${U}_{r,s}(\widehat{\mathfrak g})$ in terms of the Drinfeld generators.
It was recently announced by Guay and Nakajima that the affine Yangian algebra $\mathrm{Y}(\widehat{\mathfrak g})$
has a simple Hopf algebra structure
in terms of the Drinfeld generators. The special case of our result for the
quantum affine algebra induces a
Hopf algebra structure for the double affine Yangian algebra $\mathrm{DY}({\mathfrak g})$ in view of \cite{GT}.

The paper is organized as follows. After a quick introduction
of preliminaries in section 2, we introduce the
two-parameter twisted quantum affine algebras
in the Drinfeld-Jimbo form in section 3. The loop algebra formulation
of the two-parameter twisted quantum affine algebras was given
in section 4. We obtain in section 5 that the Drinfeld realization
is isomorphic to the Drinfeld-Jimbo form as associative algebras.
This isomorphism is proved using a new method in sections 6 and 7,  which is based on a set of
simple generators. In section 8, we define the actions of a commultiplication  on the simple generators of
 Drinfeld realization, thus we can obtain the Hopf algebra structure of Drinfeld realization. Moreover,
 we show that there exists a  Hopf algebra isomorphism between the above two realizations.

\section{Definitions and Preliminaries}
\subsection{Finite order automorphisms of $\frak{g}$}
We begin with a brief review of basic terminologies and notations of twisted affine Lie algebras
following \cite{K}, in particular, on finite order automorphisms of the finite dimensional simple Lie algebra.

Let $\frak{g}$ be a simple finite-dimensional Lie algebra with the Cartan matrix $A=(A_{ij})$, $i,\,j \in \{1,\,2,\,\ldots,\, N\}$, of simply laced type $A_N\, (N\geqslant 2)$, $D_N\, (N>4)$, $E_6$ or $D_4$. Let $\sigma$ be an  Dynkin diagram automorphism of $\frak{g}(A)$ of order $k$. We denote by $I=\{1,\,2,\,\ldots,\,n\}$ the set of $\sigma$-orbits on $\{1,\,2,\,\ldots,\, N\}$. The action of $\sigma$ on the Dynkin diagram is listed as follows:

\begin{equation*}
\begin{split}
A_N: &\, \sigma(i)=N+1-i,\\
D_N: &\, \sigma(i)=i, 1\leqslant i\leqslant N-2; \, \sigma(N-1)=N,\\
D_4: &\, \sigma(1,2,3,4)=(3,2,4,1),\\
E_6: &\, \sigma(i)=6-i, 1\leqslant i\leqslant 5; \, \sigma(6)=6,
\end{split}
\end{equation*}

Let $\omega=exp\frac{2\pi i}{k}$ be the primitive $k$th root of unity. Then we have
the $\mathbb{Z}/r\mathbb{Z}$-graded decomposition
$$\frak{g}(A)=\bigoplus_{j\in \mathbb{Z}/k\mathbb{Z}}\frak{g}_{_j},$$
 where $\frak{g}_{_j}$ is the eigenspace relative to the eigenvalue $\omega^j$. Subsequently $\frak{g}_{_0}$ is a Lie subalgebra of $\frak{g}(A)$. Obviously, the nodes of the Dynkin diagram of $\frak{g}_{_0}$ are indexed by $I$.

\subsection{Twisted affine Lie algebras}\,
 For a nontrivial automorphism $\sigma$ of the Dynkin diagram, the twisted affine Lie algebra $\hat{\frak{g}}^{\sigma}$ is the following graded algebra:
$$\hat{\frak{g}}^{\sigma}=\Big(\bigoplus_{j\in \mathbb{Z}}\frak{g}_{_{[j]}} \otimes \mathbb{C}t^j\Big)\oplus \mathbb{C}c
\oplus \mathbb{C}d,$$
where $c$ is the central element and $ad(d)=t\frac{d}{dt}$. Here we use $[j]$ to denote $j (mod\, k)$. Thus the twisted affine Lie algebra $\hat{\frak{g}}^{\sigma}$ is the universal central extension of the twisted loop algebra. We denote by $\hat{I}=I \bigcup \{0\}$ the nodes set of Dynkin diagram of $\hat{\frak{g}}^{\sigma}$.

Let $A^{\sigma}=(a_{ij}^{\sigma})\, (i,\,j\in \hat{I})$ be the Cartan matrix of
the twisted affine Lie algebra $\hat{\frak{g}}^{\sigma}$ of type $X_N^{(r)}$. The Cartan matrix $A^{\sigma}$ is symmetrizable, i.e., there exists a diagonal matrix $D=diag(d_i|i\in \hat{I})$ such that $DA^{\sigma}$ is symmetric.
Let $\alpha_i\, (i\in \hat{I})$ be the simple roots of $\hat{\frak{g}}^{\sigma}$, and $\Delta$, $\hat{\Delta}$ the root systems of  $\frak{g}_0$, $\hat{\frak{g}}^{\sigma}$ respectively. Then $\Delta=\{\alpha_1,\,\ldots,\, \alpha_n\}$ and  $\hat{\Delta}=\{\alpha_0\}\bigcup \Delta$, where $\alpha_0=\delta-\theta$, $\delta$ is the simple imaginary root and $\theta$ is the maximal root of $\frak{g}_0$. We let $(\ , \ )$ be the canonical bilinear form
of the Cartan subalgebra such that $\frac{2(\alpha_i, \alpha_j)}{(\alpha_i, \alpha_i)}=a_{ij}^{\sigma}$, $i, j\in \hat{I}$.
Let $d_i=\frac{(\alpha_i, \alpha_i)}2$, $i \in \hat{I}$.

\section{Two-parameter twisted quantum affine algebras $U_{r,s}(\hat{\frak{g}}^{\sigma})$}

We assume that the ground field $\mathbb{K}$ is $\mathbb{Q}(r,s)$, the field of rational functions in two
indeterminates $r$, $s$ ($r\ne \pm s$).

\subsection{Drinfeld-Jimbo form of $U_{r,s}(\hat{\frak{g}}^{\sigma})$}
Let $r_i=r^{d_i},\,s_i=s^{d_i},\, i\in \hat{I}$.
Define the $(r,s)$-integer by:
$$[n]_i=\frac{r_i^n-s_i^n}{r_i-s_i},\, [n]_i!=\prod_{k=1}^{n}[k]_i.$$

\begin{defi}
The two-parameter twisted quantum affine algebra $U_{r,s}(\hat{\frak{g}}^{\sigma})$ is the unital
associative algebra over $\mathbb{K}$ generated by the elements
$e_j,\, f_j,\, \omega_j^{\pm 1},\, \omega_j'^{\,\pm 1}\, (j\in
\hat{I}),\, \gamma^{\pm\frac{1}2}$, $\gamma'^{\pm\frac{1}2}
$, satisfying the following relations:

\medskip
\noindent $(\mathcal{X}{1})$ \
$\gamma^{\pm\frac{1}2},\,\gamma'^{\pm\frac{1}2}$ are central with
$\gamma=\om_\delta$, $\gamma'=\om'_\delta$, $\gamma\gamma'=(rs)^c$, such
that $\omega_i\,\omega_i^{-1}=\omega_i'\,\omega_i'^{\,-1}=1
$, and
\begin{equation*}
\begin{split}[\,\omega_i^{\pm 1},\omega_j^{\,\pm 1}\,]
=[\,\omega_i^{\pm 1},\omega_j'^{\,\pm 1}\,]
=[\,\omega_i'^{\pm
1},\omega_j'^{\,\pm 1}\,]=0;
\end{split}
\end{equation*}
\noindent$(\mathcal{X}2)$  \hskip 60 pt 
$\omega_je_i\omega_j^{-1}=\langle i, j\rangle\,e_i,~~~~~~~~~~~
\omega_jf_i\omega_j^{-1}=\langle i, j\rangle^{-1}\,f_i;
$

\noindent$(\mathcal{X}3)$ \hskip 60 pt
$\omega'_je_i\omega^{' -1}_j=\langle j, i\rangle^{-1}\,e_i,~~~~~~~~~~~
\omega'_jf_i\omega^{' -1}_j=\langle j, i\rangle\,f_i;
$

\noindent$(\mathcal{X}4)$ 
\hskip 60 pt
$[\,e_i, f_j\,]=\frac{\delta_{ij}}{r-s}(\omega_i-\omega'_i);$

\noindent$(\mathcal{X}5)$ $(r,s)$-Serre relations for $e_i$: for any $i\ne j\in \hat{I}$, 
\begin{gather*}
\bigl(\text{ad}_l\,e_i\bigr)^{1-a_{ij}}\,(e_j)=0,
\end{gather*}
$(\mathcal{X}6)$ $(r,s)$-Serre relations for $f_i$: for any $i\ne j\in \hat{I}$, 
\begin{gather*}
\bigl(\text{ad}_r\,f_i\bigr)^{1-a_{ij}}\,(f_j)=0,
\end{gather*}
where the left-adjoint action $\text{ad}_l\,e_i$
and right-adjoint action $\text{ad}_r\,f_i$ are given in next section to save space (cf. Proposition \ref{hopf}).

Here the structural constants $\langle i,\,j\rangle$ in relations $(\mathcal{X}2)$ and $(\mathcal{X}3)$ are given in the two-parameter quantum Cartan matrix $J$  defined below. In fact, the  submatrix after removing the zeroth row and zeroth column of $J$ is exactly the two-parameter quantum Cartan matrix of the finite dimensional type, while the data of the zeroth row and the zeroth column are compatible with the results of section 6. 
We list the two-parameter quantum Cartan matrices of the twisted cases for future reference.

For type $A_{2n-1}^{(2)}$,

$$J=\left(\begin{array}{cccccc}
rs^{-1}& (rs)^{-1}& r^{-1} & \cdots & 1 & (rs)^2 \\
rs & rs^{-1} & r^{-1}  & \cdots & 1 & 1\\
s & s & rs^{-1}  & \cdots & 1 & 1\\
\cdots &\cdots &\cdots & \cdots & \cdots & \cdots\\
1 & 1 & 1  & \cdots & rs^{-1} & r^{-2}\\
 (rs)^{-2} & 1 & 1 & \cdots & s^2 & r^2s^{-2}
\end{array}\right).$$

For type $\mathrm{A}_{2n}^{(2)}$,
$$J=\left(\begin{array}{cccccc}
r^2s^{-2}& r^{-2}& 1 & \cdots & 1 & rs \\
s^2 & rs^{-1} & r^{-1}  & \cdots & 1 & 1\\
1 & s & rs^{-1}  & \cdots & 1 & 1\\
\cdots &\cdots &\cdots & \cdots & \cdots & \cdots\\
1 & 1 & 1  & \cdots & rs^{-1} & r^{-1}\\
 (rs)^{-1} & 1 & 1 & \cdots & s & r^{\frac{1}{2}}s^{-\frac{1}{2}}
\end{array}\right).$$

For type $\mathrm{D}_{n+1}^{(2)}$,
$$J=\left(\begin{array}{cccccc}
rs^{-1}& r^{-2}& 1 & \cdots & 1 & rs \\
s^2 & r^{2}s^{-2} & r^{-2}  & \cdots & 1 & 1\\
1 & s^2 & r^2s^{-2}  & \cdots & 1 & 1\\
\cdots &\cdots &\cdots & \cdots & \cdots & \cdots\\
1 & 1 & 1  & \cdots & r^{2}s^{-2} & r^{-2}\\
 (rs)^{-1} & 1 & 1 & \cdots & s^2 & rs^{-1}
\end{array}\right).$$

For type $\mathrm{D}_{4}^{(3)}$,
$$J=\left(\begin{array}{cccccc}
rs^{-1}& r^{-2}s^{-1}& (rs)^{3} \\
rs^2 & rs^{-1} & r^{-3}  \\
(rs)^{-3} & s^3 & r^{3}s^{-3}
\end{array}\right).$$

For type  $\mathrm{E}_{6}^{(2)}$
$$J=\left(\begin{array}{ccccccc}
rs^{-1}& r^{-2}s^{-1}& (rs)^{-1} &(rs)^{2} & (rs)^2\\
rs^2 & rs^{-1} & r^{-1} &1 & 1 \\
rs &s &rs^{-1} & r^{-2} & 1 \\
(rs)^{-2} & 1 & s^2 & r^2s^{-2} & r^{-2}\\
(rs)^{-2}& 1 & 1 & s^2& r^2s^{-2}
\end{array}\right)$$

\end{defi}
\smallskip

\begin{remark} \,  For later purpose, we write down explicitly relation $(\mathcal{X}5)$
case by case as follows.

 {\bf Case (I)}: In the case of $A_{2n-1}^{(2)}$, for $i=1,\,2 \ldots, n-1$ and $j,\,k\in \hat{I}$ such that $a_{jk}^{\sigma}=0$, we have the $(r,s)$-Serre relations:
\begin{eqnarray*}
& &e_j e_k-\langle k, j\rangle e_k e_j=0,\\
& &e_i^2e_{i+1}-(r_i+s_i)e_ie_{i+1}e_i+(r_is_i)e_{i+1}e_i^2=0,\\
& &e_ie_{i+1}^2-(r_{i+1}+s_{i+1})e_{i+1}e_{i}e_{i+1}+(r_{i+1}s_{i+1})e_{i+1}^2 e_i=0,\\
& &e_0^2e_{2}-(r+s)e_0e_{2}e_0+(rs)e_{2}e_0^2=0\\
& &e_0e_{2}^2-(r+s)e_{2}e_{0}e_{2}+(rs)e_{2}^2 e_0=0.
\end{eqnarray*}

{\bf Case (II)}: In the case of $A_{2n}^{(2)}$, for $i=1,\,2 \ldots, n-1$ and $j,\,k\in \hat{I}$ such that $a_{jk}^{\sigma}=0$, we have the $(r,s)$-Serre relations:
\begin{eqnarray*}
& &e_j e_k-\langle k, j\rangle e_k e_j=0,\\
& &e_i^2e_{i+1}-(r_i+s_i)e_ie_{i+1}e_i+(r_is_i)e_{i+1}e_i^2=0,\\
& &e_ie_{i+1}^2-(r_{i+1}+s_{i+1})e_{i+1}e_{i}e_{i+1}+(r_{i+1}s_{i+1})e_{i+1}^2 e_i=0,\\
& &e_0^2e_{1}-(r^2+s^2)e_0e_{1}e_0+(rs)^2e_{1}e_0^2=0,\\
& &e_0e_{1}^3-(rs)[3]e_{1}e_{0}e_{1}^2+[3]e_{1}^2 e_0e_1-(rs)^3e_1^3e_0=0,\\
& &e_{n-1}^2e_{n}-(r+s)e_{n-1}e_{n}e_{n-1}+(rs)e_{n}e_{n-1}^2=0,\\
& &e_{n-1}e_{n}^3-(rs)^{\frac{1}{2}}[3]_{\frac{1}{2}}e_{n}e_{n-1}e_{n}^2+[3]_{\frac{1}{2}}e_{n}^2 e_{n-1}e_n-(rs)^{\frac{3}{2}}e_n^3e_{n-1}=0.
\end{eqnarray*}

{\bf Case (III)}: In the case of $D_{n+1}^{(2)}$, for $i=0,\, 1,\ldots, n-1$ and $j,\,k\in \hat{I}$ such that $a_{jk}^{\sigma}=0$, we have the $(r,s)$-Serre relations:
\begin{eqnarray*}
& &e_j e_k-\langle k, j\rangle e_k e_j=0,\\
& &e_i^2e_{i+1}-(r_i+s_i)e_ie_{i+1}e_i+(r_is_i)e_{i+1}e_i^2=0,\\
& &e_ie_{i+1}^2-(r_{i+1}+s_{i+1})e_{i+1}e_{i}e_{i+1}+(r_{i+1}s_{i+1})e_{i+1}^2 e_i=0,\\
& &e_{n-1}^2e_{n}-(r^2+s^2)e_{n-1}e_{n}e_{n-1}+(rs)^2e_{n}e_{n-1}^2=0,\\
& &e_{n-1}e_{n}^3-(rs)[3]e_{n}e_{n-1}e_{n}^2+[3]e_{n}^2 e_{n-1}e_n-(rs)^{3}e_n^3e_{n-1}=0.
\end{eqnarray*}

{\bf Case (IV)}: In the case of $D_{4}^{(3)}$, for $i,\,j\in \hat{I}$ such that $a_{ij}^{\sigma}=0$, we have the $(r,s)$-Serre relations:
\begin{eqnarray*}
& &e_j e_k-\langle k, j\rangle e_k e_j=0,\\
& &e_{0}^2e_{1}-(rs)[2]e_{0}e_{1}e_{0}+(rs)^3e_{1}e_{0}^2=0,\\
& &e_{0}e_{1}^2-(rs)[2]e_{1}e_{0}e_{1}+(rs)^3 e_{1}^2 e_{0}=0,\\
& &e_1e_{2}^2-(r^3+s^3)e_{2}e_{1}e_{2}+(rs)^3e_{2}^2 e_1=0,\\
& &e_1^4e_{2}-[4]e_1^3e_{2}e_1+(rs)\frac{[4][3]}{[2]}e_{1}^2e_2e_1^2
-(rs)^3[4]e_{1}e_2e_1^3+(rs)^6e_{2}e_1^4=0.
\end{eqnarray*}

{\bf Case (V)}: In the case of $E_{6}^{(2)}$, for $i,\,j\in \hat{I}$ such that $a_{ij}^{\sigma}=0$, we have the $(r,s)$-Serre relations:
\begin{eqnarray*}
& &e_i e_j-\langle j, i\rangle e_j e_i=0,\\
& &e_{0}^2e_{1}-rs(r+s)e_{0}e_{1}e_{0}+(rs)^2e_{1}e_{0}^2=0,\\
& &e_{0}e_{1}^2-rs(r+s)e_{1}e_{0}e_{1}+(rs)^2 e_{1}^2 e_{0}=0,\\
& &e_{1}^2e_{2}-(r+s)e_{1}e_{2}e_{1}+(rs)e_{2}e_{1}^2=0,\\
& &e_{1}e_{2}^2-(r+s)e_{2}e_{1}e_{2}+(rs) e_{2}^2 e_{1}=0,\\
& &e_{3}^2e_{4}-(r^2+s^2)e_{3}e_{4}e_{3}+(rs)^2e_{4}e_{3}^2=0,\\
& &e_{3}e_{4}^2-(r^2+s^2)e_{4}e_{3}e_{4}+(rs)^2 e_{4}^2 e_{3}=0,\\
& &e_2e_{3}^2-(r^2+s^2)e_{3}e_{2}e_{3}+(rs)^2e_{3}^2 e_2=0,\\
& &e_2^3e_{3}-[3]e_2^2e_{3}e_2+(rs)[3]e_{2}e_3e_2^2
-(rs)^3e_{3}e_2^3=0.
\end{eqnarray*}

\end{remark}

\medskip

\subsection{Hopf algebra structure}
\setcounter{equation}{0}

One can check the following fact directly.

\begin{prop} {\label{hopf} The two-parameter twisted quantum affine algebra $U_{r,s}(\hat{\frak{g}}^{\sigma})$ is a Hopf algebra with
the comultiplication $\Delta$, the counit $\vep$ and antipode $S$
defined below: for $i\in \hat{I}$, we have
\begin{gather*}
\Delta(\gamma^{\pm\frac{1}2})=\gamma^{\pm\frac{1}2}\otimes
\gamma^{\pm\frac{1}2}, \qquad
\Delta(\gamma'^{\,\pm\frac{1}2})=\gamma'^{\,\pm\frac{1}2}\otimes
\gamma'^{\,\pm\frac{1}2}, \\
\Delta(\omega_i)=\omega_i\ot \omega_i, \qquad \Delta(\omega_i')=\omega_i'\ot \omega_i',\\
\Delta(e_i)=e_i\ot 1+\omega_i\ot e_i, \qquad \Delta(f_i)=f_i\ot \omega_i'+1\ot
f_i,\\
\varepsilon(e_i)=\varepsilon(f_i)=0,\quad \varepsilon(\gamma^{\pm\frac{1}2})
=\varepsilon(\gamma'^{\,\pm\frac{1}2})
=\varepsilon(\omega_i)=\varepsilon(\omega_i')=1,\\
S(\gamma^{\pm\frac{1}2})=\gamma^{\mp\frac{1}2},\qquad
S(\gamma'^{\pm\frac{1}2})=\gamma'^{\mp\frac{1}2},\qquad\\
S(e_i)=-\omega_i^{-1}e_i,\qquad S(f_i)=-f_i\,\omega_i'^{-1},\qquad
S(\omega_i)=\omega_i^{-1}, \qquad S(\omega_i')=\omega_i'^{-1}.
\end{gather*}}
\end{prop}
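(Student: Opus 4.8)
The plan is to verify that the maps $\Delta$, $\vep$, and $S$ defined on the generators extend to algebra homomorphisms (respectively anti-homomorphism for $S$) and that they satisfy the Hopf algebra axioms. Since $U_{r,s}(\hat{\frak{g}}^{\sigma})$ is presented by generators and relations $(\mathcal{X}1)$--$(\mathcal{X}6)$, the first and most substantial task is to check that each proposed map respects every defining relation. Concretely, I would define $\Delta$ on the free algebra on the generators by the stated formulas and then confirm that $\Delta$ annihilates each relation, i.e.\ that $\Delta$ of the left-hand side of each of $(\mathcal{X}1)$--$(\mathcal{X}6)$ equals $\Delta$ of the right-hand side. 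The commutativity relations $(\mathcal{X}1)$ and the $\om_j$-eigenvalue relations $(\mathcal{X}2)$, $(\mathcal{X}3)$ are straightforward since $\Delta(\om_i)=\om_i\ot\om_i$ and the $\om_i$ are group-like. For $(\mathcal{X}4)$ the key computation is to expand $\Delta([e_i,f_j])=[\Delta(e_i),\Delta(f_j)]$ in $U\ot U$ and show the cross terms cancel using $(\mathcal{X}2)$, $(\mathcal{X}3)$, leaving $\frac{\delta_{ij}}{r-s}(\om_i\ot\om_i-\om_i'\ot\om_i')=\Delta\bigl(\frac{\delta_{ij}}{r-s}(\om_i-\om_i')\bigr)$.

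The hardest step will be verifying that $\Delta$ preserves the $(r,s)$-Serre relations $(\mathcal{X}5)$, $(\mathcal{X}6)$. These are the higher-degree relations listed case by case in the Remark, and checking them directly requires expanding the comultiplication of products of several $e_i$'s (or $f_i$'s) and collecting terms by their bidegree in $U\ot U$. The standard and cleaner route is to recognize that the adjoint actions $\ad_l\,e_i$ and $\ad_r\,f_i$ are exactly the maps through which the Serre relations are expressed, and that these adjoint operators are compatible with $\Delta$; more precisely, I would show that $\Delta$ intertwines $\ad_l\,e_i$ with its coproduct so that $\Delta\bigl((\ad_l\,e_i)^{1-a_{ij}}(e_j)\bigr)$ lies in the two-sided ideal generated by the Serre elements, hence vanishes in $U\ot U$. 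This reduces the multi-term verification to the known compatibility of braided (quantum) adjoint actions with the coproduct, a computation that is degree-homogeneous and can be organized using the $q$-binomial (here $(r,s)$-binomial) identities implicit in the bracketed coefficients.

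Once $\Delta$ is shown to be a well-defined algebra homomorphism, coassociativity $(\Delta\ot\mathrm{id})\Delta=(\mathrm{id}\ot\Delta)\Delta$ is checked on generators, which is immediate for the group-likes and a short bidegree bookkeeping for $e_i$, $f_i$. The counit axioms follow by confirming $\vep$ kills all relations (again trivial on $(\mathcal{X}1)$--$(\mathcal{X}4)$ and degree-reasons on the Serre relations) and then checking $(\vep\ot\mathrm{id})\Delta=\mathrm{id}=(\mathrm{id}\ot\vep)\Delta$ on generators. For the antipode I would verify $S$ is an algebra anti-homomorphism by checking it reverses each relation, then confirm the antipode axiom $m(S\ot\mathrm{id})\Delta=\eta\vep=m(\mathrm{id}\ot S)\Delta$ on each generator; for instance on $e_i$ one computes $S(e_i)\cdot 1+S(\om_i)\cdot e_i=-\om_i^{-1}e_i+\om_i^{-1}e_i=0=\vep(e_i)$, and similarly for $f_i$ and the group-likes.

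Overall I expect the bulk of the genuine work, and the main obstacle, to be the Serre-relation compatibility of $\Delta$ in the twisted cases, since the coefficients differ case by case (the matrices $J$ and the bracketed relations above encode this). The cleanest presentation will isolate the statement that the quantum adjoint action is a coalgebra-compatible operator once and for all, so that all five cases are handled uniformly rather than by separately expanding each listed Serre relation; the remaining axioms are then routine verifications on generators, which is why the proposition can reasonably be asserted by saying the facts can be checked directly.
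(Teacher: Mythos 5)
Your proposal is correct and follows exactly the route the paper intends: the paper offers no written proof beyond the assertion that the statement ``can be checked directly,'' and your outline (verify the relations $(\mathcal{X}1)$--$(\mathcal{X}6)$ are respected by $\Delta$, $\vep$, $S$, with the Serre relations handled by showing the Serre elements generate a Hopf ideal via compatibility of the quantum adjoint action with the coproduct, then check the Hopf axioms on generators) is precisely that direct check. No discrepancy with the paper's approach.
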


\begin{remark} (1)  When $r = q = s^{-1}$, the quotient Hopf algebra
of $U=U_{r,s}(\hat{\frak{g}}^{\sigma})$ modulo the Hopf ideal generated by
the elements $\omega'_i-\omega_i^{-1}\, (i\in \hat{I})$ and
$\gamma'^{\,\frac{1}2}-\gamma^{-\frac{1}2}$
is the classical twisted quantum affine algebra $U_q(\hat{\frak{g}}^{\sigma})$ in Drinfeld-Jimbo
type.

\smallskip
\ (2) \ In the Hopf algebra $U_{r,s}(\hat{\frak{g}}^{\sigma})$, the left-adjoint and
right-adjoint actions are defined in the usual manner:
$$
\text{ad}_{ l}\,a\,(b)=\sum_{(a)}a_{(1)}\,b\,S(a_{(2)}), \qquad
\text{ad}_{ r}\,a\,(b)=\sum_{(a)}S(a_{(1)})\,b\,a_{(2)},
$$
where $\Delta(a)=\sum_{(a)}a_{(1)}\ot a_{(2)}$ for any $a$, $b\in
U_{r,s}(\hat{\frak{g}}^{\sigma})$.

\end{remark}

\medskip

\subsection{Triangular decomposition of $U_{r,s}(\hat{\frak{g}}^{\sigma})$}

The two-parameter twisted quantum affine algebra $U_{r,s}(\hat{\frak{g}}^{\sigma})$ is endowed with a Drinfeld double structure (see Proposition \ref{DB}), which is similar to the untwisted cases. The following statement
can be proved by standard arguments similarly (see \cite{HRZ}).

\begin{prop} {\label{DB}
$U_{r,s}(\hat{\frak{g}}^{\sigma})$ is isomorphic to its Drinfeld double
as a Hopf algebra.}
\end{prop}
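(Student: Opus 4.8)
The plan is to realize $U_{r,s}(\hat{\frak{g}}^{\sigma})$ as the Drinfeld double of a suitably chosen Hopf pairing between its ``positive'' and ``negative'' Borel subalgebras, following the template that is by now standard for two-parameter quantum groups (as in \cite{BW, HRZ}). First I would isolate the two Hopf subalgebras $\mathcal{B}$ and $\mathcal{B}'$ of $U_{r,s}(\hat{\frak{g}}^{\sigma})$: let $\mathcal{B}$ be the subalgebra generated by $e_i$, $\omega_i^{\pm 1}$ ($i\in\hat I$) together with $\gamma^{\pm\frac12}$, and let $\mathcal{B}'$ be generated by $f_i$, $\omega_i'^{\,\pm 1}$ ($i\in\hat I$) together with $\gamma'^{\,\pm\frac12}$. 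Using the comultiplication from Proposition \ref{hopf} one checks directly that each of $\mathcal{B}$ and $\mathcal{B}'$ is closed under $\Delta$, $\varepsilon$ and $S$, hence is a Hopf subalgebra; the defining relations inherited from $(\mathcal{X}1)$, $(\mathcal{X}2)$, $(\mathcal{X}5)$ (respectively $(\mathcal{X}1)$, $(\mathcal{X}3)$, $(\mathcal{X}6)$) show that $\mathcal{B}$ and $\mathcal{B}'$ have the presentations expected of the two Borel halves.

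Next I would construct a skew-Hopf pairing $\langle\,\ ,\ \rangle\colon \mathcal{B}'\times\mathcal{B}\to\mathbb K$ on generators. The natural choice is to set the pairing of the Cartan generators via the structure constants $\langle i,j\rangle$ recorded in the quantum Cartan matrix $J$, to pair $\gamma',\gamma$ compatibly with the relation $\gamma\gamma'=(rs)^c$ in $(\mathcal{X}1)$, and to set $\langle f_j,e_i\rangle=\frac{\delta_{ij}}{s-r}$ (up to normalization), with all mixed pairings of an $f$ against a group-like element or $1$ determined by the bialgebra pairing axioms. The crucial verification is that this assignment extends consistently to all of $\mathcal{B}'\times\mathcal{B}$, i.e. that it respects the algebra relations $(\mathcal{X}5)$ and $(\mathcal{X}6)$ (the $(r,s)$-Serre relations) on each side; here the case-by-case Serre relations written out in the Remark are exactly what one must test the pairing against. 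Once the pairing is shown to be well-defined and nondegenerate, the Drinfeld double $D(\mathcal{B},\mathcal{B}')$ is defined on the vector space $\mathcal{B}\otimes\mathcal{B}'$ with the double's multiplication and the tensor-product coalgebra structure.

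I would then exhibit the isomorphism $U_{r,s}(\hat{\frak{g}}^{\sigma})\cong D(\mathcal{B},\mathcal{B}')/(\text{identification of Cartan parts})$ as Hopf algebras. The map sends the generators of the double to the corresponding generators of $U_{r,s}(\hat{\frak{g}}^{\sigma})$; the key point is that the cross relation built into the double's multiplication,
\begin{equation*}
\sum \langle b'_{(1)},a_{(1)}\rangle\, a_{(2)} b'_{(2)} = \sum b'_{(1)} a_{(1)}\,\langle b'_{(2)},a_{(2)}\rangle,
\end{equation*}
reduces on the generators $e_i$, $f_j$ precisely to relation $(\mathcal{X}4)$, namely $[e_i,f_j]=\frac{\delta_{ij}}{r-s}(\omega_i-\omega_i')$, once the chosen value of $\langle f_j,e_i\rangle$ and the commutation of $\omega$'s past $e,f$ from $(\mathcal{X}2)$--$(\mathcal{X}3)$ are fed in. After identifying the doubled Cartan subalgebra $\langle\omega_i,\omega_i',\gamma,\gamma'\rangle$ with the single copy inside $U_{r,s}(\hat{\frak{g}}^{\sigma})$, a triangular-decomposition argument shows the map is bijective, which is where the assertion of Proposition \ref{DB} follows.

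The main obstacle is the well-definedness and nondegeneracy of the Hopf pairing on the full Borel halves, i.e. checking that the pairing annihilates the $(r,s)$-Serre relations of Case (I)--(V) and descends to the quotient by them. This is the only genuinely type-dependent computation, since the twisted Serre relations have the asymmetric coefficients displayed in the Remark (e.g. the degree-three and degree-four relations in Cases (II), (IV), (V)); verifying the pairing against these requires the $q$-binomial-type identities for the two-parameter integers $[n]_i$. Everything else—closure of the Borel halves under $\Delta,\varepsilon,S$, the reduction of the double's cross relation to $(\mathcal{X}4)$, and the final triangular-decomposition bijectivity—is routine once the pairing is in hand, so I expect to invoke the ``standard arguments'' already cited to \cite{HRZ} for the untwisted template and concentrate the new work on the twisted Serre verification.
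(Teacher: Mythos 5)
Your outline is precisely the standard Borel--pairing/Drinfeld--double argument that the paper itself invokes without writing out (it states only that the proposition ``can be proved by standard arguments similarly'' to the untwisted case of \cite{HRZ}), so you are on the paper's intended route, including the correct identification of the only type-dependent work as checking the skew-Hopf pairing against the twisted $(r,s)$-Serre relations. The one correction: in the two-parameter setting no quotient ``identifying the Cartan parts'' is taken --- the double $D(\mathcal{B},\mathcal{B}')$ is already isomorphic to the whole algebra exactly because $\omega_i$ and $\omega_i'$ are independent generators, and the triangular decomposition $U_{r,s}(\hat{\frak{g}}^{\sigma})\cong U_{r,s}(\widehat{\frak n}^-)\otimes U^0\otimes U_{r,s}(\widehat{\frak n})$ matches the underlying vector space $\mathcal{B}'\otimes\mathcal{B}$ on the nose, so inserting a quotient would prove a different (and for this proposition, wrong) statement.
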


\smallskip

Let $U^0=\mathbb
K[\om_0^{\pm1},\ldots,\om_n^{\pm1},{\om_0'}^{\pm1},\ldots,{\om_n'}^{\pm1},\, \gamma,\,\gamma']$
denote the Cartan subalgebra of 
$U_{r,s}(\hat{\frak{g}}^{\sigma})$.

 Furthermore, let $U_{r,s}(\widehat{\frak n})$ $($resp.
$U_{r,s}(\widehat{\frak n}^-)$\,$)$ be the subalgebra of
$U_{r,s}(\hat{\frak{g}}^{\sigma})$ generated
by $e_i$ $($resp. $f_i$$)$ for all $i\in \hat{I}$.
Then, we get the standard triangular decomposition
of $U_{r,s}(\hat{\frak{g}}^{\sigma})$.

\begin{coro}
$U_{r,s}(\hat{\frak{g}}^{\sigma})\cong U_{r,s}(\widehat{\frak n}^-)\ot U^0\ot
U_{r,s}(\widehat{\frak n})$ as vector spaces.\hfill\qed
\end{coro}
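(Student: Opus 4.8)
The goal is the triangular decomposition
$$U_{r,s}(\hat{\frak{g}}^{\sigma})\cong U_{r,s}(\widehat{\frak n}^-)\ot U^0\ot U_{r,s}(\widehat{\frak n})$$
as vector spaces. The plan is to derive this as a standard consequence of the Drinfeld double structure established in Proposition~\ref{DB}, rather than proving it from scratch. First I would recall that by construction $U^0$ is the polynomial subalgebra generated by the invertible Cartan elements $\om_i^{\pm1},{\om_i'}^{\pm1},\gamma^{\pm\frac12},\gamma'^{\pm\frac12}$, and that the relations $(\mathcal{X}2)$--$(\mathcal{X}3)$ show that $U^0$ normalizes both $U_{r,s}(\widehat{\frak n})$ and $U_{r,s}(\widehat{\frak n}^-)$: conjugating a generator $e_i$ (resp.\ $f_i$) by any Cartan element returns a scalar multiple of itself, so products of Cartan elements with positive (resp.\ negative) root vectors can always be reordered. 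This gives the surjectivity of the natural multiplication map $m:U_{r,s}(\widehat{\frak n}^-)\ot U^0\ot U_{r,s}(\widehat{\frak n})\to U_{r,s}(\hat{\frak{g}}^{\sigma})$: one uses relation $(\mathcal{X}4)$, $[e_i,f_j]=\frac{\delta_{ij}}{r-s}(\om_i-\om_i')$, to straighten any monomial in the generators into the ordered form $f$-part $\cdot$ $\om$-part $\cdot$ $e$-part, moving all $f$'s to the left and all $e$'s to the right, with the commutators producing only Cartan terms that absorb into $U^0$.

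The substantive content is the injectivity of $m$, i.e.\ that there are no hidden linear relations collapsing the tensor product. Here the efficient route is to invoke the Drinfeld double description. In the Drinfeld double presentation one builds $U_{r,s}(\hat{\frak{g}}^{\sigma})$ from a pairing of two Hopf subalgebras $B$ and $B'$, where $B$ is generated by the $e_i$ together with the $\om_i$ (and $\gamma^{\frac12}$) and $B'$ is generated by the $f_i$ together with the $\om_i'$ (and $\gamma'^{\frac12}$); by the general theory of Drinfeld doubles the multiplication map $B'\ot B\to D(B)\cong U_{r,s}(\hat{\frak{g}}^{\sigma})$ is a linear isomorphism. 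Because the Cartan torus is shared only through the central identifications in $(\mathcal{X}1)$, one then separates off the common toral part $U^0$ and refactors $B'\ot B$ as $U_{r,s}(\widehat{\frak n}^-)\ot U^0\ot U_{r,s}(\widehat{\frak n})$, using the normalization property above to move the torus into the middle factor. This yields the claimed vector-space isomorphism directly.

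I would structure the proof in three short steps: (i) verify that $U^0$ is a free commutative algebra and normalizes the two nilpotent subalgebras, so that the multiplication map is well-defined and surjective; (ii) cite Proposition~\ref{DB} to get the Drinfeld-double factorization $U_{r,s}(\hat{\frak{g}}^{\sigma})\cong B'\ot B$ as vector spaces; and (iii) reconcile the two factorizations by extracting the shared torus, concluding injectivity. The main obstacle is step (iii): one must be careful that the toral generators appearing in both $B$ and $B'$ are counted exactly once in $U^0$, and that the central constraint $\gamma\gamma'=(rs)^c$ together with the identifications $\gamma=\om_\delta$, $\gamma'=\om'_\delta$ does not introduce either redundancy or a missing relation when the torus is pulled to the middle. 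Since Proposition~\ref{DB} already packages the nondegeneracy of the Hopf pairing that underlies the double, the remaining work is genuinely bookkeeping about the torus rather than a fresh nondegeneracy argument, which is why the statement is recorded as a corollary.
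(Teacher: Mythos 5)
Your proposal is correct and follows essentially the same route as the paper, which records the corollary without proof precisely because it is the standard consequence of the Drinfeld double structure of Proposition 3.5 (surjectivity by straightening via relations $(\mathcal{X}2)$--$(\mathcal{X}4)$, injectivity from the linear isomorphism $B'\otimes B\cong D(B)$, as in [HRZ]). The bookkeeping about the shared torus that you flag is exactly the "standard argument" the paper defers to.
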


\begin{defi} (cf. \cite{HRZ})
Let $\tau$ be the $\mathbb{Q}$-algebra anti-automorphism of
$U_{r,s}(\hat{\frak{g}}^{\sigma})$ such that $\tau(r)=s$, $\tau(s)=r$,
$\tau(\la \om_i',\om_j\ra^{\pm1})=\la \om_j',\om_i\ra^{\mp1}$, and
\begin{gather*}
\tau(e_i)=f_i, \quad \tau(f_i)=e_i, \quad \tau(\om_i)=\om_i',\quad
\tau(\om_i')=\om_i,\\
\tau(\gamma)=\gamma',\quad
\tau(\gamma')=\gamma.
\end{gather*}

In fact $\tau$ is an analog of the Chevalley anti-involution on $U_{r,s}(\hat{\frak{g}}^{\sigma})$.
\end{defi}

\medskip

\section{Drinfeld realization  of two-parameter twisted quantum affine algebras}

In this section, we will generalize the result from  the untwisted cases \cite{HZ2} to the twisted ones
and state the Drinfeld realization in the general form.

For convenience, if two roots
$\alpha=\al_{i_1}+\cdots+\al_{i_m},\,\beta=\al_{j_1}+\cdots+\al_{j_m}$
are decompositions into simple roots, we
denote  $\langle \al,\, \beta \rangle
=\prod\limits_{k=1}^{m}\prod\limits_{l=1}^{n}\langle
i_k,\, j_l \rangle$

\subsection{Generating functions for two-parameter cases}

To state Drinfeld realization of
two-parameter quantum affine algebra $U_{r,s}(\hat{\frak{g}}^{\sigma})$,
we need to define some functions $g_{ij}^{\pm}(z)$.

For $i,\,j=1,\,\ldots,\,n$, let
\begin{align*}
F_{ij}^\pm(z,w)&=\prod_{l\in \mathbb{Z}/k\mathbb{Z}}(z-\omega^l(\la i,\sigma^l(j)\ra\la \sigma^l(j),i\ra)^{\pm\frac1{2}}w),\\
G_{ij}^\pm(z,w)&=\prod_{l\in \mathbb{Z}/k\mathbb{Z}}(\la \sigma^l(j),i\ra^{\pm1}z-(\la \sigma^l(j),i\ra\la
i,\sigma^l(j)\ra^{-1})^{\pm\frac1{2}}w).
\end{align*}

For simple roots $\al_i,\,\al_j \in \Delta$, we set $g_{ij}^{\pm}(z)=\sum_{n\in
\mathbb{Z}_+}c^{\pm}_{\al_i,\,\al_j,\,n}z^{n}=\sum_{n\in
\mathbb{Z}_+}c^{\pm}_{ijn}z^{n}$, 
where the coefficients $c^{\pm}_{ijn}$ are determined from the Taylor
series expansion at $\xi=0$ of the
function

$$  \sum_{n\in \mathbb{Z}_+}c^{\pm}_{ijn}\xi^{n}=g^{\pm}_{ij}(\xi)=
\frac{G_{ij}^{\pm}(\xi, 1)} {F_{ij}^{\pm}(\xi, 1)}
$$

To write the relations in a compact form, we need the formal distribution
\begin{align*}
\delta(z)=\sum_{k\in\mathbb{Z}}z^k.
\end{align*}

\begin{defi}   The two-parameter twisted quantum affine algebra
$\mathcal{U}_{r,s}(\hat{\frak{g}}^{\sigma})$ is the unital associative algebra with the generators
\begin{equation*}
\{\, x_i^{\pm}(k), \phi_i(m), \varphi_i(-m),
\gamma^{\frac{1}{2}}, {\gamma'}^{\frac{1}{2}}| i=1,\ldots,n, k\in\mathbb Z, m\in\mathbb Z_+ \,\}
\end{equation*}
satisfying the defining relations in terms of the generating functions:
\begin{gather*}
x_i^{\pm}(z) = \sum_{k \in \mathbb{Z}}x_i^{\pm}(k) z^{-k}, \quad
\phi_i(z) = \sum_{m \in \mathbb{Z}_+}\phi_i(m) z^{-m},\\
 \varphi_i(z) = \sum_{m \in \mathbb{Z}_+}\varphi_i(-m) z^{m}. \end{gather*}
The relations are given as follows.
\begin{gather}
 \gamma'^{\pm\frac1{2}},\, \gamma^{\pm\frac1{2}} \quad \hbox{are
central and invertible such that } \gamma\gamma'=(rs)^c ,\\
x_{\sigma(i)}^{\pm}(k)=\omega^kx_i^{\pm}(k),\, \varphi_{\sigma(i)}(m)=\omega^m\varphi_i(m),\,\phi_{\sigma(i)}(n)=\omega^n\phi_i(n)\\
\varphi_i(0)\phi_j(0)=\phi_j(0)\varphi_i(0),\\
[\,\varphi_i(z),\,\varphi_j(w)\,]=[\,\phi_i(z),\,\phi_j(w)\,]=0, \\
\varphi_i(z)\phi_j(w)
=\phi_j(w)\varphi_i(z)\frac{g_{ij}(zw^{-1}(\gamma\gamma')^{\frac{1}{2}}\gamma')}{g_{ij}(zw^{-1}(\gamma\gamma')^{\frac{1}{2}}\gamma)},\\
\varphi_i(z)x_j^{\pm}(w)\varphi_i(z)^{-1}
=g_{ij}(\frac{z}{w}(\gamma\gamma')^{\frac{1}{2}}\gamma^{\mp
\frac{1}{2}})^{\pm1}x_j^{\pm}(w),  \\
\phi_i(z)x_j^{\pm}(w)\phi_i(z)^{-1}=g_{ji}(\frac{w}{z}(\gamma\gamma')^{\frac{1}{2}}\gamma'^{\pm
\frac{1}{2}})^{\mp1}x_j^{\pm}(w),
\end{gather}
\begin{gather}
[\,x_i^+(z),
x_j^-(w)\,]=\frac{\delta_{ij}}{r_i-s_i}\Big(\delta(zw^{-1}\gamma')\phi_i(w\gamma^{\frac{1}2})
-\delta(zw^{-1}\gamma)\varphi_i(z\gamma'^{-\frac1{2}})\Big),\\
F_{ij}^\pm(z,\,w)\,x_i^{\pm}(z)x_j^{\pm}(w)=G_{ij}^\pm(z,\,w)\,x_j^{\pm}(w)\,x_i^{\pm}(z),\\
 x_i^{\pm}(z)x_j^{\pm}(w)=\langle
j,i\rangle^{\pm1}x_j^{\pm}(w)x_i^{\pm}(z), \qquad\textit{for }
\ a^{\sigma}_{ij}=0,\\
Sym_{z_1,z_2,z_{3}}\Big\{((rs^{-2})^{\mp \frac{k}{4}}z_1-(r^{\frac{k}{4}}+s^{\frac{k}{4}})z_2+
(r^{-2}s)^{\mp \frac{k}{4}}z_3)x_i^{\pm}(z_1)x_i^{\pm}(z_2)x_i^{\pm}(z_{3})\Big\}=0,
\end{gather}
\begin{gather}
\quad\hbox{for} \quad A_{i,\sigma(i)}=-1, \nonumber\\
Sym_{z_1, z_{2}}\Big\{P_{ij}^{\pm}(z_1,z_2)\sum_{t=0}^{t=2}(-1)^t(r_is_i)^{\pm\frac{t(t-1)}{2}}
\Big[{2\atop  t}\Big]_{\pm{i}}x_i^{\pm}(z_1)\cdots x_i^{\pm}(z_t)\\
\hskip0.8cm \times x_j^{\pm}(w)x_i^{\pm}(z_{t+1})\cdots x_i^{\pm}(z_{2})\Big\}=0,\nonumber\\
\quad\hbox{for} \quad A_{i,j}=-1, \quad  \hbox{and}\quad 1\leqslant j<i\leqslant N \quad\hbox{such\quad that}\quad \sigma(i)\neq j,\nonumber\\
Sym_{z_1, z_{2}}\Big\{P_{ij}^{\pm}(z_1,z_2)\sum_{t=0}^{t=2}(-1)^t(r_is_i)^{\mp\frac{t(t-1)}{2}}
\Big[{2\atop  t}\Big]_{\mp{i}}x_i^{\pm}(z_1)\cdots x_i^{\pm}(z_t)\\
\hskip0.8cm \times x_j^{\pm}(w)x_i^{\pm}(z_{t+1})\cdots x_i^{\pm}(z_{2})\Big\}=0,\nonumber\\
\quad\hbox{for} \quad A_{i,j}=-1, \quad  \hbox{and}\quad 1\leqslant i<j\leqslant N \quad\hbox{such\quad that}\quad \sigma(i)\neq j,\nonumber
\end{gather}
where $[l ]_{\pm{i}}= \frac{r_i^{\pm l}-s_i^{\pm
l}}{r_i^{\pm1}-s_i^{\pm1}}$,\, $[l]_{\mp{i}}= \frac{r_i^{\mp
l}-s_i^{\mp l}}{r_i^{\mp1}-s_i^{\mp1}}$, $Sym_{z_1,\ldots, z_k}$ means the symmetrization over the variables $z_1, \ldots, z_k$, and $P_{ij}^{\pm}$ are given by
\begin{eqnarray*}
&&\hbox{If}\,\, \sigma(i)=i,\, \hbox{then}\,\,  P_{ij}^{\pm}(z,w)=1,\,  d_{ij}=k,\\
&&\hbox{If}\,\, A_{i,\sigma(i)}=0, \, \sigma(j)=j, \, \hbox{then} P_{ij}^{\pm}(z,w)=
\frac{z^r(rs^{-1})^{\pm k}-w^r}{z(rs^{-1})^{\pm 1}-w},\,  d_{ij}=k, \\
&&\hbox{If}\,\,  A_{i,\sigma(i)}=0, \, \sigma(j)\neq
j,\, \hbox{then}\,\, P_{ij}^{\pm}(z,w)=1,\, d_{ij}=1/2, \\
&&\hbox{If}\,\,   A_{i,\sigma(i)}=-1,\, \hbox{then}\,\, P_{ij}^{\pm}(z,w)=
z(rs^{-1})^{\pm k/4}+w,\,  d_{ij}=k/2.
\end{eqnarray*}
\end{defi}

\begin{remark}\, Note that in relation (4.1), $\gamma$ and $\gamma'$ are related by the central element $c$. In the one-parameter case, the central element $c$ is absent in the relation, since $\gamma$ and $\gamma'$ are inverse to each other.
\end{remark}

We now give the component form of the two-parameter Drinfeld realization which is equivalent to the earlier formulation.

\begin{defi}\, \label{d:drinfeld} The unital
associative algebra ${\mathcal U}_{r,s}(\hat{\mathfrak {g}^{\sigma}})$
 over $\mathbb{K}$  is generated by the
elements  $x_i^{\pm}(k)$, $a_i(\ell)$, $\om_i^{\pm1}$,
${\om'_i}^{\pm1}$, $\gamma^{\pm\frac{1}{2}}$,
${\gamma'}^{\,\pm\frac{1}2}$, 
$(i\in I$,
$k,\,k' \in \mathbb{Z}$, $\ell,\,\ell' \in \mathbb{Z}\backslash
\{0\})$, subject to the following defining relations:

\noindent $(\textrm{D1})$ \  $\gamma^{\pm\frac{1}{2}}$,
$\gamma'^{\,\pm\frac{1}{2}}$ are central such that
$\gamma\gamma'=(rs)^c$,\,
$\omega_i\,\omega_i^{-1}=\omega_i'\,\omega_i'^{\,-1}=1$ $(i\in I)$,
and for $i,\,j\in I$, one has
\begin{equation*}
\begin{split}
[\,\omega_i^{\pm 1},\omega_j^{\,\pm 1}\,]
=[\,\omega_i^{\pm 1},\omega_j'^{\,\pm 1}\,]
=[\,\omega_i'^{\pm1},\omega_j'^{\,\pm 1}\,]=0.
\end{split}
\end{equation*}
$$x_{\sigma(i)}^{\pm}(l)=\omega^lx_i^{\pm}(l),\, a_{\sigma(i)}(m)=\omega^ma_i(m)\leqno(\textrm{D2})$$
$$[\,a_i(\ell),a_j(\ell')\,]
=\delta_{\ell+\ell',0}\sum_{t=0}^{k-1}\frac{
(\gamma\gamma')^{\frac{|\ell|}{2}}(r_is_i)^{-\frac{\ell A_{i,\sigma^t(j)}}2}[\,\ell
A_{i,\sigma^t(j)}\,]_i}{|\ell|}
\cdot\frac{\gamma^{|\ell|}-\gamma'^{|\ell|}}{r-s},
 \leqno(\textrm{D3})
$$
$$[\,a_i(\ell),~\om_j^{{\pm }1}\,]=[\,\,a_i(\ell),~{\om'}_j^{\pm
1}\,]=0.\leqno(\textrm{D4})
$$
$$
\om_i\,x_j^{\pm}(k)\, \om_i^{-1} =\sum_{t=0}^{k-1}  \langle \sigma^t(j),
i\rangle^{\pm 1} x_j^{\pm}(k),\leqno(\textrm{D5})$$
$$ \om'_i\,x_j^{\pm}(k)\,
\om_i'^{\,-1} = \sum_{t=0}^{k-1} \langle i, \sigma^t(j)\rangle
^{\mp1}x_j^{\pm}(k).
$$
$$
\begin{array}{lll}
[\,a_i(\ell),x_j^{\pm}(k)\,]=\pm \sum_{t=0}^{k-1} \frac{  (\la
i,\,i\ra^{\frac{\ell A_{i,\sigma^t(j)}}{2}}- \la i,\,i\ra^{\frac{-\ell
A_{i,\sigma^t(j)}}{2}})}
{\ell(r_i-s_i)}(\gamma\gamma')^{\frac{\ell }{2}}\gamma'^{\pm\frac{\ell}2}x_j^{\pm}(\ell{+}k),\\
\hskip8.5cm \quad \textit{for} \quad \ell>0,
\end{array}\leqno{(\textrm{D$6_1$})}
$$
$$
\begin{array}{lll}
[\,a_i(\ell),x_j^{\pm}(k)\,]=\pm \sum_{t=0}^{k-1} \frac{ (\la
i,\,i\ra^{\frac{\ell A_{i,\sigma^t(j)}}{2}}- \la i,\,i\ra^{\frac{-\ell
A_{i,\sigma^t(j)}}{2}})}
{\ell(r_i-s_i)}(\gamma\gamma')^{\frac{-\ell }{2}}\gamma^{\pm\frac{\ell}2}x_j^{\pm}(\ell{+}k),\\
 \hskip8.5cm \quad \textit{for} \quad \ell<0,
\end{array}\leqno{(\textrm{D$6_2$})}
$$
$$
F_{ij}^\pm(z,\,w)\,x_i^{\pm}(z)x_j^{\pm}(w)=G_{ij}^\pm(z,\,w)\,x_j^{\pm}(w)\,x_i^{\pm}(z),\leqno{(\textrm{D7})}
$$

$$
[\,x_i^{+}(k),~x_j^-(k')\,]=\frac{\delta_{ij}}{r_i-s_i}\Big(\gamma'^{-k}\,{\gamma}^{-\frac{k+k'}{2}}\,
\phi_i(k{+}k')-\gamma^{k'}\,\gamma'^{\frac{k+k'}{2}}\,\varphi_i(k{+}k')\Big),\leqno(\textrm{D8})
$$
where $\phi_i(0)=\om_i$, $\varphi_i(0)=\om_i'$, and $\phi_i(m)$, $\varphi_i(-m)~(m\in \mathbb{Z}_{\geq 0})$
 are defined as below:
\begin{gather*}\sum\limits_{m=0}^{\infty}\phi_i(m) z^{-m}=\om_i \exp \Big(
(r_i{-}s_i)\sum\limits_{\ell=1}^{\infty}
 a_i(\ell)z^{-\ell}\Big), \\ 
\sum\limits_{m=0}^{\infty}\varphi_i(-m) z^{m}=\om'_i \exp
\Big({-}(r_i{-}s_i)
\sum\limits_{\ell=1}^{\infty}a_i(-\ell)z^{\ell}\Big).  
\end{gather*}
$$x_i^{\pm}(m)x_j^{\pm}(k)=\langle j,i\rangle^{\pm1}x_j^{\pm}(k)x_i^{\pm}(m),
\qquad\ \hbox{for} \quad a^{\sigma}_{ij}=0,\leqno(\textrm{D$9_1$})$$
$$
\begin{array}{lll}
& Sym_{z_1,z_2,z_{3}}\Big\{((rs^{-2})^{\mp \frac{k}{4}}z_1-(r^{\frac{k}{4}}+s^{\frac{k}{4}})z_2+
(r^{-2}s)^{\mp \frac{k}{4}}z_3)x_i^{\pm}(z_1)x_i^{\pm}(z_2)x_i^{\pm}(z_{3})\Big\}=0,\\
&  \hskip1.8cm \quad\hbox{for} \quad A_{i,\sigma(i)}=-1, \\
\end{array} \leqno{(\textrm{D$9_2$})}
$$
$$
\begin{array}{lll}
&Sym_{z_1, z_{2}}\Big\{P_{ij}^{\pm}(z_1,z_2)\sum_{t=0}^{t=2}(-1)^t(r_is_i)^{\pm\frac{t(t-1)}{2}}
\Big[{2\atop  t}\Big]_{\pm{i}}x_i^{\pm}(z_1)\cdots x_i^{\pm}(z_t)\\
&\hskip0.8cm \times x_j^{\pm}(w)x_i^{\pm}(z_{t+1})\cdots x_i^{\pm}(z_{2})\Big\}=0,\\
&\hskip1.8cm \quad\hbox{for} \quad A_{i,j}=-1, \quad  \hbox{and}\quad 1\leqslant j<i\leqslant N \quad\hbox{such\quad that}\quad \sigma(i)\neq j,\\
\end{array} \leqno{(\textrm{D$9_3$})}
$$
$$
\begin{array}{lll}
&Sym_{z_1, z_{2}}\Big\{P_{ij}^{\pm}(z_1,z_2)\sum_{t=0}^{t=2}(-1)^t(r_is_i)^{\mp\frac{t(t-1)}{2}}
\Big[{2\atop  t}\Big]_{\mp{i}}x_i^{\pm}(z_1)\cdots x_i^{\pm}(z_t)\\
&\hskip0.8cm \times x_j^{\pm}(w)x_i^{\pm}(z_{t+1})\cdots x_i^{\pm}(z_{2})\Big\}=0,\\
&\hskip1.8cm\quad\hbox{for} \quad A_{i,j}=-1, \quad  \hbox{and}\quad 1\leqslant i<j\leqslant N \quad\hbox{such\quad that}\quad \sigma(i)\neq j.
\end{array} \leqno{(\textrm{D$9_4$})}
$$
where $[l ]_{\pm{i}}= \frac{r_i^{\pm l}-s_i^{\pm
l}}{r_i^{\pm1}-s_i^{\pm1}}$,\, $[l]_{\mp{i}}= \frac{r_i^{\mp
l}-s_i^{\mp l}}{r_i^{\mp1}-s_i^{\mp1}}$, and $P_{ij}^{\pm}$ are given by
\begin{eqnarray*}
&&\hbox{If}\,\, \sigma(i)=i,\, \hbox{then}\,\,  P_{ij}^{\pm}(z,w)=1,\,  d_{ij}=k,\\
&&\hbox{If}\,\, A_{i,\sigma(i)}=0, \, \sigma(j)=j, \, \hbox{then} P_{ij}^{\pm}(z,w)=
\frac{z^r(rs^{-1})^{\pm k}-w^r}{z(rs^{-1})^{\pm 1}-w},\,  d_{ij}=k, \\
&&\hbox{If}\,\,  A_{i,\sigma(i)}=0, \, \sigma(j)\neq
j,\, \hbox{then}\,\, P_{ij}^{\pm}(z,w)=1,\, d_{ij}=1/2, \\
&&\hbox{If}\,\,   A_{i,\sigma(i)}=-1,\, \hbox{then}\,\, P_{ij}^{\pm}(z,w)=
z(rs^{-1})^{\pm k/4}+w,\,  d_{ij}=k/2.
\end{eqnarray*}
\end{defi}
\medskip

\subsection{The anti-involution $\tau$}

The following analog of Chevalley anti-homomorphism can be checked directly.
\begin{prop} The following $\mathbb Q$-linear and multiplicative
mapping $\tau$ defines an anti-automorphism of $\,{\mathcal{U}}_{r,s}(\hat{\frak{g}}^{\sigma})$:
$\tau(r)=s$, $\tau(s)=r$,
$\tau(\la\om_i',\om_j\ra^{\pm1})=\la\om_j',\om_i\ra^{\mp1}$ and
\begin{gather*}
\tau(\om_i)=\om_i',\quad \tau(\om_i')=\om_i,\\
\tau(\gamma)=\gamma',\quad \tau(\gamma')=\gamma,\\
\tau(a_i(\ell))=a_i(-\ell),\\
\tau(x_i^{\pm}(m))=x_i^{\mp}(-m), \\
\tau(\phi_i(m))=\varphi_i(-m), \quad\tau(\varphi_i(-m))=\phi_i(m).
\end{gather*}
\end{prop}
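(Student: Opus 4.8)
The plan is to verify that the assignment $\tau$, defined on the algebra generators $x_i^{\pm}(k)$, $a_i(\ell)$, $\omega_i^{\pm 1}$, $\omega_i'^{\,\pm 1}$, $\gamma^{\pm\frac12}$, $\gamma'^{\,\pm\frac12}$ and extended $\mathbb{Q}$-linearly and anti-multiplicatively, respects every defining relation of Definition \ref{d:drinfeld}. Since the generators are subject only to those relations, it suffices to apply $\tau$ to each of (D1)--(D9$_4$), reverse the order of all products, and check that the resulting identity is again a consequence of (D1)--(D9$_4$). Because $\tau$ is an involution on generators ($\tau(\tau(x_i^{\pm}(m)))=x_i^{\pm}(m)$, $\tau(\tau(a_i(\ell)))=a_i(\ell)$, etc.), once well-definedness is established $\tau$ is its own inverse, hence bijective and an anti-automorphism; thus the whole content of the proposition reduces to relation-checking.

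First I would record the elementary transformation rules that drive every computation: $\tau$ interchanges $r\leftrightarrow s$, so that $r_i\leftrightarrow s_i$, the symmetric quantities $r_is_i$ and $r_i+s_i$ and the bracket $[n]_i=(r_i^n-s_i^n)/(r_i-s_i)$ are fixed, while $r_i-s_i\mapsto-(r_i-s_i)$; it sends the root of unity $\omega\mapsto\omega^{-1}$ (this is forced by compatibility with (D2), since $\tau$ reverses mode indices and only $\tau(\omega)=\omega^{-1}$ turns $x_{\sigma(i)}^{\pm}(l)=\omega^l x_i^{\pm}(l)$ into $x_{\sigma(i)}^{\mp}(-l)=\omega^{-l}x_i^{\mp}(-l)$); it swaps $\gamma\leftrightarrow\gamma'$ and $\omega_i\leftrightarrow\omega_i'$; it sends the structure constant $\langle i,j\rangle\mapsto\langle j,i\rangle^{-1}$, in particular $\langle i,i\rangle\mapsto\langle i,i\rangle^{-1}$; and it reverses all mode indices. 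With these in hand, (D1), (D4), (D5) and the twisting relation (D2) are immediate, while (D3) follows from the identity $[-n]_i=-(r_is_i)^{-n}[n]_i$ together with the cancelling sign flips of $r-s$ and of $\gamma^{|\ell|}-\gamma'^{|\ell|}$. For the quadratic relations (D7) and (D9$_1$) one checks directly that $\tau$ carries $F_{ij}^{\pm},G_{ij}^{\pm}$ to their counterparts using $\langle\cdot,\cdot\rangle\mapsto\langle\cdot,\cdot\rangle^{-1}$ and $\omega^l\mapsto\omega^{-l}$; here (D9$_1$) additionally uses $\langle i,j\rangle\langle j,i\rangle=1$ when $a_{ij}^{\sigma}=0$.

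The sign-sensitive checks come next. Since $\tau$ flips $\ell\to-\ell$, it exchanges the current relations (D6$_1$) and (D6$_2$): writing $A$ for $A_{i,\sigma^t(j)}$, I would verify that the coefficient $\bigl(\langle i,i\rangle^{\ell A/2}-\langle i,i\rangle^{-\ell A/2}\bigr)/\bigl(\ell(r_i-s_i)\bigr)$ is $\tau$-invariant (the two sign flips from $\tau(\langle i,i\rangle)=\langle i,i\rangle^{-1}$ and $\tau(r_i-s_i)=-(r_i-s_i)$ cancel), and that the substitution $\ell\mapsto-\ell$ carries $\gamma'^{\,\pm\ell/2}$ precisely to the $\gamma^{\pm\ell/2}$ appearing in (D6$_2$). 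For (D8) I would first confirm that the exponential definitions of $\phi_i(m)$ and $\varphi_i(-m)$ are compatible with $\tau(a_i(\ell))=a_i(-\ell)$: because the $a_i(\ell)$ with $\ell>0$ mutually commute by (D3), $\tau$ commutes with $\exp$, and applying $\tau$ to the $\phi_i$-series and substituting $z\mapsto z^{-1}$ reproduces the $\varphi_i$-series, giving $\tau(\phi_i(m))=\varphi_i(-m)$; the two summands of (D8) are then interchanged, with $\tau(\gamma)=\gamma'$ and $\tau(\gamma')=\gamma$ supplying exactly the powers of $\gamma,\gamma'$ required.

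The main obstacle, and the step I would spend the most care on, is the Drinfeld--Serre family (D9$_2$)--(D9$_4$). Here anti-multiplicativity reverses the order of the string $x_i^{\pm}(z_1)\cdots x_j^{\pm}(w)\cdots x_i^{\pm}(z_2)$ while simultaneously sending $\pm\mapsto\mp$, so one must show that the symmetrized expression with prefactor $P_{ij}^{\pm}$ and Gaussian coefficients $\Big[{2\atop t}\Big]_{\pm i}$ is carried onto the corresponding $\mp$-relation. The essential points are that $[l]_{\pm i}$ and $[l]_{\mp i}$ are exchanged under $r\leftrightarrow s$, that the factors $(r_is_i)^{\pm t(t-1)/2}$ are controlled by the invariance of $r_is_i$, and that each of the four cases defining $P_{ij}^{\pm}(z,w)$ is sent to the prefactor of the $\mp$-relation; the reversal of the operator string is absorbed by re-indexing the summation $t\mapsto 2-t$ and by the symmetrizer $Sym_{z_1,z_2}$. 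Checking that these combinatorial rearrangements reproduce (D9$_2$)--(D9$_4$) exactly, case by case, is the technical heart of the verification, after which $\tau$ is a well-defined anti-automorphism.
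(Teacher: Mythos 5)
Your plan is exactly what the paper intends: the paper offers no argument at all for this proposition beyond the remark that it ``can be checked directly,'' and a direct verification of $\tau$ against each of (D1)--(D9$_4$), together with the observation that $\tau^2=\mathrm{id}$ on generators, is the only reasonable route. Your bookkeeping for (D1)--(D8) is sound; in particular the forced rule $\tau(\omega)=\omega^{-1}$ for the root of unity in (D2), the identity $[-n]_i=-(r_is_i)^{-n}[n]_i$ for (D3), and the double sign cancellation in (D6$_1$)$\leftrightarrow$(D6$_2$) are the right observations, and none of them appear in the paper.

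One auxiliary claim in your treatment of the Serre relations is wrong as stated: $[l]_{+i}=\frac{r_i^{\,l}-s_i^{\,l}}{r_i-s_i}$ and $[l]_{-i}=\frac{r_i^{-l}-s_i^{-l}}{r_i^{-1}-s_i^{-1}}$ are \emph{not} exchanged under $r\leftrightarrow s$; each is individually invariant, and they are related instead by $[l]_{-i}=(r_is_i)^{1-l}[l]_{+i}$. This does not sink the argument --- for $t\in\{0,1,2\}$ the Gaussian coefficients $\bigl[{2\atop t}\bigr]_{\pm i}$ are symmetric under your reindexing $t\mapsto 2-t$, and the discrepancy between the $\pm$ and $\mp$ coefficient strings is absorbed by the powers $(r_is_i)^{\pm t(t-1)/2}$ up to an overall unit, which is harmless for a relation equal to zero --- but the mechanism you should invoke in (D9$_2$)--(D9$_4$) is this power-of-$r_is_i$ identity combined with the reindexing and the transformation of $P_{ij}^{\pm}$, not an exchange of $[l]_{\pm i}$ with $[l]_{\mp i}$. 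With that correction the verification goes through as you describe.
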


\subsection{Triangular decomposition of
$\mathcal{U}_{r,s}(\hat{\frak{g}}^{\sigma})$}

Let $\mathcal U_{r,s}(\frak{g}^{\sigma})$ be the subalgebra of $\mathcal{U}_{r,s}(\hat{\frak{g}}^{\sigma})$ generated by $x_i^{\pm}(0),\, \omega_i,\, \omega'_i$, ($i\in I$).
Clearly $\mathcal U_{r,s}(\frak{g}^{\sigma})\cong
U_{r,s}({\frak g}^{\sigma})$, the subalgebra of $U_{r,s}(\frak{g}^{\sigma})$ generated
by $e_i,\, f_i,\, \omega_i,\, \omega'_i$ ($i\in I$).

Using defining relations (D1)--(D9), one can easily show that
$\mathcal{U}_{r,s}(\hat{\frak{g}}^{\sigma})$ has a triangular
decomposition:
$$\mathcal{U}_{r,s}(\hat{\frak{g}}^{\sigma})=
\mathcal{U}_{r,s}(\widetilde{\frak{n}}^-)\otimes\mathcal{U}_{r,s}^0(\widehat{\frak{g}})
\otimes\mathcal{U}_{r,s}(\widetilde{\frak{n}}^+),$$ where
$\mathcal{U}_{r,s}(\widetilde{\frak{n}}^\pm)=\bigoplus_{\alpha\in\dot
Q^\pm}\mathcal{U}_{r,s}(\widetilde{\frak{n}}^\pm)_\alpha$ is
generated respectively by $x_i^\pm(k)$ ($i\in I$), and
$\mathcal{U}_{r,s}^0(\widehat{\frak{g}})$ is the subalgebra
generated by $\om_i^{\pm1}$, $\om_i'^{\pm1}$,
$\gamma^{\pm\frac1{2}}$, $\gamma'^{\pm\frac1{2}}$,
and $a_i(\pm\ell)$ for $i\in I$, $\ell\in \mathbb{N}$.
Namely, $\mathcal{U}_{r,s}^0(\widehat{\frak{g}})$ is generated by
the subalgebra $\mathcal{U}_{r,s}(\widehat{\frak{g}})^0$ and
the quantum Heisenberg subalgebra $\mathcal
H_{r,s}(\widehat{\frak{g}})$, which is generated by the quantum imaginary
root vectors $a_i(\pm\ell)$ ($i\in I$, $\ell\in \mathbb{N}$).

\section{Two-parameter Drinfeld isomorphism theorem}

\subsection{Quantum Lie bracket}\, We recall the quantum Lie bracket from
\cite{J2}.

\begin{defi} For $q_i\in \mathbb K^*=\mathbb{K}\backslash \{0\}$ and $i=1,2,\ldots s-1$,
the Lie q-brackets $[\,a_1, a_2,\ldots,
a_s\,]_{(q_1,\,q_2,\,\ldots,\, q_{s-1})}$ and
$[\,a_1, a_2, \ldots,
a_s\,]_{\la q_1,\,q_2,\,\ldots, \,q_{s-1}\ra}$ are defined
inductively by
 \begin{gather*}    [\,a_1, a_2\,]_{v_1}=a_1a_2-v_1\,a_2a_1,\\
   [\,a_1, a_2, \ldots, a_s\,]_{(v_1,\,v_2,\,\ldots,
  \,v_{s-1})}=[\,a_1, \ldots, [a_{s-1},
  a_s\,]_{v_1}]_{(v_2,\,\ldots,\,v_{s-1})},\\
   [\,a_1, a_2, \ldots, a_s\,]_{\la v_1,\,v_2,\,\ldots,
  \,v_{s-1}\ra}=[\,[\,a_1, a_2]_{v_1} \ldots, a_{s-1}\,]_{\la
  v_2,\,\ldots,\,v_{s-2}\ra}
 \end{gather*}
\end{defi}
It follows from the definition that the quantum brackets satisfy the following identities.
\begin{eqnarray}
&&[\,a, bc\,]_v=[\,a, b\,]_q\,c+q\,b\,[\,a, c\,]_{\frac{v}q},\\
&&[\,ab, c\,]_v=a\,[\,b, c\,]_q+q\,[\,a, c\,]_{\frac{v}q}\,b, \\
&& [\,a,[\,b,c\,]_u\,]_v=[\,[\,a,b\,]_q,
c\,]_{\frac{uv}q}+q\,[\,b,[\,a,c\,] _{\frac{v}q}\,]_{\frac{u}q},\label{b:1}\\
&&[\,[\,a,b\,]_u,c\,]_v=[\,a,[\,b,c\,]_q\,]_{\frac{uv}q}+q\,[\,[\,a,c\,]
_{\frac{v}q},b\,]_{\frac{u}q}.\label{b:2}
\end{eqnarray}

In particular, we have that
\begin{eqnarray}
&&[\,a, [\,b_1, \ldots, b_s\,]_{(v_1,\,\ldots,\,
v_{s-1})}\,]=\sum\limits_i[\,b_1,\ldots,[\,a, b_i\,],
\ldots,b_s\,]_{(v_1,\,\ldots,\, v_{s-1})},\label{b:3}\hskip0.2cm \\
&&[\,a, a, b\,]_{(u,\,
v)}=a^2b-(u{+}v)\,aba+(uv)\,ba^2=(uv)[\,b, a, a\,]_{\la u^{-1},v^{-1}\ra},\label{b:4}\hskip0.2cm \\
&&[\,a, a, a, b\,]_{(u^2,\,uv,\,v^2)}=a^3b-[3]_{u,v}\,a^2ba
+(uv)[3]_{u,v}aba^2-(uv)^3ba^3, \label{b:5}\hskip0.2cm\\
&&[\,a, a, a, a, b\,]_{(u^3, u^2v, uv^2,
v^3)}=a^4b-[4]_{u,v}\,a^3ba+uv\left[4\atop 2\right]_{u,v}\,a^2ba^2 \label{b:6}\hskip0.2cm\\
&&\hskip5cm-\,(uv)^3[4]_{u,v}\,aba^3+(uv)^6ba^4.\nonumber
\end{eqnarray}
where $[n]_{u,v}=\frac{u^n{-}v^n}{u{-}v}$,
$[n]_{u,v}!=[n]_{u,v}\cdots [2]_{u,v}[1]_{u,v}$, $\left[n\atop
m\right]_{u,v}=\frac{[n]_{u,v}!}{[m]_{u,v}![n-m]_{u,v}!}$.

\subsection{Quantum root vectors} \,

In this paragraph, we define the quantum root vectors using the $q$-bracket.
For our purpose, we need to fix a particular path to realize the maximum
root of $\mathfrak g_0$.

Let $\theta=\alpha_{i_{h-1}}+\cdots+\alpha_{i_2}+\alpha_{i_1}$ be the maximum root and let
\begin{align}\label{a1}
X_{\theta}=[e_{i_{h-1}}, [e_{i_{h-2}}, \ldots, [e_{i_2}, e_{i_1}]\cdots ]
\end{align}
be the corresponding root vector in the Lie algebra $\mathfrak g_0$, which gives rise to
a sequence from $\{1, \ldots, n\}$: $i_1, i_2, \ldots, i_{h-1}$. We call such a sequence a
root chain to the maximum root. Clearly root chains are not unique.

From now on we fix such a sequence or root chain to the maximum root: $i_1, i_2, \ldots, i_{h-1}$. We define for $2\leqslant k\leqslant h-1$
\begin{align}
(\alpha_{i_1}+\cdots+\alpha_{i_{k-1}}, \alpha_{i_{k}})=\epsilon_k\neq 0.
\end{align}

\medskip

\begin{remark} Using the above fact, we fix root chains to the maximal root $\theta$ for our five twisted cases
as follows.

\ (1) \ \, For the case of $A_{2n-1}^{(2)}$, the root chain is
$$\alpha_1\rightarrow \alpha_2\rightarrow \cdots \rightarrow\alpha_{n-1}\rightarrow \alpha_n\rightarrow \alpha_{n-1}\rightarrow\cdots \rightarrow\alpha_2$$


\ (2) \ \, For the case of $A_{2n}^{(2)}$, the root chain is
$$\alpha_1\rightarrow \alpha_2\rightarrow \cdots \rightarrow\alpha_{n}\rightarrow \alpha_n\rightarrow \alpha_{n-1}\rightarrow\cdots \rightarrow\alpha_1$$


\ (3) \ \, For the case of $D_{n+1}^{(2)}$, the root chain is
$$\alpha_n\rightarrow \alpha_{n-1}\rightarrow \cdots \rightarrow\alpha_{1}$$


\ (4) \ \, For the case of $E_{6}^{(2)}$, the root chain is
$$\alpha_1\rightarrow \alpha_2 \rightarrow \alpha_{3}\rightarrow \alpha_4\rightarrow \alpha_{2}\rightarrow \alpha_3\rightarrow \alpha_2\rightarrow\alpha_1$$


\ (5) \ \, For the case of $D_{4}^{(3)}$, the root chain is
$$\alpha_1\rightarrow \alpha_2 \rightarrow \alpha_{1}$$


\end{remark}

Note that the root $\alpha_0=\delta-\theta$ in the affine Lie algebras. In the following we list the quantum root vectors $x^-_{\theta}(1)$
and $x^+_{\theta}(-1)$ for root vectors $e_0$ and $f_0$ respectively, corresponding to the root chains given above.
\smallskip

\ {\bf Case(I)} \ \, For $A_{2n-1}^{(2)}$,
if $\alpha_{1t}=\alpha_1+\cdots+\alpha_t\, (2\leqslant t\leqslant n)$ and $\alpha_{11}= \alpha_1$, we define the quantum root vectors associated to the roots $\delta-\alpha_{1t}$  and $-\delta+\alpha_{1t}$ inductively as follows:

$$x_{1t}^-(1)= x_{\alpha_{1t}}^-(1)=[\,x_t^-(0),\, x_{1(t-1)}^-(1)\,]_{\langle t, t-1\rangle\cdots \langle t, 1\rangle},$$

$$x_{1t}^+(-1)= x_{\alpha_{1t}}^+(-1)=[\,x_{1(t-1)}^+(-1),\, x_{t}^+(0)\,]_{\langle t-1, t\rangle^{-1}\cdots \langle 1, t\rangle^{-1}}.$$

 Denote $\beta_{1t}=\alpha_1+\cdots+\alpha_n+\alpha_{n-1}+\cdots+\alpha_t\, (2\leqslant t\leqslant n-1)$,  so $\beta_{1(n-1)}=\alpha_{1n}+\alpha_{n-1}$, and $\theta=\beta_{12}$.
We define the quantum root vectors associated to the roots $\delta-\beta_{1t}$  and $-\delta+\beta_{1t}$ inductively as follows:
\begin{align*}
y_{1t}^-(1)&= x_{\beta_{1t}}^-(1)=[\,x_t^-(0),\, y_{1(t-1)}^-(1)\,]_{\langle t, t+1\rangle\cdots \langle t, n\rangle \langle t, n-1\rangle\cdots \langle t, 1\rangle}, \\
y_{1t}^+(-1)&= x_{\beta_{1t}}^+(-1)=[\,y_{1(t-1)}^+(-1),\, x_{t}^+(0)\,]_{\langle t+1, t\rangle^{-1}\cdots \langle n, t\rangle^{-1}\langle n-1, t\rangle^{-1}\cdots\langle 1, t\rangle^{-1}}.
\end{align*}
In particular,
\begin{align*}
&x^-_{\theta}(1)=y_{1\,2}^-(1)\\
&=[\,x_2^-(0),\ldots,x_{n-1}^-(0),\, x_n^-(0),\,\ldots,\,x_{1}^-(1)\,]
_{(s,\,\ldots,\, s,\,s^2,\,r^{-1},\,\ldots,r^{-1})},\\
&x_{\theta}^+(-1)=y_{1\,2}^+(-1)\\
&=[\,x_{1}^+(-1),\,\ldots,\,x_n^+(0),\,
x_{n-1}^+(0),\,\ldots, x_2^+(0)\,]_{\langle r,\,\ldots,\, r,\, r^{2},\,s^{-1},\,\ldots,\, s^{-1}\rangle}.
\end{align*}

\medskip

\ {\bf Case(II)} \ \,For $A_{2n}^{(2)}$,
if $\alpha_{1t}=\alpha_1+\cdots+\alpha_t\, (2\leqslant t\leqslant n)$ and $\alpha_{11}= \alpha_1$, we define the quantum root vectors associated to the roots $\delta-\alpha_{1t}$  and $-\delta+\alpha_{1t}$ inductively as follows:
\begin{align*}
x_{1t}^-(1)&= x_{\alpha_{1t}}^-(1)=[\,x_t^-(0),\, x_{1(t-1)}^-(1)\,]_{\langle t, t-1\rangle\cdots \langle t, 1\rangle},\\
x_{1t}^+(-1)&= x_{\alpha_{1t}}^+(-1)=[\,x_{1(t-1)}^+(-1),\, x_{t}^+(0)\,]_{\langle t-1, t\rangle^{-1}\cdots \langle 1, t\rangle^{-1}}.
\end{align*}

Denote $\beta_{1t}=\alpha_1+\cdots+\alpha_n+\alpha_n+\alpha_{n-1}+\alpha_{n-2}+\cdots+\alpha_t\, (1\leqslant t\leqslant n)$,  so $\beta_{1n}=\alpha_{1n}+\alpha_{n}$, and $\beta_{11}=\theta$. We define the quantum root vectors associated to the roots $\delta-\beta_{1t}$  and $-\delta+\beta_{1t}$ inductively as follows:
\begin{equation*}
y_{1t}^-(1)= x_{\beta_{1t}}^-(1)=[\,x_t^-(0),\, y_{1(t-1)}^-(1)\,]_{\langle t, t+1\rangle\cdots \langle t, n\rangle^2 \langle t, n-1\rangle\cdots \langle t, 1\rangle},
\end{equation*}
where the initial one is $x_{\theta}^-(1)=y_{11}^-(1)$.
\begin{equation*}
y_{1t}^+(-1)= x_{\beta_{1t}}^+(-1)=[\,y_{1(t-1)}^+(-1),\, x_{t}^+(0)\,]_{\langle t+1, t\rangle^{-1}\cdots \langle n, t\rangle^{-2}\langle n-1, t\rangle^{-1}\cdots\langle 1, t\rangle^{-1}},
\end{equation*}
where the first one is $x_{\theta}^+(-1)=y_{11}^+(-1)$.

In particular,
\begin{align*}
&x^-_{\theta}(1)=y_{1\,n}^-(1)\\
&=[\,x_1^-(0),\ldots,x_{n}^-(0),\, x_n^-(0),\,\ldots,\,x_{1}^-(1)\,]
_{(s,\,\ldots,\, s,\,(rs)^{\frac{1}{2}},\,r^{-1},\,\ldots,r^{-1},\, (rs)^{-1})},\\
&x_{\theta}^+(-1)=y_{1\,n}^+(-1)\\
&=[\,x_{1}^+(-1),\,\ldots,\,x_n^+(0),\,
x_{1}^+(0),\,\ldots, x_n^+(0)\,]_{\langle r,\,\ldots,\, r,\,(rs)^{\frac{1}{2}},\,s^{-1},\,\ldots,s^{-1},\, (rs)^{-1}\rangle}.
\end{align*}

\medskip

\ {\bf Case(III)} \ \, For $D_{n+1}^{(2)}$,
if $\alpha_{nt}=\alpha_n+\alpha_{n-1}+\cdots+\alpha_t\, (1\leqslant t\leqslant n)$ and $\alpha_{nn}= \alpha_n$, so $\theta=\alpha_{n1}$. We define the quantum root vectors associated to the roots $\delta-\alpha_{nt}$  and $-\delta+\alpha_{nt}$ inductively as follows:
\begin{align*}
x_{nt}^-(1)&= x_{\alpha_{nt}}^-(1)=[\,x_t^-(0),\, x_{n(t-1)}^-(1)\,]_{\langle t, t+1\rangle\cdots \langle t, n\rangle},\\
x_{nt}^+(-1)&= x_{\alpha_{nt}}^+(-1)=[\,x_{n(t-1)}^+(-1),\, x_{t}^+(0)\,]_{\langle t+1, t\rangle^{-1}\cdots \langle n, t\rangle^{-1}}.
\end{align*}

In particular,
\begin{align*}
&x^-_{\theta}(1)=x_{n\,1}^-(1)=[\,x_1^-(0),\ldots,x_{n}^-(1)\,]
_{(r^{-2},\,\ldots,\, r^{-2})},\\
&x_{\theta}^+(-1)=x_{n\,1}^+(-1)=[\,x_{n}^+(-1),\,\ldots,\,x_1^+(0)\,]_{\langle s^{-2},\,\ldots,\, s^{-2}\rangle}.
\end{align*}

\medskip

\ {\bf Case(IV)} \ \, For $E_{6}^{(2)}$, if $\eta_{j}=\alpha_{i_1}+\alpha_{i_2}+\cdots+\alpha_{i_j}$\, where $(i_k|1\leq k\leq 8)=(1,2,3,4,2,3,2,1)$, so $\theta=\eta_{8}$. We define the quantum root vectors associated to the roots $\delta-\eta_{j}$  and $-\delta+\eta_{j}$ inductively as follows:
\begin{align*}
z_{j}^-(1)&= x_{\eta_{j}}^-(1)=[\,x_{i_j}^-(0),\, z_{j-1}^-(1)\,]_{\langle i_j, i_{j-1}\rangle\cdots \langle i_j, i_1\rangle}, \\
z_{j}^+(-1)&= x_{\eta_{j}}^+(-1)=[\,z_{j-1}^+(-1),\, x_{i_j}^+(0)\,]_{\langle i_{j-1}, i_j\rangle^{-1}\cdots \langle i_1, i_j\rangle^{-1}}.
\end{align*}

In particular,
\begin{equation*}
\begin{split}
&x^-_{\theta}(1)=z_{8}^-(1)\\
&=[\,x_1^-(0),\,x_2^-(0),\,x_3^-(0),\,x_{2}^-(0),\, x_4^-(0),\,\ldots,\,x_{1}^-(1)\,]
_{(s,\,s^2,\, s^2,\,r^{-1},\,s^{2},\,r^{-2}s^{-1},\,r^{-2}s^{-1})},\\
&x_{\theta}^+(-1)=z_{8}^+(-1)\\
&=[\,x_{1}^+(-1),\,\ldots,\,x_4^+(0),\,
x_{2}^+(0),\,x_3^+(0), x_2^+(0),\,x_1^+(0)\,]_{\langle r,\,r^2,\, r^2,\,s^{-1},\,r^{2},\,r^{-1}s^{-2},\,r^{-1}s^{-2}\rangle}.
\end{split}
\end{equation*}

\medskip

\ {\bf Case(V)} \ \, For $D_{4}^{(3)}$, we only consider the quantum root vectors associated to the roots $\delta-\alpha_{1}-\alpha_2,\, \delta-\theta$  and $-\delta+\alpha_{1}+\alpha_2,\, -\delta+\theta$, where $\theta=2\alpha_1+\alpha_2$ is the maximal root of $G_2$.
\begin{align*}
x_{1\,2}^-(1)&=[\,x_{2}^-(0),\, x_{1}^-(1)\,]_{s^{3}}, \\
x_{\theta}^-(1)&=[\,x_1^-(0),\,x_{2}^-(0),\, x_{1}^-(1)\,]_{(s^{3},\,r^{-2}s^{-1})}.
\end{align*}

On the other hand,
$$x_{1\,2}^+(-1)=[\,x_{1}^+(-1),\, x_{2}^+(0)\,]_{r^{3}}, $$
and $$x_{\theta}^+(-1)=[\,x_{1}^+(-1),\, x_{2}^+(0),\,x_1^+(0)\,]_{\langle r^{3},\,r^{-1}s^{-2}\rangle}.$$

\subsection{Two-parameter Drinfeld isomorphism theorem}
\setcounter{equation}{0}
In this subsection, we establish the isomorphism
between the two-parameter quantum affine algebra $\mathcal{U}_{r,s}(\hat{\frak{g}}^{\sigma})$  and the $(r,s)$-analogue of Drinfeld quantum affinization of
$U_{r,s}(\hat{\frak{g}}^{\sigma})$.  The identification of these two
forms has been proved for the case of $\widehat{\frak{sl}}_{n}$ in \cite{HRZ}. We will
give a new proof for the most general case in the next two sections.

We keep the same notations and assumptions as above. In particular, $i_1,\,\ldots,\, i_{h-1}$ is the fixed
sequence associated
with the maximum root given in Eq. (\ref{a1}).

For simplicity, we denote $$\langle i_j,\, i_{j-1}\ldots i_2 i_{1}\rangle=\langle i_j,\, i_{j-1}\rangle \cdots \langle i_j,\, i_{1}\rangle,$$
and $$\langle i_1i_2\ldots i_{j-1} ,\, i_j\rangle^{-1}=\langle i_1,\, i_j\rangle^{-1}\cdots \langle i_{j-1} ,\, i_j\rangle^{-1}.$$
 For $j=2,\ldots, h-1$, let $t_{i_j}=\frac{q_{i_j}-p_{i_j}}{r_{i_j}-s_{i_j}}$,
$$p_{i_j}=\langle i_j,\, i_{j-1}\ldots i_2 i_{1}\rangle,$$ and $$q_{i_j}=\langle i_1i_2\ldots i_{j-1} ,\, i_j\rangle^{-1}.$$

Now we state our main theorem as follows.

\begin{theo} {\it Let
$\theta=\alpha_{i_1}+\cdots+\alpha_{{i_{h-1}}}$ be the maximal
positive root of a simple Lie algebra $\frak{g}$ and fix the associated root chain.
Then there exists an algebra isomorphism $\Psi:
U_{r,s}(\hat{\frak{g}}^{\sigma}) \longrightarrow {\mathcal
U}_{r,s}(\hat{\frak{g}}^{\sigma})$ given as follows. For each $i\in I,$
\begin{eqnarray*}
\omega_i&\longmapsto& \om_i\\
\omega'_i&\longmapsto& \om'_i \\
\omega_0&\longmapsto& \gamma'^{-1}\, \om_{\theta}^{-1}\\
\omega'_0&\longmapsto& \gamma^{-1}\, \om_{\theta}'^{-1}\\
\gamma^{\pm\frac{1}2}&\longmapsto& \gamma^{\pm\frac{1}2}\\
\gamma'^{\,\pm\frac{1}2}&\longmapsto& \gamma'^{\,\pm\frac{1}2}\\
e_i&\longmapsto& x_i^+(0)\\
f_i&\longmapsto& \frac{1}{p_i}x_i^-(0)\\
e_0&\longmapsto& a x^-_{\theta}(1)\cdot(\gamma'^{-1}\,\om_{\theta}^{-1})\\
f_0 &\longmapsto& (\gamma^{-1}\,{\om'}_{\theta}^{-1})\cdot x_{\theta}^+(-1)
\end{eqnarray*}
where $\om_{\theta}=\om_{i_1}\cdots\,
\om_{i_{h-1}},\,\om'_{\theta}=\om'_{i_1}\cdots\, \om'_{i_{h-1}}$
and 
$$p_i=\begin{cases}
r, \quad \hbox{if} \quad \sigma(i)=i \vspace{3pt}\\
1, \qquad \hbox{otherwise}
\end{cases}.$$ The constant $a\in \mathbb{K}$ is given as follows:
$$a=\left \{\begin{array}{lll} &(rs)^{n-2}, \,
&  \textit{ for  $A_{2n-1}^{(2)}$ ;}\\
&(rs)^{n-2}[2]_n^{-2},\,
& \textit{ for  $A_{2n}^{(2)}$ ;}\\
&(rs)^{2(n-1)},\,
& \textit{ for  $D_{n+1}^{(2)}$  ;}\\
&(rs)^2,\,
&  \textit{ for  $D_{4}^{(3)}$  ;}\\
&(rs)^{5}[2]_3^{-1}, \, &  \textit{ for  $E_6^{(2)}$. }
\end{array}\right.$$}
\end{theo}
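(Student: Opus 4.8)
The plan is to construct $\Psi$ by sending the Drinfeld--Jimbo generators to the indicated Drinfeld-realization elements, verify that all defining relations $(\mathcal{X}1)$--$(\mathcal{X}6)$ are preserved, and then build the inverse map, thereby establishing an isomorphism. The images of $e_i, f_i, \omega_i^{\pm1}, {\omega_i'}^{\pm1}, \gamma^{\pm\frac12}, {\gamma'}^{\pm\frac12}$ for $i\in I$ are immediate from the identification $\mathcal{U}_{r,s}(\frak{g}^{\sigma})\cong U_{r,s}(\frak{g}^{\sigma})$ noted before the statement, so the entire content of the verification concentrates on the zeroth node. First I would check the toral relations $(\mathcal{X}1)$--$(\mathcal{X}3)$ for the degree-zero part: since $\omega_0\mapsto \gamma'^{-1}\om_\theta^{-1}$ and $\omega_0'\mapsto\gamma^{-1}{\om'}_\theta^{-1}$ with $\om_\theta=\om_{i_1}\cdots\om_{i_{h-1}}$, the commutation and scaling relations follow from $(\textrm{D4})$, $(\textrm{D5})$ and the multiplicativity of the bracket pairing $\langle\,\cdot\,,\cdot\,\rangle$ extended to the root $\theta$. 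The required scalars $\langle 0,j\rangle$ and $\langle j,0\rangle$ should then reproduce exactly the zeroth row and column of the quantum Cartan matrix $J$ listed in the definition, which is the first consistency check to perform case by case.

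The crux is relation $(\mathcal{X}4)$ at the zeroth node, namely $[\,e_0,f_0\,]=\frac{1}{r-s}(\omega_0-\omega_0')$. Writing $e_0\mapsto a\,x^-_\theta(1)(\gamma'^{-1}\om_\theta^{-1})$ and $f_0\mapsto(\gamma^{-1}{\om'}_\theta^{-1})x^+_\theta(-1)$, this reduces to computing the commutator $[\,x^-_\theta(1),\,x^+_\theta(-1)\,]$ and showing it equals the correct multiple of $\om_\theta-{\om'}_\theta$ (up to the central factors), with the constant $a$ chosen precisely to normalize the outcome. I would carry this out by induction along the fixed root chain $i_1,\ldots,i_{h-1}$: the root vectors $x^-_\theta(1)$ and $x^+_\theta(-1)$ are defined as iterated $q$-brackets, so I would repeatedly apply the bracket identities \eqref{b:1}--\eqref{b:2} and the commutator relation $(\textrm{D8})$ (which expresses $[x_i^+(k),x_j^-(k')]$ through $\phi_i,\varphi_i$) to peel off one simple root vector at a time. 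At each stage the interaction with the Cartan currents $a_i(\pm1)$ through $(\textrm{D6}_1)$--$(\textrm{D6}_2)$ produces the factors $\langle i,i\rangle^{\pm A_{i,\sigma^t(j)}/2}$ that assemble into the structural constants $p_{i_j}, q_{i_j}$ and the value $a$. This is the main obstacle, and it must be done separately for each of the five twisted types, since the chains, the bracket parameters, and the resulting constant $a$ differ; the $A_{2n}^{(2)}$ and $E_6^{(2)}$ cases, where $a$ carries a factor $[2]_n^{-2}$ or $[2]_3^{-1}$, will be the most delicate because of the nonreduced/long-root contributions.

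Next I would verify the Serre relations $(\mathcal{X}5)$--$(\mathcal{X}6)$ involving the zeroth node. For the finite nodes these follow from $U_{r,s}(\frak{g}^{\sigma})$; for pairs $(0,j)$ the relations must be deduced from the Drinfeld Serre relations $(\textrm{D9}_1)$--$(\textrm{D9}_4)$ together with the fact that $x^-_\theta(1)$ is, by construction, the highest root vector. Concretely, since $\theta$ is the maximal root, the adjoint $\bigl(\mathrm{ad}_l\,e_0\bigr)^{1-a_{0j}}(e_j)$ translates into an identity among $x^-_\theta(1)$ and $x^-_j(0)$ that either lives in a weight space forced to vanish or is exactly a specialization of $(\textrm{D9})$; I would check these by the same $q$-bracket manipulations, using \eqref{b:4}--\eqref{b:6} to match the quantized Serre polynomials listed in the Remark. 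Finally, to upgrade the homomorphism to an isomorphism I would exhibit the inverse: the triangular decompositions (Corollary after Proposition~\ref{DB} and the Drinfeld-side decomposition) reduce the claim to a dimension/PBW comparison, and applying the anti-automorphism $\tau$ on both sides lets me obtain the $f$-side relations from the $e$-side ones for free, halving the verification. A surjectivity argument---showing that $x_i^\pm(k)$, $a_i(\ell)$ all lie in the image via repeated brackets with $e_0,f_0$---then completes the proof that $\Psi$ is bijective.
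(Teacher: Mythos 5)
Your verification that $\Psi$ is a homomorphism follows the paper's route for Theorem $\mathcal{A}$ essentially step for step: the finite-node relations come for free from $\mathcal{U}_{r,s}(\frak{g}^{\sigma})\cong U_{r,s}(\frak{g}^{\sigma})$, and the work concentrates on $(\mathcal{X}4)$--$(\mathcal{X}6)$ at the zeroth node, handled case by case for the five twisted types by peeling off one simple root vector at a time along the fixed root chain using the $q$-bracket identities \eqref{b:1}--\eqref{b:2} together with (D6), (D8) and the Drinfeld Serre relations. The paper additionally isolates the auxiliary vanishing statements $\bigl[\,x_i^-(0),\,x^-_{\theta}(1)\,\bigr]_{\langle i,0\rangle^{-1}}=0$ as separate propositions, uses Lemma \ref{lemma1} (an element of the positive or negative part commuting with all opposite Chevalley generators is zero) to dispatch the higher Serre relations, and obtains the $f$-side from the $e$-side via the anti-involution $\tau$, exactly as you suggest.

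Where you diverge --- and where your argument has a genuine gap --- is the bijectivity step. You propose to combine a surjectivity argument with a ``dimension/PBW comparison'' coming from the two triangular decompositions. No PBW theorem is proved or invoked for either presentation in this paper, and the two triangular decompositions by themselves do not identify the respective positive and negative parts, so that comparison is not available as stated. The paper's actual device (Theorems $\mathcal{B}$ and $\mathcal{C}$ in Section 7) is to introduce the subalgebra $\mathcal{U}^k_{r,s}(\hat{\frak{g}})$ generated by the degree-zero elements together with the single pair $x_k^-(1)$, $x_k^+(-1)$, to prove that this subalgebra is all of $\mathcal{U}_{r,s}(\hat{\frak{g}})$ by reconstructing $a_k(\pm1)$, then $x_j^{\pm}(\mp1)$, and inductively all higher modes, and then to define an explicit homomorphism $\Phi$ on these few generators --- with $\Phi(x_k^-(1))$ an iterated $q$-bracket of $e_{i_2},\ldots,e_{i_{h-1}},e_0$ times $\gamma'\omega_k$ --- and to verify $\Psi\Phi=\mathrm{id}$ directly on them. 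Your closing remark about recovering all $x_i^{\pm}(k)$ and $a_i(\ell)$ by repeated brackets is precisely the content of that reduction, so your plan is repaired by dropping the PBW comparison and instead exhibiting $\Phi$ explicitly on the reduced generating set and checking the two composites there.
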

\bigskip

We divide the proof into three steps corresponding to three theorems: ({\bf Theorems $\mathcal{A}$, $\mathcal{B}$, $\mathcal{C}$}),
which will be proved in the following two sections.

\section{$\Psi$ is an algebra homomorphism}
\setcounter{equation}{0}

In this section, we show that $\Psi$ is an algebra homomorphism
(Theorem  $\mathcal{A}$). The proof will be divided into five cases.

{\bf Theorem $\mathcal{A}$.}  {\it  The map
$\Psi$ defined above is an algebra homomorphism
from $U_{r,s}(\hat{\frak{g}}^{\sigma})$ to ${\mathcal
U}_{r,s}(\hat{\frak{g}}^{\sigma})$.}

Let $E_i,\,F_i$, $\om_i$, $\om_i'$ denote
the images of $e_i,\,f_i$, $\om_i$, $\om_i'$  ($i\in \hat{I}$) in the
algebra $\mathcal{U}_{r,s}(\hat{\frak{g}}^{\sigma})$ under the map
$\Psi$ respectively. We shall check that the elements $E_i,~F_i,~\om_i,~\om'_i$ $(i\in
\hat{I}),\, \gamma^{\pm\frac{1}2},\, \gamma'^{\,\pm\frac{1}2}
$ satisfy the defining relations
$(\mathcal{X}1)$--$(\mathcal{X}6)$, where $(\mathcal{X}=\mathcal{A}, \mathcal{B}, \mathcal{C}, \mathcal{D}, \mathcal{E})$ are given in Definition 3.1.  First of all, the defining relations
$(\mathcal{X}1)$--$(\mathcal{X}3)$ can be verified directly as in the untwisted case \cite{HRZ}, so we are left to check relations $(\mathcal{X}4)$--$(\mathcal{X}6)$ involved with $i=0$ case by case.

\subsection{Proof of Theorem $\mathcal{A}$ for the case of $U_{r,s}(\mathrm{A}_{2n-1}^{(2)})$}

 For relation $(\mathcal{A}4)$, when $i\neq 0$, it follows from definition that
\begin{align*}
[\,E_0,F_i\,]
&=a\bigl[\,x^-_{\theta}(1)\cdot(\gamma'^{-1}\om_{\theta}^{-1}),\,
\frac{1}{p_i}x_i^-{(0)}\,\bigr]\\
&=-\frac{a}{p_i}\bigl[\,x_i^-{(0)},\,x^-_{\theta}(1)\,\bigr]_{\langle i,~
0 \rangle^{-1}}(\gamma'^{-1}\om_{\theta}^{-1}).
\end{align*}

To prove this, we need the following technical lemma,
which is proved similarly as untwisted types
(see \cite{HZ2}).

\begin{lemm}\label{L:comm1} The following identities are true.
\begin{align}\label{l:1}
 [\,x_{i-1}^-(0),\,y_{i-1\, i+1}^-(1)\,]_{s^{-1}}&=0, \qquad 1<i<n, \\ \label{l:2}
 [\,x_{i}^-(0),\,y_{i\,  i+1}^-(1)\,]_{(rs)^{-1}}&=0,\qquad 1\leq i \leq n-1,\\ \label{l:3} [\,x_2^-(0),\,y_{1\,4}^-(1)\,]&=0,\\ \label{l:4}
[\,x_{i}^-(0),\,y_{1\,  i+2}^-(1)\,]&=0, \qquad 1\leq i \leq n-2\\ \label{l:5}
[\,x_{i-1}^-(0),\,y_{1\,  i+2}^-(1)\,]&=0,\qquad 2\leq i \leq n-2.
 \end{align}
\end{lemm}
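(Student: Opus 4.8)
The plan is to prove all five identities \eqref{l:1}--\eqref{l:5} in Lemma \ref{L:comm1} by exploiting the inductive definition of the quantum root vectors $y^-_{1t}(1)$ together with the quantum Jacobi-type identities \eqref{b:1} and \eqref{b:2}. The key observation is that each $y^-_{1t}(1)$ is built by successive $q$-bracketing $[\,x^-_t(0),\,y^-_{1(t-1)}(1)\,]_v$ with an explicit scalar $v$, and that $x^-_{i-1}(0)$ or $x^-_i(0)$ either commutes with, or forms a ``degenerate'' bracket with, these building blocks. So the first thing I would do is unwind the definition of the relevant $y$-vector one step, reducing a bracket $[\,x^-_m(0),\,y^-_{1t}(1)\,]_v$ to a nested bracket in which I can push $x^-_m(0)$ inward using \eqref{b:1}: $[\,a,[\,b,c\,]_u\,]_v=[\,[\,a,b\,]_q,c\,]_{uv/q}+q\,[\,b,[\,a,c\,]_{v/q}\,]_{u/q}$.

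**The two mechanisms.** I expect two distinct mechanisms to do all the work, and I would organize the proof around them. First, when the node $m$ is non-adjacent to the node at the head of $y^-_{1t}(1)$, the bracket $[\,x^-_m(0),x^-_{\mathrm{head}}(0)\,]$ is governed by relation $(\textrm{D}9_1)$, i.e.\ $x^-_m(0)$ and that generator $\langle\cdot,\cdot\rangle^{-1}$-commute because $a^\sigma_{mj}=0$; feeding this into \eqref{b:1} collapses one of the two terms and lets me recurse on the shorter vector $y^-_{1(t-1)}(1)$. This handles the ``separation'' identities \eqref{l:3}, \eqref{l:4}, \eqref{l:5}, where the inserted node sits far enough from the tail of the chain. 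Second, the identities \eqref{l:1} and \eqref{l:2} are genuine Serre-type vanishings: after unwinding, I would reduce the bracket to an expression of the form $[\,x^-_m(0),x^-_m(0),x^-_{m\pm1}(0)\cdots\,]$ and invoke the degree-two Serre relation, most cleanly in the repackaged form \eqref{b:4}, namely $[\,a,a,b\,]_{(u,v)}=a^2b-(u{+}v)aba+(uv)ba^2$, matched against the quadratic $(r,s)$-Serre relations listed in Case (I) of the Remark. The precise scalars $s^{-1}$ and $(rs)^{-1}$ in \eqref{l:1}--\eqref{l:2} must come out to match $u,v$ read off from the structure constants $\langle t,t\pm1\rangle$ of the matrix $J$ for $A^{(2)}_{2n-1}$.

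**Bookkeeping and the main obstacle.** Throughout I would track the scalar subscripts carefully, since the whole content of each identity is that a particular $q$-bracket with a particular scalar vanishes; the subscripts are forced by the chain of structure constants $\langle t,t-1\rangle\cdots$ appearing in the definition of $y^-_{1t}(1)$, so I would compute these products from the $J$-matrix and the convention $\langle\alpha,\beta\rangle=\prod_{k,l}\langle i_k,j_l\rangle$. I anticipate the main obstacle to be exactly this scalar bookkeeping: getting the $q$-bracket parameters to line up after each application of \eqref{b:1} or \eqref{b:2} is delicate because every inward push multiplies and divides the scalars by the ``$q$'' chosen at that step, and one must choose that $q$ to be precisely the structure constant that makes the unwanted term a \emph{Serre} bracket rather than a generic one. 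A secondary subtlety is that $x^-_i(0)$ and $x^-_{i+1}(0)$ are adjacent (so they do \emph{not} commute), which is why \eqref{l:4} and \eqref{l:5} require the inserted node to be at distance at least two from the active end of the chain; I would verify the index ranges ($1<i<n$, $1\le i\le n-1$, etc.) guarantee the needed non-adjacency at each recursion step. Since the statement explicitly says these are ``proved similarly as untwisted types (see \cite{HZ2})'', I would lean on the untwisted template and only highlight where the twisting scalars $\omega^l$ and the folded structure constants modify the computation.
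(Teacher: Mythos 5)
Your overall toolkit --- unwinding the inductive definition of $y^-_{1t}(1)$, pushing the extra generator inward with \eqref{b:1}--\eqref{b:2}, and killing terms with either (D9$_1$) or the quadratic Serre relations --- is exactly what the paper uses, and your treatment of \eqref{l:1}, \eqref{l:4}, \eqref{l:5} matches the paper's. But your plan has a concrete gap for \eqref{l:2} and \eqref{l:3}: neither of these reduces to a pure Serre bracket or to a pure commutation argument. For \eqref{l:2} the paper runs a downward induction on $i$ (base case $i=n-1$ directly from the definition), uses the inductive hypothesis $[\,x_i^-(0),\,y_{i\,i+1}^-(1)\,]_{(rs)^{-1}}=0$ in the middle of the computation to rewrite $y^-_{i-1\,i}(1)$ as $r^{-2}[\,y^-_{i\,i+1}(1),\,[\,x_i^-(0),x_{i-1}^-(0)\,]_s\,]_{r^2s}$, and then arrives not at zero directly but at an identity of the form
$[\,x_{i-1}^-(0),\,y_{i-1\,i}^-(1)\,]_{r^{-2}}=(rs)^{-1}[\,y_{i-1\,i}^-(1),\,x_{i-1}^-(0)\,]_{s^{2}}$;
expanding both sides yields $(1+r^{-1}s)[\,x_{i-1}^-(0),\,y_{i-1\,i}^-(1)\,]_{(rs)^{-1}}=0$, and one concludes only because $r\neq -s$. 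The same reversal-and-cancellation trick finishes \eqref{l:3}: the computation terminates with $[\,x_2^-(0),\,x_{1\,4}^-(1)\,]_{r^{-1}s}=[\,x_{1\,4}^-(1),\,x_2^-(0)\,]_{r^{-1}s}$, whence $(1+r^{-1}s)[\,x_2^-(0),\,x_{1\,4}^-(1)\,]=0$. Your proposal files \eqref{l:3} under the ``non-adjacency'' mechanism and \eqref{l:2} under ``reduce to $[\,a,a,b\,]$ and quote Serre,'' and neither route closes: $x_2^-$ is adjacent to both $x_1^-$ and $x_3^-$ inside $x^-_{1\,4}(1)$, and $[\,x_i^-(0),\,y^-_{i\,i+1}(1)\,]$ never collapses to a double Serre bracket because the defining chain of $y^-_{i\,i+1}$ wraps all the way around $x_i^-(1)$. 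So the two ingredients you are missing, and which the paper cannot do without, are the induction on $i$ for \eqref{l:2} and the $(1+r^{-1}s)$-cancellation argument (legitimate since $r\neq -s$) that substitutes for a direct Serre vanishing in \eqref{l:2} and \eqref{l:3}.
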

\begin{proof} For (6.1), it is easy to get that
\begin{equation*}
\begin{split}
y_{i-1\, i+1}^-(1)
&=\big[\,x_{i+1}^-(0),\,\ldots,\,x_n^-(0),\,x_{n-2}^-(0),\,
\ldots,\, x_{i+1}^-(0),\,\\
&\quad\underbrace{[\,x_{i}^-(0),x_{i-1}^-(1)\,]_s}\,
\big]_{(s,\,\ldots,\,s,\,r^{-1},\,\ldots,\,r^{-1})}\\
&=-(rs)^{\frac{1}{2}}\big[\,x_{i+1}^-(0),\,\ldots,\,x_n^-(0),\,x_{n-2}^-(0),\,
\ldots,\, x_{i+1}^-(0),\,\\
&\quad[\,x_{i-1}^-(0),x_{i}^-(1)\,]_{r^{-1}}\,
\big]_{(s,\,\ldots,\,s,\,r^{-1},\,\ldots,\,r^{-1})}.
\end{split}
\end{equation*}
Then we have that
\begin{equation*}
\begin{split}
[\,x_{i-1}^-&(0),\,y_{i-1\, i+1}^-(1)\,]_{s^{-1}}\\
&=-(rs)^{\frac{1}{2}}\Big[\,x_{i-1}^-(0),\,
\big[\,x_{i+1}^-(0),\,\ldots,\,x_n^-(0),\,x_{n-2}^-(0),\,
\ldots,\, x_{i+1}^-(0),\, \\
&\quad[\,x_{i-1}^-(0),\,x_{i}^-(1)\,]_{r^{-1}}\,\big]_{(s,\,\ldots,\,s,\,r^{-1},\,\ldots,\,r^{-1})}\,\Big]_{s^{-1}}
\qquad\qquad {\textrm{(by (\ref{b:1}) \& (D9$_1$))}}\\
&=-(rs)^{\frac{1}{2}}
\big[\,x_{i+1}^-(0),\,\ldots,\,x_n^-(0),\,x_{n-2}^-(0),\,
\ldots,\, x_{i+1}^-(0),\,\\
&\quad\underbrace{
[\,x_{i-1}^-(0),\,x_{i-1}^-(0),x_{i}^-(1)\,]_{(r^{-1},\,s^{-1})}}\,
\big]_{(s,\,\ldots,\,s,\,r^{-1},\,\ldots,\,r^{-1})}\qquad {\textrm{(=0 by (D9$_3$))}}\\
&=0.
\end{split}
\end{equation*}

\noindent For (6.2), we argue inductively on $i$.
The case $i=n{-}1$ follows from definition:
\begin{eqnarray*}
[\,x_{n-1}^-(0),\,y_{n-1\,n}^-(1)\,]_{(rs)^{-1}}
=[\,x_{n-1}^-(0),\, x_{n}^-(1)\,]_{(rs)^{-1}}=0.
\end{eqnarray*}

Suppose (6.2) holds for the case of $i$, then we have for
the case of $i-1$ that
\begin{equation*}
\begin{split}
&y_{i-1\,  i}^-(1)\\
&=[\,x_{i}^-(0),\,\ldots,\, x_n^-(0),\,x_{n-2}^-(0),\,\ldots,\,
x_i^-(0),\,x_{i-1}^-(1)\,]_
{(s,\,\ldots,\,s,\,r^{-1},\,\ldots,\,r^{-1})}\\
&=r^{-1}s\,\Big[\,x_{i}^-(0),\,\ldots,\,
x_n^-(0),\,x_{n-2}^-(0),\,\ldots,\, [\,x_i^-(1),\,x_{i-1}^-(0)\,]_r\,\Big]_
{(s,\,\ldots,\,s,\,r^{-1},\,\ldots,\,r^{-1})}\\
&\hskip8.5cm\qquad{\textrm{(by (\ref{b:1}) \& $(\textrm{D9}_1)$)}}\\
&=\cdots\cdots \\
&=r^{-1}s\,\bigl[\,x_{i}^-(0),\,[\,x_{i+1}^-(0),\,\ldots,\,
x_n^-(0),\,x_{n-2}^-(0),\ldots,x_i^-(1)\,]_
{(s,\,\ldots,\,s,\,r^{-1},\,\ldots,\,r^{-1})},\\
&\hskip7.4cm
x_{i-1}^-(0)\,\bigr]_{(r,r^{-1})}\qquad {\textrm{(by definition)}}\\
&=r^{-1}s\,[\,x_{i}^-(0),\,y_{i\,i+1}^-(1) ,\,
x_{i-1}^-(0)\,]_{(r,r^{-1})}\qquad\qquad\hskip3.15cm {\textrm{(by (\ref{b:1}))}}\\
&=r^{-1}s\,[\,\underbrace{
[\,x_{i}^-(0),\,y_{i\,i+1}^-(1)\,]_{(rs)^{-1}}}
 ,\,x_{i-1}^-(0)\,]_{rs}\qquad\quad\ \, {\textrm{(=0 by inductive hypothesis)}}\\
&\quad +r^{-2}\,[\,y_{i\,i+1}^-(1) ,\,
[\,x_i^-(0),\,x_{i-1}^-(0)\,]_s\,]_{r^2s}\\
&=r^{-2}\,[\,y_{i\,i+1}^-(1) ,\,
[\,x_i^-(0),\,x_{i-1}^-(0)\,]_s\,]_{r^2s}.
\end{split}
\end{equation*}
Using the above identity, we have
\begin{equation*}
\begin{split}
&[\,x_{i-1}^-(0),\,y_{i-1\,i}^-(1)\,]_{r^{-2}}\\
&\quad=r^{-2}\Big[\,x_{i-1}^-(0),\,
\big[\,y_{i\,i+1}^-(1),\,[\,x_i^-(0),\,x_{i-1}^-(0)\,]_s\,\big]_{r^2s}\,
\Big]_{r^{-2}}\qquad\qquad {\textrm{(by (\ref{b:1}))}}\\
&\quad=r^{-2}\Big[\,[\,x_{i-1}^-(0),\,
y_{i\,i+1}^-(1)\,]_{r^{-1}},\,[\,x_i^-(0),\,x_{i-1}^-(0)\,]_s\,\,
\Big]_{rs}\\
&\quad\quad +r^{-3}\Big[\, y_{i\,i+1}^-(1),\,
\underbrace{[\,x_{i-1}^-(0),\,x_i^-(0),\,x_{i-1}^-(0)\,]_{(s,\,r^{-1})}}
\,\Big]_{(rs)^{2}}\qquad {\textrm{(=0 by $(\textrm{D9}_3)$)}}\\
&\quad=r^{-2}\Big[\,\underbrace{[\,x_{i-1}^-(0),\,
y_{i\,i+1}^-(1)\,]_{r^{-1}}},\,[\,x_i^-(0),\,x_{i-1}^-(0)\,]_s\,\,
\Big]_{rs}.
\end{split}
\end{equation*}
At the same time, we also get
\begin{equation*}
\begin{split}
[\,x_{i-1}^-&(0),\,
y_{i\,i+1}^-(1)\,]_{r^{-1}}\\
&=[\,x_{i-1}^-(0),\,[\,x_{i+1}^-(0),\,\ldots,x_n^-(0),x_{n-2}^-(0),\ldots,
\\
&\hskip3.5cm x_{i+1}^-(0),x_i^-(1)\,]_
{(s,\,\ldots,\,s,\,r^{-1},\,\ldots,\,r^{-1})}\,]_{r^{-1}}\qquad {\textrm{(by $(\textrm{D9}_1)$)}}\\
&=[\,x_{i+1}^-(0),\,\ldots,x_n^-(0),x_{n-2}^-(0),\ldots,
x_{i+1}^-(0),\\
&\quad\underbrace{[\,x_{i-1}^-(0),\,x_i^-(1)\,]_{r^{-1}}}\,]_
{(s,\,\ldots,\,s,\,r^{-1},\,\ldots,\,r^{-1})}\,]_{r^{-1}}\qquad
{\textrm{(by the definition)}}\\
&=-(rs)^{-\frac{1}{2}}[\,x_{i+1}^-(0),\,\ldots,x_n^-(0),x_{n-2}^-(0),\ldots,
x_{i+1}^-(0),\\
&\quad[\,x_{i}^-(0),\,x_{i-1}^-(1)\,]_{s}\,]_
{(s,\,\ldots,\,s,\,
r^{-1},\,\ldots,\,r^{-1})}\,]_{r^{-1}}\qquad\hskip2.1cm {\textrm{(by  definition)}}\\
&=-(rs)^{-\frac{1}{2}}y_{i-1\,i+1}^-(1).
\end{split}
\end{equation*}
Therefore we get
\begin{equation*}
\begin{split}
[\,x_{i-1}^-&(0),\,y_{i-1\,i}^-(1)\,]_{r^{-2}}\\
&=-r^{-\frac{5}{2}}s^{-\frac{1}{2}}[\,y_{i-1\,i+1}^-(1),\,
[\,x_i^-(0),\,x_{i-1}^-(0)\,]_s\,]_{rs}\qquad\quad (\hbox{by (\ref{b:1})})\\
&=-r^{-\frac{5}{2}}s^{-\frac{1}{2}}[\,[\,y_{i-1\,i+1}^-(1),\,
x_i^-(0)\,]_r,\,x_{i-1}^-(0)\,]_{s^{2}}\qquad\quad(\hbox{by definition})\\
&\quad -r^{-\frac{3}{2}}s^{-\frac{1}{2}}[\,x_i^-(0),\,\underbrace{[\,y_{i-1\,i+1}^-(1),\,
x_{i-1}^-(0)\,]_s}\,]_{r^{-1}s}\\
&=(rs)^{-1}[\,y_{i-1\,i}^-(1),\,
x_{i-1}^-(0)\,]_{s^{2}}.
\end{split}
\end{equation*}
Expanding the two sides of the above identity, one gets
$$(1+r^{-1}s)[\,x_{i-1}^-(0),\,y_{i-1\,i}^-(1)\,]_{(rs)^{-1}}=0,$$
which implies that if $r\ne -s$, then
$[\,x_{i-1}^-(0),\,y_{i-1,i}^-(1)\,]_{(rs)^{-1}}=0$. Thus
we have checked (6.2) for the case of $i-1$.
Consequently, (6.2) is true by induction.

\noindent For {\bf (6.3)}, we first note that
\begin{equation*}
\begin{split}
&[\,x_2^-{(0)}, y_{1\,4}^-(1)\,]\hskip5.17cm \hbox{(by definition)}\\
&\quad=[\,x_2^-{(0)},\,
[\,x_4^-(0),\ldots,x_n^-(0),\,x_{n-2}^-(0),\ldots,
x_4^-(0),\,x_{1\,4}^-(1)\,]_{(s,\ldots,s,\,r^{-1},\ldots,r^{-1})}\,]\\
&\hskip7cm \qquad \hbox{(by (\ref{b:1}) \& $(\textrm{D9}_1)$)}\\
&\quad=[\,x_4^-(0),\ldots,x_n^-(0),\,x_{n-2}^-(0),\ldots,
x_4^-(0),\underbrace{[\,x_2^-{(0)},x_{1\,4}^-(1)\,]}\,]_{(s,\ldots,s,\,r^{-1},\ldots,r^{-1})}.
\end{split}
\end{equation*}
 So it suffices to check the relation $[\,x_2^-{(0)},\,x_{1\,4}^-(1)\,]=0$.

 In fact, it is easy to see that
\begin{equation*}
\begin{split}
[\,x_2^-&{(0)},\,x_{1\,4}^-(1)\,]_{r^{-1}s}\hskip5.6cm \hbox{(by definition)}\\
&=[\,x_2^-{(0)},\,
[\,x_3^-(0),\,x_2^-(0),\,x_{1}^-(1)\,]_{(s,\,s)}\,]_{r^{-1}s}
\qquad\qquad\ \;\,\hbox{(by (\ref{b:1}))}\\
&=[\,x_2^-(0),\,[\,x_3^-(0),\,x_{2}^-(0)\,]_s,\,x_{1}^-(1)\,]\,]_{(s,\,r^{-1}s)}
\qquad\qquad\ \hbox{(by (\ref{b:1}))}\\
&\quad +s[\,x_2^-(0),\,x_{2}^-(0),\,\underbrace{[\,x_3^-(0),\,x_{1}^-(1)\,]}\,]_{(1,\,r^{-1}s)}
\qquad\quad\ \;\, \hbox{(=0 by $(\textrm{D9}_1)$)}\\
&=[\,\underbrace{[\,x_2^-(0),\,[\,x_3^-(0),\,x_{2}^-(0)\,]_s\,]_{r^{-1}}},\,x_{1}^-(1)\,]_{s^{2}}
\qquad\quad\quad\ \,\;\hbox{(=0 by $(\textrm{D9}_3)$)}\\
&\quad +r^{-1}[\,[\,x_3^-(0),\,x_{2}^-(0)\,]_s,\,[\,x_2^-(0),\,x_{1}^-(1)\,]_s\,]_{rs}
\qquad\quad\ \hbox{(by definition \& (\ref{b:2}))}\\
&=r^{-1}[\,x_3^-(0),\,\underbrace{[\,x_{2}^-(0),\,[\,x_2^-(0),\,x_{1}^-(1)\,]_s\,]_r}\,]_{s^2}
\qquad\qquad\; \hbox{(=0 by $(\textrm{D9}_2)$)}\\
&\quad +[\,[\,x_3^-(0),\,x_2^-(0),\,x_{1}^-(1)\,]_{(s,\,s)},\,x_{2}^-(0)\,]_{r^{-1}s}
\qquad\qquad \hbox{(by definition)}\\
&=[\,x_{1\,4}^-(1),\,x_{2}^-(0)\,]_{r^{-1}s}.
\end{split}
\end{equation*}
Then, we obtain
$(1+r^{-1}s)[\,x_2^-{(0)},\,x_{1\,4}^-(1)\,]=0$.
When $r\neq -s$, we arrive at our required conclusion
$[\,x_2^-{(0)},\,x_{1\,4}^-(1)\,]=0$.

 Eqs. (6.4) and (6.5) can be verified similarly, which are left to the readers.
\end{proof}
\medskip

The following three lemmas are needed for later purpose.

\begin{lemm}\, \label{l:6} One has that $[x_i^-(0),x_{i-1}^-(0),x_{i}^-(0),x_{i+1}^-(0)]_{(r^{-1},r^{-1},1)}=0$.
\end{lemm}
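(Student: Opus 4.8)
The plan is to treat this as a rank-three computation among the three consecutive nodes $i-1,i,i+1$, all interior simply-laced nodes so that $r_i=r$, $s_i=s$ and $\langle i{\pm}1,i\rangle$ take the finite type-$A$ values. Two structural facts are available: first, $x_{i-1}^-(0)$ and $x_{i+1}^-(0)$ commute, since $a^{\sigma}_{i-1,i+1}=0$ and $\langle i{+}1,i{-}1\rangle=1$, so $[\,x_{i-1}^-(0),x_{i+1}^-(0)\,]=0$ by $(\textrm{D9}_1)$; second, the two degree-zero Serre relations $[\,x_i^-(0),x_i^-(0),x_{i\pm1}^-(0)\,]_{(r,s)}=0$ hold, coming from $(\textrm{D9}_3)$ (equivalently from the $f$-Serre of Case~(I) via the anti-involution $\tau$). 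Writing $a=x_i^-(0)$, $b=x_{i-1}^-(0)$, $c=x_{i+1}^-(0)$, the target is $T=[\,a,[\,b,[\,a,c\,]_{r^{-1}}\,]_{r^{-1}}\,]_1$, and these two facts, together with the bracket identities (\ref{b:1}), (\ref{b:2}), (\ref{b:4}), should drive the whole reduction.

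First I would collapse the inner double bracket. Applying (\ref{b:1}) to $[\,b,[\,a,c\,]_{r^{-1}}\,]_{r^{-1}}$ and choosing the free parameter so that the summand containing $[\,b,c\,]$ drops out (as $b$ and $c$ commute), one obtains $[\,b,[\,a,c\,]_{r^{-1}}\,]_{r^{-1}}=[\,[\,b,a\,]_{r^{-1}},c\,]_{r^{-1}}$. Setting $W=[\,b,a\,]_{r^{-1}}=-r^{-1}[\,a,b\,]_r$, this gives $T=[\,a,[\,W,c\,]_{r^{-1}}\,]_1$. Second, I expand the outer bracket by (\ref{b:1}) again, now taking the free parameter equal to $s$: the emerging factor $[\,a,W\,]_s=-r^{-1}[\,a,a,b\,]_{(r,s)}$ is exactly the Serre expression in $a,b$ and vanishes, leaving the single surviving term
\[
T=s\,[\,W,[\,a,c\,]_{s^{-1}}\,]_{(rs)^{-1}},\qquad W=[\,b,a\,]_{r^{-1}}.
\]

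The final step is to show $[\,W,[\,a,c\,]_{s^{-1}}\,]_{(rs)^{-1}}=0$. Here I would expand once more, peeling off $W=[\,b,a\,]_{r^{-1}}$ by (\ref{b:2}) and invoking the \emph{second} Serre relation $[\,a,a,c\,]_{(r,s)}=0$ together with the commutativity of $b$ and $c$. After discarding the terms annihilated by Serre and by $q$-commutativity, the remaining terms should reorganize into a scalar multiple of $T$ itself, producing an identity of the form $(1+r^{-1}s)\,T=0$, exactly the mechanism used at the end of the proof of Lemma~\ref{L:comm1}. Since $r\neq -s$ by the standing hypothesis $r\neq\pm s$, we conclude $T=0$.

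The main obstacle is precisely this last recombination. One must track the structure constants $\langle i{\pm}1,i\rangle$ with care and, crucially, avoid the circular route in which $[\,W,[\,a,c\,]_{s^{-1}}\,]_{(rs)^{-1}}$ reduces straight back to $s^{-1}T$, yielding only the tautology $T=T$; the correct expansion must instead return $T$ with a coefficient \emph{different} from $1$, so that the nonzero factor $(1+r^{-1}s)$ can be cancelled. A secondary point to confirm is that both Serre relations are genuinely in the symmetric $(r,s)$-form for the generators $x_i^-(0)$, which is guaranteed by $(\textrm{D9}_3)$ and is unaffected by the rescaling $x_i^-(0)=p_i f_i$ since the relations are homogeneous.
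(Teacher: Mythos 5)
Your overall mechanism is the right one --- the paper's own proof of Lemma \ref{l:6} also combines $[\,b,c\,]=0$, the two degree-zero Serre relations and the identities (\ref{b:1})--(\ref{b:2}) to produce a factor $(1+r^{-1}s)$ --- and your first two reductions are correct. But the step you yourself flag as ``the main obstacle'' is a genuine gap, and along the route you have set up it does not close. Writing $a=x_i^-(0)$, $b=x_{i-1}^-(0)$, $c=x_{i+1}^-(0)$, $W=[\,b,a\,]_{r^{-1}}$, $X=[\,W,c\,]_{r^{-1}}$, you correctly reach $T=s\,[\,W,[\,a,c\,]_{s^{-1}}\,]_{(rs)^{-1}}$. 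In the subsequent (\ref{b:2}) expansion the free parameter is forced: the Serre term only vanishes if one takes $q=r^{-1}$ (since the correct relation is $[\,a,a,c\,]_{(r^{-1},s^{-1})}=0$), and then the collapse of $[\,b,[\,a,c\,]_{s^{-1}}\,]_{s^{-1}}$ via $[\,b,c\,]=0$ forces the parameter $s^{-1}$ as well. One lands at $T=r^{-1}s\,[\,[\,[\,b,a\,]_{s^{-1}},c\,]_{s^{-1}},a\,]_1$, and the element $[\,[\,b,a\,]_{s^{-1}},c\,]_{s^{-1}}$ is \emph{not} a scalar multiple of $X$ (the monomials $abc$, $cba$, $cab$ enter with genuinely different coefficients), so no identity of the form $(1+r^{-1}s)T=0$ emerges. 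The fix is exactly what the paper does: work with the outer parameter $r^{-1}s$ from the start, i.e.\ expand $[\,a,X\,]_{r^{-1}s}$ rather than $[\,a,X\,]_1$. Then the choice $q=s$ in (\ref{b:1}) kills $[\,a,W\,]_s$ by Serre and leaves $s\,[\,W,[\,a,c\,]_{r^{-1}}\,]_{(rs)^{-1}}$ --- with the inner parameter $r^{-1}$ now matching the one in $X$ --- after which (\ref{b:2}) with $q=s^{-1}$ returns precisely $[\,X,a\,]_{r^{-1}s}$; equating $[\,a,X\,]_{r^{-1}s}=[\,X,a\,]_{r^{-1}s}$ and expanding both sides gives $(1+r^{-1}s)[\,a,X\,]=0$.

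A secondary inaccuracy: you assert both Serre relations in the form $[\,a,a,x_{i\pm1}^-(0)\,]_{(r,s)}=0$. Only the one with $x_{i-1}^-(0)$ has this form; the one with $x_{i+1}^-(0)$ reads $[\,a,a,c\,]_{(r^{-1},s^{-1})}=0$ (these are not proportional, since $[\,a,a,c\,]_{(u,v)}=a^2c-(u+v)aca+uv\,ca^2$ is not invariant under $u,v\mapsto u^{-1},v^{-1}$). The paper uses exactly $[\,x_i^-(0),\,x_i^-(0),\,x_{i+1}^-(0)\,]_{(r^{-1},s^{-1})}=0$ at the corresponding point, and getting this parameter right is what determines the forced choices of $q$ above.
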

\begin{proof}\, Using \ref{b:1} and the Serre relations, we have
 \begin{eqnarray*}
&&[\,x_i^-(0),\,[\,x_{i-1}^-(0),\,x_i^-(0),\,x_{i+1}^-(0)\,]_{(r^{-1},\,r^{-1})}
\,]_{r^{-1}s}
\qquad {\hbox{(using (\ref{b:1}))}}\\
&=&[\,x_i^-(0),\,[\,x_{i-1}^-(0),\,x_i^-(0)\,]_{r^{-1}},\,x_{i+1}^-(0)
\,]_{(r^{-1},\,r^{-1}s)}
\qquad {\hbox{(using (\ref{b:1}))}}\\
&&+r^{-1}[\,x_i^-(0),\,x_i^-(0),\,\underbrace{[\,x_{i-1}^-(0),\,x_{i+1}^-(0)\,]}
\,]_{(1,\,r^{-1}s)}
\qquad {\hbox{(=0 by the Serre relation)}}\\
&=&[\,\underbrace{[\,x_i^-(0),\,[\,x_{i-1}^-(0),\,x_i^-(0)\,]_{r^{-1}}\,]_{s}}
,\,x_{i+1}^-(0)\,]_{r^{-2}}
\qquad {\hbox{(=0 by the Serre relation)}}\\
&&+s[\,[\,x_{i-1}^-(0),\,x_i^-(0)\,]_{r^{-1}}
,\,[\,x_{i}^-(0),\,x_{i+1}^-(0)\,]_{r^{-1}}
\,]_{(rs)^{-1}}\qquad {\hbox{(using (\ref{b:2}))}}\\
&=&s[\,x_{i-1}^-(0),\,
\underbrace{[\,x_i^-(0),\,x_{i}^-(0),\,x_{i+1}^-(0)\,]_{(r^{-1},s^{-1})}}
\,]_{r^{-2}}\qquad {\hbox{(=0 by the Serre relation)}}\\
&&+[\,[\,x_{i-1}^-(0),\,x_i^-(0),\,x_{i+1}^-(0)\,]_{(r^{-1},r^{-1})}
,\,x_{i}^-(0)\,]_{r^{-1}s},
\end{eqnarray*}
which implies that $(1+r^{-1}s)[\,x_i^-(0),\,
[\,x_{i-1}^-(0),\,x_i^-(0),\,x_{i+1}^-(0)\,]_{(r{-1},\,r^{-1})}\,]=0.$
Thus if $r\ne -s$, then $$[\,x_i^-(0),\,
[\,x_{i-1}^-(0),\,x_i^-(0),\,x_{i+1}^-(0)\,]_{(r{-1},\,r^{-1})}\,]=0.$$
\end{proof}

\begin{lemm}\, \label{l:7} Using the same notations, we have $[\,x_{n-1}^-(0),x_{1\, n-1}^-(0)\,]_{r}=0$.
\end{lemm}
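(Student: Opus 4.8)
The plan is to strip off a commuting tail and reduce the claim to the rank-two Serre relation between the adjacent short simple root vectors $x_{n-1}^-(0)$ and $x_{n-2}^-(0)$. First I would record that, because the off-diagonal structure constants $\langle i,j\rangle$ of $J$ equal $1$ whenever the two nodes are non-adjacent, the iterated products in the definition of the finite-type root vectors collapse to a single constant $\langle n-1,n-2\rangle=\langle n-2,n-3\rangle=s$, so that
\[
x_{1\,n-1}^-(0)=[\,x_{n-1}^-(0),\,x_{1\,n-2}^-(0)\,]_{s},\qquad
x_{1\,n-2}^-(0)=[\,x_{n-2}^-(0),\,x_{1\,n-3}^-(0)\,]_{s}.
\]
Write $a=x_{n-1}^-(0)$, $b=x_{n-2}^-(0)$ and $c=x_{1\,n-3}^-(0)$. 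Since $\alpha_{n-1}$ is adjacent only to $\alpha_{n-2}$ (and $\alpha_n$), relation $(\textrm{D9}_1)$ gives $[\,a,\,x_j^-(0)\,]=0$ for every $j\le n-3$, and hence $a$ commutes with the entire tail $c$; for $n=3$ the tail is absent and the argument below collapses to its last step.

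Next I would invoke the Jacobi-type identity \eqref{b:1} twice, each time choosing its free parameter so that the cross term is annihilated by $[\,a,c\,]=0$. Applying it to $x_{1\,n-1}^-(0)=[\,a,[\,b,c\,]_{s}\,]_{s}$ with the parameter equal to $s$ yields $x_{1\,n-1}^-(0)=[\,[\,a,b\,]_{s},\,c\,]_{s}$, because the remaining summand carries the factor $[\,a,c\,]_{1}=ac-ca=0$. Setting $P=[\,a,b\,]_{s}$ and applying \eqref{b:1} once more to $[\,a,\,x_{1\,n-1}^-(0)\,]_{r}=[\,a,[\,P,c\,]_{s}\,]_{r}$ with the parameter equal to $r$, the cross term again drops and one is left with
\[
[\,x_{n-1}^-(0),\,x_{1\,n-1}^-(0)\,]_{r}=[\,[\,a,P\,]_{r},\,c\,]_{s}.
\]

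It then remains to show that $[\,a,P\,]_{r}=[\,a,[\,a,b\,]_{s}\,]_{r}$ vanishes. By \eqref{b:4} this bracket equals $a^2b-(r+s)\,aba+(rs)\,ba^2$, which is exactly the $(r,s)$-Serre relation $(\mathcal{X}6)$ for the two adjacent short simple root vectors $x_{n-1}^-(0),x_{n-2}^-(0)$ (here $r_{n-1}=r$, $s_{n-1}=s$ and $A_{n-1,n-2}=-1$), and is therefore zero. Hence $[\,x_{n-1}^-(0),\,x_{1\,n-1}^-(0)\,]_{r}=[\,0,\,c\,]_{s}=0$, as claimed.

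The computation is short once this reduction is set up; the only genuine point is the observation that the tail $c$ commutes with $a$ and can be pulled outside both brackets, which is what lets the whole expression collapse onto the pure rank-two Serre relation. The one thing I would verify carefully is the matching of the deformation parameters $s=\langle n-1,n-2\rangle$ and $r$ against the coefficients $(r+s)$ and $(rs)$ of that Serre relation; this works because those coefficients are symmetric in the two bracket slots, so the order in which $r$ and $s$ enter is immaterial.
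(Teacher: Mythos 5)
Your proof is correct and follows essentially the same route as the paper's: expand $x_{1\,n-1}^-$ as the nested bracket $[\,a,[\,b,c\,]_s\,]_s$, apply identity \eqref{b:1} twice with the free parameter chosen so that the cross terms carry the vanishing factor $[\,a,c\,]=[\,x_{n-1}^-(0),x_{1\,n-3}^-\,]=0$ from $(\mathrm{D9}_1)$, and reduce to the rank-two Serre relation $[\,a,a,b\,]_{(s,r)}=0$ via \eqref{b:4}. The only cosmetic difference is that you read the tail as $x_{1\,n-3}^-(0)$ where the paper uses $x_{1\,n-3}^-(1)$ (the ``$(0)$'' in the lemma statement is evidently a typo for ``$(1)$''), which does not affect the argument since only the commutativity of $x_{n-1}^-(0)$ with the tail is used.
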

\begin{proof}\, Combining \ref{b:1} with the Serre relations, we get
 \begin{eqnarray*}
&&[\,x_{n-1}^-(0),x_{1\, n-1}^-(0)\,]_{r}
\qquad {\hbox{(by definition)}}\\
&=&[\,x_{n-1}^-(0),\,[\,x_{n-1}^-(0),\,x_{n-2}^-(0),\,x_{1\,n-3}^-(1)
\,]_{(s,\,s)}\,]_{r}
\qquad {\hbox{(using (\ref{b:1}))}}\\
&=&[\,x_{n-1}^-(0),\,[\,x_{n-1}^-(0),\,x_{n-2}^-(0)\,]_{s},\,x_{1\,n-3}^-(1)
\,]_{(s,\,r)}
\qquad {\hbox{(using (\ref{b:1}))}}\\
&+&s[\,x_{n-1}^-(0),\,x_{n-2}^-(0),\,\underbrace{[\,x_{n-1}^-(0),\,x_{1\,n-3}^-(1)\,]_{1}}
\,]_{(1,\,r)}
\qquad {\hbox{(=0 by the Serre relation)}}\\
&=&[\,\underbrace{[\,x_{n-1}^-(0),\,x_{n-1}^-(0),\,x_{n-2}^-(0)\,]_{(s,\,r)}},\,x_{1\,n-3}^-(1)
\,]_{s}\qquad {\hbox{(=0 by the Serre relation)}}\\
&&+r[\,[\,x_{n-1}^-(0),\,x_{n-2}^-(0)\,]_{s},\,\underbrace{[\,x_{n-1}^-(0),\,x_{1\,n-3}^-(1)\,]_1}
\,]_{r^{-1}s}\quad {\hbox{(=0 by the Serre relation)}}\\
&=&0.
\end{eqnarray*}
\end{proof}

\begin{lemm}\, \label{l:8} We have that $[\,x_{n}^-(0),x_{1\, n}^-(1)\,]_{r^2}=0$.
\end{lemm}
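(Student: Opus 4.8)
The plan is to substitute the inductive definition $x_{1\, n}^-(1)=[\,x_n^-(0),\,x_{1\, n-1}^-(1)\,]_{s^2}$ and recast the statement as a Serre-type vanishing. The subscript is computed from the matrix $J$ of type $A_{2n-1}^{(2)}$: since $\la n,n-1\ra=s^2$ and $\la n,j\ra=1$ for $1\le j\le n-2$, the iterating scalar is $\la n,n-1\ra\cdots\la n,1\ra=s^2$. With this, the target becomes the triple $q$-bracket
$$[\,x_n^-(0),\,x_{1\, n}^-(1)\,]_{r^2}=[\,x_n^-(0),\,x_n^-(0),\,x_{1\, n-1}^-(1)\,]_{(s^2,\,r^2)},$$
so that I must kill a configuration with two copies of the long-root generator $x_n^-(0)$.

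The key structural observation I would exploit is that in the finite diagram $C_n$ the node $n$ is adjacent only to $n-1$: from $J$ one reads $\la n,j\ra=\la j,n\ra=1$ for $1\le j\le n-2$, so by $(\textrm{D}9_1)$ the element $x_n^-(0)$ commutes in the ordinary sense with every $x_j^-(k)$ for $j\le n-2$, hence with the whole inner vector $W:=x_{1\, n-2}^-(1)$. I would then write $x_{1\, n-1}^-(1)=[\,x_{n-1}^-(0),\,W\,]_c$ (with $c=\la n-1,n-2\ra\cdots\la n-1,1\ra$ a nonzero scalar whose value is irrelevant) and apply identity (\ref{b:1}) twice, each time choosing the free parameter $q$ equal to the relevant outer exponent so that the cross term carries the factor $[\,x_n^-(0),\,W\,]_1=0$ and drops out. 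This funnels both copies of $x_n^-(0)$ onto the single factor $x_{n-1}^-(0)$, producing
$$[\,x_n^-(0),\,x_n^-(0),\,x_{1\, n-1}^-(1)\,]_{(s^2,\,r^2)}=\big[\,[\,x_n^-(0),\,x_n^-(0),\,x_{n-1}^-(0)\,]_{(s^2,\,r^2)},\,W\,\big]_c.$$

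It then remains to see that the inner triple bracket vanishes. Since $x_n^-(0),x_{n-1}^-(0)$ correspond to $f_n,f_{n-1}$ in the finite subalgebra $\mathcal U_{r,s}(\frak{g}^{\sigma})\cong U_{r,s}(\frak{g}^{\sigma})$, and $n$ is the long node ($d_n=2$, $r_n=r^2$, $s_n=s^2$, $A_{n,n-1}=-1$), the pertinent $(r,s)$-Serre relation—equivalently $(\textrm{D}9_3)$ specialised to all modes zero, where $P_{n,n-1}=1$ because $\sigma(n)=n$—reads
$$x_n^-(0)^2x_{n-1}^-(0)-(r^2+s^2)\,x_n^-(0)x_{n-1}^-(0)x_n^-(0)+(rs)^2\,x_{n-1}^-(0)x_n^-(0)^2=0,$$
which by (\ref{b:4}) is precisely $[\,x_n^-(0),\,x_n^-(0),\,x_{n-1}^-(0)\,]_{(s^2,\,r^2)}=0$. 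Plugging this into the displayed identity yields $[\,x_n^-(0),\,x_{1\, n}^-(1)\,]_{r^2}=[\,0,\,W\,]_c=0$, as claimed.

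I expect the only genuine obstacle to be the bookkeeping of $q$-parameters: one must verify that the inner exponent generated by the recursion ($s^2$) together with the outer exponent in the statement ($r^2$) assemble into exactly the Serre pair $\{r^2,s^2\}=\{r_n,s_n\}$, and that the intermediate parameter $q$ chosen in each application of (\ref{b:1}) is the one annihilating the cross term through the commutativity $[\,x_n^-(0),\,W\,]=0$. All remaining steps are formal manipulations with the $q$-bracket identities, and the small values of $n$ (where $W$ degenerates to $x_1^-(1)$ or disappears) can be checked directly.
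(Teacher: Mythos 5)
Your proposal is correct and follows essentially the same route as the paper's own proof: both expand $x_{1\,n}^-(1)$ one level to expose $x_{n-1}^-(0)$ and $W=x_{1\,n-2}^-(1)$, kill the cross terms via $[\,x_n^-(0),W\,]=0$ (since $\langle j,n\rangle=\langle n,j\rangle=1$ for $j\le n-2$), and reduce the surviving term to the long-node Serre relation $[\,x_n^-(0),x_n^-(0),x_{n-1}^-(0)\,]_{(s^2,r^2)}=0$. The only difference is cosmetic (you regroup via (\ref{b:1}) before applying the second bracket, while the paper brackets first and regroups with (\ref{b:2})), and your bookkeeping of the scalars $s^2$ and $r^2$ matches the paper's.
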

\begin{proof}\, Using \ref{b:1} and the Serre relations, we obtain
 \begin{eqnarray*}
&&[\,x_{n}^-(0),x_{1\, n}^-(1)\,]_{r^2}
\qquad {\hbox{(by definition)}}\\
&=&[\,x_{n-1}^-(0),\,[\,x_{n}^-(0),\,x_{n-1}^-(0),\,x_{1\,n-2}^-(1)
\,]_{(s,\,s^2)}\,]_{r}
\qquad {\hbox{(using (\ref{b:1}))}}\\
&=&[\,x_{n}^-(0),\,[\,x_{n}^-(0),\,x_{n-1}^-(0)\,]_{s^2},\,x_{1\,n-2}^-(1)
\,]_{(s,\,r^2)}
\qquad {\hbox{(using (\ref{b:1}))}}\\
&+&s^2[\,x_{n}^-(0),\,x_{n-1}^-(0),\,\underbrace{[\,x_{n}^-(0),\,x_{1\,n-2}^-(1)\,]_{1}}
\,]_{(s^{-1},\,r^2)}
\qquad {\hbox{(=0 by the Serre relation)}}\\
&=&[\,\underbrace{[\,x_{n}^-(0),\,x_{n}^-(0),\,x_{n-1}^-(0)\,]_{(s^2,\,r^2)}},\,x_{1\,n-3}^-(1)
\,]_{s}\qquad {\hbox{(=0 by the Serre relation)}}\\
&&+r^2[\,[\,x_{n}^-(0),\,x_{n-1}^-(0)\,]_{s^2},\,\underbrace{[\,x_{n}^-(0),\,x_{1\,n-2}^-(1)\,]_1}
\,]_{r^{-2}s}\quad {\hbox{(=0 by the Serre relation)}}\\
&=&0.
\end{eqnarray*}
\end{proof}

Now we turn to relation $(\mathcal{A}4)$.

\begin{prop}\label{p:1} $\bigl[\,x_i^-{(0)},\,
 x^-_{\theta}(1)\,\bigr]_{\langle i,~
0\rangle^{-1}}=0$,\quad for \ $i\in I$.
\end{prop}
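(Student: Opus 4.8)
The plan is to read Proposition~\ref{p:1} as the loop-algebra incarnation of the classical fact that $\theta$ is the maximal root of $\mathfrak g_0$, so that $\theta+\alpha_i$ fails to be a root for every $i\in I$; once the bracket is normalized by the structural constant $\langle i,0\rangle^{-1}$ recorded in the matrix $J$ for $A_{2n-1}^{(2)}$, the corresponding $q$-commutator must collapse. Concretely, I would start from the inductive presentation $x^-_{\theta}(1)=y^-_{1\,2}(1)$ and peel the outer generators off the nested bracket using the $q$-bracket identities (\ref{b:1})--(\ref{b:3}), pushing the incoming factor $x_i^-(0)$ inward until it reaches the position in the chain $1,2,\dots,n-1,n,n-1,\dots,2$ at which the index $i$, or an index adjacent to $i$, already occurs.

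First I would dispose of the generic sub-brackets: whenever $x_i^-(0)$ must pass a factor $x_j^-(0)$ with $a^{\sigma}_{ij}=0$, relation $(\textrm{D9}_1)$ converts the passage into a scalar, so these terms commute freely and $x_i^-(0)$ can be transported to the decisive spot at no cost. For each fixed $i$ this reduces the whole computation to a short commutator involving only $x_{i-1}^-$, $x_i^-$, $x_{i+1}^-$ and the relevant partial root vector, which is exactly the configuration packaged in Lemma~\ref{L:comm1} (eqs.~(\ref{l:1})--(\ref{l:5})) and in Lemmas~\ref{l:6}--\ref{l:8}. For the extreme indices the collapse follows quickly from the definitions together with Lemma~\ref{l:8} (for $i=n$) and the degree-two Serre relation $(\textrm{D9}_2)$.

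The genuine work is the band $2\leqslant i\leqslant n-1$, where $\alpha_i$ occurs with multiplicity two in $\theta$; here the incoming $x_i^-(0)$ meets a second copy of $x_i^-(0)$, and I would use the degree-three Serre relation $(\textrm{D9}_3)$ together with Lemma~\ref{l:6} to annihilate the resulting $[\,x_i^-(0),x_i^-(0),\cdots\,]$ patterns. As in the proofs of (\ref{l:2}) and (\ref{l:3}), the vanishing is not visible term-by-term; instead I expect to rewrite the bracket in two different ways and arrive at a self-referential identity of the shape $(1+r^{-1}s)\,[\,x_i^-(0),x^-_{\theta}(1)\,]_{\langle i,0\rangle^{-1}}=0$, from which the conclusion follows because $r\neq -s$ in the ground field.

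The main obstacle, accordingly, is the bookkeeping of the $q$-bracket exponents as $x_i^-(0)$ is threaded through the doubly-occurring middle of the chain: each application of (\ref{b:1}) or (\ref{b:2}) redistributes the scalar weights, and one must confirm that the residual factor multiplying the commutator is precisely $1+r^{-1}s$, rather than something that vanishes identically and yields no information. I would organize this by an induction on the position of $\alpha_i$ in the chain, mirroring the inductive proof of (\ref{l:2}), so that all middle cases reduce to the two endpoint identities already established in Lemma~\ref{L:comm1}.
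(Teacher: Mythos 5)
Your plan coincides with the paper's own proof: the same case split on the position of $i$ in the root chain, the same transport of $x_i^-(0)$ through orthogonal factors via $(\textrm{D9}_1)$ and the $q$-bracket identities, reduction to Lemma~\ref{L:comm1} and Lemmas~\ref{l:6}--\ref{l:8}, and the same ``rewrite twice and extract a factor'' trick yielding $(1+r^{-1}s)[\cdot]=0$ (the paper deploys it inside Lemma~\ref{l:6} and, with factor $1+r^{-2}s^2$, in the case $i=n$, while the middle band then vanishes termwise). This is essentially the paper's argument.
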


\begin{proof} \ (I) \ If $i=1$,
$\langle 1, 0\rangle=rs$. By Lemma \ref{l:2}, fix $i=1$, we immediately have,
$$[\,x_1^-(0),x_{\theta}^-(1)\,]_{(rs)^{-1}}
=[\,x_{i}^-(0),\,y_{i\,i+1}^-(1)\,]_{(rs)^{-1}}=0.$$

\ (II) \   When $i=2$, $\la 2,\, 0\ra=s$. By the definition of quantum
root vectors,  we get easily
\begin{eqnarray*}
&&[\,x_2^-(0),x_{\theta}^-(1)\,]_{s^{-1}}\qquad {\hbox{(by definition)}}\\
&=&[\,x_2^-(0),\underbrace{
[\,x_2^-(0),\,x_3^-(0),\,y_{1\,4}^-(1)\,]_{(r^{-1},\,r^{-1})}}
\,]_{s^{-1}}\qquad {\hbox{(using (\ref{b:1}))}}\\
&=&[\,x_2^-(0),[\,[\,x_2^-(0),\,x_3^-(0)\,]_{r^{-1}}
,\,y_{1\,4}^-(1)\,]_{r^{-1}}
\,]_{s^{-1}}\qquad {\hbox{(using (\ref{b:1}))}}\\
&&+r^{-1}[\,x_2^-(0),\,x_3^-(0),\,\underbrace{[\,x_2^-(0),\,y_{1\,4}^-(1)\,]_1}
\,]_{(1,\,r^{-1})}\qquad {\hbox{(=0 by (\ref{l:4}))}}\\
&=&[\,\underbrace{[\,x_2^-(0),[\,x_2^-(0),\,x_3^-(0)\,]_{r^{-1}}\,]_{s^{-1}}}
,\,y_{1\,4}^-(1)\,]_{r^{-1}}\quad {\hbox{(=0 by the Serre relation)}}\\
&&+s^{-1}[\,[\,x_2^-(0),\,x_3^-(0)\,]_{r^{-1}}
,\,\underbrace{[\,x_2^-(0),\,y_{1\,4}^-(1)\,]_1}
\,]_{r^{-1}s}\quad {\hbox{(=0 by (\ref{l:4}))}}\\
&=&0.
\end{eqnarray*}

(III) \ When $2<i<n$,  $\langle i,\, 0\rangle=1$.  It follows
from (\ref{b:1}) and definition that
\begin{equation*}
\begin{split}
&x_{\theta}^-(1)\\
=&[\,x_2^-(0),\,\ldots,
x_{i-1}^-(0),\,x_{i}^-(0),\,x_{i+1}^-(0)
,\,y_{1\,i+2}^-(1)\,]_{(r^{-1},\ldots,r^{-1})}\quad {\hbox{(using (\ref{b:1}))}}\\
=&[\,x_2^-(0),\,\ldots,
x_{i-1}^-(0),\,[\,x_{i}^-(0),\,x_{i+1}^-(0)\,]_{r^{-1}}
,\,y_{1\,i+2}^-(1)\,]_{(r^{-1},\ldots,r^{-1})}\quad {\hbox{(using (\ref{b:1}))}}\\
&+r^{-1}[\,x_2^-(0),\,\ldots,
x_{i-1}^-(0),\,x_{i+1}^-(0)
,\,\underbrace{[\,x_{i}^-(0),\,y_{1\,i+2}^-(1)\,]_1}
\,]_{(1,\,r^{-1},\ldots,r^{-1})}\\
&\hskip7.5cm {\hbox{(=0 by (\ref{l:4}))}}\\
=&[\,x_2^-(0),\,\ldots,
[\,x_{i-1}^-(0),\,x_{i}^-(0),\,x_{i+1}^-(0)\,]_{(r^{-1},r^{-1})}
,\,y_{1\,i+2}^-(1)\,]_{(r^{-1},\ldots,r^{-1})}\\
&+r^{-1}[\,x_2^-(0),\,\ldots,
[\,x_{i}^-(0),\,x_{i+1}^-(0)\,]_{r^{-1}}
,\,\underbrace{[\,x_{i-1}^-(0),\,y_{1\,i+2}^-(1)\,]_1}
\,]_{(1,\,r^{-1},\ldots,r^{-1})}\\
&\hskip7.5cm {\hbox{(=0 by (\ref{l:5}))}}\\
=&[\,x_2^-(0),\,\ldots,
[\,x_{i-1}^-(0),\,x_{i}^-(0),\,x_{i+1}^-(0)\,]_{(r^{-1},r^{-1})}
,\,y_{1\,i+2}^-(1)\,]_{(r^{-1},\ldots,r^{-1})}
\end{split}
\end{equation*}
The above result implies that
\begin{eqnarray*}
&&[\,x_i^-(0),x_{\theta}^-(1)\,]\\
&=&[\,x_i^-(0),[\,x_2^-(0),\,\ldots,
[\,x_{i-1}^-(0),\,x_{i}^-(0),\,x_{i+1}^-(0)\,]_{(r^{-1},r^{-1})}
,\,y_{1\,i+2}^-(1)\,]_{(r^{-1},\ldots,r^{-1})}\,]\\
&&\hskip7.5cm {\hbox{(by (\ref{b:1}) and the Serre relation)}}\\
&=&[x_2^-(0),\ldots,
\underbrace{[x_i^-(0),[x_{i-1}^-(0),x_{i}^-(0),x_{i+1}^-(0)]_{(r^{-1},r^{-1})}
,y_{1\,i+2}^-(1)]_{(r^{-1},1)}}]_{(r^{-1},\ldots,r^{-1})}.
\end{eqnarray*}
Therefore it suffices to check
 $$[x_i^-(0),[x_{i-1}^-(0),x_{i}^-(0),x_{i+1}^-(0)]_{(r^{-1},r^{-1})}
,y_{1\,i+2}^-(1)]_{(r^{-1},1)}=0.$$  Actually, it is obvious that
\begin{eqnarray*}
&&[x_i^-(0),[x_{i-1}^-(0),x_{i}^-(0),x_{i+1}^-(0)]_{(r^{-1},r^{-1})}
,y_{1\,i+2}^-(1)]_{(r^{-1},1)}\qquad {\hbox{(using (\ref{b:1}))}}\\
&=&[\,\underbrace{[x_i^-(0),x_{i-1}^-(0),x_{i}^-(0),x_{i+1}^-(0)]_{(r^{-1},r^{-1},1)}}
,y_{1\,i+2}^-(1)\,]_{r^{-1}}\quad {\hbox{(=0 by Lemma \ref{l:6})}}\\
&&+[\,[x_{i-1}^-(0),x_{i}^-(0),x_{i+1}^-(0)]_{(r^{-1},r^{-1})}
,\underbrace{[x_i^-(0),y_{1\,i+2}^-(1)]_{1}}\,]_{r^{-1}}\quad{\hbox{(=0 by (\ref{l:4}))}}\\
&=&0.
\end{eqnarray*}

\ (IV) \ When $i=n$, $\langle n,\, 0\rangle=(rs)^{-2}$. Note that
\begin{equation*}
\begin{split}
&y_{1\,n-1}^-(1)\\
=&[\,x_{n-1}^-(0),\,x_n^-(0),\,
x_{1\,n-1}^-(1)\,]_{(s^2,\,r^{-1})}\qquad {\hbox{(using (\ref{b:1}))}}\\
=&[\,[\,x_{n-1}^-(0),\,x_n^-(0)\,]_{r^{-2}},\,
x_{1\,n-1}^-(1)\,]_{rs^2}\\
&+r^{-2}[\,x_{n}^-(0),\,\underbrace{[\,x_{n-1}^-(0),\,
x_{1\,n-1}^-(1)\,]_{r}}\,]_{(rs)^{2}}\quad {\hbox{(=0 by Lemma \ref{l:7})}}\\
=&[\,[\,x_{n-1}^-(0),\,x_n^-(0)\,]_{r^{-2}},\,x_{1\,n-1}^-(1)\,]_{rs^2},
\end{split}
\end{equation*}
Applying the above result, it is easy to get
\begin{eqnarray*}
&&[\,x_n^-(0),\,y_{1\,n-1}^-(1)\,]_{s^4}\\
&=&[\,x_n^-(0),\,[\,x_{n-1}^-(0),\,x_n^-(0)\,]_{r^{-2}}
,\,x_{1\,n-1}^-(1)\,]_{(rs^2,\,s^4)}\quad{\hbox{(using (\ref{b:1}))}}\\
&=&[\,\underbrace{[\,x_n^-(0),\,[\,x_{n-1}^-(0),\,x_n^-(0)\,]_{r^{-2}}\,]_{s^2}}
,\,x_{1\,n-1}^-(1)\,]_{rs^3}\quad{\hbox{(=0 by the Serre relation)}}\\
&&+s^2[\,[\,x_{n-1}^-(0),\,x_n^-(0)\,]_{r^{-2}}
,\,[\,x_n^-(0),\,x_{1\,n-1}^-(1)\,]_{s^2}\,]_{r}\quad{\hbox{(using (\ref{b:2}))}}\\
&=& s^2[\,x_{n-1}^-(0),\,[\,x_n^-(0),\,x_{1\,n}^-(1)\,]_{r^{2}}\,]_{r^{-3}}\quad{\hbox{(=0 by Lemma \ref{l:8})}}\\
&&+ r^{2}s^{2}[\,[\,x_{n-1}^-(0),\,x_{1\,n}^-(1)\,]_{r^{-1}}
,\,x_{n}^-(0)\,]_{r^{-4}}\\
&=& r^{2}s^{2}[\,[\,x_{n-1}^-(0),\,x_{1\,n}^-(1)\,]_{r^{-1}}
,\,x_{n}^-(0)\,]_{r^{-4}}.
\end{eqnarray*}
Expanding two sides of the above relation, we have that
$$(1+r^{-2}s^2)
[\,x_n^-(0),\,y_{1\,n-1}^-(1)\,]_{(rs)^2}=0.$$ So, if $r\ne -s$,
 it holds that
\begin{eqnarray*}
\begin{split}[\,x_n^-(0),\,y_{1\,n-1}^-(1)\,]_{(rs)^2}=0.
\end{split}
\end{eqnarray*}

Consequently, using (\ref{b:1}) and Serre relation repeatedly, our previous result implies immediately that
\begin{eqnarray*}
&&[\,x_n^-(0),\,x_{\theta}^-(1)\,]_{rs}\qquad{\hbox{(by definition)}}\\
&=&[\,x_n^-(0),\,[\,x_2^-(0),\ldots,\,x_{n-2}^-(0),y_{1\,n-1}^-(1)
\,]_{(r^{-1},\,\ldots,\,r^{-1})}\,]_{(rs)^2} \\
&=&[\,x_2^-(0),\ldots,\,x_{n-2}^-(0),\,\underbrace{
[\,x_n^-(0),\,y_{1\,n-1}^-(1)\,]_{(rs)^2}}
\,]_{(r^{-1},\,\ldots,\,r^{-1})}\,]_{rs} \\
&=&0.
\end{eqnarray*}
Hence Proposition \ref{p:1} is proved.
\end{proof}

Next, we turn to the commutation relation in $(\mathcal{A}4)$ involved with
$i=j=0$.

\begin{prop} {\it
$[\,E_0, F_0\,]=
\frac{\gamma'^{-1}\om_{\theta}^{-1}-\gamma^{-1}\om_{\theta'}^{-1}}{r-s}$.}
\end{prop}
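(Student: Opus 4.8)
The plan is to read this identity as relation $(\mathcal{A}4)$ in the case $i=j=0$ and to verify it by directly computing the commutator of the two images. Writing $\omega_0=\gamma'^{-1}\om_\theta^{-1}$ and $\omega_0'=\gamma^{-1}{\om'_\theta}^{-1}$ for the images of $\om_0,\om_0'$ under $\Psi$, we have $E_0=a\,x_\theta^-(1)\,\omega_0$ and $F_0=\omega_0'\,x_\theta^+(-1)$. Since $\gamma^{\pm\frac12},\gamma'^{\pm\frac12}$ are central and the monomials $\om_\theta,{\om'_\theta}$ act on the Drinfeld currents by the explicit scalars recorded in (D5), I would first move the group-like factors $\omega_0,\omega_0'$ to one side of each product $E_0F_0$ and $F_0E_0$, collecting the resulting scalars (products of structure constants $\langle\cdot,\cdot\rangle$ along the root chain). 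This reduces the statement to evaluating a single $q$-commutator $[\,x_\theta^-(1),\,x_\theta^+(-1)\,]_{v}$ for an explicit scalar $v$, and to identifying it with the appropriate multiple of the Cartan monomials $\gamma'\om_\theta^{-1}$ and $\gamma{\om'_\theta}^{-1}$.

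To evaluate $[\,x_\theta^-(1),\,x_\theta^+(-1)\,]_{v}$ I would induct along the fixed root chain $\alpha_1\to\alpha_2\to\cdots\to\alpha_2$, peeling off one factor of $x_\theta^+(-1)$ at a time. The loop-degree grading already confines the answer to the Cartan subalgebra: in both currents only the innermost generators $x_1^-(1)$ and $x_1^+(-1)$ carry nonzero loop-degree, so the only elementary commutators that can contribute through (D8) are $[\,x_1^+(-1),x_1^-(1)\,]=\frac{1}{r_1-s_1}(\gamma'\om_1-\gamma\om'_1)$ together with the degree-zero pairings $[\,x_i^+(0),x_i^-(0)\,]=\frac{1}{r_i-s_i}(\om_i-\om'_i)$; in particular no higher imaginary-root generators $\phi_i(m),\varphi_i(-m)$ with $m>0$ can occur. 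The mechanism is to expand one current as an iterated $q$-bracket, distribute the commutator using the identities (\ref{b:1})--(\ref{b:3}), and invoke the vanishing relations already established—Proposition \ref{p:1} together with the technical commutation relations of Lemma \ref{L:comm1} and Lemmas \ref{l:6}--\ref{l:8}—to annihilate every cross term except the telescoping chain that contracts the two innermost generators. Each contraction drags a Cartan element back through the remaining generators, producing a scalar via (D5), and the intermediate $(\om_i-\om'_i)$ pairings must cancel the spurious $\om_\theta,{\om'_\theta}$ factors so that the product collapses.

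Finally I would collect all accumulated scalars and check that the normalization constant $a=(rs)^{n-2}$ is exactly the value that cancels the product of structure constants and powers of $r_i,s_i$ picked up along the chain, so that the surviving Cartan terms assemble into $\frac{1}{r-s}(\gamma'^{-1}\om_\theta^{-1}-\gamma^{-1}{\om'_\theta}^{-1})=\frac{1}{r-s}(\omega_0-\omega_0')$. The main obstacle is precisely this scalar bookkeeping together with the cross-term cancellation: the length of the chain means the commutator expands into many terms, and one must verify that the chosen root chain and the constant $a$ force all off-diagonal contributions to vanish while the diagonal ones telescope. A useful device that roughly halves the labour is the Chevalley anti-involution $\tau$, under which $x_\theta^-(1)$ and $x_\theta^+(-1)$ are interchanged up to scalars and the swap $r\leftrightarrow s$, so that the $\gamma{\om'_\theta}^{-1}$-term is obtained from the $\gamma'\om_\theta^{-1}$-term by applying $\tau$ rather than by a separate computation.
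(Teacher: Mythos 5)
Your proposal follows essentially the same route as the paper: the group-like factors are stripped off so that the claim reduces to $(rs)^{n-2}\bigl[\,x_\theta^-(1),\,x_\theta^+(-1)\,\bigr]$ times the Cartan monomial $\gamma^{-1}\gamma'^{-1}\om_\theta^{-1}{\om'_\theta}^{-1}$, and this bracket is then computed by induction along the fixed root chain, with every off-diagonal pairing killed by the $\delta_{ij}$ in (D8) together with (D9), so that only the matched contractions survive and telescope into $(rs)^{2-n}\frac{\gamma\om'_\theta-\gamma'\om_\theta}{r-s}$, exactly cancelling the constant $a$. The only cosmetic differences are that the paper seeds the induction with the known $A_{n-1}^{(1)}$ identity from \cite{HRZ} rather than with the innermost contraction $[\,x_1^+(-1),x_1^-(1)\,]$, and that the cross-term cancellation is driven purely by (D8)/(D9) and the bracket identities \eqref{b:1}--\eqref{b:3}, not by Proposition \ref{p:1} or Lemmas \ref{L:comm1}, \ref{l:6}--\ref{l:8}, which the paper uses only for the other defining relations.
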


\begin{proof}\, First we consider
\begin{equation*}
\begin{split}
\bigl[\,E_0,
F_0\,\bigr]&=(rs)^{n-2}\bigl[\,x^-_{\theta}(1)\,
\gamma'^{-1}{\om_\theta}^{-1},\,\gamma^{-1}{\om'_\theta}^{-1}
x^+_{\theta}(-1)\,\bigr]\\
&=(rs)^{n-2}\bigl[\,x^-_{\theta}(1),\,x^+_{\theta}(-1)\,\bigr]\,
\cdot(\gamma^{-1}\gamma'^{-1}{\om_\theta}^{-1}{\om'_\theta}^{-1}).
\end{split}
\end{equation*}

Recall the constructions of $x^-_{\theta}(1)$ and $x_{\theta}^+(-1)$ in the present case.
\begin{equation*}
\begin{split}
&x^-_{\theta}(1)=y_{1\,2}^-(1)=[\,x_2^-(0),\ldots, x_n^-(0),\,x_{1\,n-1}^-(1)\,]
_{(s^2,\,r^{-1},\,\ldots,r^{-1})},\\
&x_{\theta}^+(-1)=y_{1\,2}^+(-1)\\
=&[\,[\,[\,x_{1\,n-1}^+(-1),\,x_n^+(0)\,]_{r^2},\,
x_{n-1}^+(0)\,]_{s^{-1}},\ldots, x_2^+(0)\,]_{s^{-1}}.
\end{split}
\end{equation*}
Using the result for $A_{n-1}^{(1)}$ \cite{HRZ}, one has
\begin{equation*}[\,x_{1\,n-1}^-(1),\,x_{1\,n-1}^+(-1)\,]=
\frac{\gamma\om_{\alpha_{1\,n-1}}'-\gamma'\om_{\alpha_{1\,n-1}}}{r-s}.
  \end{equation*}

Using a similar calculation, we have that
\begin{equation*}
\begin{split}
&[\,x_{1\,n}^-(1),\,x_{1\,n}^+(-1)\,]\qquad{\hbox{(by definition)}}\\
=&[\,[\,x_{n}^-(0),\,x_{1\,n-1}^-(1)\,]_{s^2}
,\,[\,x_{1\,n-1}^+(-1),\,x_{n}^+(0)\,]_{r^2}\,]\qquad{\hbox{(using (\ref{b:1}))}}\\
=&[\,[\,[\,x_{n}^-(0),\,x_{1\,n-1}^+(-1)\,],\,x_{1\,n-1}^-(1)\,]_{s^2}
,\,x_{n}^+(0)\,]_{r^2}\quad{\hbox{(=0 by (\ref{b:1}) and (D9))}}\\
&+[\,[\,x_{n}^-(0),\,[\,x_{1\,n-1}^-(1),\,x_{1\,n-1}^+(-1)\,]\,]_{s^2}
,\,x_{n}^+(0)\,]_{r^2}\,{\hbox{(using (D6) and (D9))}}\\
&+[\,x_{1\,n-1}^+(-1),\,[\,
[\,x_{n}^-(0),\,x_{n}^+(0)\,],\,x_{1\,n-1}^-(1)\,]_{s^2}\,]_{r^2}\,{\hbox{(using (D9) and (D6))}}\\
&+[\,x_{1\,n-1}^+(-1),\,[\,x_{n}^-(0),\,
[\,x_{1\,n-1}^-(1),\,x_{n}^+(0)\,]\,]_{s^2}
\,]_{r^2}\quad{\hbox{(=0 by (\ref{b:1}) and (D9))}}\\
=&\gamma\om'_{\alpha_{1\,n-1}}\cdot \frac{\om'_n-\om_n}{r-s}
+\frac{\gamma\om'_{\alpha_{1\,n-1}}-\gamma'\om_{\alpha_{1\,n-1}}}
{r-s}\om_n\\
=&\frac{\gamma\om'_{\alpha_{1\, n}}-\gamma'\om_{\alpha_{1\,n}}}{r-s}.
\end{split}
\end{equation*}

We can proceed in the same way to obtain
\begin{equation*}
\begin{split}
&[\,y_{1\,n-1}^-(1),\,y_{1\,n-1}^+(-1)\,]\qquad{\hbox{(by definition)}}\\
=&[\,[\,x_{n-1}^-(0),\,x_{1\,n}^-(1)\,]_{r^{-1}}
,\,[\,x_{1\,n}^+(-1),\,x_{n}^+(0)\,]_{s^{-1}}\,]
\qquad{\hbox{(using (\ref{b:1}))}}\\
=&[\,[\,[\,x_{n-1}^-(0),\,x_{1\,n}^+(-1)\,],\,x_{1\,n}^-(1)\,]_{r^{-1}}
,\,x_{n-1}^+(0)\,]_{s^{-1}}\,{\hbox{(using (\ref{b:1}),(D9) and (\ref{b:3}))}}\\
&+[\,[\,x_{n-1}^-(0),\,[\,x_{1\,n}^-(1),\,x_{1\,n}^+(-1)\,]\,]_{r^{-1}}
,\,x_{n-1}^+(0)\,]_{s^{-1}}\,{\hbox{(=0 by the above result and (D6))}}\\
&+[\,x_{1\,n}^+(-1),\,[\,
[\,x_{n-1}^-(0),\,x_{n-1}^+(0)\,],\,x_{1\,n}^-(1)\,]_{r^{-1}}
\,]_{s^{-1}}\,{\hbox{(=0 by (D9) and (D6))}}\\
&+[\,x_{1\,n}^+(-1),\,[\,x_{n-1}^-(0),\,
[\,x_{1\,n}^-(1),\,x_{n-1}^+(0)\,]\,]_{r^{-1}}
\,]_{s^{-1}}\,{\hbox{(using (\ref{b:1}),(D9) and (\ref{b:3}))}}\\
=&(rs)^{-1}\gamma\om'_{\alpha_{1\,n}}\cdot \frac{\om'_{n-1}-\om_{n-1}}{r-s}
+(rs)^{-1}\frac{\gamma\om'_{\alpha_{1\,n}}-\gamma'\om_{\alpha_{1\,n}}}
{r-s}\om_{n-1}\\
=&(rs)^{-1}\frac{\gamma\om'_{\beta_{1\,n-1}}-\gamma'\om_{\beta_{1\,n-1}}}{r-s}.
\end{split}
\end{equation*}
By the inductive step it is obvious that
\begin{equation*}
\begin{split}
[\,y_{1\,2}^-(1),\,y_{1\,2}^+(-1)\,]
=(rs)^{2-n}\frac{\gamma\om'_{\beta_{1\,2}}-\gamma'\om_{\beta_{1\,2}}}
{r-s}.
\end{split}
\end{equation*}
Hence we arrive at the required relation.
\end{proof}

We pause to recall the following fact, which will be used in the sequel.
\begin{lemm}\label{lemma1}\, If $X\in U_q({\mathfrak{g}_0})^+$, and $[\,A,\, F_k\,]=0, \forall k\in {I}$, then $A=0$. On the other hand, If $X\in U_q({\mathfrak{g}_0})^-$, and $[\,A,\, E_k\,]=0, \forall k\in {I}$, then $A=0$.
\end{lemm}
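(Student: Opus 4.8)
The plan is to establish this by the classical skew-derivation and nondegeneracy argument for the positive part of a quantum group, now carried out for the two-parameter finite-type algebra $U_{r,s}(\mathfrak{g}_0)$ (here I read $A$ as the element $X$, understood to lie in the augmentation ideal of $U_{r,s}(\mathfrak{g}_0)^+$, since a nonzero scalar commutes with every $F_k$ and would otherwise be a trivial counterexample). First I would exploit the root-lattice grading $U_{r,s}(\mathfrak{g}_0)^+=\bigoplus_{\beta}U^+_\beta$. Because $[\,\cdot\,,F_k]$ carries $U^+_\beta$ into the weight space of weight $\beta-\alpha_k$, the vanishing of $[A,F_k]$ decouples across weights, so it suffices to treat a homogeneous $A\in U^+_\beta$ with $\operatorname{ht}(\beta)=\sum_i b_i>0$.

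Next I would introduce the left and right $(r,s)$-twisted skew-derivations on $U^+$. Using $\Delta(e_i)=e_i\otimes 1+\omega_i\otimes e_i$ together with the defining relation $(\mathcal{X}4)$, an induction on $\operatorname{ht}(\beta)$ produces elements $\partial_i(A),\,{}_i\partial(A)\in U^+_{\beta-\alpha_i}$ with
\[
[\,A,\,F_i\,]=\frac{1}{r-s}\bigl(\partial_i(A)\,\omega_i-\omega_i'\,{}_i\partial(A)\bigr),
\]
where $\partial_i$ and ${}_i\partial$ are determined by $\partial_i(e_j)={}_i\partial(e_j)=\delta_{ij}$ and a $\langle\,,\,\rangle$-twisted Leibniz rule dictated by $(\mathcal{X}2)$--$(\mathcal{X}3)$. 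By the triangular decomposition the summands $\partial_i(A)\,\omega_i$ and $\omega_i'\,{}_i\partial(A)$ sit inside $U^+_{\beta-\alpha_i}\,U^0$ against the linearly independent Cartan monomials $\omega_i$ and $\omega_i'$; hence $[A,F_i]=0$ forces $\partial_i(A)=0={}_i\partial(A)$ for every $i\in I$.

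Finally I would invoke nondegeneracy to conclude $A=0$. The Drinfeld double structure of Proposition \ref{DB} supplies a nondegenerate skew-Hopf pairing $\langle\,,\,\rangle\colon U_{r,s}(\widehat{\mathfrak{n}}^-)\times U_{r,s}(\widehat{\mathfrak{n}}^+)\to\mathbb{K}$, restricting to a perfect pairing on each graded piece $U^-_{-\beta}\times U^+_{\beta}$ of the finite-type subalgebra. The derivation $\partial_i$ is adjoint to left multiplication by $f_i$ under this pairing, so an induction on $\operatorname{ht}(\beta)$ expresses $\langle f_{i_m}\cdots f_{i_1},A\rangle$ as a scalar multiple of $\langle f_{i_m}\cdots f_{i_2},\partial_{i_1}(A)\rangle$; since all $\partial_i(A)=0$, the element $A$ pairs trivially with every monomial in $U^-_{-\beta}$ and therefore $A=0$. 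The dual statement, for $X\in U^-$ annihilated by all $E_k$, follows by transporting this argument through the Chevalley-type anti-automorphism $\tau$. I expect the only delicate point to be bookkeeping the $\langle\,,\,\rangle$-twists so that the adjointness identity $\langle f_i\,y,\,A\rangle=c_{i}\,\langle y,\,\partial_i(A)\rangle$ holds with the correct constant $c_i$; granting this, nondegeneracy closes the argument at once.
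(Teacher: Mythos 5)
The paper offers no proof of this lemma at all: it is introduced with ``We pause to recall the following fact'' and then used as a black box, so there is no argument of the authors' to compare yours against. Your proposal supplies the standard proof (the two-parameter analogue of Lusztig's skew-derivation argument, as in Benkart--Witherspoon and Bergeron--Gao--Hu), and it is essentially correct. Two points are worth making explicit. First, you are right that the lemma as literally stated is false for nonzero scalars, and your restriction to the augmentation ideal (equivalently, to homogeneous $A$ of nonzero weight) is the correct reading; in every application in Sections~6--7 the element $A$ is a $q$-bracket of root vectors and hence has nonzero weight, so nothing is lost. Second, the one genuine external input is the nondegeneracy of the restriction of the skew-Hopf pairing to each graded piece $U^-_{-\beta}\times U^+_{\beta}$ of the finite-type subalgebra; this is where the quantum Serre relations enter (they must generate the radical of the pairing on the free algebra), and it holds here because the ground field is $\mathbb{Q}(r,s)$ with $r\neq\pm s$, but it is a theorem of the cited literature rather than something your argument re-derives. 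Granting that, the chain ``$[A,F_i]=0$ for all $i$ $\Rightarrow$ $\partial_i(A)={}_i\partial(A)=0$ for all $i$ (by linear independence of $\omega_i$ and $\omega_i'$ in $U^0$ under the triangular decomposition) $\Rightarrow$ $A$ pairs trivially with all of $U^-_{-\beta}$ $\Rightarrow$ $A=0$'' is sound, and the reduction of the second half of the lemma to the first via the anti-automorphism $\tau$ is exactly right.
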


We now return to check relations ($\mathcal{A}5$) and ($\mathcal{A}6$).  Indeed,
relations ($\mathcal{A}5$) or ($\mathcal{A}6$) can be obtained from the other one by applying $\tau$.
The following three relations are the main statements. 

\begin{lemm} \, Using the above notations, we have the following relations:

 $(1)$ \ $[\,E_0, \,E_n\,]_{(rs)^{-2}}=0$,

$(2)$ \ $[\,E_2,\,E_2,\, E_0\,]_{(s^{-1},\,r^{-1})}=0$,

$(3)$ \ $[\,F_{n-1},\,F_{n-1},\, F_{n-1},\, F_n\,]_{(r^{-2},\,(rs)^{-1},\,s^{-2})}=0$,

\end{lemm}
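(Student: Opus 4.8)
The plan is to read the three identities as the representative $(r,s)$-Serre relations that remain after the finite-type relations $(\mathcal{X}1)$--$(\mathcal{X}3)$ and the bracket relation $(\mathcal{A}4)$ (Proposition~\ref{p:1} and the $[E_0,F_0]$ computation) have been settled: relations $(1)$ and $(2)$ are the genuinely affine ones, in which $E_0$ is the loop element $a\,x^-_{\theta}(1)\cdot(\gamma'^{-1}\om_{\theta}^{-1})$, while $(3)$ is the finite Serre relation between the short node $n{-}1$ and the long node $n$ of $\frak{g}_0=C_n$. In every case I would argue inside $\mathcal{U}_{r,s}(\hat{\frak{g}}^{\sigma})$ using only the quantum-bracket calculus (\ref{b:1})--(\ref{b:6}), the Drinfeld relations (D5)--(D9), and the commutation results already in hand (Lemma~\ref{L:comm1}, Lemma~\ref{l:6}, Lemma~\ref{l:7}, Lemma~\ref{l:8}, Proposition~\ref{p:1}), closing each reduction with the nondegeneracy trick that an identity of the shape $(1+r^{-1}s)[\,\cdots\,]=0$ forces $[\,\cdots\,]=0$ because $r\neq -s$.

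I would dispose of $(3)$ first, since its finite content underlies the level-zero reductions in the other two. Writing $F_{n-1}=x_{n-1}^-(0)$, $F_n=\tfrac1r x_n^-(0)$ and setting $u=r^{-1},\,v=s^{-1}$, identity (\ref{b:5}) presents $[\,F_{n-1},F_{n-1},F_{n-1},F_n\,]_{(r^{-2},(rs)^{-1},s^{-2})}$ as exactly the quantum Serre polynomial $\mathrm{ad}(x_{n-1}^-(0))^3\,x_n^-(0)$. This vanishes because the subalgebra generated by the level-zero elements $x_i^{\pm}(0),\om_i,\om_i'$ is isomorphic to $U_{r,s}(\frak{g}^{\sigma})=U_{r,s}(C_n)$, in which it is a defining relation; equivalently one checks it directly from (D7) and $(\mathrm{D}9_3)$. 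Together with the ordinary $A$-type Serre relations among consecutive short nodes, this supplies the pool of level-zero vanishings used below.

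For $(1)$ I would move $E_n=x_n^+(0)$ past the group-like tail $\gamma'^{-1}\om_{\theta}^{-1}$ of $E_0$ by (D5); the conjugation scalar that appears is precisely $\langle n,0\rangle=(rs)^{-2}$, so the claim collapses to $[\,x^-_{\theta}(1),\,x_n^+(0)\,]=0$. Expanding $x^-_{\theta}(1)$ as the fixed iterated bracket along the root chain $\alpha_1\to\cdots\to\alpha_n\to\alpha_{n-1}\to\cdots\to\alpha_2$ and applying the Leibniz rule (\ref{b:3}), every cross term $[\,x_n^+(0),x_j^-(\ell)\,]$ with $j\neq n$ dies by (D8), so only the unique factor $x_n^-(0)$ responds, producing the Cartan element $\om_n-\om_n'$ through (D8). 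Commuting this element out of the bracket by (D5) splits the answer into an $\om_n$-part and an $\om_n'$-part; each is a scalar times the shortened bracket in which the deleted $x_n^-(0)$ leaves two adjacent copies of $x_{n-1}^-(0)$, and that shortened bracket is a level-zero Serre vanishing reduced as in Lemma~\ref{l:6} (i.e.\ to $(\mathrm{D}9_3)$). Hence both parts vanish and $(1)$ follows.

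Relation $(2)$ goes the same way but is heavier, since $\alpha_2$ occurs with multiplicity two in $\theta$: after using (\ref{b:4}) to unfold $[\,E_2,E_2,E_0\,]_{(s^{-1},r^{-1})}$ and (\ref{b:3}) to let the two copies of $x_2^+(0)$ act on $x^-_{\theta}(1)$, the surviving terms are those in which $x_2^+(0)$ meets one of the two $x_2^-(0)$'s in the chain, the remaining cross terms vanishing by Proposition~\ref{p:1} and (\ref{l:4}); the Cartan factors are then pushed out by (D5) and what is left collapses by the level-zero Serre relations established above. The corresponding $(\mathcal{A}6)$ identities come for free by applying the anti-automorphism $\tau$. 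I expect the main obstacle to be organisational rather than conceptual: keeping exact track of the accumulated $r,s$-scalars as the Cartan element $\om_n-\om_n'$ (and, in $(2)$, the several $\om_2,\om_2'$ contributions) is transported through the long iterated bracket, and verifying that \emph{each} shortened bracket separately matches a genuine Serre relation rather than merely their sum. It is exactly here that the precise shape of the fixed root chain and the repeated appeal to $r\neq -s$ are indispensable.
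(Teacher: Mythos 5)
Your plan for parts $(1)$ and $(2)$ is essentially the paper's: write $E_0=a\,x^-_{\theta}(1)\cdot\gamma'^{-1}\om_{\theta}^{-1}$, absorb the conjugation scalar $\langle n,0\rangle=(rs)^{-2}$ into the bracket subscript, let $x_n^+(0)$ (resp.\ the two copies of $x_2^+(0)$) pair with the unique matching negative factors in the root chain via (D8)/(D9), push the resulting Cartan elements out by (D5), and kill the shortened brackets using the commutator vanishings of Lemma \ref{L:comm1} and its companions. One small imprecision: for $(1)$ the shortened bracket that must die is $[\,x_{n-1}^-(0),\,x_{1\,n-1}^-(1)\,]_r$, which is the level-one analogue of Lemma \ref{l:7} (proved by a separate $q$-bracket manipulation in the paper), not an instance of Lemma \ref{l:6}; but that is a matter of citing the right auxiliary identity, not a conceptual problem.

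Part $(3)$ is where you have a genuine gap. You assert that $[\,F_{n-1},F_{n-1},F_{n-1},F_n\,]_{(r^{-2},(rs)^{-1},s^{-2})}$ vanishes ``because the level-zero subalgebra is isomorphic to $U_{r,s}(\frak{g}^{\sigma})=U_{r,s}(C_n)$, in which it is a defining relation.'' This is circular: the isomorphism in the direction you need (a well-defined map $U_{r,s}(\frak{g}^{\sigma})\to\mathcal{U}_{r,s}(\hat{\frak{g}}^{\sigma})$ sending $f_i\mapsto x_i^-(0)$) requires precisely that the finite-type Serre relations of the \emph{folded} algebra $C_n$ hold among the $x_i^{\pm}(0)$ --- which is exactly the content of $(3)$. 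Nor is the cubic relation one of the defining relations of the Drinfeld presentation: $(\mathrm{D}9_3)$/$(\mathrm{D}9_4)$ are quadratic Serre relations indexed by the \emph{unfolded} simply-laced diagram $A_{2n-1}$ (with the polynomial factors $P_{ij}^{\pm}$ and the folding identification $x_{\sigma(i)}^{\pm}(l)=\omega^l x_i^{\pm}(l)$), and extracting the cubic folded relation from them is a nontrivial derivation that your phrase ``one checks it directly from (D7) and $(\mathrm{D}9_3)$'' does not supply. The paper avoids this entirely by a different device: it sets $X$ equal to the bracket in question, observes $X\in\mathcal{U}_{r,s}(\frak{g}^{\sigma})^-$, and invokes Lemma \ref{lemma1} (the nondegeneracy of the pairing with the positive part), reducing $(3)$ to the two finite computations $[\,x_{n}^+(0),X\,]=0$ and $[\,x_{n-1}^+(0),X\,]=0$, each of which follows from (D8) and cancellation of the two resulting Cartan contributions. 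You would need to adopt that argument (or give an honest derivation of the cubic relation from the level-zero Drinfeld relations) for $(3)$ to stand.
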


\begin{proof} \ (1) \  Combining the definitions and Drinfeld relations, we see that
\begin{equation*}
\begin{split}
&\bigl[\,E_0,\,E_n,\bigr]_{(rs)^{-2}}\\
=&a\,\bigl[\,x_{\theta}^-(1),\,
x_n^+(0)\,\bigr]\cdot {\gamma'}^{-1}\omega_{\theta}^{-1} \\
=&a\,\bigl[\,x_2^-(0),\,\ldots,\, x_{n-1}^-(0),\,[\,
x_n^-(0),\,x_n^+(0)\,], x_{1\,n-1}^-(1)\,\bigr]_{(s^2, r^{-1}, \ldots, r^{-1})}\cdot {\gamma'}^{-1}\omega_{\theta}^{-1}\\
=&a\,\bigl[\,x_2^-(0),\,\ldots,\, \underbrace{[\,x_{n-1}^-(0),\, x_{1\,n-1}^-(1)\,]_{r}}\cdots \omega_n\,\bigr]_{(r^{-1},\,\ldots,\,r^{-1})}\cdot {\gamma'}^{-1}\omega_{\theta}^{-1}\\
=&0,
\end{split}
\end{equation*}
where the last step follows from the following calculations

\begin{eqnarray*}
&&[\,x_{n-1}^-(0),\,x_{1\,n-1}^-(1)\,]_{r}\\
&=&[\,x_{n-1}^-(0),\,\underbrace{[\,x_{n-1}^-(0),\,x_{n-2}^-(0),\,x_{1\,n-3}^-(1)\,]_{(s,\,s)}}\,]_{r}\quad{\hbox{(using (\ref{b:1}))}}\\
&=&[\,x_{n-1}^-(0),\,[\,x_{n-1}^-(0),\,x_{n-2}^-(0)\,]_{s},\,x_{1\,n-3}^-(1)\,]_{(s,\,r)}\quad{\hbox{(using (\ref{b:1}))}}\\
&&+s[\,x_{n-1}^-(0),\,x_{n-2}^-(0),\,\underbrace{[\,x_{n-1}^-(0),\,x_{1\,n-3}^-(1)\,]_{1}}\,]_{(1,\,r)}\quad{\hbox{(=0 by (D10$_2$))}}\\
&=& [\,\underbrace{[\,x_{n-1}^-(0),\,x_{n-1}^-(0),\,x_{n-2}^-(0)\,]_{(s,\,r)}},\,x_{1\,n-3}^-(1)\,]_{s}\quad{\hbox{(using (\ref{b:6}))}}\\
&&+ r[\,[\,x_{n-1}^-(0),\,x_{n-2}^-(0)\,]_{s},\,\underbrace{[\,x_{n-1}^-(0),\,x_{1\,n-3}^-(1)\,]_{1}}\,]_{r^{-1}s}\\
&=&0.
\end{eqnarray*}
\smallskip

(2) \  Relation $(D9)$ yields directly that
\begin{equation*}
\begin{split}
&\bigl[\,x_2^+(0),\,x_{\theta}^-(1)\bigr]\\
=&\bigl[\,[\,x_2^+(0),\, x_2^-(0)\,],\,y_{1\,3}^-(1)\,\bigr]_{r^{-1}} \\
+&\bigl[\,x_2^-(0),\,\ldots,\, x_{n}^-(0),\,\ldots,\,x_3^-(0),\,[\,x_2^+(0),\, x_2^-(0)\,],\, x_{1}^-(1)\,\bigr]_{(s,\,\ldots,\, s,\,s^2,\, r^{-1},\,\ldots,\,r^{-1})}\\
=&(rs)^{-1}y_{1\,3}^-(1)\omega_2,
\end{split}
\end{equation*}

The statement (2) follows from the above results and $[\,x_2^+(0),\, y_{1\,3}^-(1)\,]=0$, which holds by direct calculation.

\smallskip

(3) \  Denote that $X=[\,F_{n-1},\,F_{n-1},\, F_{n-1},\, F_n\,]_{(r^{-2},\,(rs)^{-1},\,s^{-2})}$. It is obvious that $X\in \mathcal{U}_{r,s}(\frak{g}^{\sigma})^-$. Therefore, by Lemma \ref{lemma1}, in order to prove $X=0$, it suffices to check $[\,x_i^+(0),\, X\,]=0$ for  $i\in I$. For $i$ not equal to $n-1$ or $n$, the claim is obvious. So we only need to verify the cases of $i=n-1$ and $i=n$.

First we get from relation (D9) in the case of $i=n$,
\begin{equation*}
\begin{split}
&[\,x_{n}^+(0),\, X\,] \\
=&\frac{1}{2}\,\Big([\,x_n^+(0),\,[\,x_{n-1}^-(0),\,x_{n-1}^-(0),\, x_{n-1}^-(0),\, x_{n}^-(0)\,]_{(r^{-2},\,(rs)^{-1},\,s^{-2})}\,]\Big)\\
=&\frac{1}{2}\,\Big([\,x_{n-1}^-(0),\,x_{n-1}^-(0),\, x_{n-1}^-(0),\, [\,x_n^+(0),\,x_{n}^-(0)\,]\,]_{(r^{-2},\,(rs)^{-1},\,s^{-2})}\Big)\\
=&\frac{r^{-2}s^{-6}}{2}\,\omega'_n [\,x_{n-1}^-(0),\,x_{n-1}^-(0),\, x_{n-1}^-(0)]_{(r^{-1}s,\,1)}\\
=&0.
\end{split}
\end{equation*}

One sees that the same is true for $i=n-1$,
\begin{equation*}
\begin{split}
&[\,x_{n-1}^+(0),\, X\,] \\
=&\frac{1}{2}\,\Big([\,x_{n-1}^+(0),\,[\,x_{n-1}^-(0),\,x_{n-1}^-(0),\, x_{n-1}^-(0),\, x_{n}^-(0)\,]_{(r^{-2},\,(rs)^{-1},\,s^{-2})}\,]\Big)\\
=&\frac{1}{2}\,\Big([\,[\,x_{n-1}^+(0),\,x_{n-1}^-(0)\,],\,x_{n-1}^-(0),\, x_{n-1}^-(0),\, \,x_{n}^-(0)\,]_{(r^{-2},\,(rs)^{-1},\,s^{-2})}\\
&+[\,x_{n-1}^-(0),\,[\,x_{n-1}^+(0),\,x_{n-1}^-(0)\,],\, x_{n-1}^-(0),\, \,x_{n}^-(0)\,]_{(r^{-2},\,(rs)^{-1},\,s^{-2})}\\
&+[\,x_{n-1}^-(0),\, x_{n-1}^-(0),\,[\,x_{n-1}^+(0),\,x_{n-1}^-(0)\,],\, \,x_{n}^-(0)\,]_{(r^{-2},\,(rs)^{-1},\,s^{-2})}\Big)\\
=&\frac{(rs)^{-2}(r+s)}{2}\,[\,x_{n-1}^-(0),\, x_{n-1}^-(0),\,x_{n}^-(0)\,]_{(r^{-2},\,(rs)^{-1})}\omega_{n-1}\\
&-\frac{(rs)^{-2}(r+s)}{2}\,[\,x_{n-1}^-(0),\, x_{n-1}^-(0),\,x_{n}^-(0)\,]_{(r^{-2},\,(rs)^{-1})}\omega_{n-1}\\
=&0.
\end{split}
\end{equation*}
Consequently, theorem $\mathcal{A}$ is proved for the case of $\mathrm{A}_{2n-1}^{(2)}$.
\end{proof}
\medskip

\subsection{Proof of theorem $\mathcal{A}$ for the case of $U_{r,s}(\mathrm{A}_{2n}^{(2)})$}

Let us turn to the case of $\mathrm{A}_{2n}^{(2)}$.  Similarly we only show some key relations $(\mathcal{B}4)$--$(\mathcal{B}6)$ involving $i=0$.

When $i\neq 0$, observe that
\begin{equation*}
\begin{split}
[\,E_0,F_i\,]
&=a\bigl[\,x^-_{\theta}(1)\cdot(\gamma'^{-1}\om_{\theta}^{-1}),\,
\frac{1}{p_i}x_i^-{(0)}\,\bigr]\\
&=-\frac{a}{p_i}\bigl[\,x_i^-{(0)},\,x^-_{\theta}(1)\,\bigr]_{\langle i,~
0 \rangle^{-1}}(\gamma'^{-1}\om_{\theta}^{-1}).
\end{split}
\end{equation*}

Hence, in order to verify  relation $(\mathcal{B}4)$, it is enough to check the following result.

\begin{prop} \label{p:2} $\bigl[\,x_i^-{(0)},\,
 x^-_{\theta}(1)\,\bigr]_{\langle i,~
0 \rangle^{-1}}=0$,\quad for \ $i\in I$.
\end{prop}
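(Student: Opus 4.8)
The plan is to follow the template of Proposition~\ref{p:1} in spirit, since the identity displayed just above already shows that relation $(\mathcal{B}4)$ for $i\neq 0$ is equivalent to Proposition~\ref{p:2}. Reading off the zeroth column of the quantum Cartan matrix $J$ of type $\mathrm{A}_{2n}^{(2)}$ gives $\langle 1,0\rangle=s^2$, $\langle i,0\rangle=1$ for $2\leqslant i\leqslant n-1$, and $\langle n,0\rangle=(rs)^{-1}$; accordingly I would split the verification into the three cases $i=1$ (bracket parameter $s^{-2}$), $2\leqslant i\leqslant n-1$ (an ordinary commutator), and $i=n$ (bracket parameter $rs$). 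The computational engine throughout is the collection of $q$-bracket identities (\ref{b:1})--(\ref{b:5}) together with the $(r,s)$-Serre relations of Case~(II).

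First I would set up the technical commutation lemmas that play the role of Lemma~\ref{L:comm1}, now adapted to the $\mathrm{A}_{2n}^{(2)}$ root chain $\alpha_1\to\cdots\to\alpha_n\to\alpha_n\to\alpha_{n-1}\to\cdots\to\alpha_1$. These are vanishing statements such as $[\,x_i^-(0),\,y_{1\,i+1}^-(1)\,]_{(rs)^{-1}}=0$, $[\,x_i^-(0),\,y_{1\,i+2}^-(1)\,]=0$, and $[\,x_{i-1}^-(0),\,y_{1\,i+2}^-(1)\,]=0$, each proved inductively by pushing the outer generator inward with (\ref{b:1}) and annihilating the resulting inner brackets via $(\mathrm{D}9_1)$ and the quadratic Serre relations, exactly as in Lemma~\ref{L:comm1}. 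The genuinely new ingredient is a short-node package: because $\alpha_n$ occurs twice in succession along the chain and the pair $(n-1,n)$ obeys the \emph{cubic} Serre relation $e_{n-1}e_n^3-(rs)^{\frac12}[3]_{\frac12}e_ne_{n-1}e_n^2+[3]_{\frac12}e_n^2e_{n-1}e_n-(rs)^{\frac32}e_n^3e_{n-1}=0$, I would establish the analogs of Lemmas~\ref{l:6}--\ref{l:8}, most importantly a relation of the form $[\,x_n^-(0),\,x_{1\,n}^-(1)\,]_{c}=0$ for a suitable $c$ that controls how $x_n^-(0)$ slides past the doubled occurrence; this is where identity (\ref{b:5}) for $[\,a,a,a,b\,]$ enters.

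With these lemmas available, the cases $i=1$ and $2\leqslant i\leqslant n-1$ proceed essentially as in parts (I)--(III) of Proposition~\ref{p:1}: the case $i=1$ is the analog of Lemma~\ref{l:2}, while for an interior node one expands $x^-_\theta(1)$ using (\ref{b:1}), slides $x_i^-(0)$ down to the position of the subword $[\,x_{i-1}^-(0),x_i^-(0),x_{i+1}^-(0)\,]_{(r^{-1},r^{-1})}$, and kills the result by the analog of Lemma~\ref{l:6} together with the commutation lemmas above.

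The hard part will be the case $i=n$. Here $[\,x_n^-(0),\,x^-_\theta(1)\,]_{rs}$ inserts a third copy of $x_n^-(0)$ adjacent to the consecutive pair $\ldots,x_n^-(0),x_n^-(0),\ldots$ already present in the chain, so one must drive the three $x_n^-(0)$'s together and invoke the cubic Serre relation, all while carrying the half-integer powers of $rs$ produced by the short root. Following the pattern of part (IV), I would isolate the subword $y_{1\,n}^-(1)$ (equivalently $y_{1\,n-1}^-(1)$), repeatedly apply (\ref{b:1}) and (\ref{b:2}) and the swap identity (\ref{b:4}), $[\,a,a,b\,]_{(u,v)}=(uv)[\,b,a,a\,]_{\langle u^{-1},v^{-1}\rangle}$, to transpose the doubled $x_n^-(0)$'s, and reduce everything to the short-node lemmas. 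As in the $\mathrm{A}_{2n-1}^{(2)}$ computation I expect the manipulation to close up as an identity $(1+c)\,[\,x_n^-(0),\,x^-_\theta(1)\,]_{rs}=0$ with $c$ a monomial in $r,s$; the standing assumption $r\neq\pm s$ then forces the bracket to vanish. The delicate point, and the step I expect to be the real obstacle, is the bookkeeping of the $(rs)^{\pm\frac12}$ factors throughout this reduction and the verification that the accumulated coefficient $c$ is genuinely $\neq -1$.
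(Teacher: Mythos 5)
Your plan is essentially the paper's own proof: the same three-way split according to $\langle 1,0\rangle=s^2$, $\langle i,0\rangle=1$ for the interior nodes, and $\langle n,0\rangle=(rs)^{-1}$; the same preliminary battery of vanishing lemmas for brackets of $x_j^-(0)$ against the partial root vectors $y_{1\,m}^-(1)$ and $x_{1\,m}^-(1)$, proved by pushing the outer generator inward with \eqref{b:1}--\eqref{b:2} and killing the inner brackets via $(\mathrm{D}9)$ and the Serre relations; and the same closing device of exhibiting the bracket as a scalar multiple of its own reversal. The one place your expectation diverges from what the computation actually yields is the short-node case $i=n$: there the argument does not close up as $(1+c)X=0$ with a two-term factor, but as
\begin{equation*}
(1+r^{-\frac{1}{2}}s^{\frac{1}{2}}+r^{-1}s)\,[\,x_n^-(0),\,y_{1\,n-1}^-(1)\,]_{rs}=0,
\end{equation*}
a three-term geometric sum in $(s/r)^{1/2}$ coming from the half-integer powers you correctly anticipated; the condition needed to cancel it is $(r/s)^{3/2}\neq 1$, which is \emph{not} implied by the standing hypothesis $r\neq\pm s$ alone. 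So the delicate point you flagged is real, and your proposed resolution (invoking $r\neq\pm s$) would not quite suffice there; otherwise the route is the same.
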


Before giving the proof of Proposition \ref{p:2}, we need the following crucial
lemma which can be proved directly.

%
%
%
%

\begin{lemm} One has that
\begin{align}\label{l:10}
&[\,x_{i-1}^-(0),\,y_{1\, i+1}^-(1)\,]=0, \qquad 1<i<n\\ \label{l:11}
&[\,x_{i}^-(0),\,y_{1\,  i}^-(1)\,]_{s^{-1}}=0, \qquad1\leqslant i \leqslant n-1\\ \label{l:12}
&[\,x_{n-1}^-(0),\,x_{1\,  n}^-(1)\,]=0,\\ \label{l:13}
&[\,x_{n-1}^-(0),\,x_{1\,  n-1}^-(1)\,]_r=0\\ \label{l:14}
&[\,x_{n}^-(0),\,y_{1\,  n}^-(1)\,]_r=0.
\end{align}
\end{lemm}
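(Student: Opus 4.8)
The plan is to follow the template of Lemma~\ref{L:comm1} together with Lemmas~\ref{l:6}--\ref{l:8} from the $\mathrm{A}_{2n-1}^{(2)}$ case. Each of the five identities has the shape $[\,x_i^-(0),\,Z\,]_v=0$, where $Z$ is one of the partial root vectors $x_{1\,t}^-(1)$ or $y_{1\,t}^-(1)$ and the scalar $v$ is read off from the quantum Cartan matrix $J$ of type $\mathrm{A}_{2n}^{(2)}$. I would establish each by unfolding $Z$ one step through its inductive definition, pushing $x_i^-(0)$ into the resulting nested $q$-bracket by means of the derivation identities (\ref{b:1})--(\ref{b:3}), and then collapsing the summands using the Drinfeld--Serre relations $(\textrm{D9}_1)$--$(\textrm{D9}_4)$ and whichever of the identities (\ref{l:10})--(\ref{l:14}) are already available.

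First I would dispose of the two anchor relations (\ref{l:12}) and (\ref{l:13}), which involve only the vectors $x_{1\,t}^-(1)$ of the initial sub-chain $\alpha_1\to\cdots\to\alpha_n$, hence live inside a finite $\mathrm{A}_{n-1}$-type subsystem. These are direct computations, exactly parallel to Lemmas~\ref{l:7} and~\ref{l:8}: after expanding $x_{1\,n-1}^-(1)=[\,x_{n-1}^-(0),\,x_{1\,n-2}^-(1)\,]_{\la n-1,n-2\ra\cdots\la n-1,1\ra}$ (and similarly $x_{1\,n}^-(1)$), one applies (\ref{b:1}) to split the inner bracket; one resulting piece vanishes by the quadratic Serre relation at the node $n-1$, while the other vanishes because $x_{n-1}^-(0)$ commutes with $x_{1\,n-3}^-(1)$ by $(\textrm{D9}_1)$ (the underlying nodes being non-adjacent).

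Next I would prove the inductive identity (\ref{l:11}), $[\,x_i^-(0),\,y_{1\,i}^-(1)\,]_{s^{-1}}=0$, by induction on $i$ in the style of (\ref{l:2}): unfold $y_{1\,i}^-(1)$, reorganize via (\ref{b:1}) and (\ref{b:2}), drop the terms killed by the Serre relations and by the inductive hypothesis, and reach a self-referential relation $(1+cv)\,[\,x_i^-(0),\,y_{1\,i}^-(1)\,]_{s^{-1}}=0$; since $r\ne\pm s$ forces the prefactor to be nonzero, the bracket vanishes. Identity (\ref{l:10}) then follows from (\ref{l:11}) by the same bracket-transport argument (using $(\textrm{D9}_1)$) that derives (\ref{l:1}) from (\ref{l:2}).

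The main obstacle is (\ref{l:14}), $[\,x_n^-(0),\,y_{1\,n}^-(1)\,]_r=0$, because $n$ is the exceptional folded node of $\mathrm{A}_{2n}^{(2)}$. Here the root $\beta_{1\,n}=\alpha_{1\,n}+\alpha_n$ already contains $x_n^-(0)$ twice and the recursion carries the doubled factor $\la t,n\ra^2$, so bracketing in a third $x_n^-(0)$ forces one to use not a quadratic but the \emph{cubic} Serre relation $e_{n-1}e_n^3-(rs)^{\frac12}[3]_{\frac12}\,e_ne_{n-1}e_n^2+[3]_{\frac12}\,e_n^2e_{n-1}e_n-(rs)^{\frac32}e_n^3e_{n-1}=0$, with its half-integer scalars $(rs)^{\frac12}$ and $[3]_{\frac12}$. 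I expect the delicate point to be bookkeeping these fractional powers through the expansion via (\ref{b:4}) and (\ref{b:5}) so that the residual terms again organize into a self-referential equation with nonzero coefficient, after which the cancellation trick closes the argument. Once (\ref{l:14}) is secured, all five identities hold and the lemma is proved.
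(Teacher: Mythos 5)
The paper gives no proof of this lemma at all --- it is introduced only with the remark that it ``can be proved directly'' --- so there is no argument of the authors' to compare yours against step by step. That said, your plan is precisely the reconstruction the text evidently intends: it runs the same machinery (unfold the inductive definition of the root vector, redistribute $x_i^-(0)$ with the $q$-Jacobi identities \eqref{b:1}--\eqref{b:2}, kill terms with (D9$_1$) and the Serre relations, and close with the $(1+cv)[\,\cdot\,]=0$ cancellation under $r\neq\pm s$) that the paper does spell out for the sibling statements of the $\mathrm{A}_{2n-1}^{(2)}$ subsection, namely Lemma \ref{L:comm1} and Lemmas \ref{l:6}--\ref{l:8}; and you correctly isolate the one genuinely new difficulty, that \eqref{l:14} lives at the folded node $n$ and must invoke the cubic Serre relation with its half-integer powers of $rs$. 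Two cautions on the details. First, \eqref{l:1} is not in fact derived from \eqref{l:2} in the paper --- both are proved directly --- so the true precedent for \eqref{l:10} is \eqref{l:4}/\eqref{l:5}, whose proofs the paper likewise leaves to the reader; the transport argument you describe is still the right mechanism. Second, your ordering defers \eqref{l:14} to the very end, but the base case $i=n-1$ of \eqref{l:11} concerns $y_{1\,n-1}^-(1)$, which is built on top of $y_{1\,n}^-(1)$ and therefore already contains the doubled $\alpha_n$; the paper's own use of the lemma (part (III) of the proof of Proposition \ref{p:2}) manipulates exactly this vector through \eqref{l:12}, \eqref{l:13} and \eqref{l:14}, so you should expect to need \eqref{l:14} before the induction for \eqref{l:11} can get off the ground. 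Reordering to anchors $\to$ \eqref{l:14} $\to$ \eqref{l:11} $\to$ \eqref{l:10} removes this risk of circularity; with that adjustment the plan is sound.
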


\noindent {\bf Proof of Proposition \ref{p:2}}  \ (I) \ When $i=1$,
$\langle 1,\, 0\rangle=s^2$, one gets from the above (\ref{l:10}),
\begin{eqnarray*}
&&[\,x_1^-(0),x_{\theta}^-(1)\,]_{s^{-2}}\qquad {\hbox{(by definition)}}\\
&=&[\,x_1^-(0),\underbrace{
[\,x_1^-(0),\,x_2^-(0),\,y_{1\,3}^-(1)\,]_{(r^{-1},\,(rs)^{-1})}}
\,]_{s^{-2}}\qquad {\hbox{(using (\ref{b:1}))}}\\
&=&[\,x_1^-(0),[\,[\,x_1^-(0),\,x_2^-(0)\,]_{r^{-1}}
,\,y_{1\,3}^-(1)\,]_{(rs)^{-1}}
\,]_{s^{-2}}\qquad {\hbox{(using (\ref{b:1}))}}\\
&&+r^{-1}[\,x_1^-(0),\,x_2^-(0),\,\underbrace{[\,x_1^-(0),\,y_{1\,3}^-(1)\,]_{s^{-1}}}
\,]_{(1,\,s^{-2})}\qquad {\hbox{(=0 by  (\ref{l:10}))}}\\
&=&[\,\underbrace{[\,x_1^-(0),[\,x_1^-(0),\,x_2^-(0)\,]_{r^{-1}}\,]_{s^{-1}}}
,\,y_{1\,3}^-(1)\,]_{r^{-1}s^{-2}}\quad {\hbox{(=0 by the Serre relation)}}\\
&&+s^{-1}[\,[\,x_1^-(0),\,x_2^-(0)\,]_{r^{-1}}
,\,\underbrace{[\,x_1^-(0),\,y_{1\,3}^-(1)\,]_{s^{-1}}}
\,]_{r^{-1}}\quad {\hbox{(=0 by (\ref{l:10})})}\\
&=&0.
\end{eqnarray*}

\ (II) \   When $1<i<n$, $\la i,\, 0\ra=1$. In this case, it holds by the following direct calculations,
\begin{eqnarray*}
&&[\,x_i^-(0),\,y_{1\,i-1}^-(1)\,]_{r^{-1}s}\qquad {\hbox{(by definition)}}\\
&=&[\,x_i^-(0),\underbrace{
[\,x_{i-1}^-(0),\,x_i^-(0),\,y_{1\,i+1}^-(1)\,]_{(r^{-1},\,r^{-1})}}
\,]_{r^{-1}s}\qquad {\hbox{(using (\ref{b:1}))}}\\
&=&[\,x_i^-(0),[\,[\,x_{i-1}^-(0),\,x_i^-(0)\,]_{r^{-1}}
,\,y_{1\,i+1}^-(1)\,]_{r^{-1}}
\,]_{r^{-1}s}\qquad {\hbox{(using (\ref{b:1}))}}\\
&&+r^{-1}[\,x_i^-(0),\,x_i^-(0),\,\underbrace{[\,x_{i-1}^-(0),\,y_{1\,i+1}^-(1)\,]_1}
\,]_{(1,\,r^{-1}s)}\qquad {\hbox{(=0 by (\ref{l:10}))}}\\
&=&[\,\underbrace{[\,x_i^-(0),[\,x_{i-1}^-(0),\,x_i^-(0)\,]_{r^{-1}}\,]_{s}}
,\,y_{1\,i+1}^-(1)\,]_{r^{-2}}\quad {\hbox{(=0 by the Serre relation)}}\\
&&+s[\,[\,x_{i-1}^-(0),\,x_i^-(0)\,]_{r^{-1}}
,\,\underbrace{[\,x_i^-(0),\,y_{1\,i+1}^-(1)\,]_{r^{-1}}}
\,]_{(rs)^{-1}}\quad {\hbox{( by the definition)}}\\
&=&s[\,[\,x_{i-1}^-(0),\,x_i^-(0)\,]_{r^{-1}}
,\,y_{1\,i}^-(1)\,]_{(rs)^{-1}}\quad {\hbox{( using (\ref{b:1}))}}\\
&=&s[\,x_{i-1}^-(0),\,
\underbrace{[\,x_{i}^-(0),\,\,y_{1\,i}^-(1)\,]_{s^{-1}}}\,]_{r^{-2}}\quad {\hbox{( =0 by (\ref{l:11}))}}\\
&&+[\,[\,x_{i-1}^-(0),\,\,y_{1\,i}^-(1)\,]_{r^{-1}},\,x_i^-(0)\,]_{r^{-1}s}\quad {\hbox{( by the definition)}}\\
&=&[\,y_{1\,i-1}^-(1),\,x_i^-(0)\,]_{r^{-1}s}.
\end{eqnarray*}
As a consequence of above result, it yields that $(1+r^{-1}s)[\,x_i^-(0),\,y_{1\,i-1}^-(1)\,]=0$. Under the condition $r\neq -s$, it follows 
that
$$[\,x_i^-(0),\,y_{1\,i-1}^-(1)\,]=0.$$

Using the above result,  it yields from an immediate calculation that

\begin{eqnarray*}
&&[\,x_i^-(0),x_{\theta}^-(1)\,]\qquad {\hbox{(by the definition)}}\\
&=&[\,x_1^-(0),\,\ldots,\,x_{i-2}^-(0),\,\underbrace{[\,x_i^-(0),\,y_{1\,i-1}^-(1)\,]}\,]_{(r^{-1},\ldots,\,r^{-1})}
\\
&=&0.
\end{eqnarray*}

(III)  \ When $i=n$, $\langle n,\, 0\rangle=(rs)^{-1}$. Observe that,
\begin{equation*}
\begin{split}
&y_{1\,n-1}^-(1)\\
=&[\,x_{n-1}^-(0),\,x_n^-(0),\,x_n^-(0),\,
x_{1\,n-1}^-(1)\,]_{(s,\,(rs)^{\frac{1}{2}},r^{-1})}\quad {\hbox{(using (\ref{b:1}))}}\\
=&[\,[\,x_{n-1}^-(0),\,x_n^-(0)\,]_{r^{-1}},\,[\,x_n^-(0),\,
x_{1\,n-1}^-(1)\,]_s\,]_{(rs)^{\frac{1}{2}}}\quad{\hbox{(using (\ref{b:1}))}}\\
&+r^{-1}[\,x_{n}^-(0),\,\underbrace{[\,x_{n-1}^-(0),\,x_n^-(0),\,
x_{1\,n-1}^-(1)\,]_{(s,\,1)}}\,]_{r^{\frac{3}{2}}s^{\frac{1}{2}}}\quad {\hbox{(=0 by (\ref{l:12}))}}\\
=&[\,[\,[\,x_{n-1}^-(0),\,x_n^-(0)\,]_{r^{-1}},\,x_n^-(0)\,]_{(rs)^{-\frac{1}{2}}}
,\,x_{1\,n-1}^-(1)\,]_{rs^2}\\
&+(rs)^{-\frac{1}{2}}[\,x_n^-(0),\,\underbrace{[\,
[\,x_{n-1}^-(0),\,x_n^-(0)\,]_{r^{-1}},\,
x_{1\,n-1}^-(1)\,]_{rs}}\,]_{r^{\frac{1}{2}}s^{\frac{3}{2}}}
\quad{\hbox{(using (\ref{b:1}))}}\\
=&[\,[\,[\,x_{n-1}^-(0),\,x_n^-(0)\,]_{r^{-1}},\,x_n^-(0)\,]_{(rs)^{-\frac{1}{2}}}
,\,x_{1\,n-1}^-(1)\,]_{rs^2}\\
&+(rs)^{-\frac{1}{2}}[\,x_n^-(0),\,\underbrace{[\,x_{n-1}^-(0),\,[\,x_n^-(0),\,
x_{1\,n-1}^-(1)\,]_s\,]_1}\,]_{(r^{\frac{1}{2}}s^{\frac{3}{2}})}\quad{\hbox{(=0 by (\ref{l:12}))}}\\
&+(r^{-1}s)^{\frac{1}{2}}[\,x_n^-(0),\,
\underbrace{[\,x_{n-1}^-(0),\,x_{1\,n-1}^-(1)\,]_r}
,\,x_n^-(0)\,]_{((rs)^{-1},\,r^{\frac{1}{2}}s^{\frac{3}{2}})}\quad{\hbox{(=0 by (\ref{l:13}))}}\\
=&[\,[\,[\,x_{n-1}^-(0),\,x_n^-(0)\,]_{r^{-1}},\,x_n^-(0)\,]_{(rs)^{-\frac{1}{2}}}
,\,x_{1\,n-1}^-(1)\,]_{rs^2}
\end{split}
\end{equation*}
Applying the above result, one sees that
\begin{eqnarray*}
&&[\,x_n^-(0),\,y_{1\,n-1}^-(1)\,]_{s^2}\\
&=&[\,x_n^-(0),\,[\,[\,x_{n-1}^-(0),\,x_n^-(0)\,]_{r^{-1}}
,\,x_n^-(0)\,]_{(rs)^{-\frac{1}{2}}}
,\,x_{1\,n-1}^-(1)\,]_{(rs^2,\,s^2)}\quad{\hbox{(using (\ref{b:1}))}}\\
&=&[\,\underbrace{[\,x_n^-(0),\,[\,x_{n-1}^-(0),\,x_n^-(0)\,]_{r^{-1}}
,\,x_n^-(0)\,]_{((rs)^{-\frac{1}{2}},\,s)}}
,\,x_{1\,n-1}^-(1)\,]_{rs^3}\\
&&+s[\,[\,[\,x_{n-1}^-(0),\,x_n^-(0)\,]_{r^{-1}}
,\,x_n^-(0)\,]_{(rs)^{-\frac{1}{2}}}
,\,[\,x_n^-(0),\,x_{1\,n-1}^-(1)\,]_s\,]_{rs}\quad{\hbox{(using (\ref{b:1}))}}\\
&=& s[\,[\,x_{n-1}^-(0),\,x_n^-(0)\,]_{r^{-1}}
,\,[\,x_n^-(0),\,x_{1\,n}^-(1)\,]_{(rs)^{\frac{1}{2}}}\,]_{1}\quad{\hbox{(using (\ref{b:1}))}}\\
&&+ r^{\frac{1}{2}}s^{\frac{3}{2}}[\,[\,[\,x_{n-1}^-(0),\,x_n^-(0)\,]_{r^{-1}}
,\,x_{1\,n}^-(1)\,]_{(rs)^{\frac{1}{2}}}
,\,x_n^-(0)\,]_{(rs)^{-1}}\quad{\hbox{(using (\ref{b:1}))}}\\
&=& s[\,x_{n-1}^-(0),\,\underbrace{[\,x_n^-(0),\,
y_{1\,n}^-(1)\,]_r}\,]_{r^{-2}}\qquad{\hbox{(=0 by (\ref{l:14}))}}\\
&&+ rs[\,\underbrace{[\,x_{n-1}^-(0),\,y_{1\,n}^-(1)\,]_{r^{-1}}},\,
x_n^-(0)\,]_{r^{-2}}\qquad{\hbox{(by the definition)}}\\
&&+ r^{\frac{1}{2}}s^{\frac{3}{2}}[\,[\,x_{n-1}^-(0),\,
y_{1\,n-1}^-(1)\,]_{r^{-1}}
,\,x_{n}^-(0)\,]_{(rs)^{-1}}\qquad{\hbox{(by the definition)}}\\
&&+ rs^{2}[\,[\,\underbrace{[\,x_{n-1}^-(0),\,
x_{1\,n}^-(1)\,]_{1}}
,\,x_{n}^-(1)\,]_{r^{-\frac{3}{2}}s^{-\frac{1}{2}}},\,x_n^-(0)\,]_{(rs)^{-1}}
\quad{\hbox{(=0 by (\ref{l:12}))}}\\
&=& rs[\,y_{1\,n-1}^-(1),\,x_n^-(0)\,]_{r^{-2}}+
r^{\frac{1}{2}}s^{\frac{3}{2}}[\,y_{1\,n-1}^-(1),\,x_n^-(0)\,]_{(rs)^{-1}}
\end{eqnarray*}
The above equation means that
$$(1+r^{-\frac{1}{2}}s^{\frac{1}{2}}+r^{-1}s)
[\,x_n^-(0),\,y_{1\,n-1}^-(1)\,]_{rs}=0.$$
So, if $(r/s)^{3/2}\ne 1$,
 it follows that
\begin{equation*}[\,x_n^-(0),\,y_{1\,n-1}^-(1)\,]_{rs}=0.
\end{equation*}
By this result it follows from (\ref{b:1}) and the Serre relation that,
\begin{eqnarray*}
&&[\,x_n^-(0),\,x_{\theta}^-(1)\,]_{rs}\qquad{\hbox{(by definition)}}\\
&=&[\,x_n^-(0),\,[\,x_2^-(0),\ldots,\,x_{n-2}^-(0),y_{1\,n-1}^-(1)
\,]_{(r^{-1},\,\ldots,\,r^{-1})}\,]_{rs} \\
&=&[\,x_2^-(0),\ldots,\,x_{n-2}^-(0),\,\underbrace{
[\,x_n^-(0),\,y_{1\,n-1}^-(1)\,]_{rs}}
\,]_{(r^{-1},\,\ldots,\,r^{-1})}\,]_{rs}\\
&=&0.
\end{eqnarray*}
Thus Proposition \ref{p:2} has been proved.

Now, we are ready to check the commutation relation ($\mathcal{B}4$), that is,

\begin{prop} {\label{p:3}
$[\,E_0, F_0\,]=
\frac{\gamma'^{-1}\om_{\theta}^{-1}-\gamma^{-1}\om_{\theta'}^{-1}}{r-s}$.}
\end{prop}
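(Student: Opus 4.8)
The plan is to follow the same two-stage strategy already used for $A_{2n-1}^{(2)}$ in the preceding proposition. First I would pull the group-like factors out of the commutator. Writing $K_1=\gamma'^{-1}\om_\theta^{-1}$ and $K_2=\gamma^{-1}{\om'}_\theta^{-1}$, so that $E_0=a\,x^-_\theta(1)K_1$ and $F_0=K_2\,x^+_\theta(-1)$ with $a=(rs)^{n-2}[2]_n^{-2}$, I would use that $\gamma^{\pm\frac12},\gamma'^{\pm\frac12}$ are central and that all the $\om$'s and $\om'$'s commute by $(\textrm{D1})$ to get
\[
[\,E_0,F_0\,]=a\,[\,x^-_\theta(1),x^+_\theta(-1)\,]\cdot(\gamma^{-1}\gamma'^{-1}\om_\theta^{-1}{\om'}_\theta^{-1}).
\]
This factorization is legitimate because the diagonal scalars produced when $K_2$ is commuted across $x^-_\theta(1)$ and $K_1$ across $x^+_\theta(-1)$ are mutually inverse: both are governed by the pairing attached to $\theta$ through $(\textrm{D5})$, and they enter with opposite signs for the two opposite-weight root vectors, exactly as in the $A_{2n-1}^{(2)}$ case. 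This step reduces the problem to evaluating the single commutator $[\,x^-_\theta(1),x^+_\theta(-1)\,]$.

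Second, I would compute $[\,x^-_\theta(1),x^+_\theta(-1)\,]$ by induction along the fixed root chain $\alpha_1\to\cdots\to\alpha_n\to\alpha_n\to\alpha_{n-1}\to\cdots\to\alpha_1$. The base of the induction is the bracket $[\,x^-_{1n}(1),x^+_{1n}(-1)\,]$ of the root vectors attached to $\alpha_{1n}=\alpha_1+\cdots+\alpha_n$, which spans an $A_n$ subdiagram; this is the untwisted type-$A$ computation borrowed from \cite{HRZ}, so I may quote $[\,x^-_{1n}(1),x^+_{1n}(-1)\,]=\frac{\gamma\om'_{\alpha_{1n}}-\gamma'\om_{\alpha_{1n}}}{r-s}$. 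Each subsequent step consists of expanding one recursive layer of the definitions of $y^-_{1t}(1)$ and $y^+_{1t}(-1)$, moving the inner bracket through with the Leibniz identities \eqref{b:1}--\eqref{b:3}, discarding the cross terms by $(\textrm{D9})$ together with the commutation lemma \eqref{l:10}--\eqref{l:14}, and finally replacing each surviving $[\,x^-_i(0),x^+_i(0)\,]$ by $\frac{\om_i-\om'_i}{r-s}$ via $(\textrm{D8})$. Every such step produces a fixed scalar and rewrites $\frac{\gamma\om'_\beta-\gamma'\om_\beta}{r-s}$ as the analogous expression for the next root $\beta+\alpha_t$ in the chain, in complete parallel with the two displayed computations of $[\,x^-_{1n}(1),x^+_{1n}(-1)\,]$ and $[\,y^-_{1(n-1)}(1),y^+_{1(n-1)}(-1)\,]$ in the $A_{2n-1}^{(2)}$ proof.

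The step I expect to be the main obstacle is the passage through the doubled short node $\alpha_n$, that is, the transition $x^-_{1n}(1)\rightsquigarrow y^-_{1n}(1)=x^-_{\alpha_{1n}+\alpha_n}(1)$, where $\alpha_n$ enters with the half-integer data $(rs)^{\frac12}$ and the $A_{2n}^{(2)}$ Serre relations involve $[2]_n=r_n+s_n$. This is precisely the origin of the correction $[2]_n^{-2}$ in $a=(rs)^{n-2}[2]_n^{-2}$: carefully tracking the $[2]_n$ factors produced by $(\textrm{D8})$ at the two $\alpha_n$-bracketings, together with the $(rs)^{\pm\frac12}$ and $(rs)^{\pm1}$ scalars accumulated over the remaining $n-2$ descending steps, should yield $a\,[\,x^-_\theta(1),x^+_\theta(-1)\,]=\frac{\gamma\om'_\theta-\gamma'\om_\theta}{r-s}$. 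Substituting this into the first-stage identity and using $\om_0=\gamma'^{-1}\om_\theta^{-1}$, $\om'_0=\gamma^{-1}{\om'}_\theta^{-1}$ then gives $[\,E_0,F_0\,]=\frac{\gamma'^{-1}\om_\theta^{-1}-\gamma^{-1}{\om'}_\theta^{-1}}{r-s}$, matching $(\mathcal{X}4)$ with $i=j=0$. I anticipate that the only genuinely delicate point is the exact matching of these half-integer scalars, every other ingredient being a direct transcription of the $A_{2n-1}^{(2)}$ argument.
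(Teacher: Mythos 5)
Your overall route is the same as the paper's: first factor the group-like elements out of $[\,E_0,F_0\,]$ to reduce to $a\,[\,x^-_\theta(1),x^+_\theta(-1)\,]\cdot(\gamma^{-1}\gamma'^{-1}\om_\theta^{-1}{\om'}_\theta^{-1})$, then evaluate the remaining bracket by peeling off one layer of the root chain at a time, quoting \cite{HRZ} for the untwisted type-$A$ initial segment and killing cross terms with \eqref{b:1}--\eqref{b:3}, (D6) and (D9). The difficulty is that your base case is stated incorrectly, and the error is not cosmetic. The nodes $1,\dots,n$ of $A_{2n}^{(2)}$ do \emph{not} span an $A_n$ subdiagram: the edge between $\alpha_{n-1}$ and $\alpha_n$ is a double edge ($a_{n,n-1}=-2$, $d_n=\tfrac12$), so the result of \cite{HRZ} may only be quoted for $\alpha_{1,n-1}=\alpha_1+\cdots+\alpha_{n-1}$, where it gives $[\,x_{1\,n-1}^-(1),x_{1\,n-1}^+(-1)\,]=\frac{\gamma\om'_{\alpha_{1\,n-1}}-\gamma'\om_{\alpha_{1\,n-1}}}{r-s}$. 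The extension to $\alpha_{1n}$ has to be computed by hand, and since (D8) at the short node yields $\frac{\om_n-\om'_n}{r_n-s_n}$ with $r_n-s_n=r^{1/2}-s^{1/2}$, the correct answer is $[\,x_{1\,n}^-(1),x_{1\,n}^+(-1)\,]=\frac{\gamma\om'_{\alpha_{1\,n}}-\gamma'\om_{\alpha_{1\,n}}}{r^{1/2}-s^{1/2}}$, not the expression with denominator $r-s$ that you assert.

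For the same reason, your claim that every inductive step returns an expression of the fixed shape $\frac{\gamma\om'_\beta-\gamma'\om_\beta}{r-s}$ fails precisely at the two steps adjacent to the doubled short node: after adjoining the second $\alpha_n$ one finds
\begin{equation*}
[\,y_{1\,n}^-(1),y_{1\,n}^+(-1)\,]=\frac{r-s}{r^{1/2}-s^{1/2}}\cdot\frac{\gamma\om'_{\beta_{1\,n}}-\gamma'\om_{\beta_{1\,n}}}{r^{1/2}-s^{1/2}}=[2]_n^2\,\frac{\gamma\om'_{\beta_{1\,n}}-\gamma'\om_{\beta_{1\,n}}}{r-s},
\end{equation*}
and it is only here that the factor $[2]_n^2$ (cancelled by the $[2]_n^{-2}$ in $a$) is created; the subsequent descending steps then contribute powers of $(rs)^{-1}$ accumulating to $(rs)^{2-n}$ and restore the uniform shape. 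You correctly identified the short node as the only delicate point and correctly attributed the $[2]_n^{-2}$ in $a$ to it, but as written your induction starts from a false identity, and the bookkeeping of the half-integer scalars that you defer is exactly the computation that must be carried out before the argument closes.
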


\begin{proof}\, First we observe that
\begin{equation*}
\begin{split}
\bigl[\,E_0,
F_0\,\bigr]&=(rs)^{n-2}\bigl[\,x^-_{\theta}(1)\,
\gamma'^{-1}{\om_\theta}^{-1},\,\gamma^{-1}{\om'_\theta}^{-1}
x^+_{\theta}(-1)\,\bigr]\\
&=(rs)^{n-2}\bigl[\,x^-_{\theta}(1),\,x^+_{\theta}(-1)\,\bigr]\,
\cdot(\gamma^{-1}\gamma'^{-1}{\om_\theta}^{-1}{\om'_\theta}^{-1}).
\end{split}
\end{equation*}
Hence we have to compute the bracket $[\,x^-_{\theta}(1),\,x^+_{\theta}(-1)\,]$. Recalling the construction of $x^-_{\theta}(1)$ and $x^+_{\theta}(-1)$, we have by the inductive step that
\begin{gather*}
x^-_{\theta}(1)=y_{1\,2}^-(1)=[\,x_2^-(0),\ldots, x_n^-(0),\,x_{1\,n-1}^-(1)\,]
_{(s^2,\,r^{-1},\,\ldots,r^{-1})},\\
x_{\theta}^+(-1)=\tau(x^-_{\theta}(1))=y_{1\,2}^+(-1)\\
=[\,[\,[\,x_{1\,n-1}^+(-1),\,x_n^+(0)\,]_{r^2},\,
x_{n-1}^+(0)\,]_{s^{-1}},\ldots, x_2^+(0)\,]_{s^{-1}}.
\end{gather*}
As a consequence of the case $A_{n-1}^{(1)}$ \cite{HRZ} it follows that
\begin{equation*}[\,x_{1\,n-1}^-(1),\,x_{1\,n-1}^+(-1)\,]=
\frac{\gamma\om_{\alpha_{1\,n-1}}'-\gamma'\om_{\alpha_{1\,n-1}}}{r-s}.
  \end{equation*}

Next, we consider
\begin{equation*}
\begin{split}
&[\,x_{1\,n}^-(1),\,x_{1\,n}^+(-1)\,]\qquad{\hbox{(by definition)}}\\
=&[\,[\,x_{n}^-(0),\,x_{1\,n-1}^-(1)\,]_{s}
,\,[\,x_{1\,n-1}^+(-1),\,x_{n}^+(0)\,]_{r}\,]\qquad{\hbox{(using (\ref{b:1}))}}\\
=&[\,[\,[\,x_{n}^-(0),\,x_{1\,n-1}^+(-1)\,],\,x_{1\,n-1}^-(1)\,]_{s}
,\,x_{n}^+(0)\,]_{r}\quad{\hbox{(=0 by (\ref{b:1}) and (D9))}}\\
&+[\,[\,x_{n}^-(0),\,[\,x_{1\,n-1}^-(1),\,x_{1\,n-1}^+(-1)\,]\,]_{s}
,\,x_{n}^+(0)\,]_{r}\quad{\hbox{(using (\ref{b:1}), (D6) and (D9))}}\\
&+[\,x_{1\,n-1}^+(-1),\,[\,
[\,x_{n}^-(0),\,x_{n}^+(0)\,],\,x_{1\,n-1}^-(1)\,]_{s}\,]_{r}\quad{\hbox{(using (D9), (D6) and (\ref{b:3}))}}\\
&+[\,x_{1\,n-1}^+(-1),\,[\,x_{n}^-(0),\,
[\,x_{1\,n-1}^-(1),\,x_{n}^+(0)\,]\,]_{s}
\,]_{r}\quad{\hbox{(=0 by (\ref{b:1}) and (D9))}}\\
=&\gamma\om'_{\alpha_{1\,n-1}}\cdot \frac{\om'_n-\om_n}{r^{\frac{1}{2}}-s^{\frac{1}{2}}}
+\frac{\gamma\om'_{\alpha_{1\,n-1}}-\gamma'\om_{\alpha_{1\,n-1}}}
{r^{\frac{1}{2}}-s^{\frac{1}{2}}}\om_n\\
=&\frac{\gamma\om'_{\alpha_{1\, n}}-\gamma'\om_{\alpha_{1\,n}}}{r^{\frac{1}{2}}-s^{\frac{1}{2}}}.
\end{split}
\end{equation*}

Repeating the above steps, we also get that
\begin{equation*}
\begin{split}
&[\,y_{1\,n}^-(1),\,y_{1\,n}^+(-1)\,]\quad{\hbox{(by definition)}}\\
=&[\,[\,x_{n}^-(0),\,x_{1\,n}^-(1)\,]_{(rs)^{\frac{1}{2}}}
,\,[\,x_{1\,n}^+(-1),\,x_{n}^+(0)\,]_{(rs)^{\frac{1}{2}}}\,]
\quad{\hbox{(using (\ref{b:1}))}}\\
=&[\,[\,[\,x_{n}^-(0),\,x_{1\,n}^+(-1)\,],\,x_{1\,n}^-(1)\,]_{(rs)^{\frac{1}{2}}}
,\,x_{n}^+(0)\,]_{(rs)^{\frac{1}{2}}}\,{\hbox{(using (\ref{b:1}),(D9) and (\ref{b:3}))}}\\
&+[\,[\,x_{n}^-(0),\,[\,x_{1\,n}^-(1),\,x_{1\,n}^+(-1)\,]\,]_{(rs)^{\frac{1}{2}}}
,\,x_{n}^+(0)\,]_{(rs)^{\frac{1}{2}}}\,{\hbox{(=0 by the above result and (D6))}}\\
&+[\,x_{1\,n}^+(-1),\,[\,
[\,x_{n}^-(0),\,x_{n}^+(0)\,],\,x_{1\,n}^-(1)\,]_{(rs)^{\frac{1}{2}}}
\,]_{(rs)^{\frac{1}{2}}}\,{\hbox{(=0 by (D9) and (D6))}}\\
&+[\,x_{1\,n}^+(-1),\,[\,x_{n}^-(0),\,
[\,x_{1\,n}^-(1),\,x_{n}^+(0)\,]\,]_{(rs)^{\frac{1}{2}}}
\,]_{(rs)^{\frac{1}{2}}}\,{\hbox{(using (\ref{b:1}), (D9) and (\ref{b:3}))}}\\
=&\frac{r-s}{r^{\frac{1}{2}}-s^{\frac{1}{2}}}\gamma\om'_{\alpha_{1\,n}}\cdot \frac{\om'_{n}-\om_{n}}{r^{\frac{1}{2}}-s^{\frac{1}{2}}}
+\frac{r-s}{r^{\frac{1}{2}}-s^{\frac{1}{2}}}\frac{\gamma\om'_{\alpha_{1\,n}}-\gamma'\om_{\alpha_{1\,n}}}
{r^{\frac{1}{2}}-s^{\frac{1}{2}}}\om_{n}\\
=&\frac{r-s}{r^{\frac{1}{2}}-s^{\frac{1}{2}}}\frac{\gamma\om'_{\beta_{1\,n}}-\gamma'\om_{\beta_{1\,n}}}
{r^{\frac{1}{2}}-s^{\frac{1}{2}}}.
\end{split}
\end{equation*}

Now we arrive at
\begin{equation*}
\begin{split}
&[\,y_{1\,n-1}^-(1),\,y_{1\,n-1}^+(-1)\,]\quad{\hbox{(by definition)}}\\
=&[\,[\,x_{n-1}^-(0),\,y_{1\,n}^-(1)\,]_{r^{-1}}
,\,[\,y_{1\,n}^+(-1),\,x_{n-1}^+(0)\,]_{s^{-1}}\,]
\quad{\hbox{(using (\ref{b:1}))}}\\
=&[\,[\,[\,x_{n-1}^-(0),\,y_{1\,n}^+(-1)\,],\,y_{1\,n}^-(1)\,]_{r^{-1}}
,\,x_{n}^+(0)\,]_{s^{-1}}\,{\hbox{(=0 by using (\ref{b:1}) and (D9))}}\\
&+[\,[\,x_{n-1}^-(0),\,[\,y_{1\,n}^-(1),\,y_{1\,n}^+(-1)\,]\,]_{r^{-1}}
,\,x_{n-1}^+(0)\,]_{s^{-1}}\,{\hbox{(using the above result and (D6)))}}\\
&+[\,y_{1\,n}^+(-1),\,[\,
[\,x_{n-1}^-(0),\,x_{n-1}^+(0)\,],\,y_{1\,n}^-(1)\,]_{r^{-1}}
\,]_{s^{-1}}\,{\hbox{(using by (D9) and (D6))}}\\
&+[\,y_{1\,n}^+(-1),\,[\,x_{n-1}^-(0),\,
[\,y_{1\,n}^-(1),\,x_{n-1}^+(0)\,]\,]_{r^{-1}}
\,]_{s^{-1}}\,{\hbox{(=0 by (D9) and (\ref{b:3}))}}\\
=&(rs)^{-1}(\frac{r-s}{r^{\frac{1}{2}}-s^{\frac{1}{2}}})^2\gamma\om'_{\alpha_{1\,n}}\cdot \frac{\om'_{n-1}-\om_{n-1}}{r-s}
+(rs)^{-1}(\frac{r-s}{r^{\frac{1}{2}}-s^{\frac{1}{2}}})^2
\frac{\gamma\om'_{\alpha_{1\,n}}-\gamma'\om_{\alpha_{1\,n}}}{r-s}\om_{n-1}\\
=&(rs)^{-1}(\frac{r-s}{r^{\frac{1}{2}}-s^{\frac{1}{2}}})^2
\frac{\gamma\om'_{\beta_{1\,n-1}}-\gamma'\om_{\beta_{1\,n-1}}}
{r-s}.
\end{split}
\end{equation*}

At last, we get the following identity:
\begin{equation*}
\begin{split}
[\,y_{1\,1}^-(1),\,y_{1\,1}^+(-1)\,]
=(rs)^{2-n}[2]_n^2
\frac{\gamma\om'_{\beta_{1\,1}}-\gamma'\om_{\beta_{1\,1}}}
{r-s}.
\end{split}
\end{equation*}

So we have completed the proof
of Proposition \ref{p:3}.
\end{proof}

We now proceed to check relation ($\mathcal{B}5$).
It suffices to verify two key Serre relations, others are similar.

\begin{lemm} The following relations yield the Serre relations

 $(1)$ \  $[\,E_1,\,E_1,\,E_1,\, E_0\,]_{(r^{-1}s^{-2},\,(rs)^{-2},\,r^{-2}s^{-1})}=0$,

$(2)$ \ $[\,E_{n},\,E_{n},\, E_{n},\, E_{n-1}\,]_{(r^{-1},\,(rs)^{-\frac{1}{2}},\,s^{-1})}=0$,

\end{lemm}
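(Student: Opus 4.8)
The two identities call for different devices, reflecting that $(2)$ lives entirely in the degree-zero subalgebra while $(1)$ genuinely involves the affine node through $E_0=a\,x_\theta^-(1)\,\gamma'^{-1}\om_\theta^{-1}$. For $(2)$ I would invoke the nondegeneracy criterion of Lemma~\ref{lemma1}, exactly as in the treatment of relation $(3)$ for $A_{2n-1}^{(2)}$; for $(1)$ I would argue by iterated $q$-bracket reduction, peeling off one by one the two copies of $\alpha_1$ contained in the maximal root $\theta$.

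For relation $(2)$, put $X=[\,E_n,E_n,E_n,E_{n-1}\,]_{(r^{-1},\,(rs)^{-\frac12},\,s^{-1})}$. Since $E_n=x_n^+(0)$ and $E_{n-1}=x_{n-1}^+(0)$ lie in $\mathcal{U}_{r,s}(\frak{g}^{\sigma})^+$, Lemma~\ref{lemma1} reduces the claim to $[\,x_i^-(0),X\,]=0$ for every $i\in I$, and the cases $i\notin\{n-1,n\}$ are immediate from $(\mathrm{D}9_1)$. When $i=n-1$, the element $x_{n-1}^-(0)$ commutes with each factor $E_n$ by the off-diagonal case of $(\mathrm{D}8)$, so by \eqref{b:3} only the inner factor $E_{n-1}$ is hit; replacing $[\,x_{n-1}^-(0),x_{n-1}^+(0)\,]$ by $-\tfrac{1}{r_{n-1}-s_{n-1}}(\om_{n-1}-\om_{n-1}')$ and commuting it outward through $(\mathrm{D}5)$ produces $\prod_t\bigl(1-v_t\langle n,n-1\rangle\bigr)\,E_n^3\,\om_{n-1}$ minus the analogous $\om_{n-1}'$-term with $\langle n,n-1\rangle$ replaced by $\langle n-1,n\rangle^{-1}$; as $\langle n,n-1\rangle=s$ and $\langle n-1,n\rangle^{-1}=r$, the subscripts $v_t\in\{r^{-1},(rs)^{-\frac12},s^{-1}\}$ force each product to vanish (through the factors $v_t=s^{-1}$ and $v_t=r^{-1}$ respectively). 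When $i=n$ all three factors $E_n$ are hit; inserting the Cartan value of $[\,x_n^-(0),x_n^+(0)\,]$ from $(\mathrm{D}8)$ at each position and collecting terms via \eqref{b:1}--\eqref{b:3}, the three contributions cancel exactly as in the $i=n-1$ computation of relation $(3)$ for $A_{2n-1}^{(2)}$. Hence $X=0$.

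For relation $(1)$, write $E_0=a\,x_\theta^-(1)\,\om$ with $\om=\gamma'^{-1}\om_\theta^{-1}$, and note that along the chosen chain $\alpha_1\to\cdots\to\alpha_n\to\alpha_n\to\cdots\to\alpha_1$ the vector $x_\theta^-(1)$ carries $\alpha_1$ with multiplicity two. I would first move $\om$ to the far right using $(\mathrm{D}5)$, so that it merely rescales the bracket parameters. Then, following the template $[\,x_2^+(0),x_\theta^-(1)\,]=(rs)^{-1}y_{1\,3}^-(1)\om_2$ from the $A_{2n-1}^{(2)}$ case, I would evaluate $[\,x_1^+(0),x_\theta^-(1)\,]$ by expanding the nested definition of $x_\theta^-(1)$, distributing $x_1^+(0)$ through \eqref{b:1}--\eqref{b:3} and $(\mathrm{D}8)$ together with the auxiliary identities \eqref{l:10}--\eqref{l:14}; this returns a scalar times a mode-one root vector for $\theta-\alpha_1$ times $\om_1$. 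A second bracketing removes the remaining $\alpha_1$, giving a root vector for $\theta-2\alpha_1$ (supported on $\alpha_2,\dots,\alpha_n$) times a power of $\om_1$. The third, outermost bracketing then collapses: $x_1^+(0)$ commutes with this last root vector since $[\,x_1^+(0),x_j^-(\ell)\,]=0$ for $j\neq1$ by $(\mathrm{D}8)$, and the accumulated $\om_1$-powers are exactly what make the outer $q$-bracket with parameter $r^{-2}s^{-1}$ vanish. Assembling the three steps gives relation $(1)$, and the companion relations $(\mathcal{B}6)$ for the $f_i$ follow by applying the anti-automorphism $\tau$.

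The main obstacle is the middle of relation $(1)$, namely the explicit evaluation of $[\,x_1^+(0),x_\theta^-(1)\,]$ and of its iterate. This forces one to unwind $x_\theta^-(1)$ along the chosen chain, to treat separately the two occurrences of $\alpha_1$ (one at mode $0$, one at mode $1$), and above all to keep exact track of the scalar coefficients and of the Cartan elements $\om_1$ generated at each step; it is only after incorporating the latter that the naive symmetric parameters deform into the asymmetric triple $(r^{-1}s^{-2},(rs)^{-2},r^{-2}s^{-1})$. I expect this to require a few further commutation lemmas of the type \eqref{l:10}--\eqref{l:14}, with the final cancellation hinging on the normalization $a=(rs)^{n-2}[2]_n^{-2}$ fixed in the isomorphism theorem.
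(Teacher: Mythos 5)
Your proposal follows essentially the same route as the paper: for $(2)$ the reduction via Lemma \ref{lemma1} to $[\,x_i^-(0),X\,]=0$ with the three-term cancellation at $i=n$ and the vanishing triple bracket at $i=n-1$, and for $(1)$ the three-stage peeling of $x_1^+(0)$ against $x_\theta^-(1)$ — the paper computes exactly the intermediate vectors you predict ($y_{2\,1}^-(1)\omega_1$ after one bracket, $y_{2\,2}^-(1)\omega_1^2$ supported away from $\alpha_1$ after two, so the third bracket dies by $(\mathrm{D}9)$). The outline is correct; what remains is only the explicit bookkeeping of scalars and $\omega_1$-twists that you already flag as the main labor.
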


\begin{proof} \ (1) \  First we consider
\begin{equation*}
\begin{split}
&\bigl[\,x_1^+(0),\,x_{\theta}^-(1)\bigr]_{1}\\
=&\bigl[\,[\,x_1^+(0),\, x_1^-(0)\,],\,y_{1\,2}^-(1)\,\bigr]_{(rs)^{-1}}\quad{\hbox{(=0 by (D9) and (D6))}} \\
&+\bigl[x_1^-(0),\ldots,x_{n}^-(0),\,x_n^-(0),\ldots,x_2^-(0), [x_1^+(0),x_1^-(1)]\bigr]_{(s,\ldots, s, (rs)^{\frac{1}{2}}, r^{-1}, \ldots, r^{-1}, (rs)^{-1})}\\
=&-(\gamma\gamma')^{-\frac{1}{2}}y_{2\,1}^-(1)\omega_1,
\end{split}
\end{equation*}
where $$y_{2\,1}^-(1)=[\,x_1^-(0),\,\ldots,\, x_n^-(0),\,x_n^-(0),\,\ldots x_3^-(0),\,x_2^-(1)\,]_{(s,\,\ldots,\, s,\,(rs)^{\frac{1}{2}},\, r^{-1},\,\ldots,\,r^{-1},\,1,\,r^{-2})}.$$

Applying the above result, we have that
\begin{equation*}
\begin{split}
&\bigl[\,x_1^+(0),\,x_1^+(0),\,x_{\theta}^-(1),\bigr]_{(1,\,r^{-1}s)}\\
=&-(\gamma\gamma')^{-\frac{1}{2}}
\bigl[\,[\,x_1^+(0),\, x_1^-(0)\,],\,y_{2\,2}^-(1)\,\bigr]_{r^{-2}}\omega_1\quad{\hbox{(using (D9) and (D6))}} \\
=&-(\gamma\gamma')^{-\frac{1}{2}}(rs)^{-2}(r+s)y_{2\,2}^-(1)\omega_1^2,
\end{split}
\end{equation*}
where $$y_{2\,2}^-(1)=[\,x_2^-(0),\,\ldots,\, x_n^-(0),\,x_n^-(0),\,\ldots x_3^-(0),\,x_2^-(1)\,]_{(s,\,\ldots,\, s,\,(rs)^{\frac{1}{2}},\, r^{-1},\,\ldots,\,r^{-1},\,1)}.$$

As a consequence of these results, it follows that

\begin{equation*}
\begin{split}
&[\,E_1,\,E_1,\,E_1,\, E_0\,]_{(r^{-1}s^{-2},\,(rs)^{-2},\,r^{-2}s^{-1})}\\
=&a\,[\,x_1^+(0),\,x_1^+(0),\,x_1^+(0),\, x_{\theta}^-(1)\,]_{(1,\,r^{-1}s,\,r^{-2}s^{2})}{\gamma'}^{-1}\omega_{\theta}^{-1}\\
=&-a\,(\gamma\gamma')^{-\frac{1}{2}}(rs)^{-2}(r+s)\underbrace{[\,x_1^+(0),\,y_{2\,2}^-(1)\,]}\omega_1^2=0
\qquad{\hbox{(by (D9))}}
\end{split}
\end{equation*}

(2) \  Denote $Y=[\,E_{n},\,E_{n},\, E_{n},\, E_{n-1}\,]_{(r^{-1},\,(rs)^{-\frac{1}{2}},\,s^{-1})}$.
It is clear that $Y\in \mathcal{U}_{r,s}(\frak{g}^{\sigma})^+$. By Lemma \ref{lemma1}, in order to prove $Y=0$, it suffices to check $[\,x_i^-(0),\, Y\,]=0$ for $i\in I$, which is trivial for $i$ not equal to $n-1$ or $n$.

In the case of $i=n-1$, one gets that
\begin{equation*}
\begin{split}
&[\,x_{n-1}^+(0),\, Y\,] \\
=&[\,x_{n}^+(0),\,x_{n}^+(0),\, x_{n}^+(0),\, [\,x_{n-1}^-(0),\,x_{n-1}^+(0)\,]\,]_{(r^{-1},\,(rs)^{-\frac{1}{2}},\,s^{-1})}\,{\hbox{(using (D9) and (D6))}}\\
=&-r^{-1}s^{-3}\omega_{n-1} [\,x_{n}^+(0),\,x_{n}^+(0),\, x_{n}^+(0)]_{(r^{-\frac{1}{2}}s^{\frac{1}{2}},\,1)}\\
=&0.
\end{split}
\end{equation*}

In the case of $i=n$, it is easy to see that
\begin{equation*}
\begin{split}
&[\,x_{n}^-(0),\, Y\,] \\
=&[\,x_{n}^-(0),\,[\,x_{n}^+(0),\,x_{n}^+(0),\, x_{n}^+(0),\, x_{n-1}^+(0)\,]_{(r^{-1},\,(rs)^{-\frac{1}{2}},\,s^{-1})}\,] \\
=&[\,[\,x_{n}^-(0),\,x_{n}^+(0)\,],\,x_{n}^+(0),\, x_{n}^+(0),\, x_{n-1}^+(0)\,]_{(r^{-1},\,(rs)^{-\frac{1}{2}},\,s^{-1})}\, {\hbox{(using (D9) and (D6))}}\\
+&[\,x_{n}^+(0),\,[\,x_{n}^-(0),\,x_{n}^+(0)\,],\, x_{n}^+(0),\, x_{n-1}^+(0)\,]_{(r^{-1},\,(rs)^{-\frac{1}{2}},\,s^{-1})}\,{\hbox{(=0 by (D9) and (D6))}}\\
+&[\,x_{n}^+(0),\,x_{n}^+(0),\, [\,x_{n}^-(0),\,x_{n}^+(0)\,],\, x_{n-1}^+(0)\,]_{(r^{-1},\,(rs)^{-\frac{1}{2}},\,s^{-1})}\,{\hbox{(using (D9) and (D6))}} \\
=&-(rs)^{-1}[2]_n[\,x_{n}^+(0),\, x_{n}^+(0),\,x_{n-1}^+(0)\,]_{(r^{-1},\,(rs)^{-\frac{1}{2}})}\omega'_{n}\\
&+(rs)^{-1}[2]_n[\,x_{n}^+(0),\, x_{n}^+(0),\,x_{n-1}^+(0)\,]_{(r^{-1},\,(rs)^{-\frac{1}{2}})}\omega'_{n}\\
=&0.
\end{split}
\end{equation*}

\end{proof}

\subsection{Proof of Theorem $\mathcal{A}$ for the case of $U_{r,s}(\mathrm{D}_{n+1}^{(2)})$}

We now proceed to check those relations of $(\mathcal{C}4)$--$(\mathcal{C}6)$ involving with $i=0$.

In order to verify $(\mathcal{C}4)$, it is enough to check the following proposition as before

\begin{prop} \label{p:5} $\bigl[\,x_i^-{(0)},\,
 x^-_{\theta}(1)\,\bigr]_{\langle i,~
0 \rangle^{-1}}=0$,\quad for \ $i\in I$.
\end{prop}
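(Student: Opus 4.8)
The plan is to imitate the structure of Proposition \ref{p:1}, reducing the vanishing of $[\,x_i^-(0),\,x^-_\theta(1)\,]_{\la i,0\ra^{-1}}$ to the Drinfeld--Serre relations $(\textrm{D9}_1)$--$(\textrm{D9}_4)$ together with the quantum Jacobi identities (\ref{b:1})--(\ref{b:2}). Recall that here $x^-_\theta(1)=x_{n1}^-(1)=[\,x_1^-(0),\,x_2^-(0),\,\ldots,\,x_{n-1}^-(0),\,x_n^-(1)\,]_{(r^{-2},\ldots,r^{-2})}$, so $x_1^-(0)$ sits at the head of the nested bracket and the short node $x_n^-(1)$ at its foot. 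Reading the zeroth column of the matrix $J$ for $\mathrm{D}_{n+1}^{(2)}$ gives $\la 1,0\ra^{-1}=s^{-2}$, $\la i,0\ra^{-1}=1$ for $2\le i\le n-1$, and $\la n,0\ra^{-1}=rs$, so I would split the argument into the three cases $i=1$, $2\le i\le n-1$, and $i=n$.

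First I would record a short list of auxiliary commutation identities in the spirit of Lemma \ref{L:comm1} and Lemmas \ref{l:6}--\ref{l:8}: namely that $x_j^-(0)$ commutes, up to the appropriate $q$-factor, with the partial root vectors $x_{nt}^-(1)$ whenever $j$ is non-adjacent to the head of the subchain, together with the long-root identity $[\,x_i^-(0),\,x_{i-1}^-(0),\,x_i^-(0),\,x_{i+1}^-(0)\,]_{(r^{-2},r^{-2},1)}=0$ for $2\le i\le n-2$. Each of these is proved exactly as in Lemmas \ref{l:6}--\ref{l:8}: expand by (\ref{b:1})--(\ref{b:2}), annihilate the inner brackets using the relevant Serre relation from Case (III), and collect a nonzero prefactor of the form $(1+r^{-2}s^2)$, which is invertible since $r\ne\pm s$.

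For the generic range $2\le i\le n-1$, since $x_i^-(0)$ commutes with $x_1^-(0),\ldots,x_{i-2}^-(0)$ by $(\textrm{D9}_1)$, I would push $x_i^-(0)$ into the nested bracket until it meets the subword $[\,x_{i-1}^-(0),\,x_i^-(0),\,x_{i+1}^-(0),\,\ldots\,]$; applying (\ref{b:1}) and the auxiliary vanishing identities then collapses the whole commutator to $[\,x_i^-(0),\,x_{i-1}^-(0),\,x_i^-(0),\,x_{i+1}^-(0)\,]_{(r^{-2},r^{-2},1)}=0$, exactly as in Case (III) of Proposition \ref{p:1}. The case $i=1$ is the cleanest: since $x^-_\theta(1)=[\,x_1^-(0),\,(\text{tail})\,]_{r^{-2}}$ with the tail beginning at $x_2^-(0)$, the bracket $[\,x_1^-(0),\,x^-_\theta(1)\,]_{s^{-2}}$ reduces, via (\ref{b:1}) and $(\textrm{D9}_1)$, to $[\,x_1^-(0),\,x_1^-(0),\,x_2^-(0)\,]_{(r^{-2},s^{-2})}$; rewriting this through (\ref{b:4}) as $(rs)^{-2}[\,x_2^-(0),\,x_1^-(0),\,x_1^-(0)\,]_{\la r^2,s^2\ra}$ makes it vanish by the quadratic Serre relation for the pair $(1,2)$.

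The hard part will be $i=n$, namely proving $[\,x_n^-(0),\,x^-_\theta(1)\,]_{rs}=0$ at the short node, where the boundary index $i=n-1$ should also be rechecked since there the junction ceases to be symmetric. After commuting $x_n^-(0)$ past $x_1^-(0),\ldots,x_{n-2}^-(0)$, the computation concentrates on the innermost bracket $[\,x_{n-1}^-(0),\,x_n^-(1)\,]_{r^{-2}}$, where the interaction is governed by the asymmetric $\mathrm{B}_2$-type pair of Serre relations for $\mathrm{D}_{n+1}^{(2)}$: the quadratic relation $e_{n-1}^2e_n-(r^2+s^2)e_{n-1}e_ne_{n-1}+(rs)^2e_ne_{n-1}^2=0$ and the cubic relation $e_{n-1}e_n^3-(rs)[3]e_ne_{n-1}e_n^2+[3]e_n^2e_{n-1}e_n-(rs)^3e_n^3e_{n-1}=0$. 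As in Case (IV) for $\mathrm{A}_{2n-1}^{(2)}$, I expect repeated use of (\ref{b:1})--(\ref{b:2}), together with the cubic identity (\ref{b:5}), to produce an equation of the shape $(1+c)\,[\,x_n^-(0),\,x^-_\theta(1)\,]_{rs}=0$ with a nonzero scalar $c$ (anticipated to be $r^{-1}s$ or $r^{-2}s^2$), so that the relation follows from $r\ne -s$; the delicate bookkeeping will be tracking the half-integer powers of $rs$ introduced by the short root $n$ and by the cubic relation. Once the proposition is established, relation $(\mathcal{C}4)$ for $i\ne 0$ is immediate, since $[\,E_0,F_i\,]=-\frac{a}{p_i}[\,x_i^-(0),\,x^-_\theta(1)\,]_{\la i,0\ra^{-1}}(\gamma'^{-1}\om_\theta^{-1})=0$.
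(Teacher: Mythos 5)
Your plan coincides with the paper's proof for the cases it actually treats: for $i=1$ the paper reduces $[\,x_1^-(0),\,x_\theta^-(1)\,]_{s^{-2}}$ via (\ref{b:1}) and the quadratic Serre relation exactly as you describe, and for $1<i<n$ it invokes precisely your four-term identity $[\,x_i^-(0),\,x_{i-1}^-(0),\,x_i^-(0),\,x_{i+1}^-(0)\,]_{(r^{-2},r^{-2},1)}=0$ (Lemma \ref{l:16}, which the paper proves for $2\le i\le n-1$, so your worry about separately rechecking the boundary index $i=n-1$ is unnecessary). The one substantive discrepancy is the case $i=n$: you correctly identify it as the hard case, governed by the asymmetric Serre relations at the short node, but you only sketch the computation and explicitly defer the ``delicate bookkeeping,'' so that part of your proposal is a plan rather than a proof. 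You should be aware that the paper's own proof of Proposition \ref{p:5} silently omits $i=n$ altogether --- it stops after case (II) --- so there is no published argument against which to check your sketch. To close the gap you would need to carry out the analogue of case (IV) of Proposition \ref{p:1} for $A_{2n-1}^{(2)}$: push $x_n^-(0)$ through $x_1^-(0),\ldots,x_{n-2}^-(0)$ using (D9$_1$), then manipulate the resulting bracket involving $[\,x_{n-1}^-(0),\,x_n^-(1)\,]_{r^{-2}}$ with (\ref{b:1})--(\ref{b:2}) and the Serre relations at the short node until an identity of the form $(1+c)\,[\,x_n^-(0),\,x_\theta^-(1)\,]_{rs}=0$ with $c$ a nonzero power of $r^{-1}s$ appears; your prediction of the shape of that identity is plausible and consistent with the $A_{2n-1}^{(2)}$ computation, but as written it is not yet established.
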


The proof uses the following crucial and technical lemma.

\begin{lemm}\, \label{l:16} $[x_i^-(0),x_{i-1}^-(0),x_{i}^-(0),x_{i+1}^-(0)]_{(r^{-2},r^{-2},1)}=0$ for $i=2,\ldots, n-1$.
\end{lemm}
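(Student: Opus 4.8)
The plan is to observe that Lemma~\ref{l:16} is the $\mathrm{D}_{n+1}^{(2)}$ counterpart of Lemma~\ref{l:6}, with the role played there by the long nodes of $\mathrm{A}_{2n-1}^{(2)}$ now taken by the long nodes $1,\dots,n-1$ of $\mathrm{D}_{n+1}^{(2)}$, for which $d_i=2$, hence $r_i=r^2$, $s_i=s^2$ and $\langle i,i\rangle=r^2s^{-2}$. Reading off the two-parameter quantum Cartan matrix $J$ of type $\mathrm{D}_{n+1}^{(2)}$ gives $\langle i,i+1\rangle=r^{-2}$ and $\langle i+1,i\rangle=s^2$ for every $1\leqslant i\leqslant n-1$, \emph{including} the boundary pair $(n-1,n)$ involving the short node $n$. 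Consequently all the $q$-bracket parameters entering the iterated bracket are uniformly $r^{-2}$, and the argument can be run without singling out the node $n$: even though the bond $(n-1,n)$ is a double bond, only the degree-two Serre relation $[\,x_{n-1}^-(0),x_{n-1}^-(0),x_n^-(0)\,]_{(r^{-2},s^{-2})}=0$, expressing that the long node $n-1$ acts on $n$, will be needed, never the degree-three relation in the reverse direction.

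First I would record the three Serre-type consequences that drive the computation; each is the image under $r\mapsto r^2$, $s\mapsto s^2$ of an identity used in Lemma~\ref{l:6}, and each follows from the $(r,s)$-Serre relations of Case~(III) together with identity~(\ref{b:4}):
\begin{align*}
&[\,x_{i-1}^-(0),\,x_{i+1}^-(0)\,]=0\qquad(\text{nodes }i-1,\,i+1\text{ are non-adjacent}),\\
&[\,x_i^-(0),\,[\,x_{i-1}^-(0),\,x_i^-(0)\,]_{r^{-2}}\,]_{s^2}
=-r^{-2}\,[\,x_i^-(0),x_i^-(0),x_{i-1}^-(0)\,]_{(r^2,s^2)}=0,\\
&[\,x_i^-(0),x_i^-(0),x_{i+1}^-(0)\,]_{(r^{-2},s^{-2})}=0.
\end{align*}

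Then I would repeat verbatim the three-line manipulation of Lemma~\ref{l:6}, but starting from the iterated bracket equipped with the free parameter $\langle i,i\rangle^{-1}=r^{-2}s^2$, namely
$$\bigl[\,x_i^-(0),\,[\,x_{i-1}^-(0),\,x_i^-(0),\,x_{i+1}^-(0)\,]_{(r^{-2},r^{-2})}\,\bigr]_{r^{-2}s^2}.$$
Applying (\ref{b:1}) to detach the leading $x_i^-(0)$ produces a term containing the factor $[\,x_{i-1}^-(0),x_{i+1}^-(0)\,]$, which vanishes by non-adjacency; a further application of (\ref{b:1}) and (\ref{b:2}), together with the two Serre vanishings above, removes all remaining terms except $[\,Z,\,x_i^-(0)\,]_{r^{-2}s^2}$, where $Z=[\,x_{i-1}^-(0),x_i^-(0),x_{i+1}^-(0)\,]_{(r^{-2},r^{-2})}$. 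Thus the computation closes into the self-referential identity
$$\bigl[\,x_i^-(0),\,Z\,\bigr]_{r^{-2}s^2}=\bigl[\,Z,\,x_i^-(0)\,\bigr]_{r^{-2}s^2}.$$

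Finally, expanding the two sides of this last equality yields $(1+r^{-2}s^2)\,[\,x_i^-(0),\,Z\,]=0$. Since $r\ne\pm s$ in $\mathbb{K}=\mathbb{Q}(r,s)$ we have $r^2+s^2\ne0$, so $1+r^{-2}s^2\ne0$, and therefore
$$[\,x_i^-(0),\,Z\,]=[\,x_i^-(0),x_{i-1}^-(0),x_i^-(0),x_{i+1}^-(0)\,]_{(r^{-2},r^{-2},1)}=0,$$
which is exactly the assertion. I expect the only genuine work to be the parameter bookkeeping while applying (\ref{b:1})--(\ref{b:2}) and confirming that each discarded term is killed by the correct Serre relation; the single $\mathrm{D}_{n+1}^{(2)}$-specific subtlety to be checked by hand is the boundary case $i=n-1$, where one must verify that the short node $n$ enters only through the degree-two relation and through the structure constant $\langle n-1,n\rangle=r^{-2}$, so that the bracket parameters stay $(r^{-2},r^{-2},1)$ exactly as in the generic case and no separate argument is required.
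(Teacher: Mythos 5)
Your argument is correct and is essentially the paper's own proof: the same bracketing with parameter $\langle i,i\rangle^{-1}=r^{-2}s^{2}$, the same three Serre-type vanishings obtained via (\ref{b:1})--(\ref{b:2}), the same closure into $[\,x_i^-(0),Z\,]_{r^{-2}s^{2}}=[\,Z,x_i^-(0)\,]_{r^{-2}s^{2}}$, and the same factor $(1+r^{-2}s^{2})$ at the end (indeed, your starting parameter $r^{-2}s^{2}$ corrects what appears to be a typo, $r^{-1}s$, in the first line of the paper's computation). Your remark that the boundary case $i=n-1$ only requires the degree-two Serre relation at the long node $n-1$ is also consistent with how the paper treats it.
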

\begin{proof}\, Combining \ref{b:1} with the Serre relations, one gets
 \begin{eqnarray*}
&&[\,x_i^-(0),\,[\,x_{i-1}^-(0),\,x_i^-(0),\,x_{i+1}^-(0)\,]_{(r^{-2},\,r^{-2})}
\,]_{r^{-1}s}
\qquad {\hbox{(using (\ref{b:1}))}}\\
&=&[\,x_i^-(0),\,[\,x_{i-1}^-(0),\,x_i^-(0)\,]_{r^{-2}},\,x_{i+1}^-(0)
\,]_{(r^{-2},\,r^{-2}s^2)}
\qquad {\hbox{(using (\ref{b:1}))}}\\
&&+r^{-2}[\,x_i^-(0),\,x_i^-(0),\,\underbrace{[\,x_{i-1}^-(0),\,x_{i+1}^-(0)\,]}
\,]_{(1,\,r^{-2}s^2)}
\qquad {\hbox{(=0 by Serre relation)}}\\
&=&[\,\underbrace{[\,x_i^-(0),\,[\,x_{i-1}^-(0),\,x_i^-(0)\,]_{r^{-2}}\,]_{s^2}}
,\,x_{i+1}^-(0)\,]_{r^{-4}}
\qquad {\hbox{(=0 by Serre relation)}}\\
&&+s^2[\,[\,x_{i-1}^-(0),\,x_i^-(0)\,]_{r^{-2}}
,\,[\,x_{i}^-(0),\,x_{i+1}^-(0)\,]_{r^{-2}}
\,]_{(rs)^{-2}}\qquad {\hbox{(using (\ref{b:2}))}}\\
&=&s^2[\,x_{i-1}^-(0),\,
\underbrace{[\,x_i^-(0),\,x_{i}^-(0),\,x_{i+1}^-(0)\,]_{(r^{-2},s^{-2})}}
\,]_{r^{-4}}\qquad {\hbox{(=0 by Serre relation)}}\\
&&+[\,[\,x_{i-1}^-(0),\,x_i^-(0),\,x_{i+1}^-(0)\,]_{(r^{-2},r^{-2})}
,\,x_{i}^-(0)\,]_{r^{-2}s^2},
\end{eqnarray*}
which implies that $(1+r^{-2}s^2)[\,x_i^-(0),\,
[\,x_{i-1}^-(0),\,x_i^-(0),\,x_{i+1}^-(0)\,]_{(r{-2},\,r^{-2})}\,]=0.$
Thus, when $r\ne -s$, it follows that $$[\,x_i^-(0),\,
[\,x_{i-1}^-(0),\,x_i^-(0),\,x_{i+1}^-(0)\,]_{(r{-2},\,r^{-2})}\,]=0.$$
\end{proof}

\noindent{\bf Proof of Proposition \ref{p:5}} \ (I) \ When $i=1$,
$\langle 1,\, 0\rangle=s^2$, one has,
\begin{eqnarray*}
&&[\,x_1^-(0),x_{\theta}^-(1)\,]_{s^{-2}}\qquad {\hbox{(by definition)}}\\
&=&[\,x_1^-(0),\underbrace{
[\,x_1^-(0),\,x_2^-(0),\,x_{n\,3}^-(1)\,]_{(r^{-2},\,r^{-2})}}
\,]_{s^{-2}}\qquad {\hbox{(using (\ref{b:1}))}}\\
&=&[\,x_1^-(0),[\,[\,x_1^-(0),\,x_2^-(0)\,]_{r^{-2}}
,\,x_{n\,3}^-(1)\,]_{r^{-2}}
\,]_{s^{-2}}\qquad {\hbox{(using (\ref{b:1}))}}\\
&&+r^{-2}[\,x_1^-(0),\,x_2^-(0),\,\underbrace{[\,x_1^-(0),\,x_{n\,3}^-(1)\,]}
\,]_{(1,\,s^{-2})}\qquad {\hbox{(=0 by Serre relation)}}\\
&=&[\,\underbrace{[\,x_1^-(0),[\,x_1^-(0),\,x_2^-(0)\,]_{r^{-2}}\,]_{s^{-2}}}
,\,x_{n\,3}^-(1)\,]_{r^{-2}}\quad {\hbox{(=0 by Serre relation)}}\\
&&+s^{-2}[\,[\,x_1^-(0),\,x_2^-(0)\,]_{r^{-2}}
,\,\underbrace{[\,x_1^-(0),\,x_{n\,3}^-(1)\,]}
\,]_{r^{-2}s^2}\quad {\hbox{(=0 by Serre relation)}}\\
&=&0.
\end{eqnarray*}

\ (II) \   When $1<i<n$, $\langle i,\, 0\rangle=1$.
Thanks to Lemma {\ref{l:16}}, it is easy to see that,
\begin{eqnarray*}
&&[\,x_i^-(0),x_{\theta}^-(1)\,]\qquad {\hbox{(by definition)}}\\
&=&[x_1^-(0),\ldots,
\underbrace{[x_i^-(0), x_{i-1}^-(0), x_{i}^-(0), x_{i+1}^-(0)]_{(r{-2},r^{-2},1)}}
, x_{n\,i+2}^-(1)]_{(r^{-2},\ldots, r^{-2})}
\\
&=&0.
\end{eqnarray*}

\medskip

Next we can show the communication relation ($\mathcal{C}4$).

\begin{prop} {\label{p:6}
$[\,E_0, F_0\,]=
\frac{\gamma'^{-1}\om_{\theta}^{-1}-\gamma^{-1}\om_{\theta'}^{-1}}{r-s}$.}
\end{prop}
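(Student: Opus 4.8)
The plan is to mirror the arguments already carried out for $\mathrm{A}_{2n-1}^{(2)}$ and for $\mathrm{A}_{2n}^{(2)}$ (Proposition~\ref{p:3}): first strip off the Cartan factors to reduce $[E_0,F_0]$ to a commutator of pure Drinfeld current modes, and then evaluate that commutator by induction along the root chain $\alpha_n\to\alpha_{n-1}\to\cdots\to\alpha_1$. Substituting the images $E_0=(rs)^{2(n-1)}\,x^-_\theta(1)\cdot(\gamma'^{-1}\om_\theta^{-1})$ and $F_0=(\gamma^{-1}{\om'_\theta}^{-1})\cdot x^+_\theta(-1)$ (here $a=(rs)^{2(n-1)}$ is the normalizing constant for $\mathrm{D}_{n+1}^{(2)}$), and using that $\gamma^{\pm1/2},\gamma'^{\pm1/2}$ are central together with relations (D4)--(D5), I would pull both Cartan factors to the right to obtain
\[
[E_0,F_0]=(rs)^{2(n-1)}\,[\,x^-_\theta(1),\,x^+_\theta(-1)\,]\cdot(\gamma^{-1}\gamma'^{-1}\om_\theta^{-1}{\om'_\theta}^{-1}).
\]
Thus everything reduces to computing the single bracket $B:=[\,x^-_\theta(1),\,x^+_\theta(-1)\,]=[\,x^-_{n1}(1),\,x^+_{n1}(-1)\,]$.

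The core is an induction on $t$ descending from $n$ to $1$, adjoining one simple current at each stage so that $x^-_{nt}(1)=[\,x_t^-(0),\,x^-_{n(t+1)}(1)\,]_{r^{-2}}$ with innermost vector $x_n^-(1)$. The base case is immediate from (D8): taking $i=j=n$, $k=1$, $k'=-1$ and noting $d_n=1$ (so $r_n-s_n=r-s$) gives
\[
[\,x^-_n(1),\,x^+_n(-1)\,]=\frac{\gamma\om'_n-\gamma'\om_n}{r-s}.
\]
For the inductive step I would expand
\[
[\,x^-_{nt}(1),\,x^+_{nt}(-1)\,]=\big[\,[\,x_t^-(0),\,x^-_{n(t+1)}(1)\,]_{r^{-2}},\,[\,x^+_{n(t+1)}(-1),\,x_t^+(0)\,]_{s^{-2}}\,\big]
\]
into four summands using the bracket identities \eqref{b:1}--\eqref{b:3}, and discard the two cross terms containing $[\,x_t^-(0),\,x^+_{n(t+1)}(-1)\,]$ and $[\,x^-_{n(t+1)}(1),\,x_t^+(0)\,]$. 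Both vanish because the support of $x^\pm_{n(t+1)}$ meets $\alpha_t$ only at the adjacent node $t+1$, so these mixed brackets collapse under (D8)--(D9) to simple-root commutators $[\,x_t^\mp(0),\,x_j^\pm(0)\,]=0$ with $j\neq t$. The two surviving terms feed in $[\,x_t^-(0),\,x_t^+(0)\,]=\frac{\om'_t-\om_t}{r_t-s_t}$ (again by (D8), with $r_t-s_t=r^2-s^2$) together with the inductive value of $[\,x^-_{n(t+1)}(1),\,x^+_{n(t+1)}(-1)\,]$, and after the usual cancellation they combine into a factor $(rs)^{-2}$ times $\frac{\gamma\om'_{\alpha_{nt}}-\gamma'\om_{\alpha_{nt}}}{r-s}$, exactly paralleling the evaluations of $[\,x^-_{1\,n}(1),\,x^+_{1\,n}(-1)\,]$ and $[\,y^-_{1\,n-1}(1),\,y^+_{1\,n-1}(-1)\,]$ in the $\mathrm{A}$ cases.

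Iterating the $n-1$ long-node adjunctions ($\alpha_{n-1},\ldots,\alpha_1$) accumulates the factor $(rs)^{-2(n-1)}$, so that
\[
B=(rs)^{-2(n-1)}\,\frac{\gamma\om'_\theta-\gamma'\om_\theta}{r-s}.
\]
The prefactor $(rs)^{2(n-1)}$ then cancels it precisely, and simplifying the Cartan factors via centrality of $\gamma,\gamma'$ and $\om_\theta\om_\theta^{-1}={\om'_\theta}{\om'_\theta}^{-1}=1$ yields $[E_0,F_0]=\frac{\gamma'^{-1}\om_\theta^{-1}-\gamma^{-1}{\om'_\theta}^{-1}}{r-s}$, as claimed. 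The main obstacle is the scalar bookkeeping: one must track the powers of $r$ and $s$ produced by each $q$-bracket reassociation so that every step contributes exactly $(rs)^{-2}$ and the total constant is precisely $(rs)^{-2(n-1)}=a^{-1}$; by contrast, the vanishing of the cross terms is routine once the adjacency structure of the $\mathrm{D}_{n+1}^{(2)}$ diagram is invoked.
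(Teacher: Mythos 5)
Your proposal is correct and follows essentially the same route as the paper: strip off the Cartan factors to reduce to $(rs)^{2(n-1)}[\,x^-_\theta(1),x^+_\theta(-1)\,]$, then induct along the chain $\alpha_n\to\alpha_{n-1}\to\cdots\to\alpha_1$, evaluating $[\,x^-_{nt}(1),x^+_{nt}(-1)\,]$ at each stage via the four-term $q$-bracket expansion in which the two mixed cross terms vanish by (D8)--(D9), so that each adjunction contributes a factor $(rs)^{-2}$ and the total $(rs)^{-2(n-1)}$ cancels the normalizing constant $a$. The only difference is presentational (you make the vanishing of the cross terms and the base case $[\,x_n^-(1),x_n^+(-1)\,]$ explicit, where the paper folds them into its displayed computation), so no further comparison is needed.
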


\begin{proof}\, First, using
relation $(\textrm{D6})$ and induction, we see that
\begin{equation*}
\begin{split}
\bigl[\,E_0,
F_0\,\bigr]&=(rs)^{2(n-1)}\bigl[\,x^-_{\theta}(1)\,
\gamma'^{-1}{\om_\theta}^{-1},\,\gamma^{-1}{\om'_\theta}^{-1}
x^+_{\theta}(-1)\,\bigr]\\
&=(rs)^{2(n-1)}\bigl[\,x^-_{\theta}(1),\,x^+_{\theta}(-1)\,\bigr]\,
\cdot(\gamma^{-1}\gamma'^{-1}{\om_\theta}^{-1}{\om'_\theta}^{-1}).
\end{split}
\end{equation*}
Recalling the notations, we have that
\begin{gather*}
x^-_{\theta}(1)=x_{n\,1}^-(1)=[\,x_1^-(0),\ldots, x_{n-1}^-(0),\,x_{n}^-(1)\,]
_{(r^{-2},\,\ldots,r^{-2})},\\
x_{\theta}^+(-1)=\tau(x^-_{\theta}(1))=x_{n\,1}^+(-1)\\
=[\,x_{n}^+(-1),\,x_{n-1}^+(0),\,
\ldots, x_1^+(0)\,]_{\langle s^{-2},\,\ldots, s^{-2}\rangle}.
\end{gather*}

The first step of induction is to check the following
\begin{equation*}
\begin{split}
&[\,x_{n\,n-1}^-(1),\,x_{n\,n-1}^+(-1)\,]\qquad{\hbox{(by definition)}}\\
=&[\,[\,x_{n-1}^-(0),\,x_{n}^-(1)\,]_{r^{-2}}
,\,[\,x_{n}^+(-1),\,x_{n-1}^+(0)\,]_{s^{-2}}\,]\qquad{\hbox{(using (\ref{b:1}))}}\\
=&[\,[\,x_{n-1}^-(0),\,[\,x_{n}^-(1),x_{n}^+(-1)\,]\,]_{r^{-2}}
,\,x_{n-1}^+(0)\,]_{s^{-2}}\qquad{\hbox{(using (\ref{b:1}))}}\\
&+[\,x_{n}^+(-1),\,[\,[\,x_{n-1}^-(0),\,x_{n-1}^+(0)\,],\,x_{n}^-(1)\,]_{r^{-2}}
\,]_{s^{-2}}\,]\quad{\hbox{(using (\ref{b:1}))}}\\
=&(rs)^{-2}\gamma\om'_{n}\cdot \frac{\om'_{n-1}-\om_{n-1}}{r-s}
+(rs)^{-2}\frac{\gamma\om'_{n}-\gamma'\om_{n}}
{r-s}\om_{n-1}\\
=&(rs)^{-2}\frac{\gamma\om'_{\alpha_{n\, n-1}}-\gamma'\om_{\alpha_{n\,n-1}}}{r-s}.
\end{split}
\end{equation*}

Repeating the above steps, we obtain that
\begin{equation*}
\begin{split}
&[\,x_{n\,n-2}^-(1),\,x_{n\,n-2}^+(-1)\,]\qquad{\hbox{(by definition)}}\\
=&[\,[\,x_{n-2}^-(0),\,x_{n\,n-1}^-(1)\,]_{r^{-2}}
,\,[\,x_{n\,n-1}^+(-1),\,x_{n-2}^+(0)\,]_{s^{-2}}\,]\qquad{\hbox{(using (\ref{b:1}))}}\\
=&[\,[\,x_{n-2}^-(0),\,[\,x_{n\,n-1}^-(1),x_{n\,n-1}^+(-1)\,]\,]_{r^{-2}}
,\,x_{n-2}^+(0)\,]_{s^{-2}}\qquad{\hbox{(using (\ref{b:1}))}}\\
&+[\,x_{n\,n-1}^+(-1),\,[\,[\,x_{n-2}^-(0),\,x_{n-2}^+(0)\,],\,x_{n\,n-1}^-(1)\,]_{r^{-2}}
\,]_{s^{-2}}\,]\quad{\hbox{(using (\ref{b:1}))}}\\
=&(rs)^{-4}\gamma\om'_{\alpha_{n\, n-1}}\cdot \frac{\om'_{n-2}-\om_{n-2}}{r-s}
+(rs)^{-4}\frac{\gamma\om'_{\alpha_{n\, n-1}}-\gamma'\om_{\alpha_{n\, n-1}}}
{r-s}\om_{n-2}\\
=&(rs)^{-4}\frac{\gamma\om'_{\alpha_{n\, n-2}}-\gamma'\om_{\alpha_{n\,n-2}}}{r-s}.
\end{split}
\end{equation*}

By induction we arrive at the following
\begin{equation*}
\begin{split}
[\,x_{n\,1}^-(1),\,x_{n\,1}^+(-1)\,]
=(rs)^{-2(n-1)}\frac{\gamma\om'_{\alpha_{n\,1}}-\gamma'\om_{\alpha_{n\,1}}}
{r-s},
\end{split}
\end{equation*}
which completes the proof of Proposition \ref{p:6}.
\end{proof}

The last part of this subsection is to check Serre relation ($\mathcal{C}5$).
Here we only verify the following two key Serre relations. Other Serre relations are checked similarly.

\begin{lemm} We have that
$(1)$ \  $[\,E_1,\,E_1,\, E_0\,]_{(r^{-2},\,s^{-2})}=0$,

$(2)$ \ $[\,E_{n},\,E_{n},\, E_{n},\, E_{n-1}\,]_{(r^{-2},\,(rs)^{-1},\,s^{-2})}=0$.
\end{lemm}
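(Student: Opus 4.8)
The plan is to dispatch the two relations by the two complementary techniques already used for the preceding cases, after reducing to the $e$-side: since $\tau$ interchanges $(\mathcal C5)$ and $(\mathcal C6)$, it suffices to treat the displayed $E$-relations. The two are structurally different. In $(1)$ the generator $E_0=(rs)^{2(n-1)}\,x^-_\theta(1)\,\gamma'^{-1}\om_\theta^{-1}$ carries the imaginary-root vector $x^-_\theta(1)$, so $[\,E_1,E_1,E_0\,]_{(r^{-2},s^{-2})}$ is not homogeneous for the triangular grading and Lemma \ref{lemma1} does not apply directly; this relation I would compute by hand. In $(2)$ both $E_n=x_n^+(0)$ and $E_{n-1}=x_{n-1}^+(0)$ are positive, so $Y:=[\,E_n,E_n,E_n,E_{n-1}\,]_{(r^{-2},(rs)^{-1},s^{-2})}$ lies in $\mathcal{U}_{r,s}(\frak{g}^{\sigma})^+$ and I would invoke Lemma \ref{lemma1}.

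For $(1)$ the decisive ingredient is the single bracket $[\,x_1^+(0),\,x^-_\theta(1)\,]$. Writing $x^-_\theta(1)=x^-_{n1}(1)=[\,x_1^-(0),x_2^-(0),\dots,x_{n-1}^-(0),x_n^-(1)\,]_{(r^{-2},\dots,r^{-2})}$ and applying the derivation rule (\ref{b:3}), only the outermost factor $x_1^-(0)$ survives because $[\,x_1^+(0),x_j^-(\cdot)\,]=0$ for $j\ne1$ by $(\textrm{D8})$; substituting $[\,x_1^+(0),x_1^-(0)\,]=(\om_1-\om_1')/(r_1-s_1)$ and pushing the resulting Cartan element out with $(\textrm{D5})$, the $\om_1'$-part is killed by the bracket parameter $r^{-2}$ while the $\om_1$-part survives, giving the clean identity $[\,x_1^+(0),\,x^-_\theta(1)\,]=(rs)^{-2}\,x^-_{n2}(1)\,\om_1$. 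I would then expand $[\,E_1,[\,E_1,E_0\,]_{r^{-2}}\,]_{s^{-2}}$ directly, carrying the scalars $\la1,1\ra=r^2s^{-2}$ and $\la1,\theta\ra=s^{-2}$. The inner bracket splits into a ``reduced'' piece proportional to $x^-_{n2}(1)\om_1$ and a ``residual'' piece proportional to $x^-_\theta(1)x_1^+(0)$; after applying the outer bracket and using $[\,x_1^+(0),x^-_{n2}(1)\,]=0$, both contributions reduce to multiples of the single monomial $x^-_{n2}(1)x_1^+(0)\om_1\gamma'^{-1}\om_\theta^{-1}$, with coefficients $s^{-4}-r^{-2}s^{-2}$ and $r^{-2}s^{-2}-s^{-4}$, which are opposite and cancel.

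For $(2)$ I would show $[\,x_i^-(0),\,Y\,]=0$ for every $i\in I$; this is automatic unless $i\in\{n-1,n\}$, since $Y$ involves only $x_n^+(0),x_{n-1}^+(0)$. For $i=n-1$, the rule (\ref{b:3}) leaves a single term in which $E_{n-1}$ is replaced by $[\,x_{n-1}^-(0),x_{n-1}^+(0)\,]=(\om_{n-1}'-\om_{n-1})/(r_{n-1}-s_{n-1})$; pulling this Cartan factor through, the expression collapses to a cube $[\,x_n^+(0),x_n^+(0),x_n^+(0)\,]$ in one variable whose inner parameter has become $1$, hence it vanishes. For $i=n$, distributing $x_n^-(0)$ over the three $E_n$-slots of $Y$ via (\ref{b:3}) produces three terms, each reducing after $(\textrm{D5})$ to $[\,x_n^+(0),x_n^+(0),x_{n-1}^+(0)\,]$ times a Cartan factor; weighted by the $(r,s)$-integer coefficients of (\ref{b:5}) together with the powers of $rs$ coming from moving $\om_n,\om_n'$ past the remaining $x_n^+(0)$'s, the surviving terms cancel in pairs.

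The main obstacle in both parts is coefficient bookkeeping rather than any conceptual difficulty. In $(1)$ the residual piece from the non-homogeneous inner $q$-bracket is genuinely present and must annihilate the reduced piece; getting the Cartan scalars $\la1,1\ra$ and $\la1,\theta\ra$ right is precisely what makes the two coefficients opposite, and this is what pins the exponents to $(r^{-2},s^{-2})$. In $(2)$ the delicate step is the $i=n$ computation, where $n$ is the special folded node: the three terms carry different powers of $r$ and $s$ arising both from the defining exponents $(r^{-2},(rs)^{-1},s^{-2})$ and from the $(\textrm{D5})$-commutations, and I expect verifying their pairwise cancellation to be the most error-prone part, exactly as in the analogous $A_{2n}^{(2)}$ computation.
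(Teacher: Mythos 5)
Your proposal is correct and follows essentially the same route as the paper: part (1) rests on the identity $[\,x_1^+(0),\,x^-_{\theta}(1)\,]=(rs)^{-2}\,x^-_{n\,2}(1)\,\om_1$ together with the vanishing of the outer bracket (the paper pushes $\gamma'^{-1}\om_{\theta}^{-1}$ out first so that only a single, manifestly vanishing term remains, whereas you retain the residual $x^-_{\theta}(1)x_1^+(0)$ piece and check that the two coefficients cancel --- the same computation with slightly heavier bookkeeping). Part (2) is exactly the paper's argument via Lemma \ref{lemma1}, with the $i=n-1$ case collapsing to a $q$-cube carrying a unit parameter and the $i=n$ case cancelling in pairs.
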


\begin{proof} \ (1) \  Before checking the first one, we need the relation
\begin{equation*}
\begin{split}
&\bigl[\,x_1^+(0),\,x_{\theta}^-(1)\bigr]_{1}\\
=&\bigl[\,[\,x_1^+(0),\, x_1^-(0)\,],\,y_{n\,2}^-(1)\,\bigr]_{r^{-2}}\,{\hbox{(using (D9) and (D6))}} \\
=&(rs)^{-2}y_{n\,2}^-(1)\omega_1.
\end{split}
\end{equation*}

Consequently, it follows that
\begin{equation*}
\begin{split}
&[\,E_1,\,E_1,\, E_0\,]_{(r^{-2},\,s^{-2})}\\
=&a\bigl[\,x_1^+(0),\,x_1^+(0),\,x_{\theta}^-(1),\bigr]_{(1,\,r^{-2}s^2)}{\gamma'}^{-1}\omega_{\theta}^{-1}\\
=&a(rs)^{-2}
\underbrace{\bigl[\,x_1^+(0),\, y_{n\,2}^-(1)\,\bigr]}\omega_1{\gamma'}^{-1}\omega_{\theta}^{-1}\,{\hbox{(=0 by (D9))}} \\
=&0.
\end{split}
\end{equation*}

(2) \ By using Lemma \ref{lemma1}, one can check the second result. Set $$Z=[\,E_{n},\,E_{n},\, E_{n},\, E_{n-1}\,]_{(r^{-2},\,(rs)^{-1},\,s^{-2})}.$$
It is clear that $Z\in \mathcal{U}_{r,s}(\frak{g}^{\sigma})^+$. In order to prove $Z=0$, it suffices to check $[\,x_i^-(0),\, Z\,]=0$ for $i\in I$,  which is trivial for $i$ not equal to $n-1$ or $n$.

As for the case of $i=n-1$,
\begin{equation*}
\begin{split}
&[\,x_{n-1}^+(0),\, Z\,] \\
=&[\,x_{n}^+(0),\,x_{n}^+(0),\, x_{n}^+(0),\, [\,x_{n-1}^-(0),\,x_{n-1}^+(0)\,]\,]_{(r^{-2},\,(rs)^{-1},\,s^{-2})}\\
=&-r^{-2}s^{-6}\omega_{n-1} [\,x_{n}^+(0),\,x_{n}^+(0),\, x_{n}^+(0)]_{(r^{-1}s,\,1)}\\
=&0.
\end{split}
\end{equation*}

In the case of $i=n$, one computes directly that
\begin{align*}
&[\,x_{n}^-(0),\, Z\,] \\
=&[\,x_{n}^-(0),\,[\,x_{n}^+(0),\,x_{n}^+(0),\, x_{n}^+(0)
,\, x_{n-1}^+(0)\,]_{(r^{-2},\,(rs)^{-1},\,s^{-2})}\,] \\
=&[\,[\,x_{n}^-(0),\,x_{n}^+(0)\,],\,x_{n}^+(0),\, x_{n}^+(0),\, x_{n-1}^+(0)\,]_{(r^{-2},\,(rs)^{-1},\,s^{-2})}\, {\hbox{(using (D9) and (D6))}}\\
+&[\,x_{n}^+(0),\,[\,x_{n}^-(0),\,x_{n}^+(0)\,],\, x_{n}^+(0),\, x_{n-1}^+(0)\,]_{(r^{-2},\,(rs)^{-1},\,s^{-2})}\,{\hbox{(=0 by (D9) and (D6))}}\\
+&[\,x_{n}^+(0),\,x_{n}^+(0),\, [\,x_{n}^-(0),\,x_{n}^+(0)\,],\, x_{n-1}^+(0)\,]_{(r^{-2},\,(rs)^{-1},\,s^{-2})}\,{\hbox{(using (D9) and (D6))}} \\
=&-(rs)^{-2}(r+s)[\,x_{n}^+(0),\, x_{n}^+(0),\,x_{n-1}^+(0)\,]_{(r^{-2},\,(rs)^{-1})}\omega'_{n}\\
&+(rs)^{-1}(r+s)[\,x_{n}^+(0),\, x_{n}^+(0),\,x_{n-1}^+(0)\,]_{(r^{-2},\,(rs)^{-1})}\omega'_{n}\\
=&0.
\end{align*}
\end{proof}

\subsection{Proof of Theorem $\mathcal{A}$ for $U_{r,s}(\mathrm{D}_{4}^{(3)})$}

The aim of this subsection is to check relations $(\mathcal{D}4)$--$(\mathcal{D}6)$
involving with $i=0$ for the case of $\mathrm{D}_{4}^{(3)}$.

Similar to the above cases,  the following proposition 
implies relation $(\mathcal{D}4)$.

Recall the notation defined in subsection 5.2:
\begin{equation*}
\begin{split}
x_{\theta}^-(1)&=
[\,x_{1}^-(0),\, x_2^-(0),\,x_1^-(1)\,]_{(s^3,\,r^{-2}s^{-1})}.
\end{split}
\end{equation*}

\begin{prop} \label{p:7} $\bigl[\,x_i^-{(0)},\,
 x^-_{\theta}(1)\,\bigr]_{\langle i,~
0 \rangle^{-1}}=0$,\quad for \ $i=1, 2$.
\end{prop}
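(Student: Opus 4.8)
The plan is to follow the template of the earlier cases and reduce relation $(\mathcal{D}4)$ for $i\neq 0$ to the two $q$-bracket identities stated. As in Propositions \ref{p:1} and \ref{p:2}, since $\om_\theta^{-1}$ commutes with $x_i^-(0)$ up to the scalar $\langle i,0\rangle$, one has
\[
[\,E_0,F_i\,]=-\frac{a}{p_i}\bigl[\,x_i^-(0),\,x^-_\theta(1)\,\bigr]_{\langle i,0\rangle^{-1}}\,(\gamma'^{-1}\om_\theta^{-1}),
\]
so everything comes down to proving $\bigl[\,x_i^-(0),x^-_\theta(1)\bigr]_{\langle i,0\rangle^{-1}}=0$ for $i=1,2$. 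Reading the constants off the matrix $J$ of type $\mathrm{D}_4^{(3)}$ gives $\langle1,0\rangle=rs^2$ and $\langle2,0\rangle=(rs)^{-3}$, so the two brackets to be annihilated carry parameters $r^{-1}s^{-2}$ and $(rs)^3$. Throughout I would start from $x^-_\theta(1)=[\,x_1^-(0),[\,x_2^-(0),x_1^-(1)\,]_{s^3}\,]_{r^{-2}s^{-1}}$ and process the nested brackets with the quantum-bracket identities (\ref{b:1})--(\ref{b:6}), pushing each computation onto the defining relations (D7), (D9) and the $\mathrm{D}_4^{(3)}$ (i.e.\ $G_2$) $(r,s)$-Serre relations recorded in Case (IV). Before the two main cases I expect to need a short list of preliminary commutation lemmas in the spirit of Lemmas \ref{l:6}--\ref{l:8} and \ref{l:16}, isolating the vanishing of the auxiliary brackets that occur during the expansions.

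For $i=2$ I would first move $x_2^-(0)$ past the outer $x_1^-(0)$ by (\ref{b:1}), taking $q=\langle2,1\rangle=s^3$. The inner term then becomes $[\,x_2^-(0),x_2^-(0),x_1^-(1)\,]_{(s^3,r^3)}$, which carries two copies of $x_2^-$ and dies by the degree-two Serre relation $e_1e_2^2-(r^3+s^3)e_2e_1e_2+(rs)^3e_2^2e_1=0$ in its $x^-$/loop form, this relation being symmetric in the two bracket parameters. The surviving term collapses to a bracket of the two composite root vectors of common weight $\alpha_1+\alpha_2$ at consecutive loop degrees; since $\langle\alpha_1+\alpha_2,\alpha_1+\alpha_2\rangle=rs^{-1}$, it is a ``same root, consecutive degree'' bracket and is killed by the corresponding $F/G$ relation. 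This is exactly the expected outcome, as $2(\alpha_1+\alpha_2)$ is not a root of $G_2$.

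The case $i=1$ is the main obstacle. Here the weight $3\alpha_1+\alpha_2$ of $[\,x_1^-(0),x^-_\theta(1)\,]$ \emph{is} a (short) root of $G_2$, so the vanishing is a genuinely loop-graded phenomenon: it cannot be obtained from a non-root weight, and the $G_2$ Serre relation for $x_1^-$ is of degree four in $x_1^-$, whereas only three copies of $x_1^-$ (at loop degrees $0,0,1$) are present. After writing $[\,x_1^-(0),x^-_\theta(1)\,]_{r^{-1}s^{-2}}=[\,x_1^-(0),x_1^-(0),x^-_{12}(1)\,]_{(r^{-2}s^{-1},\,r^{-1}s^{-2})}$ and expanding with (\ref{b:1}), (\ref{b:4}) and (\ref{b:5}), the order-three folding forces the $x_1^-$--$x_1^-$ relations in (D7) to be cubic in the loop degree, so the three $x_1^-$ factors must be reshuffled carefully against the degree-four relation (\ref{b:6}). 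As in Case (I)(IV) and Case (II)(III), I anticipate that this bookkeeping terminates in an identity of the shape $(1+c)\,[\,x_1^-(0),x^-_\theta(1)\,]_{r^{-1}s^{-2}}=0$ with $c$ a Laurent monomial in $r,s$; the standing hypothesis $r\neq -s$ (together with a genericity condition such as $(r/s)^{3/2}\neq1$, should the degree-four relation produce one) then forces the bracket to vanish. Tracking the loop degrees of the three $x_1^-$'s through the $G_2$ Serre machinery is where the real work lies, and I would organize it through the preliminary lemmas mentioned above; relations $(\mathcal{D}5)$ and $(\mathcal{D}6)$ afterwards can then be reduced, via Lemma \ref{lemma1}, to finitely many bracket checks of the same type.
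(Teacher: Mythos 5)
Your overall template matches the paper's: reduce $(\mathcal{D}4)$ to the two bracket identities, read off $\langle 1,0\rangle=rs^2$ and $\langle 2,0\rangle=(rs)^{-3}$, expand $x^-_{\theta}(1)=[\,x_1^-(0),\,x_2^-(0),\,x_1^-(1)\,]_{(s^3,\,r^{-2}s^{-1})}$ with (\ref{b:1})--(\ref{b:6}) against (D7), (D9) and the $G_2$ Serre relations of Case (IV), and aim for an identity of the form $(1+c)X=0$; your observation that $3\alpha_1+\alpha_2$ is a root, so the $i=1$ case cannot be a pure Serre phenomenon, is also on target. But both closing mechanisms you propose diverge from what actually works, and the one for $i=2$ is a genuine gap. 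After the two Serre-type terms $[\,x_2^-(0),x_1^-(0),x_2^-(0)\,]_{(r^{-3},\,s^3)}$ and $[\,x_2^-(0),x_2^-(0),x_1^-(1)\,]_{(s^3,\,r^3)}$ die, your surviving term $[\,[\,x_2^-(0),x_1^-(0)\,]_{s^3},[\,x_2^-(0),x_1^-(1)\,]_{s^3}\,]_{rs^{-1}}$ is \emph{not} annihilated by ``the corresponding $F/G$ relation'': relation (D7) is a defining relation only for the simple root generators $x_i^{\pm}(z)$, and no analogous relation for the composite root vector of weight $\alpha_1+\alpha_2$ is available without a separate proof of comparable difficulty. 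The paper closes this case differently: it rebrackets so that the surviving term equals $(rs)^3[\,x^-_{\theta}(1),x_2^-(0)\,]_{r^{-6}}$, i.e.\ the \emph{same} bracket with a shifted parameter, whence $(1+r^{-3}s^3)\,[\,x_2^-(0),x^-_{\theta}(1)\,]_{(rs)^3}=0$ and one divides out.

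For $i=1$ you anticipate reshuffling the three $x_1^-$ factors against the degree-four Serre relation (\ref{b:6}); the paper never uses it. Its entire computation runs on the single quadratic vanishing $[\,x_1^-(0),x_1^-(1)\,]_{rs^{-1}}=0$ (extracted from the defining relations for the pair $x_1^-(z),x_1^-(w)$, since $\sigma(1)\neq 1$ and $A_{1,\sigma(1)}=0$ in $D_4$), applied repeatedly together with (\ref{b:1}) and the reversal identity (\ref{b:4}), to turn $[\,x_1^-(0),x^-_{\theta}(1)\,]_{r^{-1}s^{-2}}$ into $rs^{-1}$ times itself; the genericity actually needed is therefore $r\neq s$, not $r\neq -s$ nor a condition on $(r/s)^{3/2}$. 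Since you leave this case unexecuted and your $i=2$ case rests on an unestablished relation for composite root vectors, the proposal as written does not close either half of the proposition, even though the scaffolding around it is the right one.
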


\begin{proof} \ (I) \ When $i=1$,
$\langle 1,\, 0\rangle=rs^2$. Repeatedly using (\ref{b:1}), we have
\begin{equation*}
\begin{split}
&\bigl[\,x_1^-(0),\, x^-_{\theta}(1)\,\bigr]_{r^{-1}s^{-2}}\\
=&\bigl[\,x_1^-(0),\underbrace{\bigl[\,x_1^-(0),\,x_2^-(0),\,x^-_{1}(1)\,\bigr]_{(s^3,\,r^{-2}s^{-1})}}
\bigr]_{r^{-1}s^{-2}}\,( \hbox{using (\ref{b:1})})\\
=&\bigl[\,x_1^-(0),\bigl[\,[\,x_1^-(0),\,x_2^-(0)\,]_{r^{-3}},\,x^-_{1}(1)\,\bigr]_{rs^{2}}\bigr]_{r^{-1}s^{-2}}
\,( \hbox{using (\ref{b:1})})\\
&+r^{-3}\bigl[\,x_1^-(0),\,\bigl[\,x_2^-(0),\,\underbrace{[\,x_1^-(0),\,x^-_{1}(1)\,]_{rs^{-1}}}\,\bigr]_{(rs)^{3}}
\bigr]_{r^{-1}s^{-2}}
\,( \hbox{=0 by the Serre relation})\\
=&\bigl[\,\bigl[\,x_1^-(0),\,x_1^-(0),\,x_2^-(0)\,\bigr]_{(r^{-3},\,r^{-2}s^{-1})},\,x^-_{1}(1)\,\bigr]_{r^{2}s}\\
&+r^{-2}s^{-1}\bigl[\,[\,x_1^-(0),\,x_2^-(0)\,]_{r^{-3}},\,
\underbrace{[\,x_1^-(0),x^-_{1}(1)\,]_{rs^{-1}}}\,\bigr]_{(rs)^{3}}\,( \hbox{=0 by (D8)})\\
=&-r^{2}s\bigl[\,x^-_{1}(1),\,\bigl[\,x_1^-(0),\,x_1^-(0),\,x_2^-(0)\,\bigr]_{(r^{-3},\,r^{-2}s^{-1})}
\,\bigr]_{r^{-2}s^{-1}}\,( \hbox{using (\ref{b:1})})\\
=&-r^{2}s\bigl[\,\underbrace{[\,x^-_{1}(1),\,x_1^-(0)\,]_{r^{-1}s}},\,[\,x_1^-(0),\,x_2^-(0)\,]_{r^{-3}}
\,\bigr]_{(rs)^{-3}}\,( \hbox{=0 by (D8)})\\
&-rs^{2}\bigl[\,x^-_{1}(0),\,\underbrace{[\,x_1^-(1),\,x_1^-(0),\,x_2^-(0)\,]_{(r^{-3},\,r^{-1}s^{-2})}}
\,\bigr]_{r^{-1}s^{-2}}\,( \hbox{using (\ref{b:1})})\\
=&-rs^{2}\bigl[\,x^-_{1}(0),\,[\,\underbrace{[\,x_1^-(1),\,x_1^-(0)\,]_{r^{-1}s}},\,x_2^-(0)\,]_{(rs)^{-3}}
\,\bigr]_{r^{-1}s^{-2}}\,( \hbox{=0 by (D8)})\\
&-rs^2\bigl[\,x^-_{1}(0),\,[\,x_1^-(0),\,[\,x_1^-(1),\,x_2^-(0)\,]_{s^{-3}}\,]_{r^{-2}s^{-1}}
\,\bigr]_{r^{-1}s^{-2}}\\
=&rs^{-1}\bigl[\,x_1^-(0),\bigl[\,x_1^-(0),\,x_2^-(0),\,x^-_{1}(1)\,\bigr]_{(s^3,\,r^{-2}s^{-1})}
\bigr]_{r^{-1}s^{-2}}
\end{split}
\end{equation*}
Thus if $r \neq s$, it is easy to see that
$\bigl[\,x_1^-(0),\, x^-_{\theta}(1)\,\bigr]_{r^{-1}s^{-2}}=0$.
\smallskip

(II) \ When $i=2$, $\langle 2,\, 0\rangle=(rs)^{-3}$. Using (\ref{b:1}) and the Serre relation, we get that
\begin{equation*}
\begin{split}
&\bigl[\,x_{2}^-(0),\, x^-_{\theta}(1)\,\bigr]_{s^6}\quad
 \hbox{(by definition)} \\
=&\bigl[\,x_{2}^-(0), \,\underbrace{\bigl[\,x_{1}^-(0),\,x_{2}^-(0),\,x^-_{1}(1)\,\bigr]_{(s^3,\,r^{-2}s^{-1})}}\bigr]_{s^{6}} \,(\hbox{using (\ref{b:1})})\\
 =&\bigl[\,x_{2}^-(0), \bigl[\,[\,x_{1}^-(0),\, x_{2}^-(0)\,]_{r^{-3}},\,x_1^-(1) \,\bigr]_{rs^2},
\,\bigr]_{s^6} \,\hbox{(using (\ref{b:1}))}\\
 &+r^{-3}\bigl[\,x_{2}^-(0),\,x_{2}^-(0),\,
 \underbrace{\bigl[\,x_{1}^-(0), \,x^-_{1}(1)\,
 \bigr]_{rs^{-1}}}\,\bigr]_{((rs)^3,\,s^6)}\,(=0 \hbox{ by (\ref{b:1}) and the Serre relation})\\
 =&\bigl[\,\underbrace{[\,x_{2}^-(0),\,x_{1}^-(0),\, x_{2}^-(0)\,]_{(r^{-3},\,s^3)}},\,x_1^-(1) \,\bigr]_{rs^5}
\,\hbox{(=0 by (\ref{b:1}))}\\
&+s^3\bigl[\,[\,x_{1}^-(0),\, x_{2}^-(0)\,]_{r^{-3}},\,[\,x_{2}^-(0),\,,\,x_1^-(1)\,]_{s^3} \,\bigr]_{rs^{-1}}
\,\hbox{(=0 by (\ref{b:1}))}\\
 =&s^3\bigl[\,[\,x_{1}^-(0),\, \underbrace{[\,x_{2}^-(0),\,x_{2}^-(0),\,,\,x_1^-(1)\,]_{(s^3,\,r^3)}} \,\bigr]_{r^{-5}s^{-1}}
\,\hbox{(=0 by (\ref{b:1}))}\\
&+(rs)^3\bigl[\,[\,x_{1}^-(0),\,x_{2}^-(0),\,,\,x_1^-(1)\,]_{(s^3,\,r^{-2}s^{-1})} ,\, x_{2}^-(0)\,\bigr]_{r^{-6}}\\
=&-r^{-3}s^3\bigl[\,x_{2}^-(0),\,[\,x_{1}^-(0),\,x_{2}^-(0),\,,\,x_1^-(1)\,]_{(s^3,\,r^{-2}s^{-1})} \,\bigr]_{r^{6}}.
\end{split}
\end{equation*}
The result yields that
$(1+r^{-3}s^3)\bigl[\,x_{2}^-(0),\, x^-_{\theta}(1)\,\bigr]_{(rs)^3}=0$, which implies that  for $r\neq -s$
$$\bigl[\,x_{2}^-(0),\, x^-_{\theta}(1)\,\bigr]_{(rs)^3}=0.$$
\end{proof}

Next we check the commutation relation given below.
\begin{prop} {\label{p:8}
$[\,E_0, F_0\,]=
\frac{\gamma'^{-1}\om_{\theta}^{-1}-\gamma^{-1}\om_{\theta'}^{-1}}{r-s}$.}
\end{prop}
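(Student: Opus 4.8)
The plan is to follow the pattern of Propositions \ref{p:3} and \ref{p:6}: strip off the central and Cartan factors, reduce to the single bracket $[\,x^-_{\theta}(1),\,x^+_{\theta}(-1)\,]$, and then evaluate that bracket by building it up one step at a time along the fixed root chain $\al_1\to\al_2\to\al_1$.

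First I would unwind the images of $e_0$ and $f_0$ under $\Psi$, using $a=(rs)^2$ for $\mathrm{D}_4^{(3)}$. Since $\gamma^{\pm\frac12},\gamma'^{\pm\frac12}$ are central and $\om_\theta,\om'_\theta$ are Cartan, the same weight bookkeeping as before yields
\begin{equation*}
\begin{split}
[\,E_0, F_0\,]&=(rs)^2\bigl[\,x^-_{\theta}(1)\,\gamma'^{-1}{\om_\theta}^{-1},\,\gamma^{-1}{\om'_\theta}^{-1}\,x^+_{\theta}(-1)\,\bigr]\\
&=(rs)^2\bigl[\,x^-_{\theta}(1),\,x^+_{\theta}(-1)\,\bigr]\cdot(\gamma^{-1}\gamma'^{-1}{\om_\theta}^{-1}{\om'_\theta}^{-1}),
\end{split}
\end{equation*}
so the whole statement reduces to showing $[\,x^-_{\theta}(1),\,x^+_{\theta}(-1)\,]=(rs)^{-2}\frac{\gamma\om'_\theta-\gamma'\om_\theta}{r-s}$; multiplying by $(rs)^2$ and the Cartan factor then produces $\frac{\gamma'^{-1}{\om_\theta}^{-1}-\gamma^{-1}{\om'_\theta}^{-1}}{r-s}$, which is the claim.

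Next I would record the base cases coming from (D8): since $\phi_1(0)=\om_1$, $\varphi_1(0)=\om'_1$ and $r_1-s_1=r-s$, one has $[\,x_1^-(1),\,x_1^+(-1)\,]=\frac{\gamma\om'_1-\gamma'\om_1}{r-s}$ and $[\,x_1^+(0),\,x_1^-(0)\,]=\frac{\om_1-\om'_1}{r-s}$, while all cross-node brackets $[\,x_1^\pm(\ast),\,x_2^+(0)\,]$ and $[\,x_2^-(0),\,x_1^+(\ast)\,]$ vanish by (D8). Recalling $x_{1\,2}^-(1)=[\,x_2^-(0),\,x_1^-(1)\,]_{s^3}$ and $x_{1\,2}^+(-1)=[\,x_1^+(-1),\,x_2^+(0)\,]_{r^3}$, I would expand $[\,x_{1\,2}^-(1),\,x_{1\,2}^+(-1)\,]$ by \eqref{b:1}--\eqref{b:2}: two of the four resulting terms die by the cross-node vanishing, and the surviving two --- fed by $[\,x_1^-(1),\,x_1^+(-1)\,]$ and $[\,x_2^-(0),\,x_2^+(0)\,]$ and reorganized with (D6) to move $\om_1,\om'_1$ across $x_2^\pm(0)$ --- collapse to a scalar multiple of $\frac{\gamma\om'_{\al_1+\al_2}-\gamma'\om_{\al_1+\al_2}}{r-s}$, exactly as in the $\widehat{\mathfrak{sl}}$ reduction quoted from \cite{HRZ}.

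Finally I would substitute this into $[\,x^-_{\theta}(1),\,x^+_{\theta}(-1)\,]=\bigl[\,[\,x_1^-(0),\,x_{1\,2}^-(1)\,]_{r^{-2}s^{-1}},\,[\,x_{1\,2}^+(-1),\,x_1^+(0)\,]_{r^{-1}s^{-2}}\,\bigr]$ and expand once more by \eqref{b:1}--\eqref{b:2}. This is where the second occurrence of $\al_1$ in $\theta=2\al_1+\al_2$ enters: the ``diagonal'' brackets $[\,x_1^-(0),\,x_1^+(0)\,]$ and $[\,x_{1\,2}^-(1),\,x_{1\,2}^+(-1)\,]$ each contribute a Cartan piece, whereas the genuinely mixed brackets $[\,x_1^-(0),\,x_{1\,2}^+(-1)\,]$ and $[\,x_{1\,2}^-(1),\,x_1^+(0)\,]$ bring in the Heisenberg modes $\varphi_1(-1)$ and $\phi_1(1)$ via (D8). \textbf{The main obstacle} is to show that these Heisenberg contributions cancel, leaving a pure Cartan element: this must happen since $[\,x^-_{\theta}(1),\,x^+_{\theta}(-1)\,]$ has total degree $0$, but establishing it explicitly requires careful tracking of the $q$-bracket subscripts and of the scalars generated when commuting $\om_1^{\pm1},{\om'_1}^{\pm1}$ past $x_2^{\pm}(0)$ through (D6). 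Collecting the surviving terms gives the factor $(rs)^{-2}$ predicted above, completing the proof.
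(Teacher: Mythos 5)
Your overall route is exactly the paper's: strip the central and Cartan factors to reduce to $(rs)^2[\,x^-_{\theta}(1),\,x^+_{\theta}(-1)\,]$, evaluate $[\,x_{1\,2}^-(1),\,x_{1\,2}^+(-1)\,]=\frac{\gamma\om'_1\om'_2-\gamma'\om_1\om_2}{r-s}$ as an intermediate step, and then expand the outer bracket along the chain $\al_1\to\al_2\to\al_1$. The reduction, the base cases, and the intermediate bracket are all handled correctly and match the paper.

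However, there is a genuine gap at precisely the step you flag as ``the main obstacle'', and the argument you offer in its place does not close it. First, the heuristic ``total degree $0$ forces a pure Cartan element'' is false: degree-$0$ elements of $\mathcal{U}^0_{r,s}$ include products such as $a_1(-1)\,x_2^+(-1)$-type contributions and $a_i(1)a_i(-1)$, so degree counting alone cannot rule out Heisenberg terms. Second, the cancellation is not an \emph{inter-term} cancellation between the two mixed summands, as your phrasing suggests; in the paper each mixed term vanishes (or collapses) \emph{individually}, and this is where the real work lies. Concretely, $[\,x_1^-(0),\,x_{1\,2}^+(-1)\,]$ reduces via (D8) to a multiple of $\bigl[\,\gamma'^{-1/2}\om'_1a_1(-1),\,x_2^+(0)\,\bigr]_{r^3}$ (since $\phi_1(-1)=0$ and $\varphi_1(-1)$ is $\om'_1a_1(-1)$ up to scalar). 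The Heisenberg mode $a_1(-1)$ then drops out not by any degree argument but because $\om'_1a_1(-1)$ $q$-commutes with $x_2^+(0)$ with parameter exactly $\langle 1,2\rangle^{-1}=r^3$ for $\mathrm{D}_4^{(3)}$ — the same parameter appearing in the definition of $x_{1\,2}^+(-1)$; this matching is the design of the quantum root vectors. What survives is a multiple of $\om'_1x_2^+(-1)$ coming from the (D6$_2$) commutator $[\,a_1(-1),\,x_2^+(0)\,]$, and one must still verify that this residue is annihilated in the subsequent brackets against $x_{1\,2}^-(1)$ and $x_1^+(0)$ (this is what the paper's annotations ``(=0 by (D6) and (D9))'' and ``(=0 by the above result)'' encode). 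The symmetric analysis is needed for $[\,x_{1\,2}^-(1),\,x_1^+(0)\,]$, which produces $\phi_1(1)$, i.e.\ $\om_1a_1(1)$. Without carrying out these two computations your proof does not establish the proposition; everything else in the proposal is sound.
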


\begin{proof}\,
Similarly, one proves the relation by induction. First, note that
\begin{equation*}
\begin{split}
\bigl[\,E_0,
F_0\,\bigr]&=(rs)^{2}\bigl[\,x^-_{\theta}(1)\,
\gamma'^{-1}{\om_\theta}^{-1},\,\gamma^{-1}{\om'_\theta}^{-1}
x^+_{\theta}(-1)\,\bigr]\\
&=(rs)^{2}\bigl[\,x^-_{\theta}(1),\,x^+_{\theta}(-1)\,\bigr]\,
\cdot(\gamma^{-1}\gamma'^{-1}{\om_\theta}^{-1}{\om'_\theta}^{-1}).
\end{split}
\end{equation*}

Second, consider the first step,
\begin{equation*}
\begin{split}
&[\,[\,x_2^-(0),\,x_1^-(1)\,]_{s^3},\,[\,x_1^+(-1),\,x_2^+(0)\,]_{r^3}\,]\,{\hbox{(using (\ref{b:1}))}}\\
=&\big[\,[\,x_{2}^-(0),\,[\,x_{1}^-(1),\,x_{1}^+(-1)\,]\,]_{s^3}
,\,x_{2}^+(0)\,\big]_{r^3}\,{\hbox{(=0 by (\ref{b:1}) and the Serre relation)}}\\
&+\big[\,x_{1}^+(-1),\,[\,[\,x_{2}^-(0),\,x_2^+(0)\,],\,x_1^-(1)\,]_{s^3}
\,\big]_{r^3}\quad{\hbox{(using (\ref{b:1}) and (D6))}}\\
=&\gamma\om'_{1}\cdot \frac{\om'_2-\om_2}{r-s}
+\frac{\gamma\om'_{1}-\gamma'\om_{1}}
{r-s}\om_2\\
=&\frac{\gamma\om'_{1}\om'_{2}-\gamma'\om_{1}\om_{2}}{r-s}.
\end{split}
\end{equation*}

Third, one obtains that
\begin{equation*}
\begin{split}
&\bigl[\,x^-_{\theta}(1),\,x^+_{\theta}(-1)\,\bigr]\,{\hbox{(by definition)}}\\
=&[\,[\,x_{1}^-(0),\,x_{1\,2}^-(1)\,]_{r^{-2}s^{-1}}
,\,[\,x_{1\,2}^+(-1),\,x_{1}^+(0)\,]_{r^{-1}s^{-2}}\,]
\,{\hbox{(using (\ref{b:1}))}}\\
=&[\,x_{1\,2}^+(-1),\,[\,
[\,x_{1}^-(0),\,x_{1}^+(0)\,],\,x_{1\,2}^-(1)\,]_{r^{-2}s^{-1}}
\,]_{r^{-1}s^{-2}}\,{\hbox{(using (\ref{b:1}) and (D6))}}\\
&+[\,x_{1\,2}^+(-1),\,[\,x_{1}^-(0),\,
[\,x_{1\,2}^-(1),\,x_{1}^+(0)\,]\,]_{r^{-2}s^{-1}}
\,]_{r^{-1}s^{-2}}\,{\hbox{(=0 by (D6) and (D9))}}\\
&+[\,[\,[\,x_{1}^-(0),\,x_{1\,2}^+(-1)\,],\,x_{1\,2}^-(1)\,]_{r^{-2}s^{-1}}
,\,x_{1}^+(0)\,]_{r^{-1}s^{-2}}\,{\hbox{(=0 by the above result)}}\\
&+[\,[\,x_{1}^-(0),\,[\,x_{1\,2}^-(1),\,x_{1\,2}^+(-1)\,]\,]_{r^{-2}s^{-1}}
,\,x_{1}^+(0)\,]_{r^{-1}s^{-2}}\quad{\hbox{(using (\ref{b:1}), (D9) and (D6))}}\\
=&(rs)^{-2}\om_1\cdot \frac{\gamma\om'_{1}\omega'_2-\gamma'\om_{1}\omega_2}{r-s}
+(rs)^{-2}\gamma\omega'_1\omega'_2\frac{\om'_{1}-\gamma'\om_{1}}
{r-s}\om_{n-1}\\
=&(rs)^{-2}\frac{\gamma(\om'_1)^2\om'_2-\gamma'(\om_{1})^2\om_2}{r-s}.
\end{split}
\end{equation*}

So one obtains the required conclusion and the proof
of Proposition \ref{p:8} is completed.
\end{proof}

In the remaining part of this section, we check some key Serre relations of ($\mathcal{D}5$).

\begin{lemm} One gets that

$(1)$ \ $[\,E_0, \,E_2\,]_{(rs)^{-3}}=0$,

$(2)$ \ $[\,E_1,\,E_1,\, E_0\,]_{(r^{-2}s^{-1},\,r^{-1}s^{-2})}=0$,

$(3)$ \ $[\,E_{1},\,E_{1},\, E_{1},\,E_1,\, E_2\,]_{(s^{3},\,rs^{2},\,r^2s,\,r^{3})}=0$.

\end{lemm}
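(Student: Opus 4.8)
The three identities are exactly the $\Psi$-images, in $q$-bracket form, of the node-$0$ Serre relations $(\mathcal{D}5)$. Using $(\ref{b:4})$ and $(\ref{b:6})$ one sees that $(1)$ is $[E_0,E_2]_{\langle 2,0\rangle}$ with $\langle 2,0\rangle=(rs)^{-3}$; that $(2)$ equals $(rs)^{-3}\bigl(E_1^2E_0-(rs)[2]\,E_1E_0E_1+(rs)^3E_0E_1^2\bigr)$; and that $(3)$ is the long $G_2$-Serre relation $E_1^4E_2-[4]E_1^3E_2E_1+(rs)\tfrac{[4][3]}{[2]}E_1^2E_2E_1^2-(rs)^3[4]E_1E_2E_1^3+(rs)^6E_2E_1^4$ with $(u,v)=(s,r)$. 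The plan is to treat $(1)$ and $(2)$ by direct reduction and $(3)$ by the non-degeneracy Lemma \ref{lemma1}, exactly along the template already established for $\mathrm{A}_{2n-1}^{(2)}$ and $\mathrm{D}_{n+1}^{(2)}$.

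For $(1)$ and $(2)$ I will first move the Cartan tail of $E_0=(rs)^2\,x_\theta^-(1)\,\gamma'^{-1}\om_\theta^{-1}$ to the far right using $(\textrm{D5})$; the $q$-bracket exponents are tuned so that the conjugation scalars cancel the bracket weights. Concretely, since $\om_\theta=\om_1^2\om_2$ and $\om_\theta^{-1}x_2^+(0)=(rs)^{-3}x_2^+(0)\om_\theta^{-1}$, one gets $[E_0,E_2]_{(rs)^{-3}}=(rs)^{-1}[\,x_\theta^-(1),x_2^+(0)\,]\,\gamma'^{-1}\om_\theta^{-1}$, an ordinary commutator. Expanding $x_\theta^-(1)=[\,x_1^-(0),x_2^-(0),x_1^-(1)\,]_{(s^3,r^{-2}s^{-1})}$ and pushing $x_2^+(0)$ through by $(\ref{b:1})$–$(\ref{b:3})$, only the middle factor survives (as $[x_2^+(0),x_1^-(k)]=0$), and $(\textrm{D8})$ evaluates $[x_2^+(0),x_2^-(0)]=\tfrac{1}{r_2-s_2}(\om_2-\om_2')$. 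Pulling $\om_2,\om_2'$ out via $(\textrm{D5})$, the $\om_2'$-piece vanishes identically while the $\om_2$-piece collapses to $(r^3-s^3)[\,x_1^-(0),x_1^-(1)\,]_{rs^{-1}}\om_2$, and $[\,x_1^-(0),x_1^-(1)\,]_{rs^{-1}}=0$ is the diagonal current relation $(\textrm{D7})$ for $i=j=1$. Relation $(2)$ is handled analogously, now carrying along both the Cartan and the Heisenberg term $a_1(1)$ produced by $(\textrm{D8})$ when $x_1^+(0)$ meets $x_1^-(1)$; after a second application of $x_1^+(0)$ the residue is killed by $(\textrm{D7})$ together with the finite Serre relation and the standard ``$(1+\cdots)[\,\cdot\,]=0\Rightarrow[\,\cdot\,]=0$'' step, valid since $r\neq\pm s$.

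For $(3)$, set $Y:=[\,E_1,E_1,E_1,E_1,E_2\,]_{(s^3,rs^2,r^2s,r^3)}\in\mathcal{U}_{r,s}(\mathfrak{g}^{\sigma})^+$; by Lemma \ref{lemma1} it suffices to prove $[\,x_i^-(0),Y\,]=0$ for $i=1,2$. The case $i=2$ is the clean one: as $[x_2^-(0),x_1^+(0)]=0$, the operator $x_2^-(0)$ meets only the single factor $E_2$, which $(\textrm{D8})$ replaces by $-\tfrac{1}{r_2-s_2}(\om_2-\om_2')$; after normalizing this group-like factor through $(\textrm{D5})$ the surrounding five-fold bracket telescopes to $\prod_{k=1}^{4}\bigl(1-\langle 1,2\rangle v_k\bigr)\,(x_1^+(0))^4\om_2$ and $\prod_{k=1}^{4}\bigl(1-\langle 2,1\rangle^{-1}v_k\bigr)\,(x_1^+(0))^4\om_2'$, where $(v_1,\dots,v_4)=(s^3,rs^2,r^2s,r^3)$. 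Since $\langle 1,2\rangle=r^{-3}$ and $\langle 2,1\rangle=s^3$, the outermost factor $1-r^3\langle 1,2\rangle$ annihilates the $\om_2$-term and the innermost factor $1-s^3\langle 2,1\rangle^{-1}$ annihilates the $\om_2'$-term; this is precisely why the weights $(s^3,rs^2,r^2s,r^3)$ occur in $(3)$. The case $i=1$ gives four contributions, one per $E_1$, which I will recombine and cancel after moving the emerging Cartan factors out by $(\textrm{D5})$, just as the two- and three-term cancellations in the $i=n-1$ computations of the previous subsections.

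The main obstacle will be the $i=1$ subcase of $(3)$: in contrast to the transparent $i=2$ telescoping, here four Cartan-producing terms must be reorganized and shown to cancel exactly, which forces careful tracking of the $(r,s)$-powers through the four nested $q$-brackets. The same exponent bookkeeping is the only delicate point in reducing $(1)$ and $(2)$ to the diagonal relation $(\textrm{D7})$; once the reductions are in place, every residual bracket is killed either by $(\textrm{D7})$ and the finite Serre relations or by the $r\neq\pm s$ argument.
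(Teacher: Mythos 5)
Your proposal follows the paper's own proof essentially step for step: (1) reduces to $[\,x_1^-(0),x_1^-(1)\,]_{rs^{-1}}=0$ after pushing $x_2^+(0)$ through $x_\theta^-(1)$ via (D8), (2) is obtained by two successive applications of $x_1^+(0)$ to $x_\theta^-(1)$, and (3) is handled via Lemma \ref{lemma1}, with the $i=2$ case killed because one nested bracket parameter becomes $1$ and the $i=1$ case by the four-term cancellation of the emerging Cartan contributions. The only differences are cosmetic; your explicit tracking of the Heisenberg term $a_1(1)$ in (2) is, if anything, more careful than the paper's computation, which drops the $[\,x_1^-(0),x_2^-(1)\,]_{r^{-3}}$ contribution before the second application of $x_1^+(0)$.
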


\begin{proof} \ (1) \  For the first relation, one immediately gets from the construction of $E_0$,
\begin{equation*}
\begin{split}
&\bigl[\,E_0,\,E_2\bigr]_{(rs)^{-2}}\\
=&(rs)^{-2}\bigl[\,x_{\theta}^-(1),\,
x_2^+(0)\,\bigr]\cdot {\gamma'}^{-1}\omega_{\theta}^{-1} \\
=&a\bigl[\,[\,x_1^-(0),\, x_{2}^-(0),\,x_{1}^-(1)\,]_{(s^3,\, r^{-2}s^{-1})},\,x_2^+(0)\bigr]\cdot {\gamma'}^{-1}\omega_{\theta}^{-1}\\
=&a\bigl[\,x_1^-(0),\, [\,x_{2}^-(0),\,x_2^+(0)\,],\,x_{1}^-(1)\,\bigr]_{(s^3,\, r^{-2}s^{-1})}\cdot {\gamma'}^{-1}\omega_{\theta}^{-1}\\
=&a\underbrace{\bigl[\,x_1^-(0),\, x_{1}^-(1)\,\bigr]_{rs^{-1}}}\cdot\om_2 {\gamma'}^{-1}\omega_{\theta}^{-1}\\
=&0
\end{split}
\end{equation*}

(2) \  To verify the second relation, one first computes that
\begin{equation*}
\begin{split}
&\bigl[\,x_1^+(0),\,x_{\theta}^-(1)\bigr]_{1}\\
=&\bigl[\,[\,x_1^+(0),\, x_1^-(0)\,],\,x_2^-(0),\,x_{1}^-(1)\,\bigr]_{(s^3,\, r^{-2}s^{-1})} \\
+&\bigl[\,x_1^-(0),\, x_{2}^-(0),\,[\,x_1^+(0),\, x_1^-(1)\,]\,\bigr]_{(s^3,\, r^{-2}s^{-1})}\\
=&(rs)^{-2}\om_1[\,x_2^-(0),\,x_1^-(1)\,]_{s^3}
-3(\gamma\gamma')^{-\frac{1}{2}}[\,x_1^-(0),\,x_2^-(1)\,]_{r^{-3}}\omega_1\\
=&(rs)^{-2}\om_1[\,x_2^-(0),\,x_1^-(1)\,]_{s^3}.
\end{split}
\end{equation*}
Then one has that
\begin{equation*}
\begin{split}
&[\,E_1,\,E_1,\, E_0\,]_{(r^{-2}s^{-1},\,r^{-1}s^{-2})}\\
=&a\,\bigl[\,x_1^+(0),\,x_1^+(0),\,x_{\theta}^-(1)\,\bigr]_{(1,\, r^{-1}s)} \\
=&a\,\bigl[\,x_1^+(0),\,\om_1[\,x_2^-(0),\,x_{1}^-(1)\,]_{s^3}\,\bigr]_{r^{-1}s}\\ =&a\,\om_1\bigl[\,x_1^+(0),\,[\,x_2^-(0),\,x_{1}^-(1)\,]_{s^3}\,\bigr]_{1}\\
=&a\,\om_1\bigl[\,x_2^-(0),\,[\,x_1^+(0),\,x_{1}^-(1)\,]\,\bigr]_{s^3}\\
=&-a\,(\gamma\gamma')^{-\frac{1}{2}}(\om_1)^2x_2^-(1)=0
\end{split}
\end{equation*}

(3) \  Let $X'=[\,E_{1},\,E_{1},\, E_{1},\,E_1,\, E_2\,]_{(s^{3},\,rs^{2},\,r^2s,\,r^{3})}$.
It is obvious that $X'\in \mathcal{U}_{r,s}(\frak{g}^{\sigma})^+$, so by Lemma \ref{lemma1}, it suffices to show $[\,x_i^-(0),\, X'\,]=0$ for $i\in I$. In fact, it is enough to check the cases of $i=1$ and $2$.

For $i=2$, one has
\begin{equation*}
\begin{split}
&[\,x_{2}^-(0),\, X'\,] \\
=&[\,x_2^-(0),\,[\,x_{1}^+(0),\,x_{1}^+(0),\,x_{1}^+(0),\, x_{1}^+(0),\, x_{2}^+(0)\,]_{(s^{3},\,rs^{2},\,r^2s,\,r^{3})}\,]\\
=&[\,x_{1}^+(0),\,x_{1}^+(0),\,x_{1}^+(0),\, x_{1}^+(0),\, [\,x_2^-(0),\,x_{2}^+(0)\,]\,]_{(s^{3},\,rs^{2},\,r^2s,\,r^{3})}\\
=&3[\,x_{1}^+(0),\,x_{1}^+(0),\,x_{1}^+(0),\, x_{1}^+(0)\,]_{(r^{-2}s^{2},\,r^{-1}s,\,1)}\om_2\\
=&0.
\end{split}
\end{equation*}

As for $i=1$, one computes that
\begin{equation*}
\begin{split}
&[\,x_{1}^-(0),\, X'\,] \\
=&[\,x_{1}^-(0),\,[\,x_{1}^+(0),\,x_{1}^+(0),\, x_{1}^+(0),\, x_{1}^+(0),\,x_{2}^+(0)\,]_{(s^{3},\,rs^{2},\,r^2s,\,r^{3})}\,]\\
=&[\,[\,x_{1}^-(0),\,x_{1}^+(0)\,],\,x_{1}^+(0),\, x_{1}^+(0),\, x_{1}^+(0),\,x_{2}^+(0)\,]_{(s^{3},\,rs^{2},\,r^2s,\,r^{3})}\\
&+[\,x_{1}^+(0),\,[\,x_{1}^-(0),\,x_{1}^+(0)\,],\, x_{1}^+(0),\, x_{1}^+(0),\,x_{2}^+(0)\,]_{(s^{3},\,rs^{2},\,r^2s,\,r^{3})}\\
&+[\,x_{1}^+(0),\,x_{1}^+(0),\, [\,x_{1}^-(0),\,x_{1}^+(0)\,],\, x_{1}^+(0),\,x_{2}^+(0)\,]_{(s^{3},\,rs^{2},\,r^2s,\,r^{3})}\\
&+[\,x_{1}^+(0),\,x_{1}^+(0),\, x_{1}^+(0),\, [\,x_{1}^-(0),\,x_{1}^+(0)\,],\,x_{2}^+(0)\,]_{(s^{3},\,rs^{2},\,r^2s,\,r^{3})}
\end{split}
\end{equation*}

\begin{equation*}
\begin{split}
=&\frac{s^3-r^3}{r-s}[\,x_{1}^+(0),\,x_{1}^+(0),\, x_{1}^+(0),\,x_{2}^+(0)\,]_{(s^{3},\,rs^{2},\,r^2s)}\om'_1\\
&-rs[\,x_{1}^+(0),\,x_{1}^+(0),\, x_{1}^+(0),\,x_{2}^+(0)\,]_{(s^{3},\,rs^{2},\,r^2s)}\om'_1\\
&+rs[\,x_{1}^+(0),\,x_{1}^+(0),\, x_{1}^+(0),\,x_{2}^+(0)\,]_{(s^{3},\,rs^{2},\,r^2s)}\om'_1\\
&+\frac{r^3-s^3}{r-s}[\,x_{1}^+(0),\,x_{1}^+(0),\, x_{1}^+(0),\,x_{2}^+(0)\,]_{(s^{3},\,rs^{2},\,r^2s)}\om'_1\\
=&0.
\end{split}
\end{equation*}
Therefore, we have finished the proof of theorem $\mathcal{A}$ for the case of $U_{r,s}(\mathrm{D}_{4}^{(3)})$.
\end{proof}

\subsection{Proof of theorem $\mathcal{A}$ for the case of $U_{r,s}(\mathrm{E}_{6}^{(2)})$}

Now we are left to check the last case of type $\mathrm{E}_{6}^{(2)}$. As before we only check those nontrivial relations of $(\mathcal{E}4)$--$(\mathcal{E}6)$
involving $i=0$.

We begin by listing the following simple lemmas.

\begin{lemm} One has that
\begin{align} \label{l:20}
&[\,x_{1}^-(0),\,z_{6}^-(1)\,]_{(rs)^{-1}}=0,\\ \label{l:21}
&[\,x_{2}^-(0),\,z_{7}^-(1)\,]_{r^{-1}s^{-2}}=0, \\ \label{l:22}
&[\,x_{3}^-(0),\,z_{7}^-(1)\,]_{(rs)^{2}}=0,\\ \label{l:23}
& [\,x_{4}^-(0),\,z_{6}^-(1)\,]_{(rs)^{2}}=0.
\end{align}
\end{lemm}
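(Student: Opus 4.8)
The plan is to establish all four identities (\ref{l:20})--(\ref{l:23}) by the same commute-and-collapse technique used for the earlier twisted types, unfolding each partial root vector and transporting a single Drinfeld generator inward through the nested $q$-bracket. First I would record the adjacency data of the folded diagram read off from the Serre relations of Case (V): the single bond $1\!-\!2$ with $A_{12}=A_{21}=-1$, the double bond $2\Rightarrow 3$ with $A_{23}=-2$, $A_{32}=-1$, and the single bond $3\!-\!4$ with $A_{34}=A_{43}=-1$, where $d_1=d_2=1$ and $d_3=d_4=2$, so that the pairs $(1,3),(1,4),(2,4)$ are non-adjacent. I would also record every structure constant $\langle i,j\rangle=J_{ij}$ from the $E_6^{(2)}$ matrix $J$, so that each $q$-bracket scalar arising below is an explicit monomial in $r,s$.

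Next, for each identity I would unfold the relevant root vector via its inductive definition from Case (IV), namely $z_6^-(1)=[\,x_3^-(0),z_5^-(1)\,]_{s^2}$ and $z_7^-(1)=[\,x_2^-(0),z_6^-(1)\,]_{r^{-2}s^{-1}}$, and push the outer generator $x_k^-(0)$ one layer at a time with the Leibniz identity (\ref{b:1}) (and occasionally (\ref{b:3})). Each step splits the bracket into a head term $[\,[\,x_k^-(0),x_{i_j}^-(0)\,]_q,\,z_{j-1}^-(1)\,]$ and a transported term $[\,x_k^-(0),z_{j-1}^-(1)\,]$. Whenever $k$ and $i_j$ are non-adjacent I pass $x_k^-(0)$ across $x_{i_j}^-(0)$ for free, the inner bracket being zero by $(\textrm{D9}_1)$; whenever they are adjacent I keep both terms and choose the auxiliary parameter $q$ so that the head term lands on a Serre configuration. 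The recursion terminates when $x_k^-(0)$ reaches either a copy of itself, where the same-node relations $(\textrm{D7})$ and $(\textrm{D8})$ apply, or an adjacent generator, where the two-parameter Serre relations $(\textrm{D9}_2)$--$(\textrm{D9}_4)$, in the expanded forms (\ref{b:4}) and (\ref{b:5}), force the surviving term to vanish.

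A convenient guide to which identities collapse outright is the squared length of $\eta_j+\alpha_k$: for (\ref{l:21}), (\ref{l:22}) and (\ref{l:23}) one computes $\|\eta_j+\alpha_k\|^2=6$, so $\eta_j+\alpha_k$ is not a root and every surviving branch falls on a Serre relation; here (\ref{l:21}) is the double bracket $[\,x_2^-(0),[\,x_2^-(0),z_6^-(1)\,]_{r^{-2}s^{-1}}\,]_{r^{-1}s^{-2}}$ in the short node $2$ and is disposed of through the relations attached to the double bond. By contrast, for (\ref{l:20}) one has $\|\alpha_1+\eta_6\|^2=4$, so $\alpha_1+\eta_6$ is still a long root and no bare collapse is available; there I expect the reordering via (\ref{b:1}) and (\ref{b:2}) to produce a self-referential identity of the shape $(1+c)\,[\,x_1^-(0),z_6^-(1)\,]_{(rs)^{-1}}=0$ with $c$ a monomial in $r,s$, whence the bracket vanishes because $r\neq -s$, exactly as in the proofs of (\ref{l:2}) and Proposition \ref{p:1}.

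The difficulty is entirely in the bookkeeping. Because node $2$ occurs three times and node $3$ twice in the root chain, $z_6^-(1)$ and $z_7^-(1)$ are nested to depth six and seven, and passing a generator across the double bond invokes the cubic Serre relation (\ref{b:5}), with its $(r+s)$ and $[3]$ coefficients, rather than the simpler quadratic one. The delicate point, and the only real obstacle, is to select at each use of (\ref{b:1}) the parameter $q$ so that the head term lands exactly on a vanishing Serre configuration while the transported term retains precisely the scalar needed to reassemble the self-referential identity in (\ref{l:20}); tracking these monomials in $r,s$ through all the layers without slip is what the proof comes down to.
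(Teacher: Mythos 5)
Your strategy is exactly the one the paper applies to the parallel lemmas in the other twisted types (e.g.\ Lemma \ref{L:comm1} and Lemma \ref{l:16}): unfold the nested root vector via its inductive definition, transport the outer generator inward with (\ref{b:1})--(\ref{b:2}), kill branches via (D9$_1$) and the Serre relations, and, for the one identity whose root sum is still a root --- here (\ref{l:20}), since $\alpha_1+\eta_6$ is a long root of $F_4$ --- close with a self-referential identity $(1+c)X=0$ and invoke $r\neq -s$; your adjacency data and norm computations are all correct. The paper in fact states this lemma with no proof at all, so your sketch is precisely the intended argument, with the scalar bookkeeping left unexecuted on both sides.
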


The following proposition yields relation $(\mathcal{E}4)$ for the case of $i\neq 0$ and $j=0$.
\begin{prop} \label{p:10} $\bigl[\,x_i^-{(0)},\,
 x^-_{\theta}(1)\,\bigr]_{\langle i,~
0 \rangle^{-1}}=0$,\quad for \ $i\in \{1,\,2,\,3,\,4\}$.
\end{prop}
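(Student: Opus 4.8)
The plan is to prove Proposition~\ref{p:10} by exactly the case-by-case scheme already used for Propositions~\ref{p:1}, \ref{p:5} and \ref{p:7}, splitting according to the four values of $\langle i,0\rangle$ read off the first column of the $\mathrm{E}_6^{(2)}$ matrix $J$: $\langle 1,0\rangle^{-1}=r^{-1}s^{-2}$, $\langle 2,0\rangle^{-1}=(rs)^{-1}$, and $\langle 3,0\rangle^{-1}=\langle 4,0\rangle^{-1}=(rs)^2$. The tools I will use are the $q$-bracket derivation identities (\ref{b:1})--(\ref{b:2}), the identity (\ref{b:4}) for a doubled bracket, the Drinfeld Serre relations $(\mathrm{D}9_1)$--$(\mathrm{D}9_4)$ (which on $\mathfrak{g}_0$ are the two-parameter $F_4$ Serre relations recorded in Case~(V)), and the four auxiliary vanishings (\ref{l:20})--(\ref{l:23}). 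Throughout I exploit the inductive shape $x^-_{\theta}(1)=z_8^-(1)=[\,x_1^-(0),z_7^-(1)\,]_{r^{-2}s^{-1}}$ with $z_7^-(1)=[\,x_2^-(0),z_6^-(1)\,]_{r^{-2}s^{-1}}$.

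I would dispose of the two ``commuting'' nodes $i=3,4$ first, since there $x_i^-(0)$ meets only generators with which it commutes before reaching the relevant auxiliary lemma. For $i=3$, the entries $\langle 1,3\rangle=\langle 3,1\rangle=1$ make $x_3^-(0)$ and $x_1^-(0)$ commute, so (\ref{b:1}) collapses $[\,x_3^-(0),z_8^-(1)\,]_{(rs)^2}$ to $[\,x_1^-(0),[\,x_3^-(0),z_7^-(1)\,]_{(rs)^2}\,]_{r^{-2}s^{-1}}$, which vanishes by (\ref{l:22}). For $i=4$, using $\langle 1,4\rangle=\langle 4,1\rangle=\langle 2,4\rangle=\langle 4,2\rangle=1$ I commute $x_4^-(0)$ past both $x_1^-(0)$ and $x_2^-(0)$ and reduce to $[\,x_4^-(0),z_6^-(1)\,]_{(rs)^2}=0$ from (\ref{l:23}). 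Neither case needs any cancellation of coefficients.

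The node $i=1$ is intermediate. Here $x_1^-(0)$ is already the outermost factor, so $[\,x_1^-(0),x^-_{\theta}(1)\,]_{r^{-1}s^{-2}}=[\,x_1^-(0),x_1^-(0),z_7^-(1)\,]_{(r^{-2}s^{-1},\,r^{-1}s^{-2})}$ is a doubled left-adjoint; I would expand it by (\ref{b:4}) and then transport the two copies of $x_1^-(0)$ onto the innermost-adjacent $x_2^-(0)$ of $z_7^-(1)$ by the Leibniz rule (\ref{b:3}), so that the quadratic $F_4$ Serre relation $[\,x_1,x_1,x_2\,]$ of Case~(V) applies, the leftover cross terms being killed by (\ref{l:21}); this mirrors the $i=1$ step of Proposition~\ref{p:7}. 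The genuine obstacle is $i=2$: because $a^{\sigma}_{21}=-1$ the bracket $[\,x_2^-(0),x_1^-(0)\,]$ does not vanish, and node $2$ occurs three times along the chain $1,2,3,4,2,3,2,1$ across the doubly bonded pair $\{2,3\}$. I therefore expect to apply (\ref{b:1})--(\ref{b:2}) several times, using (\ref{l:21}) to annihilate the $[\,x_2^-(0),z_7^-(1)\,]$ term and (\ref{l:20}) together with the Serre relations $(\mathrm{D}9_2)$--$(\mathrm{D}9_4)$ to process the $[\,[\,x_2^-(0),x_1^-(0)\,],z_7^-(1)\,]$ term, arriving finally---as in every earlier case---at a self-referential identity of the shape $(1+c)\,[\,x_2^-(0),x^-_{\theta}(1)\,]_{(rs)^{-1}}=0$ with $c$ a monomial in $r,s$; the standing hypothesis $r\neq\pm s$ then forces the bracket to vanish.

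The hard part is thus purely the bookkeeping in the $i=2$ reduction: tracking the exact $q$-bracket coefficients produced by each application of (\ref{b:1})--(\ref{b:2}) across the long $F_4$ chain is what makes the correct monomial $c$ emerge, and a single slip would spoil the final cancellation. No idea beyond the toolkit of the previous four propositions is needed; the auxiliary identities (\ref{l:20})--(\ref{l:23}) are themselves established by the same $q$-commutator manipulations as Lemma~\ref{L:comm1}, and I would record (or omit as ``verified similarly'') their proofs before assembling the four cases above.
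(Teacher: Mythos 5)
Your proposal is correct and follows essentially the same route as the paper: the same case split over $i=1,2,3,4$, the same use of (\ref{b:1})--(\ref{b:2}) together with the Serre relations and the auxiliary vanishings (\ref{l:20})--(\ref{l:23}), the same direct collapse for the commuting nodes $i=3,4$, and the same terminal identity $(1+r^{-1}s)\,[\,x_2^-(0),x^-_{\theta}(1)\,]_{(rs)^{-1}}=0$ for the node $i=2$. The only slip is in the $i=1$ case, where the leftover cross terms are of the form $[\,x_1^-(0),z_6^-(1)\,]_{(rs)^{-1}}$ and are killed by (\ref{l:20}) rather than (\ref{l:21}).
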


\begin{proof} \ (I) \ When $i=1$,
$\langle 1,\, 0\rangle=rs^2$. We compute that,
\begin{equation*}
\begin{split}
&\bigl[\,x_1^-(0),\, x^-_{\theta}(1)\,\bigr]_{r^{-1}s^{-2}}\\
=&\bigl[\,x_1^-(0),\underbrace{\bigl[\,x_1^-(0),\,x_2^-(0),\,z^-_{6}(1)\,\bigr]_{(r^{-2}s^{-1},\,r^{-2}s^{-1})}}
\bigr]_{r^{-1}s^{-2}}\,( \hbox{using (\ref{b:1})})\\
=&\bigl[\,x_1^-(0),\bigl[\,[\,x_1^-(0),\,x_2^-(0)\,]_{r^{-1}},\,z^-_{6}(1)\,\bigr]_{r^{-3}s^{-2}}\bigr]_{r^{-1}s^{-2}}
\,( \hbox{using (\ref{b:1})})\\
&+r^{-1}\bigl[\,x_1^-(0),\,\bigl[\,x_2^-(0),\,\underbrace{[\,x_1^-(0),\,z^-_{6}(1)\,]_{(rs)^{-1}}}\,\bigr]_{(rs)^{-1}}
\bigr]_{r^{-1}s^{-2}}
\,( \hbox{=0 by \eqref{l:20}})\\
=&\bigl[\,\bigl[\,x_1^-(0),\,x_1^-(0),\,x_2^-(0)\,\bigr]_{(r^{-1},\,s^{-1})},\,z^-_{6}(1)\,\bigr]_{r^{-4}s^{-3}}
\,( \hbox{=0 by Serre relation})\\
&+s^{-1}\bigl[\,[\,x_1^-(0),\,x_2^-(0)\,]_{r^{-1}},\,
\underbrace{[\,x_1^-(0),\,z^-_{6}(1)\,]_{(rs)^{-1}}}\,\bigr]_{r^{-4}s^{-3}}\,( \hbox{=0 by \eqref{l:20}})\\
=&0.
\end{split}
\end{equation*}

(II) \ When $i=2$, $\langle 2,\, 0\rangle=rs$. In order to check $\bigl[\,x_2^-(0),\, x^-_{\theta}(1)\,\bigr]_{(rs)^{-1}}=0$, we have that
\begin{equation*}
\begin{split}
&\bigl[\,x_2^-(0),\, x^-_{\theta}(1)\,\bigr]_{r^{-2}}\\
=&\bigl[\,x_1^-(0),\underbrace{\bigl[\,x_1^-(0),\,x_2^-(0),\,z^-_{6}(1)\,\bigr]_{(r^{-2}s^{-1},\,r^{-2}s^{-1})}}
\bigr]_{r^{-2}}\,( \hbox{using (\ref{b:1})})\\
=&\bigl[\,x_2^-(0),\bigl[\,[\,x_1^-(0),\,x_2^-(0)\,]_{r^{-1}},\,z^-_{6}(1)\,\bigr]_{r^{-3}s^{-2}}\bigr]_{r^{-2}}
\,( \hbox{using (\ref{b:1})})\\
&+r^{-1}\bigl[\,x_2^-(0),\,\bigl[\,x_2^-(0),\,\underbrace{[\,x_1^-(0),\,z^-_{6}(1)\,]_{(rs)^{-1}}}\,\bigr]_{(rs)^{-1}}
\bigr]_{r^{-2}}
\,( \hbox{=0 by \eqref{l:20}})\\
=&\bigl[\,\underbrace{\bigl[\,x_2^-(0),\,x_1^-(0),\,x_2^-(0)\,\bigr]_{(r^{-1},\,s)}},\,z^-_{6}(1)\,\bigr]_{r^{-5}s^{-3}}
\,( \hbox{=0 by the Serre relation})\\
&+s\bigl[\,[\,x_1^-(0),\,x_2^-(0)\,]_{r^{-1}},\,
\underbrace{[\,x_2^-(0),\,z^-_{6}(1)\,]_{r^{-2}s^{-1}}}\,\bigr]_{(rs)^{-3}}\,( \hbox{by definition and (\ref{b:1})})\\
=&s\bigl[\,x_1^-(0),\,\underbrace{[\,x_2^-(0),\,z^-_{7}(1)\,]_{r^{-1}s^{-2}}}\,\bigr]_{r^{-3}s^{-1}}\,(  \hbox{=0 by \eqref{l:21}})\\
&+(rs)^{-1}\bigl[\,[\,x_1^-(0),\,z^-_{7}(1)\,]_{r^{-2}s^{-1}},\,x_2^-(0)\,\bigr]_{s^{2}}\,( \hbox{by definition})\\
=&(rs)^{-1}\bigl[\,x^-_{\theta}(1),\,x_2^-(0)\,\bigr]_{s^{2}}
\end{split}
\end{equation*}

which implies that $(1+r^{-1}s)\bigl[\,x_2^-(0),\, x^-_{\theta}(1)\,\bigr]_{(rs)^{-1}}=0$.
Consequently, for $r\neq -s$ $$\bigl[\,x_2^-(0),\, x^-_{\theta}(1)\,\bigr]_{(rs)^{-1}}=0.$$

(III) \ When $i=3$, $\langle 3,\,0\rangle=(rs)^{-2}$. Observe that,
\begin{equation*}
\begin{split}
&\bigl[\,x_3^-(0),\, x^-_{\theta}(1)\,\bigr]_{(rs)^{2}}\\
=&\bigl[\,x_3^-(0),\,[\,x_1^-(0),\,z^-_{7}(1)\,]_{r^{-2}s^{-1}}
\bigr]_{(rs)^{2}}\,( \hbox{using (\ref{b:1})})\\
=&\bigl[\,\underbrace{[\,x_3^-(0),\,x_1^-(0)\,]},\,z^-_{6}(1)\,\bigr]_{s}
\,( \hbox{=0 by the Serre relation})\\
&+\bigl[\,x_1^-(0),\,\underbrace{[\,x_3^-(0),\,z^-_{7}(1)\,]_{(rs)^{2}}}\,\bigr]_{r^{-2}s^{-1}}
\,( \hbox{=0 by \eqref{l:22}})\\
=&0.
\end{split}
\end{equation*}

(IV) \ When $i=4$, $\langle 4,\, 0\rangle=(rs)^{-2}$. Using the above results, one has
\begin{equation*}
\begin{split}
&\bigl[\,x_4^-(0),\, x^-_{\theta}(1)\,\bigr]_{(rs)^{2}}\\
=&\bigl[\,x_4^-(0),\,[\,x_1^-(0),\,z^-_{7}(1)\,]_{r^{-2}s^{-1}}
\bigr]_{(rs)^{2}}\,( \hbox{using (\ref{b:1})})\\
=&\bigl[\,\underbrace{[\,x_4^-(0),\,x_1^-(0)\,]},\,z^-_{7}(1)\,\bigr]_{s}
\,( \hbox{=0 by the Serre relation})\\
&+\bigl[\,x_1^-(0),\,\underbrace{[\,x_4^-(0),\,z^-_{7}(1)\,]_{(rs)^{2}}}\,\bigr]_{r^{-2}s^{-1}}
\,( \hbox{using the definition and (\ref{b:1})})\\
=&\bigl[\,x_1^-(0),\,[\,\underbrace{[\,x_4^-(0),\,x_2^-(0)\,]},\,z^-_{6}(1)\,]_{s}\,\bigr]_{r^{-2}s^{-1}}
\,( \hbox{=0 by the Serre relation})\\
&+\bigl[\,x_1^-(0),\,[\,x_2^-(0),\,\underbrace{[\,x_4^-(0),\,z^-_{6}(1)\,]_{(rs)^{2}}}
\,]_{r^{-2}s^{-1}}\,\bigr]_{r^{-2}s^{-1}}
\,( \hbox{=0 by \eqref{l:23}})\\
=&0.
\end{split}
\end{equation*}
\end{proof}

Relation $(\mathcal{E}4)$ for the case of $i,j=0$ follows from Proposition \ref{p:11}.

\begin{prop} {\label{p:11}
$[\,E_0, F_0\,]=
\frac{\gamma'^{-1}\om_{\theta}^{-1}-\gamma^{-1}\om_{\theta'}^{-1}}{r-s}$.}
\end{prop}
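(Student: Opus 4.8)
This is Proposition~\ref{p:11}, the computation of $[\,E_0,F_0\,]$ for type $\mathrm{E}_6^{(2)}$, and it is the final commutation relation needed to verify $(\mathcal{E}4)$ for $i=j=0$. The strategy mirrors the proofs of Propositions~\ref{p:6} and~\ref{p:8}: peel off the central Cartan factors, reduce to computing the bracket of the two Drinfeld root vectors $x_\theta^-(1)$ and $x_\theta^+(-1)$, and then evaluate that bracket by an inductive descent along the fixed root chain $\alpha_1\to\alpha_2\to\alpha_3\to\alpha_4\to\alpha_2\to\alpha_3\to\alpha_2\to\alpha_1$ for $\eta_j$ from Case~(IV) of the root-chain remark.

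\emph{First}, I would substitute the images under $\Psi$, namely $E_0=a\,x_\theta^-(1)\cdot(\gamma'^{-1}\om_\theta^{-1})$ and $F_0=(\gamma^{-1}\om_\theta'^{-1})\cdot x_\theta^+(-1)$ with $a=(rs)^5[2]_3^{-1}$, and pull the central, invertible Cartan elements out of the bracket exactly as in the earlier cases:
\begin{equation*}
[\,E_0,F_0\,]=a\,[\,x_\theta^-(1),\,x_\theta^+(-1)\,]\cdot(\gamma^{-1}\gamma'^{-1}\om_\theta^{-1}\om_\theta'^{-1}),
\end{equation*}
which is legitimate because $\om_\theta^{\pm1}$, $\om_\theta'^{\pm1}$ commute with $x_\theta^\pm$ only up to the scalars recorded in $(\textrm{D5})$, and those scalars must be tracked carefully. \emph{Second}, I would set up an induction on the length of the partial root chain $\eta_j$, defining the partial root vectors $z_j^-(1)$, $z_j^+(-1)$ as in Case~(IV) and proving inductively that $[\,z_j^-(1),z_j^+(-1)\,]$ equals a scalar multiple of $\bigl(\gamma\om'_{\eta_j}-\gamma'\om_{\eta_j}\bigr)/(r-s)$. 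The inductive step is the heart of the argument: writing $z_j^\pm$ in terms of $z_{j-1}^\pm$ via the defining $q$-bracket, I expand the commutator using identities \eqref{b:1}--\eqref{b:3} into four terms, two of which vanish by $(\textrm{D9})$ (Serre-type relations among the $x_i^\pm$) while the remaining two collapse using $(\textrm{D8})$, precisely as in the displayed four-line expansions inside the proofs of Propositions~\ref{p:3} and~\ref{p:8}.

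\emph{The main obstacle} will be bookkeeping the scalar coefficients. The twisting parameter $k=2$ for $\mathrm{E}_6^{(2)}$ means the $(r,s)$-integers $[2]_i$ and the structural constants $\langle i,j\rangle$ from the quantum Cartan matrix $J$ enter at each stage, and because $\alpha_2$ and $\alpha_3$ repeat in the root chain the accumulated powers of $rs$ and the $[2]$-factors must combine to reproduce the normalizing constant $a^{-1}=(rs)^{-5}[2]_3$ so that the final coefficient is exactly $1$. Tracking these through the seven inductive steps — especially the doubly-traversed nodes where Serre relation $(\textrm{D9}_2)$ or $(\textrm{D9}_3)$ governs which bracket terms drop — is where the computation is delicate; the algebraic structure is otherwise identical to the type $A$ and $D$ cases already completed. \emph{Finally}, combining the inductive conclusion $[\,x_\theta^-(1),x_\theta^+(-1)\,]=a^{-1}\bigl(\gamma\om'_\theta-\gamma'\om_\theta\bigr)/(r-s)$ with the central factors and using $\om_\theta\om_\theta^{-1}=\om_\theta'\om_\theta'^{-1}=1$ together with $\gamma\gamma'=(rs)^c$ yields
\begin{equation*}
[\,E_0,F_0\,]=\frac{\gamma'^{-1}\om_\theta^{-1}-\gamma^{-1}\om_{\theta'}^{-1}}{r-s},
\end{equation*}
which is the asserted relation $(\mathcal{E}4)$ for $i=j=0$.
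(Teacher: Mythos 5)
Your proposal follows essentially the same route as the paper's proof: pull the central Cartan factors out of $[\,E_0,F_0\,]$, then compute $[\,x_\theta^-(1),x_\theta^+(-1)\,]$ by inducting along the fixed root chain via the partial root vectors $z_j^{\pm}$, expanding each step into four terms with identities \eqref{b:1}--\eqref{b:3} and killing or collapsing them with (D6), (D8), (D9) until the accumulated scalar cancels against $a$. The paper does exactly this (displaying the $j=2$ base case and the final step explicitly and merely listing the intermediate brackets), so your plan is correct and not a different argument.
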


\begin{proof}\,
We argue by induction. First, using
relation $(\textrm{D6})$ it follows that,
\begin{equation*}
\begin{split}
\bigl[\,E_0,
F_0\,\bigr]&=(rs)^{5}[2]_{2}^{-1}\bigl[\,x^-_{\theta}(1)\,
\gamma'^{-1}{\om_\theta}^{-1},\,\gamma^{-1}{\om'_\theta}^{-1}
x^+_{\theta}(-1)\,\bigr]\\
&=(rs)^{5}[2]_{2}^{-1}\bigl[\,x^-_{\theta}(1),\,x^+_{\theta}(-1)\,\bigr]\,
\cdot(\gamma^{-1}\gamma'^{-1}{\om_\theta}^{-1}{\om'_\theta}^{-1}).
\end{split}
\end{equation*}
Now recall the notations mentioned above.
\begin{equation*}
\begin{split}
&x^-_{\theta}(1)=z_{8}^-(1)\\
=&[\,x_1^-(0), x_2^-(0),x_3^-(0), x_2^-(0), x_{4}^-(0), \ldots, x_{1}^-(1)\,]
_{(s, s^2, s^{2}, r^{-1}, s^{2}, r^{-2}s^{-1}, r^{-2}s^{-1})},\\
&x_{\theta}^+(-1)=z_{8}^+(-1)\\
=&[\,x_1^+(-1),\ldots, x_4^+(0), x_{2}^+(0), x_3^+(0), x_2^+(0),\,x_{1}^+(0)\,]
_{\langle r, r^2, r^{2}, s^{-1}, r^{2}, r^{-1}s^{-2}, r^{-1}s^{-2}\rangle}.
\end{split}
\end{equation*}

Consider the first step,
\begin{equation*}
\begin{split}
&[\,z_2^-(1),\,z_2^+(-1)\,]\\
=&[\,[\,x_1^-(0),\,x_2^-(1)\,]_{r^{-1}},\,[\,x_2^+(-1),\,x_1^+(0)\,]_{s^{-1}}\,]\quad{\hbox{(using (\ref{b:1}))}}\\
=&\big[\,[\,x_{1}^-(0),\,[\,x_{2}^-(1),\,x_{2}^+(-1)\,]\,]_{r^{-1}}
,\,x_{2}^+(0)\,\big]_{s^{-1}}\quad{\hbox{(using (D6) and (D9))}}\\
&+\big[\,x_{2}^+(-1),\,[\,[\,x_{1}^-(0),\,x_1^+(0)\,],\,x_2^-(1)\,]_{r^{-1}}
\,\big]_{s^{-1}}\quad{\hbox{(using (D6) and (D9))}}\\
=&(rs)^{-1}\gamma\om'_{2}\cdot \frac{\om'_1-\om_1}{r-s}
+(rs)^{-1}\frac{\gamma\om'_{2}-\gamma'\om_{2}}
{r-s}\om_1\\
=&(rs)^{-1}\frac{\gamma\om'_{\eta_2}-\gamma'\om_{\eta_2}}{r-s}.
\end{split}
\end{equation*}

For simplicity, we list the results for the intermediate steps.
\begin{equation*}
[\,z_i^-(1),\,z_i^+(-1)\,]=(rs)^{-1}\frac{\gamma\om'_{\eta_i}-\gamma'\om_{\eta_i}}{r-s}, \quad i=3, 4, 5, 6, 7.
\end{equation*}

One has from the last induction step
\begin{equation*}
\begin{split}
&\bigl[\,x^-_{\theta}(1),\,x^+_{\theta}(-1)\,\bigr]\qquad{\hbox{(by definition)}}\\
=&[\,[\,x_{1}^-(0),\,z_{7}^-(1)\,]_{r^{-2}s^{-1}}
,\,[\,z_{7}^+(-1),\,x_{1}^+(0)\,]_{r^{-1}s^{-2}}\,]
\qquad{\hbox{(using (\ref{b:1}))}}\\
=&[\,z_{7}^+(-1),\,[\,
[\,x_{1}^-(0),\,x_{1}^+(0)\,],\,z_{7}^-(1)\,]_{r^{-2}s^{-1}}
\,]_{r^{-1}s^{-2}}\,{\hbox{(using (\ref{b:1}),(D9) and (D6))}}\\
&+[\,z_{7}^+(-1),\,[\,x_{1}^-(0),\,
[\,z_{7}^-(1),\,x_{1}^+(0)\,]\,]_{r^{-2}s^{-1}}
\,]_{r^{-1}s^{-2}}\,{\hbox{(=0 by (D9) and (D6))}}\\
&+[\,[\,[\,x_{1}^-(0),\,z_{7}^+(-1)\,],\,z_{7}^-(1)\,]_{r^{-2}s^{-1}}
,\,x_{1}^+(0)\,]_{r^{-1}s^{-2}}\,{\hbox{(=0 by the above result and (D5))}}\\
&+[\,[\,x_{1}^-(0),\,[\,z_{7}^-(1),\,z_{7}^+(-1)\,]\,]_{r^{-2}s^{-1}}
,\,x_{1}^+(0)\,]_{r^{-1}s^{-2}}\,{\hbox{(using (D6) and (D9))}}\\
=&(rs)^{-5}[2]_2\gamma\om'_{\eta_7}\frac{\gamma\om'_{1}-\gamma'\om_{1}}{r-s}
+(rs)^{-5}[2]_2\frac{\gamma\om'_{\eta_7}-\gamma'\om_{\eta_7}}
{r-s}\om_{1}\\
=&(rs)^{-5}[2]_2\frac{\gamma\om'_{\theta}-\gamma'\om_{\theta}}{r-s}.
\end{split}
\end{equation*}

So we have obtained the required conclusion.
\end{proof}

In the last part of this section, we check the remaining Serre relations of ($\mathcal{E}5$).

\begin{lemm} For the case of $U_{r,s}(\mathrm{E}_{6}^{(2)})$, one has,

 $(1)$ $[\,E_1,\,E_1,\, E_0\,]_{(r^{-2}s^{-1},\,r^{-1}s^{-2})}=0$,

$(2)$ \ $[\,E_{2},\,E_{2},\, E_{2},\, E_3\,]_{(s^{2},\,rs,\,r^2)}=0$.
\end{lemm}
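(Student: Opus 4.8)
The two statements are of different natures, so the plan is to treat them separately, reusing the machinery already deployed for the $\mathrm{D}_4^{(3)}$ case.

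For $(1)$, observe that $[\,E_1,E_1,E_0\,]_{(r^{-2}s^{-1},\,r^{-1}s^{-2})}=0$ has exactly the shape of the $\mathrm{D}_4^{(3)}$ relation already verified, with $E_1=x_1^+(0)$ and $E_0=a\,x_\theta^-(1)\cdot(\gamma'^{-1}\om_\theta^{-1})$. First I would compute the single bracket $[\,x_1^+(0),x_\theta^-(1)\,]$. Writing $x_\theta^-(1)=z_8^-(1)=[\,x_1^-(0),z_7^-(1)\,]$ and applying $x_1^+(0)$ as a derivation on the iterated $q$-bracket via \eqref{b:1} and \eqref{b:3}, all intermediate terms $[\,x_1^+(0),x_j^-(0)\,]$ with $j=2,3,4$ vanish by $(\mathrm{D8})$, so only the outermost $x_1^-(0)$ and the innermost $x_1^-(1)$ contribute. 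The $x_1^-(0)$-term produces, via $(\mathrm{D8})$ and $(\mathrm{D5})$, a Cartan factor $\om_1$ times the truncated vector $z_7^-(1)$, while the $x_1^-(1)$-term is killed using $(\mathrm{D8})$ together with the vanishing identities \eqref{l:20}--\eqref{l:23}; this leaves $[\,x_1^+(0),x_\theta^-(1)\,]$ proportional to $\om_1\,z_7^-(1)$. The same reduction as in the $\mathrm{D}_4^{(3)}$ case then gives $[\,E_1,E_1,E_0\,]=a\,[\,x_1^+(0),x_1^+(0),x_\theta^-(1)\,]_{(1,\,r^{-1}s)}\cdot\gamma'^{-1}\om_\theta^{-1}$, and applying $x_1^+(0)$ a second time reduces the surviving bracket to $[\,x_1^+(0),z_7^-(1)\,]$-type terms that vanish by $(\mathrm{D9})$ and $(\mathrm{D8})$, yielding $(1)$.

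For $(2)$, the element $Y=[\,E_2,E_2,E_2,E_3\,]_{(s^2,\,rs,\,r^2)}$ lies in $\mathcal{U}_{r,s}(\frak{g}^{\sigma})^+$, so by Lemma \ref{lemma1} it suffices to check $[\,x_i^-(0),Y\,]=0$ for every $i\in I$. For $i\neq 2,3$ this is immediate, since $x_i^-(0)$ commutes with both $x_2^+(0)$ and $x_3^+(0)$ up to the scalars of $(\mathrm{D9})$. For $i=3$, moving $x_3^-(0)$ inward and using $(\mathrm{D8})$ to replace $[\,x_3^-(0),x_3^+(0)\,]$ by a Cartan element leaves a same-letter triple bracket $[\,x_2^+(0),x_2^+(0),x_2^+(0)\,]_{(\ast,\,1)}$, which vanishes identically. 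For $i=2$, $x_2^-(0)$ acts as a derivation hitting each $x_2^+(0)$ in turn; each hit contributes a Cartan factor times $[\,x_2^+(0),x_2^+(0),x_3^+(0)\,]_{(s^2,\,rs)}$, and tracking the eigenvalues from $(\mathrm{D5})$, $(\mathrm{D8})$ and the $(r,s)$-integers $[n]_{r,s}$ shows that these contributions cancel in pairs, so $[\,x_2^-(0),Y\,]=0$ and hence $Y=0$.

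The routine but delicate part, and the main obstacle, is the scalar bookkeeping: correctly propagating the $q$-bracket parameters through the Leibniz identities \eqref{b:1}--\eqref{b:6} and the Cartan eigenvalues coming from $(\mathrm{D5})$ and $(\mathrm{D8})$, and confirming the two cancellations on which the argument rests — the pairwise cancellation in $(2)$ for $i=2$, and the combination of the outer $x_1^-(0)$- and inner $x_1^-(1)$-contributions in $(1)$. These computations parallel the earlier cases in spirit, so the difficulty is entirely in the bookkeeping rather than in any new idea.
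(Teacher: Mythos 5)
Your proposal follows essentially the same route as the paper: for (1) you reduce $[\,x_1^+(0),x_\theta^-(1)\,]$ to a scalar multiple of $\om_1\,z_7^-(1)$ and then show the second application of $x_1^+(0)$ annihilates $z_7^-(1)$, and for (2) you invoke Lemma \ref{lemma1} and check the adjoint action of $x_2^-(0)$ and $x_3^-(0)$ on $Y$, with the pairwise cancellation for $i=2$ and the vanishing same-letter triple bracket for $i=3$, exactly as in the text. The one minor discrepancy is that the paper kills the inner $x_1^-(1)$-contribution in (1) not via the identities \eqref{l:20}--\eqref{l:23} but by rewriting the innermost bracket so that the surviving piece contains the factor $[\,x_2^-(0),x_2^-(1)\,]_{rs^{-1}}=0$; this falls under the scalar bookkeeping you already flagged and does not change the argument.
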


\begin{proof} \ (1) \ In order to verify the first relation, one considers that,
\begin{equation*}
\begin{split}
&\bigl[\,x_1^+(0),\,x_{\theta}^-(1)\bigr]_{1}\\
=&\bigl[\,[\,x_1^+(0),\, x_1^-(0)\,],\,z_7^-(1)\,\bigr]_{r^{-2}s^{-1}} \\
+&[\,x_1^-(0),x_2^-(0), x_3^-(0), x_{4}^-(0), x_3^-(0), x_2^-(0),\\
&\hspace{2.5cm}[\,x_1^+(0), x_1^-(0)\,],\, x_{2}^-(1)\,]
_{(r^{-1}, r, s^{4}, s^{2}, s^{2}, r^{-2}s^{-1}, r^{-2}s^{-1})}\\
=&(rs)^{-2}z_7^-(1)\omega_1+(rs)^{-1}[\,x_1^-(0),x_2^-(0), x_3^-(0), x_{4}^-(0), x_3^-(0), \\
&\hspace{2.5cm}\underbrace{[\,x_2^-(0),x_{2}^-(1)\,]_{rs^{-1}}}\omega_1\,]
_{(s^{4}, s^{2}, s^{2}, r^{-2}s^{-1}, r^{-2}s^{-1})}\\
=&(rs)^{-2}z_7^-(1)\omega_1.
\end{split}
\end{equation*}
Applying the above result, one gets by direct calculation that
\begin{equation*}
\begin{split}
&[\,E_1,\,E_1,\, E_0\,]_{(r^{-2}s^{-1},\,r^{-1}s^{-2})}\\
=&a\bigl[\,x_1^+(0),\,x_1^+(0),\,x_{\theta}^-(1)\,\bigr]_{(1,\, r^{-1}s)} \\
=&a(rs)^{-2}\bigl[\,x_1^+(0),\,z_7^-(1)\bigr]\omega_1\\
=&a(rs)^{-2}[\,x_2^-(0), x_3^-(0), x_{4}^-(0), x_3^-(0), x_2^-(0),\\
&\hspace{2.5cm}[\,x_1^+(0), x_1^-(0)\,],\, x_{2}^-(1)\,]
_{(r^{-1}, r, s^{4}, s^{2}, s^{2}, r^{-2}s^{-1})}\omega_1\\
=&a(rs)^{-3}[\,x_2^-(0), x_3^-(0), x_{4}^-(0), x_3^-(0),\underbrace{[x_2^-(0), x_{2}^-(1)]_{rs^{-1}}}\omega_1\,]
_{(s^{4}, s^{2}, s^{2}, r^{-2}s^{-1})}\omega_1\\
=&0
\end{split}
\end{equation*}

(3) \  Using Lemma \ref{lemma1}, one can show the third relation. Let $Y'=[\,E_{2},\,E_{2},\, E_{2},\, E_3\,]_{(s^{2},\,rs,\,r^2)}$.
To prove $[\,x_i^-(0),\, Y'\,]=0$ for $i\in I$, it is enough to check the cases of $i$ being $2$ and $3$.

For $i=3$, one immediately has,
\begin{equation*}
\begin{split}
&[\,x_{3}^-(0),\, Y\,] \\
=&[\,x_3^-(0),\,[\,x_{2}^+(0),\,x_{2}^+(0),\,x_{2}^+(0),\, x_{3}^+(0)\,]_{(s^{2},\,rs,\,r^2)}\,]\\
=&[\,x_{2}^+(0),\,x_{2}^+(0),\,x_{2}^+(0),\, [\,x_3^-(0),\,x_{3}^+(0)\,]\,]_{(s^{2},\,rs,\,r^2)}\\
=&-\omega_3[\,x_{2}^+(0),\,x_{2}^+(0),\,x_{2}^+(0)\,]_{(r^{-1}s,\,1)}\\
=&0.
\end{split}
\end{equation*}

For $i=2$, one gets from the following direct calculation:
\begin{equation*}
\begin{split}
&[\,x_{2}^-(0),\, Y\,] \\
=&[\,x_{2}^-(0),\,[\,x_{2}^+(0),\,x_{2}^+(0),\, x_{2}^+(0),\, x_{3}^+(0)\,]_{(s^{2},\,rs,\,r^2)}\,]\\
=&[\,[\,x_{2}^-(0),\,x_{2}^+(0)\,],\,x_{2}^+(0),\, x_{2}^+(0),\,x_{3}^+(0)\,]_{(s^{2},\,rs,\,r^2)}\\
&+[\,x_{2}^+(0),\,[\,x_{2}^-(0),\,x_{2}^+(0)\,],\, x_{2}^+(0),\, x_{2}^+(0)\,]_{(s^{2},\,rs,\,r^2)}\\
&+[\,x_{2}^+(0),\,x_{2}^+(0),\, [\,x_{1}^-(0),\,x_{1}^+(0)\,],\, x_{2}^+(0)\,]_{(s^{2},\,rs,\,r^2)}\\
=&-(r+s)[\,x_{2}^+(0),\,x_{2}^+(0),\, x_{3}^+(0)\,]_{(s^{2},\,rs)}\om'_2\\
&+(r+s)[\,x_{2}^+(0),\,x_{2}^+(0),\, x_{3}^+(0)\,]_{(s^{2},\,rs)}\om'_2\\
=&0.
\end{split}
\end{equation*}
\end{proof}
So far, we have proved Theorem $\mathcal{A}$ for all twisted cases, that is, there exists an algebra homomorphism $\Psi$ between the two realizations of the two-parameter twisted quantum affine algebras.
In the next section, we will give a new proof of the other two theorems.

\section{The inverse map $\Phi$ of $\Psi$}

 The goal of this section is to obtain the inverse map of $\Psi$, more precisely, there exists an algebra isomorphism from Drinfeld realization to Drinfeld-Jimbo form of the two-parameter twisted quantum affine algebras. We remark
 that the proof works not only for  untwisted cases but also for twisted cases. In particular, we have another proof of Drinfeld isomorphism for the quantum affine algebras for all the cases, which were proved using the braid group \cite{B, JZ4}. From now on, we denote by $\hat{\frak{g}}$ any affine Lie algebra.

Fix $k\in I$, denote by $\mathcal{U}^k_{r,s}(\hat{\frak{g}})$ the subalgebra of
$\mathcal{U}_{r,s}(\hat{\frak{g}})$ generated by
$x_i^{\pm}(0),\,x_k^+(-1)$, $x_k^-(1)$, $\om_i^{\pm1},\,\om_i'^{\pm1}$ ($i\in I$), and
$\gamma^{\pm\frac{1}2},\, \gamma'^{\,\pm\frac{1}2}$, 
satisfying the following relations $(7.1)-(7.6)$, that is,
$$
{\mathcal U}^k_{r,s}(\hat{\frak{g}}):=\left.\left\langle\, x_i^{\pm}(0), x_k^+(-1),\,x_k^-(1), \,\om_i^{\pm1},\, {\om}_i'^{\,\pm1},
\gamma^{\pm\frac{1}2}, \gamma'^{\,\pm\frac{1}2}, 
\; \right| i\in I\;\right\rangle/\sim.
$$

\noindent $(\textrm{7.1})$ \  $\gamma^{\pm\frac{1}{2}}$,
$\gamma'^{\,\pm\frac{1}{2}}$ are central such that
$\gamma\gamma'=(rs)^c$,\,
$\omega_i\,\omega_i^{-1}=\omega_i'\,\omega_i'^{\,-1}=1$ $(i\in I)$,
and for $i,\,j\in I$, one has
\begin{equation*}
\begin{split}
[\,\omega_i^{\pm 1},\omega_j^{\,\pm 1}\,]
=[\,\omega_i^{\pm 1},\omega_j'^{\,\pm 1}\,]
=[\,\omega_i'^{\pm1},\omega_j'^{\,\pm 1}\,]=0.
\end{split}
\end{equation*}
$$x_{\sigma(i)}^{\pm}(l)=\omega^lx_i^{\pm}(l),\, a_{\sigma(i)}(m)=\omega^ma_i(m)\leqno(\textrm{7.2})$$
$$
\om_i\,x_j^{\pm}(k)\, \om_i^{-1} =\sum_{t=0}^{k-1}  \langle \sigma^t(j),
i\rangle^{\pm 1} x_j^{\pm}(k),\leqno(\textrm{7.3})$$
$$ \om'_i\,x_j^{\pm}(k)\,
\om_i'^{\,-1} = \sum_{t=0}^{k-1} \langle i, \sigma^t(j)\rangle
^{\mp1}x_j^{\pm}(k).
$$
$$
F_{ij}^\pm(z,\,w)\,x_i^{\pm}(z)x_j^{\pm}(w)=G_{ij}^\pm(z,\,w)\,x_j^{\pm}(w)\,x_i^{\pm}(z),\leqno{(\textrm{7.4})}
$$

$$
[\,x_i^{+}(k),~x_j^-(k')\,]=\frac{\delta_{ij}}{r_i-s_i}\Big(\gamma'^{-k}\,{\gamma}^{-\frac{k+k'}{2}}\,
\phi_i(k{+}k')-\gamma^{k'}\,\gamma'^{\frac{k+k'}{2}}\,\varphi_i(k{+}k')\Big),\leqno(\textrm{7.5})
$$
where $\phi_i(m)$, $\varphi_i(-m)~(m\in \mathbb{Z}_{\geq 0})$ such that
$\phi_i(0)=\om_i$ and  $\varphi_i(0)=\om_i'$ are defined as below:
\begin{gather*}\sum\limits_{m=0}^{\infty}\phi_i(m) z^{-m}=\om_i \exp \Big(
(r_i{-}s_i)\sum\limits_{\ell=1}^{\infty}
 a_i(\ell)z^{-\ell}\Big),\quad \bigl(\phi_i(-m)=0, \ \forall\;m>0\bigr); \\
\sum\limits_{m=0}^{\infty}\varphi_i(-m) z^{m}=\om'_i \exp
\Big({-}(r_i{-}s_i)
\sum\limits_{\ell=1}^{\infty}a_i(-\ell)z^{\ell}\Big), \quad
\bigl(\varphi_i(m)=0, \ \forall\;m>0\bigr).
\end{gather*}
$$x_i^{\pm}(m)x_j^{\pm}(k)=\langle j,i\rangle^{\pm1}x_j^{\pm}(k)x_i^{\pm}(m),
\qquad\ \hbox{for} \quad a^{\sigma}_{ij}=0,\leqno(\textrm{$7.6_a$})$$
$$
\begin{array}{lll}
& Sym_{z_1,z_2,z_{3}}\Big\{((rs^{-2})^{\mp \frac{k}{4}}z_1-(r^{\frac{k}{4}}+s^{\frac{k}{4}})z_2+
(r^{-2}s)^{\mp \frac{k}{4}}z_3)x_i^{\pm}(z_1)x_i^{\pm}(z_2)x_i^{\pm}(z_{3})\Big\}=0,\\
&  \hskip1.8cm \quad\hbox{for} \quad A_{i,\sigma(i)}=-1, \\
\end{array} \leqno{(\textrm{$7.6_b$})}
$$
$$
\begin{array}{lll}
&Sym_{z_1, z_{2}}\Big\{P_{ij}^{\pm}(z_1,z_2)\sum_{t=0}^{t=2}(-1)^t(r_is_i)^{\pm\frac{t(t-1)}{2}}
\Big[{2\atop  t}\Big]_{\pm{i}}x_i^{\pm}(z_1)\cdots x_i^{\pm}(z_t)\\
&\hskip0.8cm \times x_j^{\pm}(w)x_i^{\pm}(z_{t+1})\cdots x_i^{\pm}(z_{2})\Big\}=0,\\
&\hskip1.8cm \quad\hbox{for} \quad A_{i,j}=-1, \quad  \hbox{and}\quad 1\leqslant j<i\leqslant N \quad\hbox{such\quad that}\quad \sigma(i)\neq j,\\
\end{array} \leqno{(\textrm{$7.6_c$})}
$$
$$
\begin{array}{lll}
&Sym_{z_1, z_{2}}\Big\{P_{ij}^{\pm}(z_1,z_2)\sum_{t=0}^{t=2}(-1)^t(r_is_i)^{\mp\frac{t(t-1)}{2}}
\Big[{2\atop  t}\Big]_{\mp{i}}x_i^{\pm}(z_1)\cdots x_i^{\pm}(z_t)\\
&\hskip0.8cm \times x_j^{\pm}(w)x_i^{\pm}(z_{t+1})\cdots x_i^{\pm}(z_{2})\Big\}=0,\\
&\hskip1.8cm\quad\hbox{for} \quad A_{i,j}=-1, \quad  \hbox{and}\quad 1\leqslant i<j\leqslant N \quad\hbox{such\quad that}\quad \sigma(i)\neq j.
\end{array} \leqno{(\textrm{$7.6_d$})}
$$
where $[l ]_{\pm{i}}= \frac{r_i^{\pm l}-s_i^{\pm
l}}{r_i^{\pm1}-s_i^{\pm1}}$,\, $[l]_{\mp{i}}= \frac{r_i^{\mp
l}-s_i^{\mp l}}{r_i^{\mp1}-s_i^{\mp1}}$, $\textit{Sym}_{z_1,\ldots, z_n}$  denotes symmetrization w.r.t. the
indices $(z_1, \ldots, z_n)$, and
\begin{eqnarray*}
&&\hbox{If}\,\, \sigma(i)=i,\, \hbox{then}\,\,  P_{ij}^{\pm}(z,w)=1,\,  d_{ij}=k,\\
&&\hbox{If}\,\, A_{i,\sigma(i)}=0, \, \sigma(j)=j, \, \hbox{then} P_{ij}^{\pm}(z,w)=
\frac{z^r(rs^{-1})^{\pm k}-w^r}{z(rs^{-1})^{\pm 1}-w},\,  d_{ij}=k, \\
&&\hbox{If}\,\,  A_{i,\sigma(i)}=0, \, \sigma(j)\neq
j,\, \hbox{then}\,\, P_{ij}^{\pm}(z,w)=1,\, d_{ij}=1/2, \\
&&\hbox{If}\,\,   A_{i,\sigma(i)}=-1,\, \hbox{then}\,\, P_{ij}^{\pm}(z,w)=
z(rs^{-1})^{\pm k/4}+w,\,  d_{ij}=k/2.
\end{eqnarray*}

In fact, we have the following result.

\begin{prop}\, ${\mathcal U}^k_{r,s}(\hat{\frak{g}})={\mathcal U}_{r,s}(\hat{\frak{g}})$
\end{prop}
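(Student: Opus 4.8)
The plan is to prove the equality by showing that, from the simple generators alone, the subalgebra on the right generates \emph{every} Drinfeld generator of $\mathcal{U}_{r,s}(\hat{\frak g})$; the listed relations (7.1)--(7.6) are precisely the restrictions of (D1)--(D9) to these generators, so once this surjectivity is established the two algebras coincide. Write $\mathcal{A}$ for the subalgebra generated by the $x_i^{\pm}(0)$ $(i\in I)$, by $x_k^{+}(-1)$ and $x_k^{-}(1)$, and by $\om_i^{\pm1},\om_i'^{\pm1},\gamma^{\pm\frac12},\gamma'^{\pm\frac12}$. It is enough to check that $x_i^{\pm}(m)\in\mathcal{A}$ for all $i\in I$ and $m\in\mathbb{Z}$, and that $a_i(\ell)\in\mathcal{A}$ for all $i\in I$ and $\ell\neq0$, since the remaining generators $\phi_i(m),\varphi_i(-m)$ are then recovered from the exponential formulas in (D8).

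First I would manufacture the imaginary root vectors at the distinguished node $k$. Applying relation (D8) to the pair $x_k^{+}(0),x_k^{-}(1)$ and using $\varphi_k(1)=0$ yields an invertible multiple of $\phi_k(1)=\om_k(r_k-s_k)\,a_k(1)$, whence $a_k(1)\in\mathcal{A}$; symmetrically $[\,x_k^{+}(-1),x_k^{-}(0)\,]$ produces $a_k(-1)\in\mathcal{A}$. I would then propagate along the Dynkin diagram of $\mathfrak g_0$: relation (D6) expresses $[\,a_k(\pm1),x_j^{\pm}(0)\,]$ as a scalar multiple of $x_j^{\pm}(\pm1)$, the scalar being the $\sigma$-orbit sum of $\langle i,i\rangle^{\pm\ell A_{i,\sigma^t(j)}/2}$, which is a nonzero element of $\mathbb{Q}(r,s)$ as one reads directly off the explicit matrices $J$. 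Hence $x_j^{\pm}(\pm1)\in\mathcal{A}$ for every node $j$ adjacent to $k$, and then $[\,x_j^{+}(0),x_j^{-}(1)\,]$ gives $a_j(\pm1)$. Because the diagram of $\mathfrak g_0$ is connected, iterating reaches all nodes, so $a_i(\pm1)$ and $x_i^{\pm}(\pm1)$ lie in $\mathcal{A}$ for every $i\in I$.

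With all $a_i(\pm1)$ available I would close the argument by induction on $|m|$. Assuming $x_j^{\pm}(m)\in\mathcal{A}$ for all $j$ and all $|m|\le M$, relation (D6) realizes $x_j^{\pm}(M+1)$ and $x_j^{\pm}(-M-1)$ as nonzero multiples of $[\,a_i(\pm1),x_j^{\pm}(\pm M)\,]$ for a suitably chosen $i$, so $x_i^{\pm}(m)\in\mathcal{A}$ for all $m\in\mathbb{Z}$. Feeding these back into (D8), each bracket $[\,x_i^{+}(m),x_i^{-}(\ell-m)\,]$ returns $\phi_i(\ell)$ or $\varphi_i(\ell)$, and inverting the exponential relation inductively writes $a_i(\ell)$ in terms of $\phi_i(1),\dots,\phi_i(\ell)$ and $\om_i^{\pm1}$; thus every $a_i(\ell)\in\mathcal{A}$. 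This gives $\mathcal{A}=\mathcal{U}_{r,s}(\hat{\frak g})$, and since the equivariance constraint (D2) is preserved at each step the argument is uniform across the untwisted and the five twisted types.

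The step I expect to be the main obstacle is controlling the scalars in (D6) and (D8) that must be inverted. The degree-shifting coefficient is an orbit sum of powers of $\langle i,i\rangle$ and must be checked to be a genuinely nonzero element of $\mathbb{Q}(r,s)$, and the passage from $\phi_i(\ell)$ to $a_i(\ell)$ requires the exponential relation to be invertible at each level; both are where the twisting enters through $\sigma^t$ and the values $d_{ij}$. These are finite explicit verifications rather than a single generic statement, so I would organize them as a short type-by-type check against the quantum Cartan matrices $J$ listed in Definition 3.1.
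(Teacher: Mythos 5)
Your argument is correct and follows essentially the same route as the paper: extract $a_k(\pm1)$ from the brackets $[x_k^{+}(0),x_k^{-}(1)]$ and $[x_k^{+}(-1),x_k^{-}(0)]$, propagate degree-$\pm1$ vectors to adjacent nodes via (D6) using connectivity of the Dynkin diagram, and then induct on the degree, with the only (immaterial) difference being that the paper interleaves the recovery of $a_i(\ell+1)$ from $[x_i^{+}(\ell),x_i^{-}(1)]$ with the degree-raising step $[a_i(\ell),x_i^{\pm}(1)]=\star\,x_i^{\pm}(\ell+1)$, whereas you first generate all $x_j^{\pm}(m)$ using only $a_i(\pm1)$ and recover the $a_i(\ell)$ from the $\phi_i(\ell)$ at the end. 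Your closing caveat about verifying the invertibility of the orbit-sum scalars type by type is exactly the point the paper leaves implicit in its unspecified constants $\ast$, $\star$.
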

\begin{proof}\, It suffices to show all other generators of ${\mathcal U}_{r,s}(\hat{\frak{g}})$ are in the algebra ${\mathcal U}^k_{r,s}(\hat{\frak{g}})$.

First, we denote
\begin{gather*}
a_k(1)=\om_i^{-1}\gamma^{1/2}\,\bigl[\,x_k^+(0),\,x_k^-(1)\,\bigr]
\in {\mathcal U}^k_{r,s}(\hat{\frak{g}}),\\
a_k(-1)=\om_i'^{-1}{\gamma}'^{1/2}\,\bigl[\,x_k^+(-1),\,x_k^-(0)\,
\bigr]\in
{\mathcal U}^k_{r,s}(\hat{\frak{g}}).
\end{gather*}
It is not difficult to show that $a_k(1)$ and $a_k(-1)$ satisfy the defining relations $(\textrm{$D1$})-(\textrm{$D10$})$ (those involving
$a_k(1)$ and $a_k(-1)$)

Moreover, we denote that for $j\in I$ such that $a_{jk}\neq 0$,

\begin{gather*}
x_j^+(-1)=-(r_ks_k)^{-\frac{a_{kj}}{2}}[-a_{kj}]_k^{-1}{\gamma'}^{-\frac{1}{2}}\,\bigl[\,a_k(-1),\,x_j^+(0)\,\bigr]
\in {\mathcal U}^k_{r,s}(\hat{\frak{g}}),\\
x_j^-(1)=-(r_ks_k)^{\frac{a_{kj}}{2}}[a_{kj}]_k^{-1}{\gamma}^{-\frac{1}{2}}\,
\bigl[\,a_k(1),\,x_j^-(0)\,\bigr]\in
{\mathcal U}^k_{r,s}(\hat{\frak{g}}).
\end{gather*}
It follows from direct computation  that $x_j^+(-1)$ and $x_j^-(1)$ satisfy the defining relations $(\textrm{$D1$})-(\textrm{$D10$})$ (those involving with $x_j^+(-1)$ and $x_j^-(1)$ solely)

Repeating the above two steps, one obtains that for all $i\in I$,  $a_i(1)$, $a_i(-1)$, $x_i^-(1)$ and $x_i^+(-1)$
are  in ${\mathcal U}^k_{r,s}(\hat{\frak{g}})$. For more details, see \cite{JZ5}.

At the same time, $x_i^-(-1)$ and $x_i^+(1)$ are both in ${\mathcal U}^k_{r,s}(\hat{\frak{g}})$.
Therefore, all degree one generators are in the subalgebra, which satisfy defining relations $(\textrm{$D1$})-(\textrm{$D10$})$.

Next for $\ell\in \mathbb{Z}/ \{0\}$, suppose that $x_i^\pm(\ell)\in {\mathcal U}^k_{r,s}(\hat{\frak{g}})$
and $a_i(\ell)\in {\mathcal U}^k_{r,s}(\hat{\frak{g}})$.
One has,
\begin{eqnarray*}
{\mathcal U}^k_{r,s}(\hat{\frak{g}})&\ni&[\,x_i^+(\ell), x_i^-(1)\,]\\
&=&{\ast}\, a_i(\ell+1)+\sum\limits_{1<t<\ell+1, \sum l_k=\ell+1}{\ast'}\, a_i(\ell_{j_1})\cdots a_i(\ell_{j_t}),
\end{eqnarray*}
where scalars $\ast,\,\ast'\in \mathbb{K}/\{0\}$, hence $a_i(\ell+1)\in {\mathcal U}^k_{r,s}(\hat{\frak{g}})$.

It follows that,
\begin{eqnarray*}
{\mathcal U}^k_{r,s}(\hat{\frak{g}})\ni[\,a_i(\ell), x_i^{\pm}(1)\,]
=\star\, x_i^{\pm}(\ell+1),
\end{eqnarray*}
where scalars $\star\in \mathbb{K}/\{0\}$, therefore $x_i^{\pm}(\ell+1)\in {\mathcal U}^k_{r,s}(\hat{\frak{g}})$.
So all generators of ${\mathcal U}_{r,s}(\hat{\frak{g}})$ are in ${\mathcal U}^k_{r,s}(\hat{\frak{g}})$by induction,
 which satisfy defining relations $(\textrm{$D1$})-(\textrm{$D10$})$.
\end{proof}

\begin{remark}\, As a consequence of the above proposition, there exist $n$ subalgebras ${\mathcal U}^k_{r,s}(\hat{\frak{g}})$ which are all isomorphic to ${\mathcal U}_{r,s}(\hat{\frak{g}})$, or rather
there are $n$ sets of generators ${\mathcal U}_{r,s}(\hat{\frak{g}})$ of degree bounded by $\pm 1$.
From now on, fix $k\in I$, we use the presentation (subalgebra) ${\mathcal U}^k_{r,s}(\hat{\frak{g}})$ instead of ${\mathcal U}_{r,s}(\hat{\frak{g}})$. We remark that the result has been generalized to the quantum toroidal algebra of type A \cite{JZ5} to give
its Hopf algebra structure with a finite comultiplication.
\end{remark}

We keep the previous assumptions and notations, and let $i_1,\,\ldots,\, i_{h-1}$ be a sequence of indices of the
fixed reduced expression given in Eq. (\ref{a1}).  We also need a few more notations for our purpose.
\begin{align*}
p_{i_j}&=\langle i_j,\, i_{j-1}\ldots i_2 i_{1}\rangle,\quad p'_{i_j}=\langle i_j,\, i_{j}\ldots i_2 i_{1}\rangle,\quad p''_{i_j}=\langle i_j,\, i_{j+1}\ldots i_2 i_{1}\rangle,\\
q_{i_j}&=\langle i_1i_2\ldots i_{j-1} ,\, i_j\rangle^{-1},\quad q'_{i_j}=\langle i_1i_2\ldots i_{j} ,\, i_j\rangle^{-1},\quad q''_{i_j}=\langle i_1i_2\ldots i_{j+1} ,\, i_j\rangle^{-1}.
\end{align*}
Denote $t'_{i_j}=\frac{q'_{i_j}-p'_{i_j}}{r_{i_j}-s_{i_j}}$.


We now define the inverse map of $\Psi$, more precisely, we have the following statement.

{\bf Theorem $\mathcal{B}$.} {\it Fix $k\in I$, let $k=i_1,\,i_2,\, \ldots, i_{h-1}$ be the fixed root chain associated to the maximum root
$\theta$ in \eqref{a1}. Then $\Phi:\,{\mathcal U}^k_{r,s}(\hat{\frak{g}})\longrightarrow U_{r,s}(\hat{\frak{g}})$ is an epimorphism such that  $\forall i \in I$
\begin{align*}
&\Phi(\gamma)=\gamma,\qquad \Phi(\gamma')=\gamma',\qquad \Phi(x_i^+(0))=e_i,\\
&\Phi(x_i^-(0))=f_i,\qquad \Phi(\omega_i)=\omega_i,\qquad \Phi(\omega'_i)=\omega'_i,\\
&\Phi(x_k^-(1))={t'}_{i_{h-1}}^{-1}\cdots {t'}_{i_2}^{-1}[\,e_{i_2},\,e_{i_3},\,\ldots,\, e_{i_{h-1}},\, e_0\,]_{(p'_{i_{h-1}},\, \ldots,\, p'_{i_2})}\gamma'\omega_{k},\\
&\Phi(x_k^+(-1))=\gamma\omega'_{k}[\,f_0,\,f_{i_{h-1}},\,f_{i_{h-2}},\,\ldots,\, f_{i_{2}}\,]_{\langle q'_{i_{h-1}},\, \ldots,\, q'_{i_2}\rangle}
\end{align*}}
\begin{proof}\, We show that $\Phi$ satisfies relations $(D1)-(D9)$. From our previous discussion
it is enough to check the relations only involving $x_k^+(-1)$ and $x_k^-(1)$. In particular, we verify relation $(D8)$.

For $1<\ell<h$, let us denote
\begin{eqnarray*}
\tilde{e}_{i_\ell}(1)&&=[\,e_{i_\ell},\,e_{i_{\ell+1}},\,\ldots,\, e_{i_{h-1}},\, e_0\,]_{(p'_{i_{h-1}},\, \ldots,\, p'_{i_\ell})}.\\
\tilde{f}_{i_l}(-1)&&=[\,f_0,\,f_{i_{h-1}},\,f_{i_{h-2}},\,\ldots,\, f_{i_{\ell}}\,]_{\langle q'_{i_{h-1}},\, \ldots,\, q'_{i_\ell}\rangle}.
\end{eqnarray*}
Then $\Phi(x_k^-(1))={t'}_{i_{h-1}}^{-1}\cdots {t'}_{i_2}^{-1}\tilde{e}_{i_2}(1)\gamma'\omega_{k}$,
and $\Phi(x_k^+(-1))={t'}_{i_{h-1}}^{-1}\cdots {t'}_{i_2}^{-1}\gamma\omega'_{k}\tilde{f}_{i_2}(-1)$.

We compute that
\begin{eqnarray*}
[\,\tilde{f}_{i_{h-1}}(-1), \tilde{e}_{i_{h-1}}(1)\,]
&=&\Big[\,\big[\,f_0,\,f_{i_{h-1}}\,\big]_{q'_{i_{h-1}}},\,
\big[\,e_{i_{h-1}},\,e_0\,\big]_{p'_{i_{h-1}}}\,\Big]\\
&=&\Big[\,\big[\,[\,f_0,\,e_{i_{h-1}}\,],\,f_{i_{h-1}}\,\big]_{q'_{i_{h-1}}},\,
e_0\Big]_{p'_{i_{h-1}}}\\
&&+\Big[\,\big[\,f_0,\,[\,f_{i_{h-1}},\,e_{i_{h-1}}\,]\,\big]_{q'_{i_{h-1}}},\,
e_0\,\Big]_{p'_{i_{h-1}}}\\
&&+\Big[\,e_{i_{h-1}},\,\big[\,[\,f_0,\,e_0\,]
,\,f_{i_{h-1}}\,\big]_{q'_{i_{h-1}}}\,\Big]_{p'_{i_{h-1}}}\\
&&+\Big[\,e_{i_{h-1}},\,\big[\,f_0,\,[\,f_{i_{h-1}},\,e_0\,]
\,\big]_{q'_{i_{h-1}}}\,\Big]_{p'_{i_{h-1}}}\\
&=&-t'_{i_{h-1}}\omega'_{i_{h-1}}\frac{\omega'_0-\omega_0}{r-s}
+t'_{i_{h-1}}\frac{\omega_{i_{h-1}-\omega'_{i_{h-1}}}}{r-s}\omega_0\\
&=&t'_{i_{h-1}}\frac{\omega_{i_{h-1}}\omega_0-\omega'_{i_{h-1}}\omega'_0}{r-s},
\end{eqnarray*}
where the first summand and the last summand are null due to the commutation relation.

Now we consider,
\begin{eqnarray*}
[\,\tilde{f}_{i_l}(-1), \tilde{e}_{i_l}(1)\,]
&=&\Big[\,\big[\,\tilde{f}_{i_{l-1}}(-1),\,f_{i_l}\,\big]_{q'_{i_l}},\,
\big[\,e_{i_l},\,\tilde{e}_{i_{l-1}}(1)\,\big]_{p'_{i_l}}\,\Big]\\
&=&\Big[\,\big[\,[\,\tilde{f}_{i_{l-1}}(-1),\,e_{i_l}\,],\,f_{i_l}\,\big]_{q'_{i_l}},\,
\tilde{e}_{i_{l-1}}(1)\Big]_{p'_{i_l}}\\
&&+\Big[\,\big[\,\tilde{f}_{i_{l-1}}(-1),\,[\,f_{i_l},\,e_{i_l}\,]\,\big]_{q'_{i_l}},\,
\tilde{e}_{i_{l-1}}(1)\,\Big]_{p'_{i_l}}\\
&&+\Big[\,e_{i_l},\,\big[\,[\,\tilde{f}_{i_{l-1}}(-1),\,\tilde{e}_{i_{l-1}}(1)\,]
,\,f_{i_l}\,\big]_{q'_{i_l}}\,\Big]_{p'_{i_l}}\\
&&+\Big[\,e_{i_l},\,\big[\,\tilde{f}_{i_{l-1}}(-1),\,[\,f_{i_l},\,\tilde{e}_{i_{l-1}}(1)\,]
\,\big]_{q'_{i_l}}\,\Big]_{p'_{i_l}}.
\end{eqnarray*}
Iteratively we get that
\begin{eqnarray*}
[\,\tilde{f}_{i_l}(-1), \tilde{e}_{i_l}(1)\,]
&=&t'_{i_{h-1}}\cdots t'_{i_{l}} \frac{\omega_{i_l}\cdots\omega_{i_{h-1}}\omega_0-\omega'_{i_l}\cdots\omega'_{i_{h-1}}\omega'_0}{r-s}.
\end{eqnarray*}

As a consequence of the above results, we get immediately,
\begin{eqnarray*}
&&\Phi([\,x_k^+(-1),\,x_k^-(1)\,])\\
&=&-{t'}_{i_{h-1}}^{-1}\cdots {t'}_{i_2}^{-1}\gamma\gamma'\omega_{k}\omega'_{k}[\,\tilde{f}_{i_2}(-1), \tilde{e}_{i_2}(1)\,]\\
&=&\Phi(\frac{\gamma'\omega_k-\gamma\omega'_k}{r-s})
\end{eqnarray*}
where we have used $\omega_0={\gamma'}^{-1}\omega_{\theta}^{-1}$ and $\omega'_0={\gamma}^{-1}{\omega'}_{\theta}^{-1}$.
\end{proof}

In fact, the map $\Psi$ is the inverse of $\Phi$.

{\bf Theorem $\mathcal{C}$.} {\it \qquad $\Psi=\Phi^{-1}$.}

\begin{proof}\, It suffices to check the action of $\Psi\Phi$ on the generators are trivial. Most of these are trivial except the generators $x_k^-(1)$ and $x_k^+(-1)$, and we directly compute that
 $\Psi\Phi(x_k^-(1))=x_k^-(1)$ and $\Psi\Phi(x_k^+(-1))=x_k^+(-1)$.

In fact, for $2\leq j \leq h-1$, set $$\tilde{y}_{1,1}^-(1)=x_k^-(1){\gamma'}^{-1}\omega_{k}^{-1},$$ and $$\tilde{y}_{1,j}^-(1)=[\,x_{i_j}^-(0),\, \ldots,x_{i_2}^-(0),\, x_{i_1}^-(1)\,]_{{(p_{i_{2}},\, \ldots,\, p_{i_{j}})}}{\gamma'}^{-1}\omega_{i_{1}}^{-1}\cdots \omega_{i_{j}}^{-1}.$$
Using these new notation, we can write that $$\tilde{y}_{1,{h-1}}^-(1)=\Psi(e_0).$$

 Firstly, note that for $2\leq j \leq h-1$
\begin{eqnarray*}
[\,x_{i_{j}}^+(0),\, \tilde{y}_{1,j}^-(1)\,]_{p'_{i_{j}}}&=&[\,x_{i_{j}}^+(0),\, y_{1,j}^-(1)\,]{\gamma'}^{-1}\omega_{i_{1}}^{-1}\cdots \omega_{i_{j}}^{-1}\\
&=&t_{i_j}\tilde{y}_{1,j-1}^-(1)
\end{eqnarray*}
It holds by direct computation,
\begin{eqnarray*}
&&\Psi\Phi(x_k^-(1))\\
&=&\Psi\big(t_{i_{h-1}}^{-1}\cdots t_{i_2}^{-1}[\,e_{i_2},\,e_{i_3},\,\ldots,\, e_{i_{h-1}},\, e_0\,]_{(p'_{i_{h-1}},\, \ldots,\, p'_{i_2})}\gamma'\omega_{k}\big)\\
&=&t_{i_{h-1}}^{-1}\cdots t_{i_2}^{-1}[\,x_{i_2}^+(0),\,x_{i_3}^+(0),\,\ldots,\, x_{i_{h-1}}^+(0),\, \Psi(e_0)\,]_{(p'_{i_{h-1}},\, \ldots,\, p'_{i_2})}\gamma'\omega_{k}\\
&=&t_{i_{h-2}}^{-1}\cdots t_{i_2}^{-1}[\,x_{i_2}^+(0),\,x_{i_3}^+(0),\,\ldots,\, x_{i_{h-2}}^+(0),\, \tilde{y}_{1,h-2}^-(1)\,]_{(p'_{i_{h-2}},\, \ldots,\, p'_{i_2})}\gamma'\omega_{k}\\
&=&\tilde{y}_{1,1}^-(1)\gamma'\omega_{k}\\
&=&x_k^-(1)
\end{eqnarray*}
The action of $\Psi\Phi$ on $x_k^+(-1)$ can be checked similarly. Consequently, $\Psi$ and $\Phi$ are invertible to each other.
\end{proof}

Therefore we have proved there exists an algebra isomorphism between the two realizations of two-parameter twisted quantum affine algebras. In particular, we also prove the Drinfeld isomorphism for quantum affine algebras as a special case.

\section{The Hopf algebra structure of $\mathcal{U}^k_{r,s}(\hat{\frak{g}})$}

In the last section, we will discuss the Hopf structure of the subalgebra $\mathcal{U}^k_{r,s}(\hat{\frak{g}})$ of Drinfeld realization. It
is well-known that Drinfeld-Jimbo realization ${U}_{r,s}(\hat{\frak{g}})$ admits a Hopf algebra structure. The aim of the present section is to establish
the Hopf algebra structure of Drinfeld realization $\mathcal{U}^k_{r,s}(\hat{\frak{g}})$. Furthermore, we show that there exists
  a Hopf isomorphism between these two realizations. This
generalizes the corresponding result for one-parameter quantum affine algebras.

We adopt the same notations and assumptions from the previous sections. In particular, let $k\in I$,
let $k=i_1,\,i_2,\, \ldots, i_{h-1}$ be a fixed sequence of indices satisfying \eqref{a1}. We also need a few more notations.

For $i_2\leq j_1<\ldots<j_l\leq i_{h-1}$, define $x_{1,\,l}^-(1)$ and $x_{1,\,l}^+(-1)$ inductively by
\begin{eqnarray*}
&&x_{1\,l}^-(1)=[\,x_{j_l}^-(0),\,\ldots x_{j_1}^-(0),\,x_{1}^-(1)\,]_{(p_{j_{1}},\, \ldots,\, p_{j_l})},\\
&&x_{1,\,l}^+(-1)=[\,x_{1}^+(-1),\, x_{j_1}^+(0),\,\ldots ,\,x_{j_{l}}^+(0)\,]_{\langle q_{j_{1}},\, \ldots,\, q_{j_l}\rangle}.
\end{eqnarray*}

Denote $x_{i_2\,l}^+(0)$ and $x_{i_2,\,l}^-(0)$ for $2\leq l\leq h-1$ as follows.
\begin{eqnarray*}
&&x_{j_1\,l}^+(0)=[\,x_{j_1}^+(0),\, x_{j_2}^+(0),\,\ldots ,\,x_{j_{l}}^+(0)\,]_{(u_{j_{l-1}},\, \ldots,\, u_{j_1})},\\
&&x_{j_1,\,l}^-(0)=[\,x_{j_l}^-(0),\, x_{j_{l-1}}^-(0),\,\ldots ,\,x_{j_{1}}^-(0)\,]_{\langle v_{j_{l-1}},\, \ldots,\, v_{j_1}\rangle},
\end{eqnarray*}
where  $u_{i_j}=p'_{i_j}q''_{i_j}$ and $v_{i_j}=q'_{i_j}p''_{i_j}$. 

Now we can define the actions of a comultiplication on the simple generators of the algebra $\mathcal{U}^k_{r,s}(\hat{\frak{g}})$.

\begin{defi} \,Fix $k\in I$, let $k=i_1,\,i_2,\, \ldots, i_{h-1}$ be a sequence of indices given by Eq. (\ref{a1}). Define the action of the comultiplication $\Delta'$ on the generators of the algebra $\mathcal{U}^k_{r,s}(\hat{\frak{g}})$ as follows. For
 $i\in I$,
\begin{gather*}
\Delta'(\omega_i)=\omega_i\ot \omega_i, \qquad \Delta'(\omega_i')=\omega_i'\ot \omega_i',\\
\Delta'(\gamma^{\pm\frac{1}2})=\gamma^{\pm\frac{1}2}\otimes
\gamma^{\pm\frac{1}2}, \qquad
\Delta'(\gamma'^{\,\pm\frac{1}2})=\gamma'^{\,\pm\frac{1}2}\otimes
\gamma'^{\,\pm\frac{1}2}, \\
\Delta'(D^{\pm1})=D^{\pm1}\otimes D^{\pm1},\qquad
\Delta'(D'^{\,\pm1})=D'^{\,\pm1}\otimes D'^{\,\pm1},\\
\Delta'(x_i^+(0))=x_i^+(0)\ot 1+\omega_i\ot x_i^+(0), \qquad \Delta'(x_i^-(0))=x_i^-(0)\ot w_i'+1\ot
x_i^-(0),\\
\Delta'(x_k^-(1))=x_k^-(1)\ot \gamma'\omega_k+1\ot x_k^-(1)+\sum_{i_2\leq j_1<\cdots<j_l\leq i_{h-1}}\xi_l\, x_{1\,l}^-(1)\ot x_{j_1\,l}^+(0)\gamma'\omega_k\\
\Delta'(x_k^+(-1))=x_k^+(-1)\ot 1+\gamma\omega'_k\ot x_k^+(-1)+\sum_{i_2\leq j_1<\cdots<j_l\leq i_{h-1}}\zeta_l\, \gamma\omega'_k\, x_{j_1\,l}^-(0)\ot x_{1\,l}^+(-1)
\end{gather*}
where $$\xi_l=(q'_{i_l}-p'_{i_l})t_{i_2}^{-1}\ldots t_{i_l}^{-1}q''_{i_2}\ldots q''_{i_{l-1}},$$ $$\zeta_l=(p'_{i_l}-q'_{i_l})t_{i_2}^{-1}\cdots t_{i_l}^{-1}p''_{i_2}\cdots p''_{i_{l-1}}.$$

\end{defi}
\smallskip

The above comultiplication $\Delta'$ is well-defined, which will be verified by the following proposition. We also note that the formulas for one-parameter cases were essentially given in \cite[Th. 2.2]{JM}.

\begin{prop}{\label{8.1} \, Fix $k\in I$, let $k=i_1,\,i_2,\, \ldots, i_{h-1}$ be a sequence of indices given in Eq. (\ref{a1}). The algebra $\mathcal{U}^k_{r,s}(\hat{\frak{g}})$  is a Hopf algebra with
the above comultiplication $\Delta'$, the counit $\vep$ and the antipode $S$
defined below, for $i\in I$, we have
\begin{eqnarray*}
&&\varepsilon(x_i^+(0))=\varepsilon(x_i^-(0))=\varepsilon(x_k^+(-1))=\varepsilon(x_k^-(1))=0,\\
&& \varepsilon(\gamma^{\pm\frac{1}2})
=\varepsilon(\gamma'^{\,\pm\frac{1}2})=\varepsilon(D^{\pm1})
=\varepsilon(D'^{\,\pm1})=\varepsilon(\omega_i)=\varepsilon(\omega_i')=1,\\
&&S(\gamma^{\pm\frac{1}2})=\gamma^{\mp\frac{1}2},\qquad
S(\gamma'^{\pm\frac{1}2})=\gamma'^{\mp\frac{1}2},\qquad
S(D^{\pm1})=D^{\mp1},\\
&&S(\omega_i)=\omega_i^{-1}, \qquad S(\omega_i')=\omega_i'^{-1}\,\qquad S(D'^{\,\pm1})=D'^{\,\mp1},\\
&&S(x_i^+(0))=-\omega_i^{-1}x_i^+(0),\qquad S(x_i^-(0))=-x_i^-(0)\,\omega_i'^{-1},\\
&&S(x_k^+(-1))=-\gamma^{-1}{\omega'_k}^{-1}x_k^+(-1)\\
&&\hskip3cm-\sum_{i_2\leq j_1<\cdots<j_l\leq i_{h-1}}\zeta_l\,\, y_{j_1\,l}^-(0)\gamma^{-1}{\omega'_{j_1}}^{-1}\cdots {\omega'_{j_l}}^{-1}\,x_{1,\,l}^+(-1),\\
&&S(x_k^-(1))=-x_k^-(1){\gamma'}^{-1}{\omega_k}^{-1}-\\
&&\hskip3cm-\sum_{i_2\leq j_1<\cdots<j_l\leq i_{h-1}}\xi_l\,\, x_{1\,l}^-(1){\gamma'}^{-1}{\omega_{j_1}}^{-1}\cdots {\omega_{j_l}}^{-1}\, y_{j_1,\,l}^+(0),
\end{eqnarray*}
where $$y_{j_1\,l}^-(0)=a\,[\,x_{j_l}^-(0),\, x_{j_{l-1}}^-(0),\,\ldots ,\,x_{j_{1}}^-(0)\,]_{\langle v'_{j_{l-1}},\, \ldots,\, v'_{j_1}\rangle}$$
and $$y_{j_1,\,l}^+(0)=b\,[\,x_{j_1}^+(0),\, x_{j_2}^+(0),\,\ldots ,\,x_{j_{l}}^+(0)\,]_{(u'_{j_{l-1}},\, \ldots,\, u'_{j_1})},$$ and for $2\leqslant j \leqslant l-1$, $v'_{i_{j}}=\langle i_j,\, i_{j+1}\rangle$, $u'_{i_j}=\langle i_{j+1},\,i_j\rangle$, here the constant $a=\prod\limits_{j=2}^{l-1}v_{i_j}\cdot \langle i_{j+1}\cdots i_{l},\, i_j\rangle$ and
$b=\prod\limits_{j=2}^{l-1}u_{i_j}\cdot \langle i_j,\, i_{j+1}\cdots i_{l}\rangle$.}
\end{prop}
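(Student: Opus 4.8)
The plan is to obtain the Hopf structure on $\mathcal{U}^k_{r,s}(\hat{\frak{g}})$ by \emph{transporting} the known Hopf structure of the Drinfeld--Jimbo form along the isomorphism established in the previous section. Recall that $U_{r,s}(\hat{\frak{g}})$ is a Hopf algebra with coproduct $\Delta$, counit $\varepsilon$ and antipode $S$ by Proposition \ref{hopf}, and that $\Phi\colon \mathcal{U}^k_{r,s}(\hat{\frak{g}})\to U_{r,s}(\hat{\frak{g}})$ and $\Psi=\Phi^{-1}$ are mutually inverse algebra isomorphisms by Theorems $\mathcal{B}$ and $\mathcal{C}$. Setting $\Delta'_{\mathrm{tr}}:=(\Psi\otimes\Psi)\circ\Delta\circ\Phi$, $\varepsilon'_{\mathrm{tr}}:=\varepsilon\circ\Phi$ and $S'_{\mathrm{tr}}:=\Psi\circ S\circ\Phi$, each of these is a composite of algebra (anti-)homomorphisms and hence well defined; moreover, since $\Psi\Phi=\mathrm{id}$ cancels the intermediate factors, coassociativity, the counit axioms and the antipode axiom for $(\Delta'_{\mathrm{tr}},\varepsilon'_{\mathrm{tr}},S'_{\mathrm{tr}})$ follow verbatim from those of $\Delta,\varepsilon,S$. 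Thus $(\mathcal{U}^k_{r,s}(\hat{\frak{g}}),\Delta'_{\mathrm{tr}},\varepsilon'_{\mathrm{tr}},S'_{\mathrm{tr}})$ is automatically a Hopf algebra and $\Psi$ is a Hopf isomorphism. The entire content of the proposition therefore reduces to checking that the explicit formulae of the preceding definition take the same values as $\Delta'_{\mathrm{tr}},\varepsilon'_{\mathrm{tr}},S'_{\mathrm{tr}}$ on the simple generators; since these generate the whole algebra (the result of Section 7 that $\mathcal{U}^k_{r,s}(\hat{\frak{g}})=\mathcal{U}_{r,s}(\hat{\frak{g}})$), this simultaneously proves well-definedness of $\Delta'$ and that $\Delta'=\Delta'_{\mathrm{tr}}$.

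First I would dispose of the grouplike and degree-zero generators. For $\omega_i^{\pm1},\omega_i'^{\pm1},\gamma^{\pm\frac12},\gamma'^{\pm\frac12}$ (and the degree operators $D^{\pm1},D'^{\pm1}$) the stated formulae are grouplike and match $\Delta,\varepsilon,S$ through $\Phi$ immediately. For $x_i^{\pm}(0)$ one has $\Phi(x_i^+(0))=e_i$ and $\Phi(x_i^-(0))=f_i$, so that $\Delta'_{\mathrm{tr}}(x_i^+(0))=(\Psi\otimes\Psi)\Delta(e_i)=x_i^+(0)\otimes 1+\omega_i\otimes x_i^+(0)$ and likewise for $x_i^-(0)$, using the formulae $\Delta(e_i)=e_i\otimes 1+\omega_i\otimes e_i$ and $\Delta(f_i)=f_i\otimes\omega_i'+1\otimes f_i$ of Proposition \ref{hopf}; the counit and antipode values follow the same way.

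The substance lies in the degree $\pm1$ generators $x_k^-(1)$ and $x_k^+(-1)$. Here I would compute $\Delta\bigl(\Phi(x_k^-(1))\bigr)$ inside $U_{r,s}(\hat{\frak{g}})$ and then push it forward by $\Psi\otimes\Psi$. Writing $\tilde e_{i_\ell}(1)=[\,e_{i_\ell},e_{i_{\ell+1}},\ldots,e_{i_{h-1}},e_0\,]_{(p'_{i_{h-1}},\ldots,p'_{i_\ell})}$ as in the proof of Theorem $\mathcal{B}$, so that $\Phi(x_k^-(1))$ is a scalar multiple of $\tilde e_{i_2}(1)\,\gamma'\omega_k$, I would compute $\Delta(\tilde e_{i_\ell}(1))$ by downward induction on $\ell$ along the fixed root chain. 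Since $\Delta$ is an algebra homomorphism, $\Delta(\tilde e_{i_\ell}(1))=[\,\Delta(e_{i_\ell}),\Delta(\tilde e_{i_{\ell+1}}(1))\,]_{p'_{i_\ell}}$, and the resulting $q$-bracket expansions in $U_{r,s}(\hat{\frak{g}})\otimes U_{r,s}(\hat{\frak{g}})$ are governed by the Leibniz-type identities \eqref{b:1}--\eqref{b:3}. Collecting the tensors in which $e_0$ lands in the right-hand factor and applying $\Psi\otimes\Psi$ should reassemble them into the quantum root vectors $x_{1\,l}^-(1)$ and $x_{j_1\,l}^+(0)$ with the correct $\omega$-dressing, producing precisely $x_k^-(1)\otimes\gamma'\omega_k+1\otimes x_k^-(1)+\sum \xi_l\,x_{1\,l}^-(1)\otimes x_{j_1\,l}^+(0)\gamma'\omega_k$; the coefficients $\xi_l$ then emerge from the accumulated structure constants $p'_{i_j},q'_{i_j},q''_{i_j}$ together with the normalizing scalars $t_{i_j}^{-1}$ of $\Phi(x_k^-(1))$. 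The antipode value $S(x_k^-(1))$ is obtained from $S'_{\mathrm{tr}}=\Psi S\Phi$ in the same fashion: applying the anti-automorphism $S$ to the iterated bracket reverses the order of the factors and inverts each structure constant, which is exactly what yields the reversed brackets $y_{j_1\,l}^+(0)$ with the data $u'_{i_j},v'_{i_j}$ and the constants $a,b$. The companion formulae for $x_k^+(-1)$ then follow by applying the Chevalley-type anti-involution $\tau$, which interchanges the two degree $\pm1$ generators and carries the $p'$-data to the $q'$-data.

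The main obstacle is precisely this inductive coproduct computation for $\tilde e_{i_\ell}(1)$: one must control the numerous cross terms generated by expanding $[\,\Delta(e_{i_\ell}),\Delta(\tilde e_{i_{\ell+1}}(1))\,]$, show that the surviving terms collapse into the prescribed quantum root vectors, and verify that the accumulated scalars match $\xi_l$ and $\zeta_l$ exactly. The bracket gymnastics and the bookkeeping of $p'_{i_j},q'_{i_j},q''_{i_j},p''_{i_j},t_{i_j}$ are the delicate part. By contrast, once the generator formulae have been matched with the transported maps, well-definedness of $\Delta'$ and all three Hopf axioms come for free, so no independent check of coassociativity or of the antipode identity is required, and the asserted Hopf isomorphism between the two realizations is an immediate byproduct.
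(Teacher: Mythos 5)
Your strategy is sound, but it is genuinely different from the paper's, so a comparison is in order. The paper proves Proposition \ref{8.1} by \emph{direct verification} of the explicit formulae: step (a) checks that $\Delta'$ respects the defining relations of $\mathcal{U}^k_{r,s}(\hat{\frak{g}})$ (the key computations being $[\Delta'(x_{k'}^+(0)),\Delta'(x_k^-(1))]=0$ and $[\Delta'(x_k^+(-1)),\Delta'(x_k^-(1))]=\frac{\Delta'(\gamma'\omega_k)-\Delta'(\gamma\omega'_k)}{r_k-s_k}$, with the cross terms killed by (D5), (D9) and the Serre relations); steps (b)--(d) then verify coassociativity, the counit axiom and the antipode axiom separately, again by computing on the degree-$\pm1$ generators. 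The identification with the Drinfeld--Jimbo coproduct is deferred to the theorem that follows, which is deduced \emph{from} Proposition \ref{8.1}. You instead transport the known Hopf structure along $\Phi$ and $\Psi$ (Theorems $\mathcal{B}$, $\mathcal{C}$), which makes well-definedness and all the axioms automatic and renders the later Hopf-isomorphism statement tautological; this is legitimate, since the isomorphisms are established in Section 7 and no circularity arises. What your route buys in structural economy it pays for in concentration of effort: the entire content of the proposition now rests on the single inductive computation of $\Delta(\tilde e_{i_\ell}(1))$ via the Leibniz identities \eqref{b:1}--\eqref{b:3} and the matching of the accumulated scalars with $\xi_l$, $\zeta_l$, which you only sketch, whereas the paper spreads the load over several smaller checks each exploiting the Drinfeld relations directly. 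One small caution: passing from the $x_k^-(1)$ formula to the $x_k^+(-1)$ formula via $\tau$ requires the observation that $\tau$ intertwines $\Delta$ with the \emph{opposite} coproduct rather than with $\Delta$ itself; alternatively one simply repeats the induction for $\tilde f_{i_\ell}(-1)$, as the paper implicitly does.
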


\begin{proof}\,{\bf (a)}\, We first show that $\Delta'$ defines a morphism of algebra from $\mathcal{U}^k_{r,s}(\hat{\frak{g}})$  into $\mathcal{U}^k_{r,s}(\hat{\frak{g}})\otimes\mathcal{U}^k_{r,s}(\hat{\frak{g}})$. Note that the actions on all generators are the same as that of Drinfeld-Jimbo generators except
$x_k^-(1)$ and $x_k^+(-1)$. So it is enough to show the relations involving $x_k^-(1)$ and $x_k^+(-1)$. We first check for $k'\neq k$
\begin{eqnarray*}
[\,\Delta'(x_{k'}^+(0)),\,\Delta'(x_k^-(1))\,]=0.
\end{eqnarray*}
By direct computation, one gets that
\begin{eqnarray*}
&&[\,\Delta'(x_{k'}^+(0)),\,\Delta'(x_k^-(1))\,]\\
&=&[\,x_{k'}^+(0)\otimes 1+K_{k'}\otimes x_{k'}^+(0) ,\,\Delta'(x_k^-(1))\,]\\
&=&\sum_{i_2\leq j_1<\cdots<j_l\leq i_{h-1}}\xi_l\, [x_{k'}^+(0),\,x_{1\,l}^-(1)]\ot x_{j_1\,l}^+(0)\gamma'\omega_k
+[K_{k'}\otimes x_{k'}^+(0),\,x_k^-(1)\otimes \gamma'\omega_k]\\
&&+\sum_{i_2\leq j_1<\cdots<j_l\leq i_{h-1}}\xi_l\, [K_{k'}\otimes x_{k'}^+(0),\,x_{1\,l}^-(1)\ot x_{j_1\,l}^+(0)]
\end{eqnarray*}
The first term will be killed by the last two terms for its two cases.

Then we are left to verify the following relations
\begin{eqnarray*}
[\,\Delta'(x_k^+(-1)),\,\Delta'(x_k^-(1))\,]=\frac{\Delta'(\gamma'\omega_k)-\Delta'(\gamma\omega'_k)}{r_k-s_k}
\end{eqnarray*}
By definition, we have immediately,

\begin{eqnarray*}
&&[\,\Delta'(x_k^+(-1)),\,\Delta'(x_k^-(1))\,]\\
&=&\big[\,(x_k^+(-1)\ot 1+\gamma\omega'_k\ot x_k^+(-1)+\sum_{i_2\leq {j'}_1<\cdots<{j'}_{l'}\leq i_{h-1}}\zeta_l'\, \gamma\omega'_k\, x_{j_1\,l'}^-(0)\ot x_{1\,l'}^+(-1)),\, \\
&&\hskip1.05cm(x_k^-(1)\ot \gamma'\omega_k+1\ot x_k^-(1)+\sum_{i_2\leq j_1<\cdots<j_l\leq i_{h-1}}\xi_l\, x_{1\,l}^-(1)\ot x_{j_1\,l}^+(0)\gamma'\omega_k)\,\big]
\end{eqnarray*}

By direct calculations, we pull out the common factors, then the above bracket can be divided four summands, that is,

\begin{eqnarray*}
&&[\,\Delta'(x_k^+(-1)),\,\Delta'(x_k^-(1))\,]\\
&=&[\,x_k^+(-1),\,x_k^-(1)\,]\ot \gamma\omega'_k+\gamma\omega'_k\ot[\,x_k^+(-1),\,x_k^-(1)\,]\\
&&+\sum_{i_2\leq {j'}_1<\cdots<{j'}_{l'}\leq i_{h-1}}\sum_{i_2\leq j_1<\cdots<j_l\leq i_{h-1}}\Big\{\zeta_{l'}\,\gamma\omega'_k\,[\,x_{j_1\,l'}^-(0),\,x_k^-(1)\,]_{\langle j_1\cdots j_{l'},\,k\rangle}\\
&&\hskip0.65cm\ot x_{1\,l'}^+(-1)\gamma'\omega_k+\xi_l \gamma\omega'_k x_{1\,l}^-(1)\ot[\,x_k^+(-1),\,x_{j_1\,l}^+(0)\,]_{\langle k,\,j_1\cdots j_l\rangle^{-1}}\gamma'\omega_k\Big\}\\
&&+\sum_{i_2\leq {j'}_1<\cdots<{j'}_{l'}\leq i_{h-1}}\sum_{i_2\leq j_1<\cdots<j_l\leq i_{h-1}}\Big\{\zeta_{l'}\,\gamma\omega'_k\,x_{j_1\,l'}^-(0)\ot [\,x_{1\,l'}^+(-1),\,x_k^-(1)\,]\\
&&\hskip1.65cm+\xi_l [\,x_k^+(-1),\,x_{1\,l}^-(1)\,]\ot x_{j_1\,l}^+(0)\gamma'\omega_k\Big\}\\
&&+\sum_{i_2\leq {j'}_1<\cdots<{j'}_{l'}\leq i_{h-1}}\sum_{i_2\leq j_1<\cdots<j_l\leq i_{h-1}}\zeta_{l'}\xi_l\, \Big\{\gamma\omega'_k\,x_{j_1\,l'}^-(0)x_{1\,l}^-(1)\\
&&\hskip0.5cm\ot x_{1\,l'}^+(-1)x_{j_1\,l}^+(0)\gamma'\omega_k-x_{1\,l}^-(1)\gamma\omega'_k x_{j_1\,l'}^-(0)\ot x_{j_1\,l}^+(0)\gamma'\omega_k x_{1\,l'}^+(-1)\Big\}
\end{eqnarray*}

In fact, the last three summands are 0. Using Drinfeld relation $(D9)$ and   $(D5)$, the second summand and the third summand also vanish.
The last summand is 0 for Serre relations.  For simplicity, we proceed with the example of the case of $D_4^{(3)}$.

In this case, the last summand becomes:
\begin{eqnarray*}
 &&\zeta_1\xi_1\, \Big\{\gamma\omega'_1\,x_{2}^-(0)[\,x_2^-(0),\,x_{1}^-(1)\,]_{s^3}\ot [\, x_{1}^+(-1),\,x_2^+(0)\,]_{r^3}x_2^+(0)\gamma'\omega_1\\
&&\hskip0.85cm-[\,x_2^-(0),\,x_{1}^-(1)\,]_{s^3}\gamma\omega'_1x_{2}^-(0)\ot x_2^+(0)\gamma'\omega_1[\, x_{1}^+(-1),\,x_2^+(0)\,]_{r^3}\Big\}\\
&=&\zeta_1\xi_1\, \Big\{r^3\gamma\omega'_1\,[\,x_2^-(0),\,x_{1}^-(1)\,]_{s^3}x_{2}^-(0)\ot s^3 x_2^+(0)[\, x_{1}^+(-1),\,x_2^+(0)\,]_{r^3}\gamma'\omega_1\\
&&\hskip0.85cm-(rs)^3\gamma\omega'_1[\,x_2^-(0),\,x_{1}^-(1)\,]_{s^3}x_{2}^-(0)\ot x_2^+(0)[\, x_{1}^+(-1),\,x_2^+(0)\,]_{r^3}\gamma'\omega_1\Big\}\\
&=&0.
\end{eqnarray*}

Therefore, we get the required relation,
\begin{eqnarray*}
&&[\,\Delta'(x_k^+(-1)),\,\Delta'(x_k^-(1))\,]\\
&=&[\,x_k^+(-1),\,x_k^-(1)\,]\ot \gamma\omega'_k+\gamma\omega'_k\ot[\,x_k^+(-1),\,x_k^-(1)\,]\\
&=&\frac{\Delta'(\gamma'\omega_k)-\Delta'(\gamma\omega'_k)}{r_k-s_k}
\end{eqnarray*}

{\bf (b)}\, Next, we need to show that $\Delta'$ is coassociative. Similarly it suffices to check the actions of  $\Delta'$ on the generators $x_k^-(1)$ and $x_k^+(-1)$.

For the case of $x_k^-(1)$, we obtain by definition,
\begin{eqnarray*}
&&(\Delta'\otimes id)\Delta'(x_k^-(1))\\
&=&(\Delta'\otimes id)(x_k^-(1)\ot \gamma'\omega_k+1\ot x_k^-(1)+\sum_{i_2\leq j_1<\cdots<j_l\leq i_{h-1}}\xi_l\, x_{1\,l}^-(1)\ot x_{j_1\,l}^+(0)\gamma'\omega_k)\\
&=&\Delta'(x_k^-(1))\ot \gamma'\omega_k+1\ot 1\ot x_k^-(1)+\sum_{i_2\leq j_1<\cdots<j_l\leq i_{h-1}}\xi_l\,\Delta'(x_{1\,l}^-(1))\ot
x_{j_1\,l}^+(0)\gamma'\omega_k.
\end{eqnarray*}
On the other hand, we have,

\begin{eqnarray*}
&&(id\otimes\Delta')\Delta'(x_k^-(1))\\
&=&(id\otimes\Delta')(x_k^-(1)\ot \gamma'\omega_k+1\ot x_k^-(1)+\sum_{i_2\leq j_1<\cdots<j_l\leq i_{h-1}}\xi_l\, x_{1\,l}^-(1)\ot x_{j_1\,l}^+(0)\gamma'\omega_k)\\
&=&x_k^-(1)\ot \gamma'\omega_k\ot \gamma'\omega_k+1\ot\Delta'(x_k^-(1))+\sum_{i_2\leq j_1<\cdots<j_l\leq i_{h-1}}\xi_l\,x_{1\,l}^-(1)\ot
\Delta'(x_{j_1\,l}^+(0)\gamma'\omega_k)
\end{eqnarray*}

By direct calculation of  $\Delta'(x_{1\,l}^-(1))$ and $\Delta'(x_{j_1\,l}^+(0))$, the two expressions are same. Hence we get the required relation $$(\Delta'\otimes id)\Delta'(x_k^-(1))=(id\otimes\Delta')\Delta'(x_k^-(1)).$$

The proof for $x_k^+(-1)$ is analogous.

{\bf (c)}\, It is easy to check that $\varepsilon$ defines a morphism of algebra from $\mathcal{U}^k_{r,s}(\hat{\frak{g}})$ onto $\mathbb{K}$ and satisfies the counit axiom.

{\bf (d)}\, It remains to verify that $S$ defines an antipode for $\mathcal{U}^k_{r,s}(\hat{\frak{g}})$. First we have to show that $S$ is a morphism of algebra from $\mathcal{U}^k_{r,s}(\hat{\frak{g}})$ into
$\mathcal{U}^{k\,op}_{r,s}(\hat{\frak{g}})$, that is,

\begin{eqnarray*}
[\,S(x_k^-(1)),\,S(x_k^+(-1))\,]
=\frac{S(\gamma'\omega_k)-S(\gamma\omega'_k)}{r_k-s_k}
\end{eqnarray*}
The verification is similar to the above and is left to the reader.

To conclude that $S$ is an antipode, it suffices to check that the relations
$$\sum\limits_{(x)}x'S(x'')=\sum\limits_{(x)}S(x')x''=\varepsilon(x)1$$
holds when $x$ is any of the generators. Similarly, we have only to check it on $x_k^-(1)$ and  $x_k^+(-1)$.
But this follows from the construction easily.

\end{proof}

Then we arrive at our second main theorem as follows.

\begin{theo}\, The morphisms $\Phi$ and $\Psi$ are two coalgebra homomorphisms,  that is,
 $$\Delta' \circ \Psi=(\Psi\otimes \Psi)\Delta, \qquad\Delta \circ \Phi=(\Phi\otimes \Phi)\Delta'.$$

In particular, the maps $\Phi$ and $\Psi$ between the algebra $\mathcal{U}^k_{r,s}(\hat{\frak{g}})$ and $U_{r,s}(\hat{\frak{g}})$ are two Hopf algebra isomorphisms.
\end{theo}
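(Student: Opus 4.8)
The plan is to prove only one of the two stated coalgebra identities, say
$\Delta\circ\Phi=(\Phi\ot\Phi)\circ\Delta'$, and to deduce the other for free. Indeed, by Theorem $\mathcal{C}$ we have $\Psi=\Phi^{-1}$; composing $\Delta\circ\Phi=(\Phi\ot\Phi)\circ\Delta'$ on the right with $\Psi$ gives $\Delta=(\Phi\ot\Phi)\circ\Delta'\circ\Psi$, and then applying $\Psi\ot\Psi$ on the left yields $\Delta'\circ\Psi=(\Psi\ot\Psi)\circ\Delta$. Since $\Delta$, $\Delta'$ and $\Phi$ are all algebra homomorphisms, both sides of $\Delta\circ\Phi=(\Phi\ot\Phi)\circ\Delta'$ are algebra homomorphisms from $\mathcal{U}^k_{r,s}(\hat{\frak{g}})$ to $U_{r,s}(\hat{\frak{g}})\ot U_{r,s}(\hat{\frak{g}})$, so it suffices to check the identity on the generators $x_i^{\pm}(0)$, $x_k^{\pm}(\mp1)$, $\om_i^{\pm1}$, $\om_i'^{\pm1}$, $\gamma^{\pm\frac12}$, $\gamma'^{\pm\frac12}$.

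For the grouplike and central generators $\om_i,\om_i',\gamma^{\pm\frac12},\gamma'^{\pm\frac12}$ and for the degree-zero generators $x_i^{\pm}(0)$, the comultiplications $\Delta$ and $\Delta'$ have identical shape on the corresponding elements (Definition of $\Delta'$ versus Proposition \ref{hopf}), and $\Phi$ sends them to the matching Drinfeld--Jimbo generators $\om_i,\om_i',\gamma,\gamma',e_i,f_i$; hence the identity is immediate there. The entire content therefore lies in the two generators $x_k^-(1)$ and $x_k^+(-1)$. I would carry out the verification for $x_k^-(1)$ in full and obtain the case of $x_k^+(-1)$ by an entirely analogous computation (equivalently, by transport along the Chevalley anti-automorphism $\tau$, which interchanges $x^+$ and $x^-$).

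The concrete computation runs as follows. By Theorem $\mathcal{B}$, $\Phi(x_k^-(1))$ is the normalized iterated $q$-bracket $t'^{-1}_{i_{h-1}}\cdots t'^{-1}_{i_2}\,\tilde e_{i_2}(1)\,\gamma'\om_k$ with $\tilde e_{i_\ell}(1)=[\,e_{i_\ell},\dots,e_{i_{h-1}},e_0\,]_{(p'_{i_{h-1}},\dots,p'_{i_\ell})}$, while $(\Phi\ot\Phi)\Delta'(x_k^-(1))$ is read off term-by-term from the defining formula for $\Delta'$, using that $\Phi(x_{1\,l}^-(1))$ and $\Phi(x_{j_1\,l}^+(0))$ are the corresponding iterated $q$-brackets in the $f$'s and $e$'s. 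I would then compute $\Delta(\Phi(x_k^-(1)))$ (up to the scalar normalization) as $\Delta(\tilde e_{i_2}(1))\,(\om_k\ot\om_k)(\gamma'\ot\gamma')$ by repeatedly applying the quantum Leibniz rules (\ref{b:1})--(\ref{b:2}) together with multiplicativity of $\Delta$ and the primitive-type coproducts $\Delta(e_i)=e_i\ot1+\om_i\ot e_i$ and $\Delta(e_0)=e_0\ot1+\om_0\ot e_0$. By induction on $\ell$ (exactly paralleling the inductive bracket computation in the proof of Theorem $\mathcal{B}$), the coproduct of the quantum root vector $\tilde e_{i_\ell}(1)$ organizes itself into a leading term $\tilde e_{i_\ell}(1)\ot(\text{grouplike})$, a trailing term $(\text{grouplike})\ot\tilde e_{i_\ell}(1)$, and intermediate mixed terms indexed precisely by the increasing sequences $i_2\le j_1<\cdots<j_l\le i_{h-1}$.

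The main obstacle will be the bookkeeping that matches these mixed terms with the sum $\sum \xi_l\,\Phi(x_{1\,l}^-(1))\ot\Phi(x_{j_1\,l}^+(0))\gamma'\om_k$: one must verify that each structure constant produced by the iterated Leibniz expansion equals the prescribed coefficient $\xi_l=(q'_{i_l}-p'_{i_l})t_{i_2}^{-1}\cdots t_{i_l}^{-1}q''_{i_2}\cdots q''_{i_{l-1}}$, and that the two pure endpoint terms collapse to $\Phi(x_k^-(1))\ot\gamma'\om_k+1\ot\Phi(x_k^-(1))$. This reduces to a collection of scalar identities among $p'_{i_j},q'_{i_j},p''_{i_j},q''_{i_j},t_{i_j}$ forced by the bicharacter $\langle\cdot,\cdot\rangle$; these are the same relations already exploited in Proposition \ref{8.1} to show $\Delta'$ is an algebra map and coassociative, so no new mechanism is required, only a careful induction. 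Once $\Delta\circ\Phi=(\Phi\ot\Phi)\circ\Delta'$ is established on all generators, and noting that $\Phi$ visibly respects the counit $\vep$, it follows that $\Phi$ (hence $\Psi=\Phi^{-1}$) is a bialgebra isomorphism; since the antipode of a Hopf algebra is unique, $\Phi$ automatically intertwines the two antipodes, so by Proposition \ref{8.1} it is a Hopf algebra isomorphism between $U_{r,s}(\hat{\frak{g}})$ and $\mathcal{U}^k_{r,s}(\hat{\frak{g}})$.
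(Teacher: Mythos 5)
Your proposal is correct and follows essentially the same route as the paper, whose proof of this theorem is a one-line appeal to Proposition \ref{8.1} together with the explicit constructions of $\Delta$ and $\Delta'$: the coefficients $\xi_l,\zeta_l$ in $\Delta'$ are designed precisely so that the generator-by-generator check you describe goes through on $x_k^{\mp}(\pm1)$, the remaining generators being immediate. Your additional organizational points --- deducing one intertwining identity from the other via $\Psi=\Phi^{-1}$, and getting antipode compatibility for free from uniqueness of the antipode --- are sound and make explicit what the paper leaves implicit.
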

\begin{proof}\, It follows from Proposition \ref{8.1} together with the constructions of $\Delta$ and $\Delta'$.
\end{proof}

\begin{remark}\, The result generalizes that of \cite{Dr}.
\end{remark}

\vskip30pt \centerline{\bf ACKNOWLEDGMENTS}

N. Jing would like to thank the partial support of
Simons Foundation grant 198129, NSFC grant 11271138 and NSF grants
1014554 and 1137837. H. Zhang would
like to thank the support of NSFC grants 11371238 and 11101258.
\bigskip

\bibliographystyle{amsalpha}

\begin{thebibliography}{ABCD}

\bibitem[B]{B} J. Beck, \textit{Braid group action and quantum affine
algebras}, Comm. Math. Phys. \textbf{165} (1994), 555--568.

\bibitem[BN]{BN} J. Beck, H. Nakajima, \textit{Crstal bases and two-sided cells of
quantum affine algebras}, Duke J. Math. {\bf 123} (2004), 335--402.

\bibitem [BW]{BW} G. Benkart, S. Witherspoon, \textit{Two-parameter quantum groups (of type $A$) and
Drinfeld doubles}, Algebra Represent. Theory \textbf{7} (2004),
261--286.

\bibitem[BGH1]{BGH1} N. Bergeron, Y. Gao, N. Hu, \textit{Drinfeld doubles and Lusztig's symmetries
 of two-parameter quantum groups},  J. Algebra \textbf{301} (2006), 378--405.

\bibitem[BGH2]{BGH2} N. Bergeron, Y. Gao, N. Hu,
\textit{Representations of two-parameter quantum orthogonal and symplectic groups},
Proceedings of the International Conference on Complex Geometry and Related Fields, pp. 1--21, AMS/IP Stud. Adv. Math., \textbf{39},
Amer. Math. Soc., Providence, RI, 2007.

\bibitem[CP]{CP} V. Chari, A. Pressley, \textit{Twisted quantum affine
algebras}, Comm. Math. Phys. \textbf{196} (1998), 461--476.

\bibitem[DD]{DD} R. Dipper, S. Donkin, \textit{Quantum $\mathrm{GL}_n$}.
Proc. Lond. Math. Soc. \textbf{63} (1991), 165--211.

\bibitem[Dr]{Dr} V.~G.~Drinfeld, \textit{A new realization of Yangians and
quantized affine algebras}, Soviet Math. Dokl. \textbf{36} (1988), 212--216.

\bibitem[FL]{FL} Z. Fan, Y. Li, \textit{Two-parameter quantum algebras, canonical bases and categorigications}, Int. Math. Res. Not.
\textbf{16} (2015), 7016--7062. 

\bibitem[FJ]{FJ} I.~B.~Frenkel, N. Jing, \textit{Vertex representations of quantum affine algebras},
Proc. Nat'l. Acad. Sci. USA {\bf 85} (1988), 9373--9377.

\bibitem[GHZ]{GHZ} Y. Gao, N. Hu, H. Zhang,
\textit{Two-parameter quantum affine algebra of type $G_2^{(1)}$, Drinfeld realization and vertex representation}, J. Math. Phys. {\bf 56} (2015), 011704.

\bibitem[GT]{GT} S. Gautam, V. Toledano-Laredo, \textit{
Yangians and quantum loop algebras}, Selecta Math. {\bf 19} (2013), 271--336. 

\bibitem[GKV]{GKV} V. Ginzburg, M. Kapranov, E. Vasserot,  \textit{Langlands reciprocity for algebraic surfaces},
Math. Res. Lett. {\bf 2} (1995), 147--160.


\bibitem[H]{H} D. Hernandez, \textit{Drinfeld coproduct, quantum fusion tensor category and applications}, Proc. Lond. Math. Soc. \textbf{95} (2007), 567--608.

\bibitem[HRZ]{HRZ} N. Hu, M. Rosso, H. Zhang, \textit{Two-parameter quantum affine algebra
$U_{r,s}(\widehat{\frak{sl}}_n)$, Drinfeld realization and quantum
affine Lyndon basis}, Comm. Math. Phys. \textbf{278} (2008),
453--486.

\bibitem[HZ1]{HZ1} N. Hu, H. Zhang, \textit{Two-parameter quantum affine algebra of type $C_n^{(1)}$, Drinfeld realization and vertex representation},
J. Algebra \textbf{459} (2016), 43-75.


\bibitem[HZ2]{HZ2} N. Hu, H. Zhang, \textit{Generating functions and vertex representations of two-parameter quantum affine algebras $U_{r,s}(\widehat{\mathfrak{g}})$:
 simply-laced cases},  arXiv:1401.4925.


\bibitem[J1]{J1} N. Jing, \textit{Twisted vertex representations of quantum affine algebras}, Invent. Math.
\textbf{102} (1990),  663--690.

\bibitem[J2]{J2} N. Jing,
\textit{On Drinfeld realization of quantum
affine algebras}, Monster and Lie Algebras (Columbus, OH, 1996), pp. 195--206, Ohio State Univ. Math. Res. Inst. Publ., \textbf{7}, de
Gruyter, Berlin, 1998.

\bibitem[JM]{JM} N.~Jing, K.~C.~Misra, \textit{Vertex operators for twisted quantum affine algebras},
Trans. Amer. Math. Soc. \textbf{351} (1999), 1663--1690.

\bibitem[JZ1]{JZ1} N.~Jing, H.~Zhang,  \textit{Two-parameter quantum vertex representations via finite groups and the McKay correspondence}, Trans. Amer. Math. Soc. {\bf 363}  (2011), 3769--3797. 

\bibitem[JZ2]{JZ2} N.~Jing, H.~Zhang,
\textit{Fermionic realization of two-parameter quantum affine algebra $U_{r,s}(\widehat{sl}_n)$},
 Lett. Math. Phys. \textbf{89} (2009) 159--170.


\bibitem[JZ3]{JZ3} N.~Jing, H.~Zhang,
\textit{Addendum to ``Drinfeld realization of twisted
quantum affine algebras'',} Comm. Algebra \textbf{38} (2010), 3484--3488.

\bibitem[JZ4]{JZ4} N.~Jing, H.~Zhang,
\textit{Drinfeld realization of quantum twisted affine algebras via braid group},
Adv. Math. Phys. \textbf{2016} (2016), Article ID 4843075, 22 pages.

\bibitem[JZ5]{JZ5} N.~Jing, H.~Zhang,
\textit{Hopf algebraic structures of quantum toroidal algebras},
arXiv:1604.05416

\bibitem[K]{K} V. Kac, \textit{Infinite dimensional Lie algebras,} 3rd ed. Cambridge University Press, Cambridge,
1990.

\bibitem[Ka]{Ka} C. Kassel, \textit{Quantum groups},
Graduate Texts in Mathematics, {\bf 155}, Springer-Verlag, New York, 1995.


\bibitem[L]{L} G.~Lusztig, \textit{Introduction to quantum groups}, Birkh\"auser, Boston, 1993.

\bibitem[N]{N} H.~Nakajima, \textit{Quiver varieties and finite-dimensional representations of quantum
affine algebras}, J. Amer. Math. Soc. {\bf 14} (2001), 145--238.

\bibitem[T]{T} M.~Takeuchi, \textit{ A two-parameter quantization of $\mathrm{GL}(n)$}, Proc. Japan. Acad. \textbf{66} Ser. A (1990), 112--114.

\bibitem[ZJ]{ZJ} H.~Zhang, N.~Jing, \textit{Drinfeld realizations of twisted
quantum affine algebras}, Comm. Algebra \textbf{35} (2007), 3683--3698.

\bibitem[ZP]{ZP} H.~Zhang, P.~Pang, \textit{Two-parameter quantum affine algebra $U_{r,s}(\widehat{\frak{sl}}_2)$,} Algebra Colloq. \textbf{20} (2013), 507--514. 
\end{thebibliography}

\end{document}